\title{Norms and Transfers in Motivic Homotopy Theory}
\author{Brian Shin}
\date{2024 October 26}
\begin{document}

\begin{abstract}
    In this article, we establish the compatibility between norms and transfers in motivic homotopy theory.
    More precisely, we construct norm functors for motivic spaces equipped with various flavours of transfer.
    This yields a norm monoidal refinement of the infinite $\mathbb{P}^1$-delooping machine of Elmanto-Hoyois-Khan-Sosnilo-Yakerson.
    We apply this refinement to construct a normed algebra structure on the very effective Hermitian $\mathrm{K}$-theory spectrum.
\end{abstract}

\maketitle

\tableofcontents

\section{Introduction}

The goal of this article is to study the interactions between multiplicative norms and additive transfers in motivic homotopy theory.
Specifically, we develop a theory of norm functors for motivic spaces equipped with various flavors of transfers.
We use this to upgrade the Motivic Recognition Principle of Elmanto-Hoyois-Khan-Sosnilo-Yakerson to a norm monoidal equivalence.
We also use this to promote the very effective Herimitian $\mathrm{K}$-theory spectrum to a normed algebra.

\subsection{Background}

\subsubsection{Motivic Homotopy Theory}

Motivic homotopy theory is the study of algberaic geometry through the lens of homotopy theory.
The basic objects of interest are \emph{motivic spaces}, \emph{i.e.} Nisnevich sheaves $F : \Sm_S^\op \to \Spc$ over some base scheme $S$ such that $F(X \times \mathbb{A}^1) \cong F(X)$ for every smooth $S$-scheme $X$.
We write $\HH(S)$ for the $\infty$-category of motivic spaces over $S$.
This $\infty$-category provides a convenient setting where we can apply homotopy-theoretic tools to algebro-geometric objects.
Abstractly, $\HH(S)$ is the presentable $\infty$-category generated by smooth $S$-schemes and subject to the relations of Nisnevich descent and $\mathbb{A}^1$-invariance.
The definition of $\HH(S)$ is justified in part by the following observation: if we replace smooth $S$-schemes by manifolds, the Nisnevich topology with the usual open cover topology, and $\mathbb{A}^1$ with $\mathbb{R}$, then we recover the $\infty$-category of spaces.

As in ordinary homotopy theory, one is often interested in studying generalized cohomology theories for motivic spaces.
These can be packaged into objects called \emph{motivic spectra}.
Explicitly, a motivic spectrum consists of a sequence of pointed motivic spaces $X_0, X_1, X_2, \ldots$ together with equivalences $X_i \cong \Omega_{\mathbb{P}^1} X_{i+1}$.
Here we write $\Omega_{\mathbb{P}^1} X = \mathrm{Hom}_\bullet(\mathbb{P}^1, X)$ for the motivic space of based $\mathbb{P}^1$-loops in $X$.
On a categorical level, motivic spectra are obtained by taking the $\infty$-category of pointed motivic spaces and formally inverting $\mathbb{P}^1$ under smash product.
Everything takes place over a base scheme $S$, and we write $\SH(S)$ for the $\infty$-category of motivic spectra over $S$.
Many of the usual spectra from ordinary homotopy theory admit motivic analogues.
For example, there are spectra $\mathrm{H}\mathbb{Z}_S$, $\mathrm{KGL}_S$, and $\mathrm{MGL}_S$ which are motivic analogues of the ordinary spectra $\mathrm{H}\mathbb{Z}$, $\mathrm{KU}$, and $\mathrm{MU}$.

\subsubsection{Framed Transfers and the Recognition Principle}

Passing from ordinary to motivic homotopy theory introduces new and interesting complexity to various classical constructions.

A fundamental result in ordinary homotopy theory is the \emph{Recognition Principle} for infinite loop spaces (\cite{BV_HomotopyEverything}, \cite{Segal_CatsCoh}, \cite{May_GeometryIteratedLoops}).
This establishes a tight connection between spectra and $\mathbb{E}_\infty$ monoids of spaces via the infinite loop space functor.
Specifically, it says that $\Omega^\infty : \Spt \to \Spc_\bullet$ refines to an equivalence between connective spectra and grouplike $\mathbb{E}_\infty$ monoids.
In other words, we may view spectra as being some sort of homotopical generalization of abelian groups.

Building on an insight of Voevodsky (\cite{Voevodsky_FramedNotes}) and work of Garkusha-Panin and their collaborators (\cite{GP_FramedMotivesAfterVoevodsky}, \cite{AGP_cancellation}, \cite{GP_HISheavesFramedTransfers}, \cite{DP_Surjectivity}, \cite{Druzhinin_Framed}, \cite{GNP_FramedMotiveRelativeSphere}), the authors of \cite{EHKSY_MotivicInfiniteLoops} prove a motivic analogue of the above recognition principle.
They construct an $\infty$-category $\HH^\fr(S)$ of motivic spaces with \emph{framed transfers} and show that it plays the role of $\mathbb{E}_\infty$ monoids of spaces.
More precisely, they show that the diagram
\begin{equation*}
    \begin{tikzcd}
        & \HH^\fr(S) \ar[d,"forget"] \\
        \SH(S) \ar[r,"\Omega^\infty_{\mathbb{P}^1}"] \ar[ur,dashed] & \HH(S)_\bullet
    \end{tikzcd}
\end{equation*}
admits a lift, this lift admits a left adjoint $\Sigma^\infty_{\mathbb{P}^1,\fr} : \HH^\fr(S) \to \SH(S)$, and (when $S$ is the spectrum of a perfect field) this adjunction restricts to an equivalence
\begin{equation*}
    \SH(S)^\veff \cong \HH^\fr(S)^\gp.
\end{equation*}
The $\infty$-category on the left side of this equivalence consists of \emph{very effective} motivic spectra, which are a motivic analogue of connective spectra in ordinary homotopy theory.

\subsubsection{Norm Functors and Normed Algebras}

In a separate vein, Voevodsky introduced norm functors in motivic homotopy theory in order to define symmetric powers of motives (\cite{Deligne_Voevodsky}).
In \cite{BachmannHoyois_Norms}, Bachmann-Hoyois push the theory further, showing that norm functors taken together can be viewed as a motivic enhancement of symmetric monoidal structure.
For a finite \'etale map $p : S \to T$, the norm along $p$ is a certain functor $p_\otimes : \SH(S) \to \SH(T)$ which behaves like a ``twisted $n$-ary smash product'' functor
When $p : T^{\sqcup n} \to T$ is a fold map, we can identify $p_\otimes$ with the $n$-aray smash product functor under he equivalence $\SH(T^{\sqcup n}) \cong \SH(T)^{\times n}$.
Taken together, these norm functors give $\SH$ the structure of a \emph{norm monoidal} $\infty$-category.

Once one has a notion of norm monoidal $\infty$-category, one can make a definition of \emph{normed algebra} within such a structure.
Bachmann-Hoyois show that normed algebras in $\SH$ come equipped with a rich theory of motivic power operations.
For $(\mathrm{H}\mathbb{F}_p)_S$ recover Voevodsky's motivic Steenrod operations.
These played a critical role in Rost and Voevodsky's proof of the Bloch-Kato conjecture.

\subsection{The Present Work}

\subsubsection{Multiplicative Motivic Infinite Loop Space Theory}

In this article, we establish a compatibility between norm functors and framed transfers.

\begin{theorem}[see Theorem~\ref{theorem:THE-CONSEQUENCES}]
    The association $S \mapsto \HH^\fr(S)$ can be promoted to a norm monoidal $\infty$-category.
    The functors $\HH(S)_\bullet \to \HH^\fr(S)$ can be assembled into a norm monoidal functor.
\end{theorem}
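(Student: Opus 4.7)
The plan is to upgrade the assignment $S \mapsto \HH^\fr(S)$ from a contravariant functor on schemes to a functor out of the span $\infty$-category of schemes with backward legs finite \'etale, in the Segal-style formalism for norm monoidal $\infty$-categories introduced by Bachmann--Hoyois. Concretely, for each finite \'etale map $p : S \to T$ this requires a norm functor $p_\otimes : \HH^\fr(S) \to \HH^\fr(T)$ together with coherence data encoding base change, and the whole structure must be compatible with the existing norm structure on $\SH$ via $\Sigma^\infty_{\mathbb{P}^1,\fr}$.

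First I would construct the norms at the level of framed correspondence categories. Weil restriction $R_p$ along a finite \'etale $p : S \to T$ preserves smoothness and finite limits of $S$-schemes, so it extends to a functor on spans. The key technical point is that the tangential framing data underlying a framed correspondence is preserved by $R_p$: given a span $X \leftarrow Z \to Y$ in $\Sm_S$ with $Z \to X$ finite syntomic and a stable trivialisation of the cotangent complex $L_{Z/X}$, its Weil restriction ought to inherit a canonical stable trivialisation of $L_{R_p(Z)/R_p(X)}$, constructed via the multiplicative transfer on the $\mathrm{K}$-theory of perfect complexes along $p$. This yields functors $p_\otimes : \mathrm{Corr}^\fr(\Sm_S) \to \mathrm{Corr}^\fr(\Sm_T)$, which I would then extend to presheaves with framed transfers by left Kan extension.

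Next I would descend these functors to $\HH^\fr$. Nisnevich descent is preserved because Weil restriction along finite \'etale maps carries Nisnevich squares to Nisnevich squares, and $\mathbb{A}^1$-invariance is preserved because $R_p(\mathbb{A}^1_S)$ is itself $\mathbb{A}^1$-contractible. To assemble the $p_\otimes$ into a genuine norm monoidal structure, I would apply the standard criterion reducing such functoriality on spans to a base change isomorphism $p^* q_\otimes \simeq \tilde q_\otimes \tilde p^*$ for pullback squares with backward legs finite \'etale; this follows from the analogous identity for Weil restriction of schemes together with the framing compatibility above. For the second statement, the canonical map $\HH(S)_\bullet \to \HH^\fr(S)$ is induced by the ``graph'' embedding of pointed smooth $S$-schemes into framed correspondences, and this embedding visibly intertwines Weil restriction with $p_\otimes$, so it promotes to a norm monoidal transformation.

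The main obstacle I anticipate is the coherent construction of framings on Weil restrictions of finite syntomic maps. One must produce a canonical equivalence between the Weil restriction of a trivialised class in $\mathrm{K}$-theory and the image of a trivialised class under the norm on $\mathrm{K}$-theory, and track this through the simplicial construction of framed correspondences (and hence through all higher coherences of the normed structure). Once this compatibility is pinned down --- presumably by packaging framed correspondences as a labelled span category and using the universal property of Weil restriction --- the remaining steps, namely the assembly into a normed structure via spans and the compatibility with $\HH(-)_\bullet$, are largely formal consequences of the Bachmann--Hoyois machinery.
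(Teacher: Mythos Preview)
Your outline is broadly correct and lands close to the paper's argument, but the paper organises the two hard steps differently, and the differences are worth noting.

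\textbf{On the framing data under norms.} You propose to transport a trivialisation of $\mathscr{L}_{Z/X}$ in $\mathrm{K}(Z)$ to a trivialisation of $\mathscr{L}_{R_p(Z)/R_p(X)}$ via a multiplicative transfer on $\mathrm{K}$-theory. The paper does not do this directly. Instead it observes that the entire apparatus used to build the labeling functor $\mathrm{fr}$ --- the diagram involving $\mathbf{QCoh}_S$, the cotangent complex functor $\mathscr{L}$, the gap construction on $\mathbf{Perf}$, and then $\mathrm{K}$-theory --- is, as a presheaf of symmetric monoidal $\infty$-categories on $\AlgSpc$, a sheaf for the finite \'etale topology. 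By \cite[Lemma~C.13]{BachmannHoyois_Norms}, such a sheaf extends uniquely to a finite-product-preserving functor on $\Corr^\fet(\AlgSpc)^\op$, and this furnishes the norm functoriality of the labeling functor all at once, with all higher coherences built in. This sidesteps entirely the need to identify $\mathscr{L}_{R_p(Z)/R_p(X)}$ explicitly or to invoke a norm map on $\mathrm{K}$-theory by hand. Your approach would presumably give the same answer, but making the higher coherences rigorous along those lines is exactly the obstacle you flag at the end; descent dissolves it.

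\textbf{On descending to $\HH^\fr$.} You argue that $p_\otimes$ preserves motivic-local objects because Weil restriction preserves Nisnevich squares and $R_p(\mathbb{A}^1_S)$ is $\mathbb{A}^1$-contractible. The paper instead introduces the abstraction of a \emph{normed motivic pattern}: once $\Corr^\fr(\Sm_\star)$ is a norm monoidal $\infty$-category receiving an essentially surjective norm monoidal functor from $\Sm_{\star+}$, the norm $p_\otimes$ on $\PSh_\Sigma(\Corr^\fr(\Sm_\star))$ preserves motivic equivalences by a monadicity argument (write any motivic equivalence as a simplicial colimit of $\theta_!$ of motivic equivalences, then reduce to the known case of $\PSh_\Sigma(\Sm_\star)_\bullet$ from \cite[Theorem~3.3]{BachmannHoyois_Norms}). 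This is cleaner than checking Nisnevich and $\mathbb{A}^1$ separately, and it packages the compatibility with $\HH(S)_\bullet \to \HH^\fr(S)$ for free: that functor is simply the component of the norm monoidal functor $\theta_!$ induced by the morphism of normed motivic patterns $\Sm_{\star+} \to \Corr^\fr(\Sm_\star)$.

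In short: your plan is sound, and you correctly anticipate the labelled-span packaging at the end. The paper's two economies are (i) replacing the explicit $\mathrm{K}$-theory norm by finite \'etale descent of the labeling data, and (ii) replacing the ad hoc descent to $\HH^\fr$ by the general machinery of normed motivic patterns.
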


This lets us prove the following recognition principle for normed infinite $\mathbb{P}^1$-loop spaces.

\begin{theorem}[see Theorem~\ref{theorem:multiplicative-recognition}]
    Let $k$ be a perfect field.
    There is an equivalence
    \begin{equation*}
        \NAlg(\HH^{\fr,\gp} | k) \simeq \NAlg(\SH^\mathrm{veff} | k)
    \end{equation*}
    of $\infty$-categories of normed algebras over $\Spec k$.
\end{theorem}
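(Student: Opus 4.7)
The strategy is to upgrade the Motivic Recognition Principle of Elmanto-Hoyois-Khan-Sosnilo-Yakerson to an equivalence of norm monoidal $\infty$-categories over $\Spec k$, and then apply the normed-algebras functor $\NAlg(-|k)$ to obtain the claimed equivalence.

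I would first use Theorem~\ref{theorem:THE-CONSEQUENCES} to equip $S \mapsto \HH^\fr(S)$ with its norm monoidal structure and to realise the free-transfer functor $L : \HH(-)_\bullet \to \HH^\fr(-)$ as norm monoidal. Composing with the norm monoidal $\Sigma^\infty_{\mathbb{P}^1} : \HH(-)_\bullet \to \SH(-)$ of Bachmann-Hoyois and using the defining identity $\Sigma^\infty_{\mathbb{P}^1,\fr} \circ L \simeq \Sigma^\infty_{\mathbb{P}^1}$, I would then promote $\Sigma^\infty_{\mathbb{P}^1,\fr} : \HH^\fr \to \SH$ to a norm monoidal functor; here the content is that the norm monoidal structure on the left-hand side produced by the preceding theorem is characterised by making $L$ norm monoidal, so the norm-monoidality of $\Sigma^\infty_{\mathbb{P}^1}$ descends through $L$. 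Next, for every finite \'etale $p : S \to T$ I would verify two stability statements: (i) the norm $p_\otimes$ carries grouplike objects of $\HH^\fr$ to grouplike objects, so $\HH^{\fr,\gp}$ inherits a norm monoidal structure; and (ii) $p_\otimes$ carries very effective motivic spectra to very effective motivic spectra, so $\SH^\veff$ inherits a norm monoidal structure.

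Assembling these ingredients, $\Sigma^\infty_{\mathbb{P}^1,\fr}$ descends to a norm monoidal functor $\HH^{\fr,\gp} \to \SH^\veff$. Over a perfect field $k$, the EHKSY theorem identifies the underlying functor as an equivalence, and since the whole structure is norm monoidal, this is an equivalence of norm monoidal $\infty$-categories over $\Spec k$. Applying $\NAlg(-|k)$ to this equivalence yields the theorem.

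The main obstacle is statement (ii), that norms preserve very effective motivic spectra. Statement (i) should be essentially formal: group-completion is a $\pi_0$-localisation, and the norm monoidal structure on $\HH^\fr$ already forces norms to respect $\pi_0$-isomorphisms. Very effectivity, on the other hand, requires controlling the interaction of norms with the effective slice filtration. The natural approach is to reduce, via the generators of $\SH^\veff$, to the computation $p_\otimes \Sigma^\infty_{\mathbb{P}^1} X_+ \simeq \Sigma^\infty_{\mathbb{P}^1}(\mathrm{R}_p X)_+$ coming from the Bachmann-Hoyois formula for norms of suspension spectra, together with smoothness of the Weil restriction $\mathrm{R}_p X$ of a smooth $S$-scheme $X$ along a finite \'etale $p$; the right-hand side is then a suspension spectrum of a smooth scheme, hence very effective.
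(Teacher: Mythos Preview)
Your argument has a genuine gap in the final step. You assert that, because the EHKSY recognition principle gives an equivalence over the perfect field $k$, the norm monoidal functor $\HH^{\fr,\gp} \to \SH^\veff$ is an equivalence of norm monoidal $\infty$-categories over $\Spec k$. But recall that $\NAlg(-|k)$ means normed algebras over $\mathscr{B} = \Sm_k$: the norm monoidal $\infty$-categories in question are coCartesian fibrations over $\Corr^\fet(\Sm_k)^\op$, and an equivalence between them would require the fiberwise functors $\HH^{\fr,\gp}(X) \to \SH^\veff(X)$ to be equivalences for \emph{every} $X \in \Sm_k$. The EHKSY theorem only establishes this when $X$ is the spectrum of a perfect field, not for arbitrary smooth $k$-schemes. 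So you cannot simply apply $\NAlg(-|k)$ to an equivalence of norm monoidal $\infty$-categories, because you do not have one.

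The paper circumvents this as follows. It shows that $\HH^{\fr,\gp} \to \SH^\veff$ is the left adjoint in a norm monoidal \emph{adjunction} (this uses your point~(ii), but also requires checking the exchange conditions of Proposition~\ref{prop:presentably-norm-adjunction-criterion}). Such an adjunction induces an adjunction on $\NAlg(-|k)$ whose unit and counit are computed objectwise and hence are detected by the conservative forgetful functors $\NAlg(\HH^{\fr,\gp}|k) \to \HH^{\fr}(k)^\gp$ and $\NAlg(\SH^\veff|k) \to \SH^\veff(k)$. Since the underlying adjunction at the single object $\Spec k$ is an equivalence by EHKSY, conservativity forces the adjunction on $\NAlg$ to be an equivalence as well. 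The point is that one only needs the recognition principle at $\Spec k$ itself, provided one has the norm monoidal adjunction structure.

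Two smaller issues: your claim that (i) is ``essentially formal'' understates the work involved---norms are not additive, and the paper needs the polynomial functor machinery (Propositions~\ref{prop:norm-is-polynomial} and~\ref{prop:polynomial-preserves-gp-equiv}) to show $p_\otimes$ preserves $(-)^\gp$-equivalences. Also, your proposed mechanism for making $\Sigma^\infty_{\mathbb{P}^1,\fr}$ norm monoidal (``norm-monoidality of $\Sigma^\infty_{\mathbb{P}^1}$ descends through $L$'') is not obviously rigorous; the paper instead builds the norm monoidal structure on $\Sigma^\infty_{\mathbf{T},\fr}$ functorially from the normed motivic pattern structure (Construction~\ref{construct:normed-patt-to-normed-cat}).
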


In order to carry out the construction of $S \mapsto \HH^\fr(S)$ as a norm monoidal $\infty$-category, we extend the technology of labeling functors developed in \cite{EHKSY_MotivicInfiniteLoops}.
More specifically, we make suitable modifications to the technology to be able to handle norms, and then follow the basic steps in the original construction of $\HH^\fr(S)$.

\subsubsection{Normed Orientations}

As an application of the theory of norm functors for motivic spaces with framed transfers, we prove that the very effective $\mathrm{K}$-theory spectra $\mathrm{kgl}$ and $\mathrm{ko}$ admit normed orientations.

\begin{theorem}[See Theorem~\ref{theorem:normed-orientations}]
    Let $S$ be a scheme essentially smooth over a Dedekind scheme with $2 \in \mathcal{O}_S^\times$.
    The commutative diagram
    \begin{equation*}
        \begin{tikzcd}
            \mathrm{MSL}_S \ar[d] \ar[r] & \mathrm{MGL}_S \ar[d] \\
            \mathrm{ko}_S \ar[r] & \mathrm{kgl}_S
        \end{tikzcd}
    \end{equation*}
    can be promoted to one in $\NAlg(\SH|S)$.
\end{theorem}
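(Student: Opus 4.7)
The plan is to exhibit each of the four spectra as the image of an explicit normed framed presheaf model, and each of the four maps as coming from a presheaf-level norm-compatible transformation. By the first main theorem of this article, $\HH^\fr$ carries a norm monoidal structure and the functor $\HH_\bullet \to \HH^\fr$ is norm monoidal; composing its left adjoint with $\Sigma^\infty_{\mathbb{P}^1}$ produces $\Sigma^\infty_{\mathbb{P}^1,\fr} : \HH^\fr \to \SH$ which is norm oplax monoidal and therefore carries normed algebras to normed algebras. For very effective inputs this matches the recognition equivalence (over a perfect field via Theorem~\ref{theorem:multiplicative-recognition}, extended to the general $S$ in the statement by base change from a prime field), which we can use to go back and forth between framed and stable models.

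Next, I would identify explicit normed framed models. For $\mathrm{kgl}$ we take the group completion of the framed presheaf $\mathrm{Vect}^{\simeq}$ of vector bundles under direct sum, with norms given by Weil restriction of vector bundles along finite \'etale maps. For $\mathrm{ko}$, using $2 \in \mathcal{O}_S^\times$, we take the group completion of the framed presheaf of non-degenerate symmetric bilinear forms, with norms given by Weil restriction of such forms. For $\mathrm{MGL}$ and $\mathrm{MSL}$ we use the framed models provided by the Hilbert schemes of finite syntomic schemes equipped with a $\mathrm{GL}$- or $\mathrm{SL}$-orientation on the cotangent complex, again with norms induced by Weil restriction. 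In each case, tensor-product coherences supply the multiplicative data, and packaging these through the norm monoidal refinement of the labeling functor machinery developed earlier in the paper produces genuine normed framed monoid structures, which $\Sigma^\infty_{\mathbb{P}^1,\fr}$ converts into normed algebras in $\SH(S)$.

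The four maps arise as presheaf-level natural transformations: the horizontal maps forget orientation or bilinear data to the underlying vector bundle; the vertical maps send a finite syntomic scheme with its orientation to the class of its module of functions (in the $\mathrm{MSL}\to\mathrm{ko}$ case, equipped with the symmetric bilinear form built from Koszul duality and trivialized via the $\mathrm{SL}$-orientation). The main obstacle is proving that each of these transformations is compatible with the normed structures, i.e.\ commutes with Weil restriction. This reduces to classical compatibilities of Weil restriction with the formation of determinants, Koszul duality pairings, and passage to underlying modules; however, assembling them coherently as morphisms of normed framed presheaves requires systematic bookkeeping through the labeling functor formalism established in the preceding sections. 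Once completed, applying $\Sigma^\infty_{\mathbb{P}^1,\fr}$ and the recognition identifications yields the desired commutative square in $\NAlg(\SH|S)$.
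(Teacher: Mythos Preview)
Your proposal takes a genuinely different route from the paper, and while it could in principle be made to work, it misses the structural shortcut that makes the paper's proof clean.

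The paper's argument is this: Theorem~\ref{theorem:THE-DIAGRAM} gives a commutative diagram of normed motivic patterns, which by Construction~\ref{construct:normed-patt-to-normed-cat} yields a commutative diagram of norm monoidal $\infty$-categories
\[
\PSh_\Sigma^\fr(\Sm_\star) \to \PSh_\Sigma^\orfsyn(\Sm_\star) \to \PSh_\Sigma^\orfgor(\Sm_\star), \qquad \text{etc.}
\]
Each arrow is the left adjoint in a norm monoidal adjunction (Proposition~\ref{prop:psh-h-sh-presentably-normed}), so each right adjoint is lax norm monoidal. Applying these right adjoints to the norm monoidal units produces the square (\ref{eqn:examples-of-nalgs}) in $\NAlg(\PSh_\Sigma^\fr(\Sm_\star)|\AlgSpc_S)$ \emph{automatically} --- no explicit verification of Weil-restriction compatibility is needed, because that compatibility is already encoded in the norm monoidal structure on the patterns. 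One then pushes forward along $\mathrm{L}_\mot$, $({-})^\gp$, and $\Sigma^\infty_{\mathbf{T},\fr}$ (all left adjoints in norm monoidal adjunctions), and finally identifies the underlying spectra via the cited theorems of \cite{EHKSY_ModulesOverMGL}, \cite{HJNTY_HilbertSchemes}, \cite{HJNY_Hermitian}, and \cite{Bachmann_veffCovers}.

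Your approach instead builds explicit framed presheaf models ($\mathrm{Vect}^\simeq$, symmetric bilinear forms, Hilbert schemes) and proposes to check norm-compatibility of the maps by hand. Two specific issues: first, your invocation of the recognition principle (Theorem~\ref{theorem:multiplicative-recognition}) and base change from a prime field is a detour --- the paper's construction is uniform over $\AlgSpc$ and never uses the recognition principle for this theorem; the identifications of the spectra come from the specific cited results, not from the general delooping machine. Second, your appeal to the ``labeling functor machinery'' to package the normed structures on your presheaf models is misplaced: that machinery is used in Section~5 to build the norm monoidal structure on the \emph{categories} $\Corr^\fr(\Sm_\star)$, not to construct individual normed algebras inside them. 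If you want to stay closer to your explicit-model approach, the cleaner mechanism is finite \'etale descent (Proposition~\ref{prop:nalg-by-descent}): each of your models is a finite \'etale sheaf in $\CAlg(\PSh_\Sigma^\fr(\Sm_S))$, and that proposition promotes such objects to normed algebras over $\Sm_S$ for free. The paper notes this variant in the remark following Theorem~\ref{theorem:normed-orientations}.
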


In particular, $\mathrm{ko}$ admits a normed algebra structre, which is new.
Recall that normed algebra structures for $\mathrm{kgl}$, $\mathrm{MGL}$, and $\mathrm{MSL}$ were constructed already in \cite{BachmannHoyois_Norms}.
In this work, we give a new construction for normed algebra structures for these.
In future work, we will return to the question of comparing the norm structures.

The essence of our proof lies in the fact that each of the motivic spectra arising in Theorem~\ref{theorem:normed-orientations} can be constructed by applying the infinite $\mathbb{P}^1$-delooping functor $\Sigma^\infty_{\mathbb{P}^1,\fr} : \HH^\fr(S) \to \SH(S)$ to an extremely concrete geometric object.
For example, $\mathrm{MGL}$ comes from the moduli space of finite syntomic schemes, while $\mathrm{ko}$ comes from the moduli space of oriented finite Gorenstein schemes.
See \cite{EHKSY_ModulesOverMGL}, \cite{HJNTY_HilbertSchemes}, and \cite{HJNY_Hermitian}.
Access to these descriptions allows us to construct the relevant normed algebra structures using descent.

It's worth pointing out that none of the assumptions in Theorem~\ref{theorem:normed-orientations} are relevant for the techniques developed in the present article.
In fact, forthcoming work of Bachmann, Elmanto, and Morrow will show that $S$ can be any scheme for which $2$ is a unit.
See Remark~\ref{remark:assumptions-are-true}.

\subsubsection{Normed Algebras over Cobordism and \texorpdfstring{$\mathrm{K}$}{K}-Theory Spectra}

The authors of \cite{EHKSY_ModulesOverMGL}, \cite{HJNTY_HilbertSchemes}, and \cite{HJNY_Hermitian} study motivic spaces and spectra equipped with transfers more general than framed ones
In Theorem~\ref{theorem:THE-CONSEQUENCES}, we show that these all admit norm functors that are compatible with those of motivic spaces with framed transfers.
We use these norm monoidal structures to prove multiplicative recognition theorems for modules over various motivic spectra.

\begin{theorem}[see Theorem~\ref{theorem:normed-algebras-over-cobordism}]
    Let $k$ be a perfect field.
    There is an equivalence
    \begin{equation*}
        \NAlg_{\mathrm{MGL}}(\SH^\mathrm{veff}|k) \cong \NAlg(\HH^{\fsyn,\gp}|k)
    \end{equation*}
    of $\infty$-categories of normed algebras over $\Spec k$.
\end{theorem}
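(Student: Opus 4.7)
The plan is to deduce this by combining the norm monoidal enhancement provided by Theorem~\ref{theorem:THE-CONSEQUENCES} with the underlying (non-normed) recognition principle for $\mathrm{MGL}$-modules proved in \cite{EHKSY_ModulesOverMGL}. The analogy to keep in mind is Theorem~\ref{theorem:multiplicative-recognition}: where framed transfers correspond to the sphere spectrum, finite syntomic transfers correspond to $\mathrm{MGL}$, via the description of $\mathrm{MGL}$ as $\Sigma^\infty_{\mathbb{P}^1,\fsyn}$ applied to the unit of $\HH^\fsyn$ (equivalently, to the Hilbert scheme of finite syntomic schemes).

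First, I would invoke Theorem~\ref{theorem:THE-CONSEQUENCES} to get that $S \mapsto \HH^\fsyn(S)$ is norm monoidal and that the comparison with motivic spectra is norm monoidal, so in particular the infinite $\mathbb{P}^1$-delooping $\Sigma^\infty_{\mathbb{P}^1,\fsyn} : \HH^\fsyn(k) \to \SH(k)$ is a map of normed symmetric monoidal $\infty$-categories. As such it sends the unit $\mathbf{1}^\fsyn$ to a canonically normed algebra, which under the identification of \cite{EHKSY_ModulesOverMGL} recovers $\mathrm{MGL}_k$ with its normed structure.

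Second, I would pass to the group-completion and very effective parts. The results of \cite{EHKSY_ModulesOverMGL} give, for $k$ a perfect field, a plain equivalence
\begin{equation*}
    \HH^{\fsyn,\gp}(k) \simeq \Mod_{\mathrm{MGL}}(\SH^\veff(k)),
\end{equation*}
and Theorem~\ref{theorem:THE-CONSEQUENCES} together with the observation above upgrades this to an equivalence of norm monoidal $\infty$-categories. Concretely, the adjunction $\Sigma^\infty_{\mathbb{P}^1,\fsyn} \dashv \Omega^\infty_{\mathbb{P}^1,\fsyn}$ is norm monoidal on group-like/very effective objects, both sides have the same unit $\mathrm{MGL}_k$, and the underlying functor is an equivalence. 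Taking normed algebras on both sides and using that $\NAlg(\Mod_R(\mathcal{C})) \simeq \NAlg_R(\mathcal{C})$ for a normed algebra $R$ in a norm monoidal $\infty$-category $\mathcal{C}$ yields the claimed equivalence.

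The main obstacle is the second step, specifically promoting the equivalence of \cite{EHKSY_ModulesOverMGL} from a plain adjoint equivalence to a norm monoidal equivalence. This requires knowing that the norm functors on the two sides are intertwined by $\Omega^\infty_{\mathbb{P}^1,\fsyn}$, which comes down to checking that the norm monoidal structure on $\HH^\fsyn$ constructed in the body of the paper is compatible with the one induced from $\SH$ via restriction along $\mathrm{MGL}$. Once this compatibility is in place, the remaining categorical bookkeeping (that $\NAlg$ sends a norm monoidal equivalence of presentable $\infty$-categories to an equivalence, and the slicing identity $\NAlg(\Mod_R) \simeq \NAlg_R$) is formal.
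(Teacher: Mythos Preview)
Your overall strategy is right, and you correctly isolate the crux: one must somehow lift the underlying equivalence $\HH^{\fsyn,\gp}(k)\simeq \Mod_{\mathrm{MGL}}(\SH^\veff(k))$ from \cite{EHKSY_ModulesOverMGL} to the normed setting. But you do not actually resolve this obstacle---you note that it ``requires knowing that the norm functors on the two sides are intertwined'' and then assume this compatibility. The paper's proof sidesteps this check entirely rather than carrying it out.

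The paper factors through $\HH^{\fr,\gp}$ and uses the conservativity trick already seen in Theorem~\ref{theorem:multiplicative-recognition}. Applying Proposition~\ref{prop:norm-adj-factorization} to the norm monoidal adjunction $\HH^{\fr,\gp}\rightleftarrows\HH^{\fsyn,\gp}$ (coming from the morphism $\fr\to\fsyn$ of normed motivic patterns) yields a norm monoidal adjunction $\Mod_{\mathscr{A}}(\HH^{\fr,\gp})\rightleftarrows\HH^{\fsyn,\gp}$, where $\mathscr{A}$ is the image of the unit under the right adjoint. By \cite[Theorem~3.4.1]{EHKSY_ModulesOverMGL} and Theorem~\ref{theorem:multiplicative-recognition}, one identifies $\NAlg(\Mod_{\mathscr{A}}(\HH^{\fr,\gp})\,|\,k)$ with $\NAlg_{\mathrm{MGL}}(\SH^\veff\,|\,k)$. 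Construction~\ref{construct:nalg-adj} then gives an adjunction on $\NAlg$ sitting in a commutative square over the forgetful functors to $\Mod_{\mathrm{MGL}}(\SH^\veff(k))$ and $\HH^\fsyn(k)^\gp$; the bottom arrow is an equivalence by \cite[Theorem~4.1.4]{EHKSY_ModulesOverMGL}, and conservativity of the forgetful functors (Proposition~\ref{prop:nalg-forget-conservative}) forces the top arrow to be one as well. The point is that a norm monoidal \emph{adjunction} (left adjoint strong, right adjoint merely lax) already induces an adjunction on normed algebras, so one never needs to verify directly that the equivalence of \cite{EHKSY_ModulesOverMGL} is norm monoidal---the conservativity argument reduces everything to the underlying, non-normed statement. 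Your direct route would also work if completed, but the compatibility you flag is the whole content, and the paper's approach avoids it.
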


We obtain similar theorems for normed algebras over $\mathrm{MSL}$, $\mathrm{kgl}$, and $\mathrm{ko}$.
See Theorems~\ref{theorem:normed-algebras-over-cobordism} and \ref{theorem:normed-algebras-over-k-theory}.

\subsection{Outline}

We begin in Section~2 with some preliminaries.
We review the connection between flagged $\infty$-categories and Segal spaces, and use this to study descent for presheaves of flagged $\infty$-categories.
We also record some facts about the interaction between polynomial functors and group completion.

In Section~3, we extend the developing body of work on the theory of norm monoidal $\infty$-categories.
In particular, we highlight the role played by lax norm monoidal functors.
We warn the reader familiar with \cite{BachmannHoyois_Norms} that we have made several small modifications to the language.
See the beginning of Section~3 for more details.

Sections 4 and 5 form the technical heart of this article.
They both begin with detailed discussions of their contents, so we'll be brief here.
To streamline the discussion around various $\infty$-categories that are built by ``replacing $\Sm_S$ by something else''  in the construction of $\mathscr{SH}(S)$, we introduce the notion of a \emph{motivic pattern} in Section~4.
The rough idea is to capture the features necessary in order to ``transport motivic homotopy theory'' along a functor $\Sm_S \to \mathscr{C}$.
We put particular emphasis on the structure needed to construct norm functors.
Section~5 is devoted to construction of various motivic patterns equipped with norm functors.

Finally, we reap the fruit of our labor in Section~6.

\subsection{Notation and Conventions}

Throughout this article, we will work in the setting of quasi-compact quasi-separated algebraic spaces.
We'll write $\AlgSpc$ for the category of all such algebraic spaces.
The extension of motivic homotopy theory to algebraic spaces comes essentially for free since every algebraic space is Nisnevich-locally affine \cite[Chapter II Theorem 6.4]{Knutson_AlgSpc}.
For any $S \in \AlgSpc$, we'll write
\begin{equation*}
    \Sm_S \subseteq \AlgSpc_S = \AlgSpc_{/S}
\end{equation*}
for the full subcategory spanned by smooth algebraic spaces over $S$.

We'll use the language of $\infty$-categories.
We'll write $\Cat_\infty$ for the (large) $\infty$-category of $\infty$-categories, and $\Spc \subseteq \Cat_\infty$ for the full subcategory spanned by spaces.
We'll write $\mathbf{\Delta}$ to denote the ordinary category of finite nonempty linearly ordered sets, which we regard as a full subcategory of $\Cat_\infty$.
We'll write $\Delta^n$ or $[n]$ for the linearly ordered set $\{ 0 \leq \cdots \leq n \}$.

We will write $\mathscr{O}\Cat_\infty$ and $\mathscr{M}\Cat_\infty$ to denote the $\infty$-categories defined by the fiber products
\begin{equation*}
    \begin{tikzcd}
        \mathscr{M}\Cat_\infty \ar[d] \ar[r] \arrow[dr,phantom,"\lrcorner", very near start] & \mathscr{O}\Cat_\infty \ar[d] \ar[r] \arrow[dr,phantom,"\lrcorner", very near start] & \mathscr{E} \ar[d] \\
        \Cat_\infty \ar[r,"\mathrm{Fun}(\Delta^1{,}{-})"] & \Cat_\infty \ar[r] & \mathscr{P}\mathrm{os}
    \end{tikzcd}
\end{equation*}
where $\mathscr{P}\mathrm{os}$ is the ordinary category of partially ordered sets, $\mathscr{E} \to \mathscr{P}\mathrm{os}$ denotes the universal coCartesian fibration in partially ordered sets, and the functor $\Cat_\infty \to \mathscr{P}\mathrm{os}$ sends an $\infty$-category $C$ to the power set of the set of equivalences classes of objects in $C$.
For any functor $F : C \to \Cat_\infty$, we'll also write $\mathscr{O}C$ and $\mathscr{M}C$ to denote the fiber products of $C$ with $\mathscr{O}\Cat_\infty$ and $\mathscr{M}\Cat_\infty$, respectively.

For an $\infty$-category $C$ with a final object $* \in C$, we'll write $C_\bullet = C_{*/}$ for the $\infty$-category of pointed objects.
If $C$ moreover admits finite coproducts then this comes with a functor $({-})_+ : C \to C_\bullet$ given by adding a disjoint basepoint.
In this case, we'll write $C_+ \subseteq C_\bullet$ for the full subcategory spanned by objects of the form $U_+$ for $U \in C$.

For a functor $C \to D$, we'll try to adhere to the following conventions.
\begin{enumerate}
    \item Given a functor $f : C \to D$, we'll write $f^* : \PSh(D) \to \PSh(C)$ for the functor given by precomposition with $f$, and we'll write $f_!$ and $f_*$ for the left and right adjoint to $f^*$, respectively.

    \item If the name of $C \to D$ comes decorated with a symbol, then the left Kan extension $\PSh(C) \to \PSh(D)$ will be given the same name.
    For example, given a functor $f_\diamond : C \to D$, we'll write $f_\diamond : \PSh(C) \to \PSh(D)$ for the left Kan extension.
\end{enumerate}
We will be explicit about notation if there is any potential ambiguity.

\subsection{Acknowledgements}
This article is a distillation of the my doctoral thesis.
It is a pleasure to thank my advisor Jeremiah Heller for his guidance and support throughout this project.
I would like to thank Daniel Carmody and Tsutomu Okano for many helpful conversations.
I would also like to thank Elden Elmanto, Marc Hoyois, Denis Nardin, and Tom Bachmann for stimulating conversations, their patience with my many questions, and their continued interest in my work.

\section{Preliminaries}

\subsection{Flagged \texorpdfstring{$\infty$}{oo}-Categories as Segal Spaces}

\begin{definition}
    A \emph{flagged $\infty$-category} is an $\infty$-category $\mathscr{C}_1$ together with an essentially surjective functor
    \begin{equation*}
        \phi: \mathscr{C}_0 \to \mathscr{C}_1
    \end{equation*}
    where $\mathscr{C}_0$ is a space.
    We write $\fCat \subseteq \mathrm{Fun}(\Delta^1, \Cat_\infty)$ for the full subcategory spanned by flagged $\infty$-categories.
    We write $\mathrm{ev}_i : \fCat \to \Cat_\infty$ for the functor that sends $\phi : \mathscr{C}_0 \to \mathscr{C}_1$ to $\mathscr{C}_i$.
\end{definition}

\begin{proposition}
    The full subcategory $\fCat$ of $\mathrm{Fun}(\Delta^1, \Cat_\infty)$ is closed under finite products.
\end{proposition}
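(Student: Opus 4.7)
The plan is to check closure under the empty product (i.e., the terminal object) and under binary products separately; both reduce to elementary observations about spaces and essentially surjective functors.

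For the terminal object of $\mathrm{Fun}(\Delta^1, \Cat_\infty)$, I would note that it is given by the identity functor $* \to *$ on the terminal $\infty$-category. Since $*$ is a space and the identity is essentially surjective, this is a flagged $\infty$-category.

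For binary products, suppose $\phi : \mathscr{C}_0 \to \mathscr{C}_1$ and $\psi : \mathscr{D}_0 \to \mathscr{D}_1$ are flagged $\infty$-categories. Their product in $\mathrm{Fun}(\Delta^1, \Cat_\infty)$ is computed pointwise, yielding $\phi \times \psi : \mathscr{C}_0 \times \mathscr{D}_0 \to \mathscr{C}_1 \times \mathscr{D}_1$. I would then verify the two conditions in the definition of a flagged $\infty$-category: first, that $\mathscr{C}_0 \times \mathscr{D}_0$ is a space, which follows from the fact that $\Spc \subseteq \Cat_\infty$ is closed under finite products (spaces are the $0$-truncated objects, or equivalently the $\infty$-groupoids, and these properties are preserved by finite products); second, that $\phi \times \psi$ is essentially surjective, which is an immediate consequence of the componentwise essential surjectivity of $\phi$ and $\psi$ (given $(c,d) \in \mathscr{C}_1 \times \mathscr{D}_1$, lift $c$ and $d$ separately to obtain a preimage in $\mathscr{C}_0 \times \mathscr{D}_0$).

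There is no real obstacle here; the proposition is essentially a bookkeeping observation that both defining conditions of a flagged $\infty$-category are preserved under finite products in $\Cat_\infty$. The only mild subtlety is confirming that the product in $\mathrm{Fun}(\Delta^1, \Cat_\infty)$ agrees with the pointwise product, but this is standard for functor categories.
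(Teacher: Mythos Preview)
Your proof is correct and follows essentially the same approach as the paper, which simply states that spaces and essentially surjective functors are each closed under finite products in $\Cat_\infty$; you have just spelled out the verification for the terminal object and binary products explicitly. One small slip: spaces are not the $0$-truncated objects of $\Cat_\infty$ (those would be categories whose groupoid of functors from any category is discrete), but your alternative description as $\infty$-groupoids is correct and suffices.
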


\begin{proof}
    This follows from the fact that spaces and essentially surjective functors are closed under finite products in $\Cat_\infty$.
\end{proof}

\begin{proposition}
    The functor $\mathrm{ev}_1 : \fCat \to \Cat_\infty$ admits a fully faithful right adjoint.
\end{proposition}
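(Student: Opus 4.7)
The plan is to exhibit the right adjoint explicitly and read off its properties. The natural candidate is the functor $R : \Cat_\infty \to \fCat$ sending an $\infty$-category $\mathscr{D}$ to its \emph{trivial flagging} $\mathscr{D}^\simeq \hookrightarrow \mathscr{D}$, where $\mathscr{D}^\simeq$ denotes the maximal subgroupoid of $\mathscr{D}$. This is essentially the unique functorial way to construct a flagged $\infty$-category from $\mathscr{D}$ using no extra data.

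First I would check that $R$ actually lands in $\fCat$. This is immediate: $\mathscr{D}^\simeq$ is a space by construction, and the inclusion $\mathscr{D}^\simeq \hookrightarrow \mathscr{D}$ is bijective on objects, hence essentially surjective.

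Next I would verify the adjunction. Since $\fCat \subseteq \mathrm{Fun}(\Delta^1, \Cat_\infty)$ is a full subcategory, mapping spaces in $\fCat$ are computed as in the ambient functor $\infty$-category, giving for any flagged $\infty$-category $\phi : \mathscr{C}_0 \to \mathscr{C}_1$ and any $\mathscr{D} \in \Cat_\infty$ a pullback square
\[
    \mathrm{Map}_{\fCat}\bigl((\mathscr{C}_0 \to \mathscr{C}_1),\, (\mathscr{D}^\simeq \to \mathscr{D})\bigr) \simeq \mathrm{Map}(\mathscr{C}_0, \mathscr{D}^\simeq) \times_{\mathrm{Map}(\mathscr{C}_0, \mathscr{D})} \mathrm{Map}(\mathscr{C}_1, \mathscr{D}).
\]
Since $({-})^\simeq : \Cat_\infty \to \Spc$ is right adjoint to the inclusion $\Spc \hookrightarrow \Cat_\infty$, and $\mathscr{C}_0$ is a space, the map $\mathrm{Map}(\mathscr{C}_0, \mathscr{D}^\simeq) \to \mathrm{Map}(\mathscr{C}_0, \mathscr{D})$ is an equivalence. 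Hence the pullback collapses to $\mathrm{Map}(\mathscr{C}_1, \mathscr{D}) = \mathrm{Map}_{\Cat_\infty}(\mathrm{ev}_1(\phi), \mathscr{D})$, which is the desired adjunction.

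Finally, full faithfulness of $R$ is equivalent to the counit $\mathrm{ev}_1 \circ R \Rightarrow \mathrm{id}$ being an equivalence, but by construction $\mathrm{ev}_1(R(\mathscr{D})) = \mathscr{D}$ naturally in $\mathscr{D}$, so we are done. There is no real obstacle here: once one guesses the trivial flagging as the right adjoint, the verification is a direct calculation using the universal property of the core.
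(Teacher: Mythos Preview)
Your proof is correct and takes essentially the same approach as the paper: both identify the right adjoint as $\mathscr{D} \mapsto (\mathscr{D}^\simeq \hookrightarrow \mathscr{D})$ and verify the adjunction by computing the relevant mapping spaces. The paper's proof is a one-line assertion that $\operatorname{Map}(\mathrm{ev}_1({-}), \mathscr{D})$ is represented by $\mathscr{D}^\simeq \to \mathscr{D}$, whereas you have spelled out the pullback computation and the full faithfulness check explicitly.
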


\begin{proof}
    For $\mathscr{D}$ an $\infty$-category, the functor
    \begin{equation*}
        \operatorname{Map}(\mathrm{ev}_1({-}), \mathscr{D}) : \fCat^\mathrm{op} \to \Spc
    \end{equation*}
    is represented by the inclusion $\mathscr{D}^\simeq \to \mathscr{D}$ of the maximal subspace of $\mathscr{D}$.
\end{proof}

\begin{definition}
    For $n \geq 1$, the \emph{$n$-spine} $\mathrm{Sp}^n \in \PSh(\mathbf{\Delta})$ is the object
    \begin{equation*}
        \Delta^{\{0,1\}} \sqcup_{\Delta^{\{1\}}} \cdots \sqcup_{\Delta^{\{n-1\}}} \Delta^{\{n-1,n\}}.
    \end{equation*}
    This comes with a map $\mathrm{Sp}^n \to \Delta^n$.
\end{definition}

\begin{notation}
    Let $J \in \PSh(\mathbf{\Delta})$ denote the composite
    \begin{equation*}
        \mathbf{\Delta}^\op \to \mathscr{S}\mathrm{et} \to \Spc
    \end{equation*}
    where the first functor is the nerve of the contractible $1$-groupoid with two objects.
\end{notation}

\begin{definition}
    A \emph{Segal space} is a functor $X_\bullet : \Delta^\op \to \Spc$ such that, for every $n \geq 1$, the morphism
    \begin{equation*}
        X_n = \mathrm{Map}_{\PSh(\mathbf{\Delta})}(\Delta^n, X_\bullet) \to \mathrm{Map}_{\PSh(\mathbf{\Delta})}(\mathrm{Sp}^n, X_\bullet)
    \end{equation*}
    is an equivalence.
    A \emph{complete Segal space} is a Segal space $X_\bullet : \Delta^\op \to \Spc$ such that the morphism
    \begin{equation*}
        X_0 = \mathrm{Map}_{\PSh(\mathbf{\Delta})}(\Delta^0, X_\bullet) \to \mathrm{Map}_{\PSh(\mathbf{\Delta})}(J, X_\bullet)
    \end{equation*}
    is an equivalence.

    We write $\SegalSpaces$ and $\CompleteSegalSpaces$ for the full subcategories of $\PSh(\mathbf{\Delta})$ spanned by Segal spaces and complete Segal spaces, respectively.
\end{definition}

\begin{proposition} \label{prop:segal-accessible}
    The inclusions $\SegalSpaces \subseteq \PSh(\mathbf{\Delta})$ and $\CompleteSegalSpaces \subseteq \SegalSpaces$ are accessible localizations.
\end{proposition}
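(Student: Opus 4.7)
The plan is to exhibit each of the two inclusions as the inclusion of local objects with respect to a small set of morphisms in a presentable $\infty$-category, and then invoke the standard fact (Lurie, \emph{Higher Topos Theory} 5.5.4.15) that such inclusions are accessible localizations.

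For the first inclusion, I would begin by noting that $\PSh(\mathbf{\Delta})$ is presentable. By definition, a presheaf $X_\bullet$ is a Segal space if and only if it is local with respect to the set of spine inclusions
\begin{equation*}
    \{ \mathrm{Sp}^n \hookrightarrow \Delta^n : n \geq 1 \},
\end{equation*}
which is a (countable, hence small) set of morphisms in $\PSh(\mathbf{\Delta})$. Applying the cited result then shows that $\SegalSpaces \subseteq \PSh(\mathbf{\Delta})$ is an accessible localization.

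For the second inclusion, the above in particular gives that $\SegalSpaces$ is itself a presentable $\infty$-category. A Segal space is complete precisely when it is local with respect to the single morphism $\Delta^0 \to J$ in $\PSh(\mathbf{\Delta})$; since $\SegalSpaces$ is a reflective subcategory, this locality can equivalently be tested inside $\SegalSpaces$ after replacing $\Delta^0 \to J$ by its image under the Segal localization. Applying the standard accessible localization result a second time, now to the singleton set of morphisms in $\SegalSpaces$, yields that $\CompleteSegalSpaces \subseteq \SegalSpaces$ is an accessible localization as well.

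I do not anticipate a serious obstacle here: the proof is essentially a matter of naming the defining maps and citing the general machinery for small-generated localizations of presentable $\infty$-categories. The only point requiring any care is the second step, where one should confirm that the local-object description of completeness within $\SegalSpaces$ agrees with its description as a subcategory of $\PSh(\mathbf{\Delta})$; this is immediate from the fact that the Segal localization is a left adjoint and therefore preserves the relevant mapping-space condition.
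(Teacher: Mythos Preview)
Your proposal is correct and follows essentially the same approach as the paper. The paper's proof is a one-liner observing that (complete) Segal spaces are defined by a small set of locality conditions; you have simply made these conditions explicit (the spine inclusions for Segal, the map $\Delta^0 \to J$ for completeness) and cited the precise reference in HTT.
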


\begin{proof}
    This follows from the fact that (complete) Segal spaces are presheaves that satisfy a certain small set of conditions.
\end{proof}

\begin{notation}
    We write $\mathrm{N}_\bullet : \fCat \to \PSh(\mathbf{\Delta})$ for the composite
    \begin{equation*}
        \fCat \to \PSh(\fCat) \to  \PSh(\mathbf{\Delta})
    \end{equation*}
    where the first functor is the Yoneda embedding, and the second functor is restriction along the composite of inclusions $\mathbf{\Delta} \to \Cat_\infty \to \fCat$.
\end{notation}

\begin{remark} \label{remark:flagged-mapping-space}
    Fix a flagged $\infty$-category $\mathscr{C} = (\phi : \mathscr{C}_0 \to \mathscr{C}_1)$.
    Note that we have an identification $\mathrm{N}_0 \mathscr{C} = \mathscr{C}_0$, and for every pair of objects $x, y \in \mathscr{C}_0$, we have a Cartesian square
    \begin{equation*}
        \begin{tikzcd}
            \operatorname{Map}_{\mathscr{C}_1}(\phi(x),\phi(y)) \ar[r] \ar[d] \arrow[dr,phantom,"\lrcorner", very near start] & \mathrm{N}_1 \mathscr{C} \ar[d,"(d_1{,}d_0)"] \\
            * \ar[r,"(x{,}y)"] & \mathrm{N}_0 \mathscr{C} \times \mathrm{N}_0 \mathscr{C}
        \end{tikzcd}
    \end{equation*}
    of spaces.
\end{remark}

\begin{proposition} \label{prop:flagged-segal}
    The functor $\mathrm{N}_\bullet : \fCat \to \PSh(\mathbf{\Delta})$ is fully faithful, and the essential image is $\SegalSpaces$.
    Moreover, $\mathrm{N}_\bullet$ restricts to an equivalence between the full subcategories $\Cat_\infty \subseteq \fCat$ and $\CompleteSegalSpaces \subseteq \SegalSpaces$.
\end{proposition}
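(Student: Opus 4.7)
The plan is to establish the three claims by (i) verifying directly that $\mathrm{N}_\bullet \mathscr{C}$ is a Segal space, (ii) constructing an inverse functor $L : \SegalSpaces \to \fCat$ using the completion functor provided by Proposition~\ref{prop:segal-accessible}, and (iii) identifying the complete Segal spaces via a direct completeness computation.

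For (i), since the fully faithful right adjoint $\Cat_\infty \hookrightarrow \fCat$ from the preceding proposition sends $\mathscr{D}$ to $(\mathscr{D}^\simeq \to \mathscr{D})$, the pullback formula for mapping spaces in $\mathrm{Fun}(\Delta^1, \Cat_\infty)$ yields
\[
    \mathrm{N}_n \mathscr{C} \simeq \mathscr{C}_0^{n+1} \times_{(\mathscr{C}_1^\simeq)^{n+1}} \operatorname{Map}_{\Cat_\infty}([n], \mathscr{C}_1).
\]
As the classical nerve of $\mathscr{C}_1$ is a (complete) Segal space by Joyal--Lurie--Rezk, the Segal condition for $\mathrm{N}_\bullet \mathscr{C}$ follows from this formula by straightforward pullback bookkeeping.

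For (ii), Proposition~\ref{prop:segal-accessible} supplies a completion functor $(-)^\mathrm{cpl} : \SegalSpaces \to \CompleteSegalSpaces$ left adjoint to the inclusion. Given a Segal space $X$, I identify $X^\mathrm{cpl}$ with some $\mathscr{D} \in \Cat_\infty$ and define $L(X) = (X_0 \to \mathscr{D})$, whose structure map is the composite $X_0 \to X_0^\mathrm{cpl} = \mathscr{D}^\simeq \hookrightarrow \mathscr{D}$. This structure map is essentially surjective because the unit $X \to X^\mathrm{cpl}$ is $\pi_0$-surjective on $0$-simplices (Rezk completion identifies objects up to equivalence but does not introduce new ones). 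For mutual inversion: for $\mathscr{C} \in \fCat$, the completion of $\mathrm{N}_\bullet \mathscr{C}$ is the already-complete $\mathrm{N}_\bullet \mathscr{C}_1$, so $L \mathrm{N}_\bullet \mathscr{C} \simeq \mathscr{C}$; for $X \in \SegalSpaces$, the formula from (i) applied to $L(X)$ gives $\mathrm{N}_n L(X) \simeq X_0^{n+1} \times_{(X_0^\mathrm{cpl})^{n+1}} X_n^\mathrm{cpl}$, which I compare with $X_n$ using the Segal condition for $X$ together with the fact that $X \to X^\mathrm{cpl}$ induces equivalences on hom-spaces.

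For (iii), a direct computation shows $\operatorname{Map}_{\PSh(\mathbf{\Delta})}(J, \mathrm{N}_\bullet \mathscr{C}) \simeq \mathscr{C}_0 \times_{\mathscr{C}_1^\simeq} \mathscr{C}_0$ (i.e.\ the space of equivalences in $\mathrm{N}_\bullet \mathscr{C}$), so completeness of $\mathrm{N}_\bullet \mathscr{C}$ is equivalent to the diagonal $\mathscr{C}_0 \to \mathscr{C}_0 \times_{\mathscr{C}_1^\simeq} \mathscr{C}_0$ being an equivalence, hence to $\mathscr{C}_0 \to \mathscr{C}_1^\simeq$ being a monomorphism; together with essential surjectivity, this forces $\mathscr{C}_0 \to \mathscr{C}_1^\simeq$ to be an equivalence, placing $\mathscr{C}$ in the essential image of $\Cat_\infty \hookrightarrow \fCat$. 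I anticipate that the main obstacle will be the hom-space equivalence $\operatorname{Map}_X(x, y) \simeq \operatorname{Map}_{X^\mathrm{cpl}}(\phi x, \phi y)$ used in (ii): this is classical in Rezk's model-categorical treatment of Segal spaces, but extracting it purely from the accessible localization of Proposition~\ref{prop:segal-accessible} requires some careful unpacking.
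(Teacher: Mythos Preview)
Your argument is essentially correct and self-contained, whereas the paper does not prove the statement at all: it simply records the reference \cite[Theorem~0.26]{AyalaFrancis_Flagged}. So there is nothing to compare at the level of strategy---you have supplied an independent proof where the paper defers to the literature.

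A few remarks on your write-up. The formula in (i) is correct and is exactly what one gets from computing the mapping space in $\mathrm{Fun}(\Delta^1,\Cat_\infty)$ out of $([n]^\simeq \to [n])$. In (ii), the step you flag as the obstacle---that the unit $X \to X^{\mathrm{cpl}}$ is a Dwyer--Kan equivalence (fully faithful on hom-spaces and essentially surjective)---is the real content of Rezk's completion theorem; you are right that this does not fall out of Proposition~\ref{prop:segal-accessible} alone, and you should cite Rezk or Joyal--Tierney for it rather than attempt to extract it from abstract localization theory. Once you have that, the pullback identity $X_1 \simeq (X_0 \times X_0)\times_{(X_0^{\mathrm{cpl}} \times X_0^{\mathrm{cpl}})} X_1^{\mathrm{cpl}}$ together with the Segal conditions on both sides does give $\mathrm{N}_\bullet L(X) \simeq X$, though the iterated pullback manipulation deserves a line or two. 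In (iii), your computation of $\operatorname{Map}(J,\mathrm{N}_\bullet\mathscr{C})$ as $\mathscr{C}_0 \times_{\mathscr{C}_1^\simeq} \mathscr{C}_0$ is correct, and the conclusion that a $(-1)$-truncated $\pi_0$-surjection of spaces is an equivalence is a clean way to finish.
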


\begin{proof}
    This is \cite[Theorem~0.26]{AyalaFrancis_Flagged}.
\end{proof}

\subsection{Descent for Flagged \texorpdfstring{$\infty$}{oo}-Categories}
\begin{notation}
    For this subsection, fix an $\infty$-category $\mathscr{C}$ with a Grothendieck topology $\tau$.
\end{notation}

\begin{proposition} \label{prop:descent-for-segal-spaces}
    A presheaf $X_\bullet : \mathscr{C}^\mathrm{op} \to \SegalSpaces$ of Segal spaces is a sheaf for the $\tau$-topology if and only if $X_0$ and $X_1$ are sheaves for the $\tau$-topology.
\end{proposition}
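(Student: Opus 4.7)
The plan is to reduce the sheaf condition in $\SegalSpaces$ to a levelwise sheaf condition in $\PSh(\mathbf{\Delta})$, and then use the Segal condition to collapse further down to the levels $0$ and $1$.

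For the first reduction, I would invoke Proposition~\ref{prop:segal-accessible}: since $\SegalSpaces \hookrightarrow \PSh(\mathbf{\Delta})$ is a reflective inclusion, it preserves limits. Combined with the fact that limits in $\PSh(\mathbf{\Delta})$ are computed levelwise, this means $X_\bullet : \mathscr{C}^\op \to \SegalSpaces$ is a $\tau$-sheaf if and only if each level $X_n : \mathscr{C}^\op \to \Spc$ is a $\tau$-sheaf. This already handles the forward direction of the proposition.

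For the converse, suppose $X_0$ and $X_1$ are $\tau$-sheaves and fix $n \geq 2$. Since $X_\bullet(c)$ is a Segal space for every $c \in \mathscr{C}$, the Segal decomposition is natural in $c$ and therefore assembles into an equivalence of presheaves of spaces
\[ X_n \simeq X_1 \times_{X_0} X_1 \times_{X_0} \cdots \times_{X_0} X_1 \]
on $\mathscr{C}$, with $n$ factors of $X_1$. Because the full subcategory of $\tau$-sheaves is closed under finite limits in presheaves of spaces, $X_n$ inherits the sheaf property.

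The argument is essentially formal and I do not anticipate a genuine obstacle. The closest thing to a subtle point is verifying that the Segal decomposition is natural in the $\mathscr{C}$-variable, but this is immediate from the fact that the Segal condition is a representability condition in $\PSh(\mathbf{\Delta})$, preserved by any map of presheaves.
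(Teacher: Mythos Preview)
Your proposal is correct and matches the paper's proof essentially line for line: reduce to levelwise sheaf conditions via the limit-preserving inclusion $\SegalSpaces \hookrightarrow \PSh(\mathbf{\Delta})$ (Proposition~\ref{prop:segal-accessible}), then use the Segal decomposition $X_n \simeq X_1 \times_{X_0} \cdots \times_{X_0} X_1$ to get all higher levels from $X_0$ and $X_1$.
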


\begin{proof}
    Fix a presheaf $X_\bullet : \mathscr{C}^\mathrm{op} \to \SegalSpaces$.
    By Proposition~\ref{prop:segal-accessible}, the inclusion $\SegalSpaces \to \PSh(\mathbf{\Delta})$ preserves and reflects limits.
    In particular, the presheaf $X_\bullet$ is a sheaf if and only if $X_n : \mathscr{C}^\mathrm{op} \to \Spc$ is a sheaf for every $n \geq 0$.
    The claim now follows from the equivalence
    \begin{equation*}
        X_n = X_1 \times_{X_0} \cdots \times_{X_0} X_1
    \end{equation*}
    for $n \geq 2$.
\end{proof}

\begin{notation}
    We'll identify presheaves $\mathscr{F} : \mathscr{C}^\mathrm{op} \to \fCat$ of flagged $\infty$-categories on $\mathscr{C}$ as triples $(\mathscr{F}_0, \mathscr{F}_1, \phi)$, where $\mathscr{F}_i = \mathrm{ev}_i \circ \mathscr{F}$ and $\phi : \mathscr{F}_0 \to \mathscr{F}_1$ is the induced natural transformation.
\end{notation}

\begin{definition} \label{def:tau-local-mapping-spaces}
    We say a presheaf $\mathscr{F} = (\mathscr{F}_0,\mathscr{F}_1,\phi): \mathscr{C}^\mathrm{op} \to \fCat$ of flagged $\infty$-categories \emph{has $\tau$-local mapping spaces} if, for every object $c \in\mathscr{C}$, and every pair of objects $x, y \in \mathscr{F}_0(c)$, the presheaf
    \begin{equation*}
        \underline{\operatorname{Map}}_{\mathscr{F}_1}(\phi(x),\phi(y)) : (f : d \to c) \mapsto \mathrm{Map}_{\mathscr{F}_1(d)}(f^*\phi(x), f^*\phi(y))
    \end{equation*}
    on $\mathscr{C}_{/c}$ is a sheaf for the $\tau$-topology.
\end{definition}

\begin{remark} \label{remark:tau-local}
    Let $\mathscr{F} = (\mathscr{F}_0,\mathscr{F}_1,\phi)$ be a presheaf of flagged $\infty$-categories on $\mathscr{C}$.

    For every object $c \in \mathscr{C}$, every covering sieve $\mathscr{U} \to c$, and every pair of objects $x, y \in \mathscr{F}(c)$, we can form a commutative diagram
    \begin{equation*}
        \begin{tikzcd}
            \operatorname{Map}_{\mathscr{F}_1(c)}(\phi(x), \phi(y)) \ar[r] \ar[d] & \mathrm{N}_1 \mathscr{F} (c) \ar[r] \ar[d] & \mathrm{N}_0 \mathscr{F}(c) \times \mathrm{N}_0 \mathscr{F}(c) \ar[d] \\
            \operatorname{Map}(\mathscr{U}, \underline{\operatorname{Map}}_{\mathscr{F}_1}(\phi(x), \phi(y))) \ar[r] & \operatorname{Map}(\mathscr{U}, \mathrm{N}_1 \mathscr{F}) \ar[r] & \operatorname{Map}(\mathscr{U}, \mathrm{N}_0 \mathscr{F} \times \mathrm{N}_0 \mathscr{F})
        \end{tikzcd}
    \end{equation*}
    of spaces where each row is a fiber sequence and we have used Remark~\ref{remark:flagged-mapping-space} to identify the fibers.

    Thus, we find that $\mathscr{F}$ has $\tau$-local mapping spaces if and only if, for every object $c \in \mathscr{C}$ and every covering sieve $\mathscr{U} \to c$, the diagram
    \begin{equation*}
        \begin{tikzcd}
            \mathrm{N}_1 \mathscr{F} (c) \ar[r] \ar[d] & \mathrm{N}_0 \mathscr{F}(c) \times \mathrm{N}_0 \mathscr{F}(c) \ar[d] \\
            \operatorname{Map}(\mathscr{U}, \mathrm{N}_1 \mathscr{F}) \ar[r] & \operatorname{Map}(\mathscr{U}, \mathrm{N}_0 \mathscr{F} \times \mathrm{N}_0 \mathscr{F})
        \end{tikzcd}
    \end{equation*}
    is Cartesian.
\end{remark}

\begin{proposition} \label{prop:sheaf-criteria-flagged-cats}
    Let $\mathscr{F} = (\mathscr{F}_0, \mathscr{F}_1, \phi)$ be a presheaf of flagged $\infty$-categories on $\mathscr{C}$.
    Then $\mathscr{F}$ is a sheaf for the $\tau$-toplogy if and only if it has $\tau$-local mapping spaces and $\mathscr{F}_0$ is a sheaf for the $\tau$-topology.
\end{proposition}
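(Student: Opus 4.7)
The plan is to reduce the statement to Proposition~\ref{prop:descent-for-segal-spaces} by passing through the Segal space nerve $\mathrm{N}_\bullet : \fCat \to \PSh(\mathbf{\Delta})$. By Proposition~\ref{prop:flagged-segal}, this nerve is fully faithful with essential image $\SegalSpaces$, and by Proposition~\ref{prop:segal-accessible} the inclusion $\SegalSpaces \subseteq \PSh(\mathbf{\Delta})$ is an accessible localization, hence its right adjoint (the inclusion itself, via the usual convention) preserves and reflects limits. Consequently, a presheaf $\mathscr{F} : \mathscr{C}^\mathrm{op} \to \fCat$ is a sheaf for the $\tau$-topology if and only if the composite $\mathrm{N}_\bullet \mathscr{F} : \mathscr{C}^\mathrm{op} \to \SegalSpaces \subseteq \PSh(\mathbf{\Delta})$ is a sheaf.

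Next I would apply Proposition~\ref{prop:descent-for-segal-spaces} to $\mathrm{N}_\bullet \mathscr{F}$, which reduces its sheafiness to that of $\mathrm{N}_0 \mathscr{F}$ and $\mathrm{N}_1 \mathscr{F}$. Since $\mathrm{N}_0 \mathscr{F} = \mathscr{F}_0$, one of these conditions appears verbatim in the statement of the proposition. The task thereby reduces to showing that, in the presence of the hypothesis that $\mathscr{F}_0$ is a sheaf, having $\tau$-local mapping spaces is equivalent to $\mathrm{N}_1 \mathscr{F}$ being a sheaf.

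For this final step I would invoke the reformulation given in Remark~\ref{remark:tau-local}, which characterizes $\tau$-locality of mapping spaces as the condition that, for every object $c \in \mathscr{C}$ and every covering sieve $\mathscr{U} \to c$, the square
\begin{equation*}
    \begin{tikzcd}
        \mathrm{N}_1 \mathscr{F}(c) \ar[r] \ar[d] & \mathrm{N}_0 \mathscr{F}(c) \times \mathrm{N}_0 \mathscr{F}(c) \ar[d] \\
        \operatorname{Map}(\mathscr{U}, \mathrm{N}_1 \mathscr{F}) \ar[r] & \operatorname{Map}(\mathscr{U}, \mathrm{N}_0 \mathscr{F} \times \mathrm{N}_0 \mathscr{F})
    \end{tikzcd}
\end{equation*}
is Cartesian. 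If $\mathscr{F}_0$ is a sheaf, then so is $\mathscr{F}_0 \times \mathscr{F}_0$ (sheaves are closed under products), making the right vertical map an equivalence; hence the square is Cartesian if and only if the left vertical map is an equivalence for all such $c$ and $\mathscr{U}$, which is precisely the condition that $\mathrm{N}_1 \mathscr{F}$ is a sheaf. Conversely, if both $\mathrm{N}_0 \mathscr{F}$ and $\mathrm{N}_1 \mathscr{F}$ are sheaves, all vertical arrows are equivalences and the square is Cartesian for free.

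The main obstacle is really just bookkeeping: the identifications $\fCat \simeq \SegalSpaces \subseteq \PSh(\mathbf{\Delta})$ must be threaded carefully enough to transport sheafiness into the simplicial world, and one must notice that closure of sheaves under finite products is what makes the right vertical map collapse. All of the substantive work has already been absorbed into Propositions~\ref{prop:segal-accessible}, \ref{prop:flagged-segal}, and \ref{prop:descent-for-segal-spaces}, together with Remark~\ref{remark:tau-local}.
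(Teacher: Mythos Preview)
Your proposal is correct and follows essentially the same approach as the paper, which simply cites Propositions~\ref{prop:flagged-segal} and~\ref{prop:descent-for-segal-spaces} together with Remarks~\ref{remark:flagged-mapping-space} and~\ref{remark:tau-local}. You have unpacked the argument in exactly the intended way, including the key observation that when $\mathscr{F}_0$ is a sheaf the right vertical map in the square of Remark~\ref{remark:tau-local} is an equivalence, reducing Cartesianness to the sheaf condition for $\mathrm{N}_1\mathscr{F}$.
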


\begin{proof}
    This follows immediately from Proposition~\ref{prop:flagged-segal}, Proposition~\ref{prop:descent-for-segal-spaces}, Remark~\ref{remark:flagged-mapping-space}, and Remark~\ref{remark:tau-local}.
\end{proof}

\subsection{Group Completion}

\begin{definition}
    An $\infty$-category $\mathscr{C}$ is \emph{semiadditive} if
    \begin{enumerate}
        \item it is pointed,
        \item it admits finite products and finite coproducts, and
        \item the map
        \begin{equation*}
            \begin{pmatrix}
                \mathrm{id}_X & 0 \\
                0 & \mathrm{id}_Y
            \end{pmatrix} : X \sqcup Y \to X \times Y
        \end{equation*}
        is an equivalence for every $X, Y \in \mathscr{C}$.
    \end{enumerate}
    For $X, Y \in \mathscr{C}$, we write $X \oplus Y$ to denote the product $X \times Y$.
\end{definition}

\begin{definition}
    Let $\mathscr{C}$ be a semiadditive $\infty$-category.
    An object $X \in \mathscr{C}$ is called \emph{group complete} if the shear map
    \begin{equation*}
    \begin{pmatrix}
        \mathrm{id}_X & \mathrm{id}_X \\
        0 & \mathrm{id}_X
    \end{pmatrix} : X \oplus X \to X \oplus X
    \end{equation*}
    is an equivalence.
    We write $\mathscr{C}^\gp$ for the full subcategory of $\mathscr{C}$ spanned by group complete objects.
\end{definition}

\begin{definition}
    Let $\mathscr{C}$ be a semiadditive $\infty$-category.
    We say $\mathscr{C}$ is \emph{additive} if every object $X \in \mathscr{C}$ is group complete.
\end{definition}

\begin{proposition} \label{prop:grplike-accessible}
    Let $\mathscr{C}$ be a semiadditive presentable $\infty$-category.
    Then the full subcategory $\mathscr{C}^\gp \subseteq \mathscr{C}$ is closed under small limits and small colimits.
    In particular, it is additive presentable.
\end{proposition}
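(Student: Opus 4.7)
The plan is to first establish closure under small limits and small colimits in $\mathscr{C}$, and then to deduce presentability via a pullback argument. Additivity will then be immediate.

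The key observation for the closure step is to view the shear map as a natural transformation $\sigma : T \Rightarrow T$ between two copies of the endofunctor $T : \mathscr{C} \to \mathscr{C}$, $X \mapsto X \oplus X$. Because $\mathscr{C}$ is semiadditive, the biproduct $\oplus : \mathscr{C} \times \mathscr{C} \to \mathscr{C}$ is simultaneously a left and right adjoint to the diagonal $\delta : \mathscr{C} \to \mathscr{C} \times \mathscr{C}$; consequently, both $\oplus$ and $\delta$, and hence their composite $T$, preserve small limits and small colimits. It follows that for any diagram $F : I \to \mathscr{C}$, the shear map of the limit (resp.\ colimit) of $F$ is identified with the limit (resp.\ colimit) of the shear maps $\sigma_{F(i)}$. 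Since equivalences in any $\infty$-category are stable under small limits and small colimits, a (co)limit of group complete objects remains group complete, which gives the first half of the proposition.

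For presentability, I would identify $\mathscr{C}^\gp$ with the pullback
\[
\mathscr{C} \times_{\mathrm{Fun}(\Delta^1,\mathscr{C})} \mathrm{Eq}(\mathscr{C}),
\]
where $\mathrm{Eq}(\mathscr{C}) \subseteq \mathrm{Fun}(\Delta^1,\mathscr{C})$ is the full subcategory spanned by equivalences (which is itself equivalent to $\mathscr{C}$, via the source functor) and the functor $\mathscr{C} \to \mathrm{Fun}(\Delta^1,\mathscr{C})$ records the shear map $X \mapsto \sigma_X$. Both factors in the pullback are presentable, and both functors into $\mathrm{Fun}(\Delta^1,\mathscr{C})$ are accessible — in fact, the shear functor even preserves limits and colimits by the analysis above. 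Standard results on pullbacks of presentable $\infty$-categories along accessible functors then yield the presentability of $\mathscr{C}^\gp$. From here, additivity is immediate: $\mathscr{C}^\gp$ inherits semiadditivity from $\mathscr{C}$ through the closure under finite biproducts and the zero object established in the first step, and by construction every object of $\mathscr{C}^\gp$ is group complete.

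I expect the presentability step to be the main obstacle. Closure under (co)limits is essentially formal once one recasts the shear map as a natural transformation between (co)limit-preserving endofunctors, but packaging $\mathscr{C}^\gp$ as the right pullback and invoking the relevant accessibility theory requires a little care in the setup.
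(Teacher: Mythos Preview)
Your closure argument is correct and is essentially the paper's proof, just spelled out: the paper's one-liner that ``small limits and sifted colimits both commute with finite products, and finite products coincide with finite coproducts'' amounts exactly to your observation that $T = \oplus \circ \delta$ preserves all small limits and colimits, so that the shear map of a (co)limit is the (co)limit of the shear maps.

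Where you diverge is in the presentability step. The paper simply asserts ``in particular, it is additive presentable'' and leaves the deduction to the reader; your pullback description of $\mathscr{C}^\gp$ as $\mathscr{C} \times_{\mathrm{Fun}(\Delta^1,\mathscr{C})} \mathrm{Eq}(\mathscr{C})$ is a perfectly valid and rigorous way to fill this in, since both legs preserve limits and colimits and the pullback of presentable $\infty$-categories along such functors is presentable. Your worry that this is ``the main obstacle'' is overstated, though: once you have closure under limits and filtered colimits, any of the standard accessibility arguments (your pullback being one, or noting that $\mathscr{C}^\gp$ is the preimage of the accessible subcategory $\mathrm{Eq}(\mathscr{C}) \subseteq \mathrm{Fun}(\Delta^1,\mathscr{C})$ under the accessible functor $\sigma$) finishes immediately. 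So your proof is correct and, modulo the extra detail you supply for presentability, follows the same route as the paper.
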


\begin{proof}
    This is clear using the fact small limits and sifted colimits both commute with finite products, and the fact that finite products coincide with finite coproducts.
\end{proof}

\begin{definition}
    Let $\mathscr{C}$ be a semiadditive presentable $\infty$-category.
    We write $({-})^\gp : \mathscr{C} \to \mathscr{C}^\gp$ for the left adjoint of the inclusion $\mathscr{C}^\gp \to \mathscr{C}$.
    A morphism $f : X \to Y$ in $\mathscr{C}$ is called a $({-})^\gp$-equivalence if $f^\gp : X^\gp \to Y^\gp$ is an equivalence.
\end{definition}

\begin{definition}
    Let $\mathscr{C}$ be a pointed $\infty$-category with finite colimits.
    A morphism $f : X \to Y$ is called a \emph{$\Sigma$-equivalence} if $\Sigma f : \Sigma X \to \Sigma Y$ is an equivalence.
\end{definition}

\begin{proposition} \label{prop:gp-equiv-sigma-equiv}
    Let $\mathscr{C}$ be a semiadditive presentable $\infty$-category and let $f : X \to Y$ be a morphism in $\mathscr{C}$.
    If $f$ is a $({-})^\gp$-equivalence, then it is a $\Sigma$-equivalence.
    If moreover $\Sigma : \mathscr{C}^\gp \to \mathscr{C}^\gp$ is conservative, then the converse holds.
\end{proposition}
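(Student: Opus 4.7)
The plan is to reduce both implications to a single key fact: the natural map $\Sigma X \to \Sigma X^\gp$ induced by the group-completion unit is an equivalence in $\mathscr{C}$ for every $X \in \mathscr{C}$. Granting this, part (1) follows because the naturality square
\begin{equation*}
    \begin{tikzcd}
        \Sigma X \ar[r,"\Sigma f"] \ar[d,"\simeq"] & \Sigma Y \ar[d,"\simeq"] \\
        \Sigma X^\gp \ar[r,"\Sigma f^\gp"] & \Sigma Y^\gp
    \end{tikzcd}
\end{equation*}
identifies $\Sigma f$ with $\Sigma f^\gp$, and a $({-})^\gp$-equivalence $f$ makes $f^\gp$, hence $\Sigma f^\gp$, an equivalence. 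For part (2), note that $\mathscr{C}^\gp$ is closed under colimits in $\mathscr{C}$ by Proposition~\ref{prop:grplike-accessible}, so the inclusion $\mathscr{C}^\gp \hookrightarrow \mathscr{C}$ commutes with the pushout defining $\Sigma$. The same square shows that if $\Sigma f$ is an equivalence then so is $\Sigma f^\gp$, now viewed as a map in $\mathscr{C}^\gp$; the assumed conservativity of $\Sigma : \mathscr{C}^\gp \to \mathscr{C}^\gp$ then forces $f^\gp$ to be an equivalence, which is precisely what it means for $f$ to be a $({-})^\gp$-equivalence.

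To prove the key fact, I will show that $\Omega Z$ is group complete for every $Z \in \mathscr{C}$. The $\Sigma \dashv \Omega$ adjunction together with the universal property of group completion then delivers natural equivalences
\begin{equation*}
    \operatorname{Map}_\mathscr{C}(\Sigma X, Z) \simeq \operatorname{Map}_\mathscr{C}(X, \Omega Z) \simeq \operatorname{Map}_\mathscr{C}(X^\gp, \Omega Z) \simeq \operatorname{Map}_\mathscr{C}(\Sigma X^\gp, Z),
\end{equation*}
and Yoneda provides $\Sigma X \simeq \Sigma X^\gp$. To verify that $\Omega Z$ is group complete, I will use the criterion that $X \in \mathscr{C}$ is group complete if and only if, for every $W \in \mathscr{C}$, the commutative monoid $\operatorname{Map}_\mathscr{C}(W, X)$ (coming from the semiadditive structure) is grouplike in $\Spc$; this follows from the fact that the shear map is a specific diagram whose being an equivalence is detected by mapping spaces. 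For $X = \Omega Z$, this mapping space identifies with $\Omega \operatorname{Map}_\mathscr{C}(W, Z)$ as a pointed space, and by an Eckmann-Hilton argument the semiadditive $\mathbb{E}_\infty$-structure on it coincides with the loop-concatenation structure. Since every loop space is grouplike, we are done.

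The main obstacle is the Eckmann-Hilton step: identifying, as commutative monoids in $\Spc$, the semiadditive structure on $\operatorname{Map}_\mathscr{C}(W, \Omega Z)$ with the structure coming from loop concatenation on $\Omega \operatorname{Map}_\mathscr{C}(W, Z)$. The cleanest approach exploits that $\operatorname{Map}_\mathscr{C}(W, Z)$ is already a commutative monoid by semiadditivity, so its loop space carries two compatible $\mathbb{E}_\infty$-structures which agree by Dunn additivity; either structure sees the loop space as grouplike, since $\Omega$ always produces $\mathbb{E}_1$-grouplike (hence $\mathbb{E}_\infty$-grouplike) objects. This upgrades the semiadditive shear map on $\Omega Z$ to an equivalence after applying $\operatorname{Map}_\mathscr{C}(W,-)$ for every $W$, and hence to an equivalence in $\mathscr{C}$.
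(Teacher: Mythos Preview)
Your proof is correct and follows essentially the same route as the paper: both hinge on the observation that $\Omega Z$ is group complete (which you justify via Eckmann--Hilton, and the paper simply asserts), combined with the $\Sigma \dashv \Omega$ adjunction for part (1), and on colimit-preservation of $({-})^\gp$ plus the conservativity hypothesis for part (2). Your repackaging of the first step as the ``key fact'' $\Sigma X \simeq \Sigma X^\gp$ is equivalent to the paper's direct Yoneda computation.
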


\begin{proof}
    Fix a morphism $f : X \to Y$ in $\mathscr{C}$.

    Suppose that $f^\gp$ is an equivalence, and let $Z$ be an arbitrary object of $\mathscr{C}$.
    Using the adjunction $\Sigma \dashv \Omega$, we get a commutative diagram
    \begin{equation*}
        \begin{tikzcd}
            \operatorname{Map}(\Sigma Y, Z) \ar[r,"(\Sigma f)^*"] \ar[d,"\simeq"] & \operatorname{Map}(\Sigma X, Z) \ar[d,"\simeq"] \\
            \operatorname{Map}(Y, \Omega Z) \ar[r,"f^*"] & \operatorname{Map}(X, \Omega Z).
        \end{tikzcd}
    \end{equation*}
    Since $\Omega Z$ is group complete, we have that bottom horizontal arrow is an equivalence, and hence so is the top horizontal arrow.
    Since $Z$ was arbitrary, we conclude that $\Sigma f$ is an equivalence.

    Now suppose $\Sigma : \mathscr{C}^\gp \to \mathscr{C}^\gp$ is conservative and $\Sigma f$ is an equivalence.
    Since $({-})^\gp : \mathscr{C} \to \mathscr{C}^\gp$ preserves colimits, the map $\Sigma (f^\gp)$ is also an equivalence.
    By the conservativity assumption, we have that $f^\gp$ is an equivalence.
\end{proof}

\begin{remark} \label{remark:add-proj-gen-is-prestable}
    In practice, we will use the above proposition when $\mathscr{C}$ is semiadditive projectively generated, \emph{i.e.} when $\mathscr{C} \simeq \PSh_\Sigma(\mathscr{C}_0)$ for some small semiadditive $\infty$-category $\mathscr{C}_0$.
    In this case, the $\infty$-category $\mathscr{C}^\gp$ is additive projectively generated, and thus Grothendieck prestable by \cite[Remark~C.1.5.10]{Lurie_SAG}.
\end{remark}

\begin{proposition} \label{prop:sigma-semiadd}
    Let $\mathscr{C}$ be a semiadditive projectively generated $\infty$-category.
    For all $X \in \mathscr{C}$, we have $\Sigma X \in \mathscr{C}^\gp$
\end{proposition}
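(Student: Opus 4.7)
The plan is to reduce the statement to the case where $X$ is already group complete by passing through the group completion, and then invoke closure of $\mathscr{C}^\gp$ under colimits. The argument uses only two inputs from the preceding development: Proposition~\ref{prop:grplike-accessible} and Proposition~\ref{prop:gp-equiv-sigma-equiv}.

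First, I would apply Proposition~\ref{prop:gp-equiv-sigma-equiv} to the unit map $\eta_X : X \to X^\gp$ of the group completion adjunction. Because $({-})^\gp$ is the unit of an idempotent adjunction, the map $\eta_X$ is tautologically a $({-})^\gp$-equivalence (applying $({-})^\gp$ to it yields the equivalence $X^\gp \xrightarrow{\simeq} (X^\gp)^\gp$). The unconditional direction of Proposition~\ref{prop:gp-equiv-sigma-equiv} then produces an equivalence $\Sigma \eta_X : \Sigma X \xrightarrow{\simeq} \Sigma X^\gp$.

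Next, I would verify that $\Sigma X^\gp$ lies in $\mathscr{C}^\gp$. By definition, this suspension is the pushout $\ast \sqcup_{X^\gp} \ast$ in $\mathscr{C}$. Both the zero object $\ast$ and $X^\gp$ belong to $\mathscr{C}^\gp$, and Proposition~\ref{prop:grplike-accessible} tells us that $\mathscr{C}^\gp \subseteq \mathscr{C}$ is closed under small colimits. Hence the pushout lies in $\mathscr{C}^\gp$, and combining with the previous step gives $\Sigma X \simeq \Sigma X^\gp \in \mathscr{C}^\gp$.

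There is no real obstacle to this plan; the argument is formal and short. The one subtle point worth flagging is that we are careful to invoke only the unconditional implication of Proposition~\ref{prop:gp-equiv-sigma-equiv} (\emph{i.e.}~that every $({-})^\gp$-equivalence is a $\Sigma$-equivalence), not the partial converse, which would require the additional hypothesis that $\Sigma : \mathscr{C}^\gp \to \mathscr{C}^\gp$ be conservative.
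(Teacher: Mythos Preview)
Your argument is correct. The paper does not supply its own proof here; it simply defers to the argument of \cite[Lemma~2.14]{Bachmann_Cancellation}. Your route is genuinely different in that it is entirely self-contained within the paper's preliminaries: you factor through the unit $X \to X^\gp$, use the unconditional direction of Proposition~\ref{prop:gp-equiv-sigma-equiv} to get $\Sigma X \simeq \Sigma X^\gp$, and then invoke the closure of $\mathscr{C}^\gp$ under colimits from Proposition~\ref{prop:grplike-accessible}. A pleasant byproduct is that your argument never uses the projectively-generated hypothesis---it goes through for any semiadditive presentable $\infty$-category---whereas the cited external argument is stated in the projectively-generated setting. What the external reference buys is a more explicit description tied to the generators; what your approach buys is economy and a slight strengthening of the hypothesis.
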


\begin{proof}
    This follows from the argument in \cite[Lemma~2.14]{Bachmann_Cancellation}.
\end{proof}

\subsection{Polynomial Functors}

\begin{definition}
    Let $\mathscr{C}$ and $\mathscr{D}$ be pointed $\infty$-categories that admit finite colimits, and let $n \in \mathbb{Z}$.
    A functor $F : \mathscr{C} \to \mathscr{D}$ is \emph{polynomial of degree $\leq n$} if
    \begin{itemize}
        \item we have $n \leq -1$ and $F$ is the zero functor, or
        \item we have $n \geq 0$ and for every $Y \in \mathscr{C}$ the functor $\mathscr{D}_Y F : \mathscr{C} \to \mathscr{D}$ defined by
        \begin{equation*}
            X \mapsto \operatorname{cofib}(F(i_1) : F(Y) \to F(X \vee Y))
        \end{equation*}
        is polynomial of degree $\leq n-1$.
    \end{itemize}
\end{definition}

\begin{proposition} \label{prop:multivariable-deriv-is-poly}
    Let $\mathscr{C}$ and $\mathscr{D}$ be pointed $\infty$-categories that admit finite colimits, and let $F : \mathscr{C} \to \mathscr{D}$ be a functor polynomial of degree $\leq n$.
    For every $k \geq 0$, the functor
    \begin{align*}
        \Delta F : \mathscr{C}^{\times k} &\to \mathscr{D} \\
        (X_1, \ldots, X_k) &\mapsto \operatorname{cofib}\left( \bigvee_{i=1}^k F(X_i) \to F \left(\bigvee_{i=1}^k X_i \right) \right)
    \end{align*}
    is polynomial of degree $\leq n-1$ in each variable.
\end{proposition}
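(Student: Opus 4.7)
The plan is to induct on the polynomial degree $n$. The cases $k \leq 1$ are immediate ($\Delta F$ is the constant functor with value $F(0)$ when $k=0$ and identically zero when $k=1$), so the real content is $k \geq 2$. Since $\Delta F$ is symmetric in its $k$ arguments, it suffices to show that $X_1 \mapsto \Delta F(X_1, X_2, \ldots, X_k)$ is polynomial of degree $\leq n-1$ for fixed $X_2, \ldots, X_k$.

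The key computational input is the identity
\begin{equation*}
    \mathscr{D}_Y \Delta F({-}, X_2, \ldots, X_k)(X_1) \simeq \Delta(\mathscr{D}_Y F)(X_1, X_2 \vee \cdots \vee X_k),
\end{equation*}
where $\Delta$ on the right denotes the two-variable cross-effect. To establish it, I form the commutative square
\begin{equation*}
    \begin{tikzcd}
        F(Y) \vee \bigvee_{i \geq 2} F(X_i) \ar[r] \ar[d] & F(Y \vee X_2 \vee \cdots \vee X_k) \ar[d] \\
        F(X_1 \vee Y) \vee \bigvee_{i \geq 2} F(X_i) \ar[r] & F(X_1 \vee Y \vee X_2 \vee \cdots \vee X_k)
    \end{tikzcd}
\end{equation*}
whose horizontal arrows are the canonical comparison maps and whose vertical arrows are induced by the inclusion $Y \to X_1 \vee Y$, and compute its total cofiber in two ways. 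Cofibers of the rows produce $\Delta F(Y, X_2, \ldots, X_k)$ on top and $\Delta F(X_1 \vee Y, X_2, \ldots, X_k)$ on the bottom, whose cofiber is $\mathscr{D}_Y \Delta F({-}, X_2, \ldots, X_k)(X_1)$ by definition. Cofibers of the columns produce $\mathscr{D}_Y F(X_1)$ on the left (the $F(X_i)$-summands contribute zero, because cofibers of identity maps vanish and cofibers distribute over wedges) and $\mathscr{D}_{Y \vee X_2 \vee \cdots \vee X_k} F(X_1)$ on the right. Using the elementary identity $\mathscr{D}_Y \mathscr{D}_Z F \simeq \mathscr{D}_{Y \vee Z} F$ together with the rewriting $\operatorname{cofib}(G(A) \to \mathscr{D}_B G(A)) \simeq \Delta G(A, B)$ for the two-variable $\Delta$---both instances of the cofiber sequence $B/C \to A/C \to A/B$ attached to a composable pair $C \to B \to A$---the column computation collapses to $\Delta(\mathscr{D}_Y F)(X_1, X_2 \vee \cdots \vee X_k)$, as required.

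With the identity in hand, the induction is straightforward. The base case $n \leq -1$ is immediate because $F = 0$ forces $\Delta F = 0$. For the inductive step, suppose the claim for polynomial functors of degree $\leq n-1$ and let $F$ have degree $\leq n$. Then $\mathscr{D}_Y F$ has degree $\leq n-1$, so applying the inductive hypothesis in the two-variable case shows that $\Delta(\mathscr{D}_Y F)(X_1, X_2 \vee \cdots \vee X_k)$ is polynomial of degree $\leq n-2$ in $X_1$. By the identity, so is $\mathscr{D}_Y \Delta F({-}, X_2, \ldots, X_k)(X_1)$, which is exactly the condition that $\Delta F({-}, X_2, \ldots, X_k)$ be polynomial of degree $\leq n-1$ in $X_1$.

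The main obstacle is the bookkeeping in the derivation of the key identity: one needs to verify that cofibers distribute over wedges (so the identity components on the $F(X_i)$-summands contribute zero) and that iterated derivatives compose under $\mathscr{D}_Y \mathscr{D}_Z F \simeq \mathscr{D}_{Y \vee Z} F$. Both rely only on elementary pushout manipulations in a pointed $\infty$-category with finite colimits; once in place, the induction is mechanical.
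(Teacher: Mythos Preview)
Your proof is correct and follows essentially the same route as the paper's: both compute the total cofiber of the same $2\times 2$ square (built from $F(Y)$, $F(X_1\vee Y)$, $F(Y\vee Z)$, $F(X_1\vee Y\vee Z)$ with $Z=\bigvee_{i\ge 2}X_i$, up to relabeling) in two ways to identify $\mathscr{D}_Y\bigl(\Delta F({-},X_2,\dots,X_k)\bigr)$. The one difference is that the paper reads this total cofiber off directly as $\mathscr{D}_Z\mathscr{D}_Y F$, which has degree $\le n-2$ by two applications of the definition, so no induction on $n$ is needed; your extra rewriting as $\Delta(\mathscr{D}_Y F)(X_1,Z)$ and appeal to the inductive hypothesis is correct but avoidable.
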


\begin{proof}
    Fix $ k \geq 0$.
    If $k = 0$ then there is nothing to show, so assume $k \geq 1$.
    By symmetry, it suffices to show $\Delta F : \mathscr{C}^{\times k} \to \mathscr{D}$ is polynomial of degree $\leq n-1$ its first variable.
    Fix objects $X_2, \ldots, X_k$ in $\mathscr{C}$ and let $G : \mathscr{C} \to \mathscr{D}$ be the functor
    \begin{equation*}
        X \mapsto \Delta F(X,X_2, \ldots, X_k).
    \end{equation*}
    For all $X$ and $Y$, we can form the commutative diagram
    \begin{equation*}
        \begin{tikzcd}
            F(X) \vee \left( \bigvee_{i=2}^k F(X_i) \right) \ar[r] \ar[d] & F\left(X \vee \left( \bigvee_{i=2}^k X_i \right) \right) \ar[r] \ar[d] & G(X) \ar[d] \\
            F(X \vee Y) \vee \left( \bigvee_{i=2}^k F(X_i) \right) \ar[r] \ar[d] & F\left(X \vee Y \vee \left( \bigvee_{i=2}^k X_i \right) \right) \ar[r] \ar[d] & G(X \vee Y) \ar[d] \\
            \mathscr{D}_Y F(X) \ar[r] & \mathscr{D}_Y F \left(X \vee \left( \bigvee_{i=2}^k X_i \right) \right) \ar[r] & \mathscr{D}_Y G(X)
        \end{tikzcd}
    \end{equation*}
    in $\mathscr{D}$ where each row and each column is a cofiber sequence.
    We find thus that $\mathscr{D}_Y G \simeq \mathscr{D}_Z \mathscr{D}_Y F$, where
    \begin{equation*}
        Z = \bigvee_{i=2}^k X_i.
    \end{equation*}
    Since $F$ is polynomial of degree $\leq n$, we have that $\mathscr{D}_Y G$ is polynomial of degree $\leq n - 2$, and so $G$ is polynomial of degree $\leq n -1$.
\end{proof}

We are grateful to Tom Bachmann for providing us with the following proposition.

\begin{proposition} \label{prop:polynomial-preserves-gp-equiv}
    Let $\mathscr{C}$ and $\mathscr{D}$ be semiadditive projectively generated $\infty$-categories, and let $F : \mathscr{C} \to \mathscr{D}$ be a polynomial functor that preserves geometric realizations of simplicial objects.
    Then $F$ sends $({-})^\gp$-equivalences to $({-})^\gp$-equivalences.
\end{proposition}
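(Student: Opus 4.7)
The plan is to induct on the polynomial degree $n$ of $F$; when $n \leq 0$ (so $F$ is essentially constant) there is nothing to prove. For the inductive step, I combine Proposition~\ref{prop:gp-equiv-sigma-equiv} with the fact (from Remark~\ref{remark:add-proj-gen-is-prestable}) that $\mathscr{D}^\gp$ is Grothendieck prestable---so $\Sigma : \mathscr{D}^\gp \to \mathscr{D}^\gp$ is fully faithful and in particular conservative---to reduce the problem to showing that $\Sigma F(f)$ is an equivalence (noting that $\Sigma F(X) \in \mathscr{D}^\gp$ by Proposition~\ref{prop:sigma-semiadd}).

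The main computational input is that in a semiadditive presentable $\infty$-category, suspension is given by the two-sided bar construction: $\Sigma Z$ is the realization of the simplicial object whose $n$-th term is $Z^{\oplus n}$, with face maps induced by the canonical commutative monoid structure on $Z$. Since $F$ preserves geometric realizations, $F(\Sigma X) \simeq |F(X^{\oplus \bullet})|$. Replacing $F$ by its reduction $\operatorname{cofib}(F(0) \to F)$---polynomial of the same degree and a natural summand of $F$ in the semiadditive setting---I may assume $F(0) = 0$. The semiadditive structure then yields a natural direct sum decomposition
\begin{equation*}
    F(X^{\oplus n}) \simeq F(X)^{\oplus n} \oplus \Delta F(X, \ldots, X),
\end{equation*}
where $\Delta F$ is the $n$-variable cross-effect from Proposition~\ref{prop:multivariable-deriv-is-poly}. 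Taking realizations levelwise (using that finite direct sums commute with colimits) produces a natural splitting $F(\Sigma X) \simeq \Sigma F(X) \oplus E(X)$, where $E(X)$ is the realization of the simplicial object with $n$-th term $\Delta F(X, \ldots, X)$.

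By Proposition~\ref{prop:multivariable-deriv-is-poly}, $\Delta F$ is polynomial of degree $\leq n-1$ in each variable, and it preserves realizations in each variable since cofibers and direct sums commute with colimits. Applying the inductive hypothesis one variable at a time, $\Delta F(f, \ldots, f)$ is a $({-})^\gp$-equivalence, and hence so is $E(f)$ since $({-})^\gp$ preserves colimits. Meanwhile $\Sigma f$ is an equivalence by Proposition~\ref{prop:gp-equiv-sigma-equiv}, hence so is $F(\Sigma f)$. Applying $({-})^\gp$ to the splitting $F(\Sigma X) \simeq \Sigma F(X) \oplus E(X)$ gives $F(\Sigma X)^\gp \simeq \Sigma F(X) \oplus E(X)^\gp$ (using that $\Sigma F(X)$ is already group-complete), under which $F(\Sigma f)^\gp$ identifies with $\Sigma F(f) \oplus E(f)^\gp$. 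Since both $F(\Sigma f)^\gp$ and $E(f)^\gp$ are equivalences, so is $\Sigma F(f)$, completing the induction.

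I expect the main technical hurdle to be the bar-construction presentation $\Sigma Z \simeq |Z^{\oplus \bullet}|$ in the semiadditive projectively generated setting, together with the naturality of the cross-effect splitting $F(X^{\oplus n}) \simeq F(X)^{\oplus n} \oplus \Delta F(X, \ldots, X)$ that is needed for the argument to work functorially in $f$. The multivariable form of the inductive hypothesis used to handle $\Delta F$ follows formally from the single-variable statement by fixing all but one variable at a time.
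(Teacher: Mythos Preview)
Your overall strategy is exactly the paper's: induct on degree, reduce to showing that $\Sigma F$ inverts $({-})^\gp$-equivalences via Remark~\ref{remark:add-proj-gen-is-prestable} and Proposition~\ref{prop:gp-equiv-sigma-equiv}, present $\Sigma$ as the realization of the bar construction, use that $F$ preserves realizations, and control the ``cross-effect'' piece via Proposition~\ref{prop:multivariable-deriv-is-poly} and the inductive hypothesis. There is, however, one genuine gap.

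The problematic step is the claim that the levelwise splitting
\[
  F(X^{\oplus n}) \;\simeq\; F(X)^{\oplus n} \oplus \Delta F(X,\ldots,X)
\]
realizes to a splitting $F(\Sigma X)\simeq \Sigma F(X)\oplus E(X)$. The \emph{section} $F(X)^{\oplus n}\to F(X^{\oplus n})$ (built from the coordinate inclusions $\iota_j$) is compatible with the bar simplicial structure, because each $d_i\circ\iota_j$ is again a coordinate inclusion. But the \emph{retraction} $F(X^{\oplus n})\to F(X)^{\oplus n}$ (built from the projections $p_j$) is \emph{not} simplicial: for an inner face $d_i$ one has $p_i\circ d_i = p_i + p_{i+1}$, so compatibility would force $F(p_i+p_{i+1}) = F(p_i)+F(p_{i+1})$, which fails for any $F$ that is not additive. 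A levelwise-split cofiber sequence of simplicial objects need not split on geometric realizations, so you do not get the direct sum decomposition you use in the last paragraph.

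The fix is exactly what the paper does: keep only the cofiber sequence
\[
  \Sigma F \;\longrightarrow\; \mathrm{L}_\gp\, F\Sigma \;\longrightarrow\; G
\]
in $\mathscr{D}^\gp$, obtained by realizing the simplicial section and group-completing. Since $\mathscr{D}^\gp$ is Grothendieck prestable, this cofiber sequence is also a fiber sequence, so the two-out-of-three argument goes through: $\mathrm{L}_\gp F\Sigma$ inverts $({-})^\gp$-equivalences because $\Sigma$ does, and $G$ is a sifted colimit of the terms $\operatorname{cofib}\big(\mathrm{L}_\gp F(X)^{\vee k}\to \mathrm{L}_\gp F(X^{\vee k})\big)$, which are handled by Proposition~\ref{prop:multivariable-deriv-is-poly} and the inductive hypothesis---precisely your $E$-term argument, now applied to a cofiber rather than a complement. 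In short: replace ``direct summand'' by ``fiber of a cofiber sequence in a prestable category'' and your proof becomes the paper's.
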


\begin{proof}
    For this proof, we'll write $\mathrm{L}_\gp = ({-})^\gp : \mathscr{C} \to \mathscr{C}^\gp$ for the group completion functor, and $\iota : \mathscr{C}^\gp \to \mathscr{C}$ for inclusion.

    We proceed by induction on $n$.
    If $n \leq -1$, there is nothing to show.

    By Remark~\ref{remark:add-proj-gen-is-prestable} and Proposition~\ref{prop:gp-equiv-sigma-equiv}, it suffices to show that $\Sigma F : \mathscr{C} \to \mathscr{D}$ sends the unit transformation
    \begin{equation*}
        \eta : \mathrm{id} \to \iota \mathrm{L}_\gp
    \end{equation*}
    to an equivalence.
    Using Proposition~\ref{prop:sigma-semiadd}, we have that $\Sigma F$ lands in $\mathscr{D}^\gp \subseteq \mathscr{D}$.

    Consider the cofiber sequence
    \begin{equation*}
        \Sigma F \to \mathrm{L}_\gp F\Sigma \to G
    \end{equation*}
    of functors $\mathscr{C} \to \mathscr{D}^\gp$.
    By Proposition~\ref{prop:gp-equiv-sigma-equiv}, the functor $\mathrm{L}_\gp F \Sigma$ inverts $({-})^\gp$-equivalences.
    Thus, to show that $\Sigma F$ inverts $({-})^\gp$-equivalences, it suffices to show that same for $G$.
    Using the canonical description of suspension as a geometric realization of coproducts and the assumption that $F$ preserves geometric realizations, we have that $G$ is a sifted colimit of terms of the form
    \begin{equation*}
        \operatorname{cofib}\left( \mathrm{L}_\gp F(X)^{\vee k} \to \mathrm{L}_\gp F \left(X^{\vee k} \right) \right).
    \end{equation*}
    Since $\mathrm{L}_\gp$ preserves colimits, we have that $\mathrm{L}_\gp F$ is polynomial of degree $\leq n$ \cite[Lemma~5.24]{BachmannHoyois_Norms}.
    By Proposition~\ref{prop:multivariable-deriv-is-poly} and the inductive hypothesis, we conclude that each of these terms inverts $({-})^\gp$-equivalences, and so we have the result.
\end{proof}

\section{Normed Category Theory}

In this section, we further develop the theory of norm monoidal $\infty$-categories introduced in \cite[\S\S 6 and 7]{BachmannHoyois_Norms} and \cite[\S 3]{Bachmann_MotivicSpectralMackey}.
We introduce notions of lax norm monoidal functors (Definition~\ref{def:lax-norm-monoidal-functor}) and norm monoidal adjunctions (Definition~\ref{def:norm-monoidal-adjunction}), which play critical roles in our proofs later.

\begin{warning}
    We deviate slightly from the language set out in \cite{BachmannHoyois_Norms}.
    \begin{itemize}
        \item Our notion of normed algebra is more general than the one used in \cite[\S 7]{BachmannHoyois_Norms}. (See Remark~\ref{remark:deviation-nalg})
        \item Our notion of presentably norm monoidal $\infty$-category is more restrictive than the one used in \cite[\S 6]{BachmannHoyois_Norms}. (See Remark~\ref{remark:deviation-presentably-norm-monoidal})
    \end{itemize}
    These modifications make for a more robust theory, and do not change any of the major results of \cite{BachmannHoyois_Norms}.
    In any case, the language we use is consistent with that used in \cite{Bachmann_MotivicSpectralMackey}.
\end{warning}

\begin{remark} \label{remark:fold-transfers-cmons}
    Let $\mathscr{B}$ be an extensive $\infty$-category. (See \cite[Defintion 2.3]{BachmannHoyois_Norms}.)
    We write $\Corr^\fold(\mathscr{B}) = \mathscr{S}\mathrm{pan}(\mathscr{B}, \fold, \all)$ for the $\infty$-category of spans in $\mathscr{B}$ where the backwards morphism is a finite coproduct of fold maps $\nabla : X^{\sqcup n} \to X$.
    Recall \cite[Appendix C]{BachmannHoyois_Norms} that there is an equivalence between
    \begin{itemize}
        \item functors $\Corr^\fold(\mathscr{B})^\op \to \Cat_\infty$ that preserve finite products, and
        \item functors $\mathscr{B}^\op \to \CAlg(\Cat_\infty)$ that preserve finite products.
    \end{itemize}
    If $\mathscr{D} : \mathscr{B}^\op \to \CAlg(\Cat_\infty)$ preserves finite products, and $\hat{\mathscr{D}} : \Corr^\fold(\mathscr{B})^\op \to \Cat_\infty$ is the corresponding functor, then there is also an equivalence between
    \begin{itemize}
        \item sections of the coCartesian fibration corresponding to $\hat{\mathscr{D}}$, and
        \item sections of the coCartesian fibration corresponding to $\CAlg(\mathscr{D}({-}))$.
    \end{itemize}
\end{remark}

\begin{notation}
    Let $S$ be an algebraic space.
    We write $\mathscr{B} \subseteq_\fet \AlgSpc_S$ to indicate that $\mathscr{B}$ is a full subcategory of $\AlgSpc_S$ such that
    \begin{itemize}
        \item $S \in \mathscr{B}$, and
        \item $\mathscr{B}$ is closed under finite coproducts and finite \'etale extensions.
    \end{itemize}
    In this case, we write $\Corr^\fet(\mathscr{B})$ for the $\infty$-category of spans 
    \begin{equation*}
        \begin{tikzcd}
            & \widetilde{X} \ar[dl,"p"'] \ar[dr,"f"] \\
            X & & Y
        \end{tikzcd}
    \end{equation*}
    in $\mathscr{B}$ where the backwards morphism $p$ is finite \'etale.

    We also write $\mathscr{B} \subseteq_\fet \Sm_S$ to mean $\mathscr{B} \subseteq_\fet \AlgSpc_S$ and $\mathscr{B} \subseteq \Sm_S$.
\end{notation}

\subsection{Norm Monoidal \texorpdfstring{$\infty$}{oo}-Categories}

\begin{definition}
    Let $B$ be an algebraic space and let $\mathscr{B} \subseteq_\fet \AlgSpc_B$.

    A \emph{norm monoidal $\infty$-category} over $\mathscr{B}$ is a coCartesian fibration
    \begin{equation*}
        \mathscr{D}^\otimes \to \Corr^\fet(\mathscr{B})^\op
    \end{equation*}
    such that the corresponding functor $\mathscr{D} : \Corr^\fet(\mathscr{B})^\op \to \Cat_\infty$
    preserves finite products.
    For an arbitrary morphism $f : S \to T$ in $\mathscr{B}$ and a finite \'etale morphism $p : \widetilde{S} \to S$ in $\mathscr{B}$, we write $f^* : \mathscr{D}(T) \to \mathscr{D}(S)$ and $p_\otimes : \mathscr{D}\big( \widetilde{S} \big) \to \mathscr{D}(S)$ for the images of 
    \begin{equation*}
        \begin{tikzcd}
            & S \ar[dl,"\mathrm{id}"'] \ar[dr,"f"] \\
            S & & T
        \end{tikzcd} \qquad\text{and}\qquad \begin{tikzcd}
            & \widetilde{S} \ar[dl,"p"'] \ar[dr,"\mathrm{id}"] \\
            S & & \widetilde{S}
        \end{tikzcd}
    \end{equation*}
    under $\mathscr{D}$, respectively.

    We write $\NMon(\Cat_\infty | \mathscr{B}) \subseteq \Fun(\Corr^\fet(\mathscr{B})^\op, \Cat_\infty)$ for the full subcategory spanned by norm monoidal $\infty$-categories over $\mathscr{B}$.
\end{definition}

\begin{remark}
    Let $B$ be an algebraic space, and let $\mathscr{B} \subseteq_\fet \AlgSpc_B$.
    Recall from \cite[Appendix C]{BachmannHoyois_Norms} that there is an equivalence between $\NMon(\Cat_\infty|\mathscr{B})$ and functors
    \begin{equation*}
        \Corr^\fet(\mathscr{B})^\op \to \CAlg(\Cat_\infty)
    \end{equation*}
    that preserve finite products.
    In particular, given a norm monoidal $\infty$-category $\mathscr{D}$ over $\mathscr{B}$ and $S \in \mathscr{B}$, we can view $\mathscr{D}(S)$ as a symmetric monoidal $\infty$-category.
\end{remark}

\begin{example}
    Let $B$ be an algebraic space and let $\mathscr{B} \subseteq_\fet \AlgSpc_B$.
    The terminal norm monoidal $\infty$-category over $\mathscr{B}$ is the constant functor
    \begin{equation*}
        \Corr^\fet(\mathscr{B})^\op \to \Cat_\infty, S \mapsto *.
    \end{equation*}
    We write $*_\mathscr{B}$ for this norm monoidal $\infty$-category.
    Note that the corresponding coCartesian fibration is the identity functor on $\Corr^\fet(\mathscr{B})^\op$.
\end{example}

\begin{example}
    The assignment
    \begin{equation*}
        S \mapsto \AlgSpc_S
    \end{equation*}
    can be promoted to a norm monoidal $\infty$-category over $\AlgSpc$.
    Indeed, for a finite \'etale map $p : \widetilde{S} \to S$ of algebraic spaces, the functor $p^* : \AlgSpc_S \to \AlgSpc_{\widetilde{S}}$ admits a right adjoint called Weil restriction.
    See \cite[\href{https://stacks.math.columbia.edu/tag/05YF}{Tag 05YF}]{stacks-project}
    We can thus apply the unfurling construction of Barwick \cite[\S11]{Barwick_SpectralMackey} to extend the functor
    \begin{equation*}
        \AlgSpc_\star : \AlgSpc^\op \to \Cat_\infty
    \end{equation*}
    along $\AlgSpc^\op \to \Corr^{\fet}(\AlgSpc)^\op$.
    The induced symmetric monoidal structures on $\AlgSpc_S$ is the Cartesian monoidal structure.
    Since Weil restriction preserves smoothness, we can also promote $S \mapsto \Sm_S$ to a norm monoidal $\infty$-category $\Sm_\star$ over $\AlgSpc$.
\end{example}

\begin{example} \label{example:sm-s-plus-normed}
    We can promote the assignment
    \begin{equation*}
        S \mapsto \Sm_{S+}
    \end{equation*}
    to a norm monoidal $\infty$-category over $\AlgSpc$ where the norm functors are again (induced by) Weil restriction.
    See the beginning of \cite[\S6.1]{BachmannHoyois_Norms}.
    The induced symmetric monoidal structure on $\Sm_{S+}$ is the smash product $X_+ \otimes Y_+ = (X \times Y)_+$.
    This example will be particularly important to us in the sequel.
\end{example}

\begin{example}
    One of the main constructions in \cite{BachmannHoyois_Norms} is a norm monoidal refinement of the assignment $S \mapsto \SH(S)$.
    The induced symmetric monoidal structure on $\SH(S)$ is the usual tensor product of motivic spectra.
\end{example}

\begin{definition} \label{def:lax-norm-monoidal-functor}
    Let $B$ be an algebraic space, let $\mathscr{B} \subseteq_\fet \AlgSpc_B$, and let $\mathscr{D}_1, \mathscr{D}_2$ be norm monoidal $\infty$-categories over $\mathscr{B}$.
    A \emph{lax norm monoidal functor} $F : \mathscr{D}_1 \to \mathscr{D}_2$ over $\mathscr{B}$ is a commutative diagram
    \begin{equation*}
        \begin{tikzcd}
            \mathscr{D}_1^\otimes \ar[rr,"F"] \ar[dr] & & \mathscr{D}_2^\otimes \ar[dl] \\
            & \Corr^\fet(\mathscr{B})^\op
        \end{tikzcd}
    \end{equation*}
    such that the restriction along $\mathscr{B}^\op \to \Corr^\fet(\mathscr{B})^\op$ preserves coCartesian edges over smooth morphisms.
    We say that $F$ is \emph{strong norm monoidal} (or simply \emph{norm monoidal}) if it preserves all coCartesian edges.
\end{definition}

\begin{remark}
    Let $B$ be an algebraic space and let $\mathscr{B} \subseteq_\fet \AlgSpc_B$.
    Note that strong norm monoidal functors between norm monoidal $\infty$-categories over $\mathscr{B}$ are exactly the morphisms in $\NMon(\Cat_\infty|\mathscr{B})$.
\end{remark}

\begin{remark}
    Let $B$ be an algebraic space and let $\mathscr{B} \subseteq_\fet \AlgSpc_B$.
    It is possible to construct an $\infty$-bicategory $\mathbf{NMon}(\Cat_\infty|\mathscr{B})$ where the objects are norm monoidal $\infty$-categories over $\mathscr{B}$ and the one-cells are lax norm monoidal functors.
    It is a sub-bicategory of the $\infty$-bicategory of coCartesian fibrations over $\Corr^\fet(\mathscr{B})^\op$.
    This is useful to keep in mind, but we do not make any serious use of the framework of $\infty$-bicategories here.
\end{remark}

\begin{construction}
    Let $f : B_1 \to B_2$ be a morphism of algebraic spaces, let $\mathscr{B}_1 \subseteq_\fet \AlgSpc_{B_1}$ and $\mathscr{B}_2 \subseteq_\fet \AlgSpc_{B_2}$, and suppose $f_\sharp (\mathscr{B}_1) \subseteq \mathscr{B}_2$.
    Then there is a functor $\Corr^\fet(\mathscr{B}_1) \to \Corr^\fet(\mathscr{B}_2)$.
    Restriction along this functor sends norm monoidal $\infty$-categories over $\mathscr{B}_2$ to norm monoidal $\infty$-categories over $\mathscr{B}_1$, and similarly for (lax) norm monoidal functors.
    We will often perform these restriction implicitly and without comment.
\end{construction}

\begin{definition}
    Let $B$ be an algebraic space, let $\mathscr{B} \subseteq_\fet \AlgSpc_B$, and let $\mathscr{D}$ be a norm monoidal $\infty$-category over $\mathscr{B}$.
    A \emph{normed algebra} $\mathscr{A}$ in $\mathscr{D}$ over $\mathscr{B}$ is a section
    \begin{equation*}
        \begin{tikzcd}
            \mathscr{D}^\otimes \ar[d] \\
            \Corr^\fet(\mathscr{B})^\op\ar[u,bend left=45, dashed,"\mathscr{A}"]
        \end{tikzcd}
    \end{equation*}
    whose restriction along $\mathscr{B}^\op \to \Corr^\fet(\mathscr{B})^\op$ sends all smooth morphisms to coCartesian edges.
    We say $\mathscr{A}$ is \emph{steady} if the restriction sends all morphisms in $\mathscr{B}^\op$ to coCartesian edges.
    We write $\NAlg(\mathscr{D} | \mathscr{B})$ for the $\infty$-category of normed algebras in $\mathscr{D}$ over $\mathscr{B}$, and $\NAlg^\mathrm{stdy}(\mathscr{D} | \mathscr{B})$ for the full subcategory spanned by steady normed algebras.

    When $\mathscr{B} = \Sm_B$, we refer to these simply as normed algebras in $\mathscr{D}$ over $B$, and we write $\NAlg(\mathscr{D} | B) = \NAlg(\mathscr{D} | \Sm_B)$.
\end{definition}

\begin{remark}
    Let $B$ be an algebraic space and let $\mathscr{B} \subseteq_\fet \AlgSpc_B$.
    Note that a normed algebra in $\mathscr{D}$ over $\mathscr{B}$ is exactly a lax norm monoidal functor $*_\mathscr{B} \to \mathscr{D}$.
\end{remark}

\begin{remark} \label{remark:sm-nalg-is-nalg}
    Let $B$ be an algebraic space, let $\mathscr{B} \subseteq_\fet \AlgSpc_B$, and let $\mathscr{D}$ be a norm monoidal $\infty$-category over $\mathscr{B}$.
    If $\mathscr{B} \subseteq \Sm_B$, then every normed algebra in $\mathscr{D}$ over $\mathscr{B}$ is steady.
    Indeed, let $\mathscr{A} \in \NAlg(\mathscr{D}|\mathscr{B})$ and let
    \begin{equation*}
        \begin{tikzcd}
            S \ar[rr,"f"] \ar[dr,"p"'] & & T \ar[dl,"q"] \\
            & B
        \end{tikzcd}
    \end{equation*}
    be a morphism in $\mathscr{B}$.
    By assumption, the arrows $p$ and $q$ are smooth morphisms, so $\mathscr{A}(p)$ and $\mathscr{A}(q)$ are coCartesian edges in $\mathscr{D}^\otimes$.
    By \cite[\href{https://kerodon.net/tag/01TS}{Tag 01TS}]{kerodon}, we conclude that $\mathscr{A}(f)$ is also a coCartesian edge.
\end{remark}

\begin{remark} \label{remark:deviation-nalg}
    The above definition of normed algebra is more general than the one in \cite[\S 7]{BachmannHoyois_Norms}.
    Normed algebras over $\mathscr{B}$ there are what we call steady normed algebras over $\mathscr{B}$.
    By Remark~\ref{remark:sm-nalg-is-nalg}, the two definitions agree when $\mathscr{B} \subseteq \Sm_B$.
    In \cite[\S 3]{Bachmann_MotivicSpectralMackey}, normed algebras are only defined over $\Sm_B$.
\end{remark}

\begin{example}
    Let $B$ be an algebraic space, let $\mathscr{B} \subseteq_\fet \AlgSpc_B$, and let $\mathscr{D}$ be a norm monoidal $\infty$-category over $\mathscr{B}$.
    The assignment $S \mapsto \mathbf{1}_{\mathscr{D}(S)}$ can be promoted to a steady normed algebra $\mathbf{1}_\mathscr{D} \in \NAlg^\mathrm{stdy}(\mathscr{D} | \mathscr{B})$.
    We call it the \emph{norm monoidal unit} of $\mathscr{D}$ over $\mathscr{B}$.
\end{example}

\begin{construction}
    Let $B$ be an algebraic space, let $\mathscr{B} \subseteq_\fet \AlgSpc_B$, and let $\mathscr{D}$ be a norm monoidal $\infty$-category over $\mathscr{B}$.
    Note that there is a functor $\Phi : \Corr^\fold(\mathscr{B})^\op \to \Corr^\fet(\mathscr{B})^\op$.
    Using the equivalences in Remark~\ref{remark:fold-transfers-cmons}, precomposition with $\Phi$ induces a functor
    \begin{equation*}
        \NAlg(\mathscr{D} | \mathscr{B}) \to \CAlg(\mathscr{D} | \mathscr{B}),
    \end{equation*}
    where $\CAlg(\mathscr{D} | \mathscr{B}) \subseteq \Sect(\CAlg(\mathscr{D}({-})) | \mathscr{B}^\op)$ is the full subcategory spanned by sections that send smooth morphisms to coCartesian ones.
\end{construction}

\begin{remark} \label{remark:parametrized-calg}
    Let $B$ be an algebraic space, let $\mathscr{B} \subseteq_\fet \AlgSpc_B$, and let $\mathscr{D}$ be a norm monoidal $\infty$-category over $\mathscr{B}$.
    If $\mathscr{B} \subseteq \Sm_B$, then $\CAlg(\mathscr{D} | \mathscr{B}) \cong \CAlg(\mathscr{D}(B))$.
\end{remark}

\begin{proposition} \label{prop:nalg-forget-conservative}
    Let $B$ be an algebraic space, let $\mathscr{B} \subseteq_\fet \AlgSpc_B$, and let $\mathscr{D}$ be a norm monoidal $\infty$-category over $\mathscr{B}$.
    The forgetful functor
    \begin{equation*}
        \NAlg(\mathscr{D} | \mathscr{B}) \to \CAlg(\mathscr{D} | \mathscr{B})
    \end{equation*}
    is conservative.
\end{proposition}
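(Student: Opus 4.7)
The plan is to detect equivalences fiberwise in both $\NAlg(\mathscr{D}|\mathscr{B})$ and $\CAlg(\mathscr{D}|\mathscr{B})$, and then reduce to the classical fact that the forgetful functor $\CAlg(\mathscr{C}) \to \mathscr{C}$ is conservative for any symmetric monoidal $\infty$-category $\mathscr{C}$.

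First, I would observe that both $\infty$-categories are full subcategories of $\infty$-categories of sections of coCartesian fibrations: $\NAlg(\mathscr{D}|\mathscr{B})$ sits inside sections of $\mathscr{D}^\otimes \to \Corr^\fet(\mathscr{B})^\op$, while by Remark~\ref{remark:fold-transfers-cmons} we may identify $\CAlg(\mathscr{D}|\mathscr{B})$ with a full subcategory of sections of the coCartesian fibration classifying $\CAlg(\mathscr{D}({-})) : \mathscr{B}^\op \to \Cat_\infty$. An edge in the $\infty$-category of sections of a coCartesian fibration is an equivalence if and only if each of its components lies in the subspace of equivalences in the corresponding fiber. Thus a morphism $\alpha : \mathscr{A} \to \mathscr{A}'$ in $\NAlg(\mathscr{D}|\mathscr{B})$ is an equivalence iff $\alpha_S : \mathscr{A}(S) \to \mathscr{A}'(S)$ is an equivalence in $\mathscr{D}(S)$ for every $S \in \mathscr{B}$; likewise, its image in $\CAlg(\mathscr{D}|\mathscr{B})$ is an equivalence iff each component is an equivalence in $\CAlg(\mathscr{D}(S))$.

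Next, I would verify the compatibility of these two fiberwise descriptions: the underlying morphism in $\mathscr{D}(S)$ of the component in $\CAlg(\mathscr{D}(S))$ produced by the forgetful functor is precisely $\alpha_S$. This is essentially tautological given the construction in Remark~\ref{remark:fold-transfers-cmons}, since $\Phi : \Corr^\fold(\mathscr{B})^\op \to \Corr^\fet(\mathscr{B})^\op$ is the identity on objects and the forgetful $\CAlg(\mathscr{D}(S)) \to \mathscr{D}(S)$ corresponds to further restriction along the inclusion of the identity correspondences. This is the one step where a little care is needed, but everything is formal from the unfurling equivalence.

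Finally, I would assemble the pieces. Suppose the image of $\alpha$ in $\CAlg(\mathscr{D}|\mathscr{B})$ is an equivalence. By the fiberwise criterion, each component in $\CAlg(\mathscr{D}(S))$ is an equivalence. By classical conservativity of $\CAlg(\mathscr{D}(S)) \to \mathscr{D}(S)$, each underlying morphism in $\mathscr{D}(S)$ is an equivalence, which by the compatibility above is $\alpha_S$. By the fiberwise criterion for $\NAlg(\mathscr{D}|\mathscr{B})$, $\alpha$ itself is an equivalence. The main (mild) obstacle is the bookkeeping around the equivalence in Remark~\ref{remark:fold-transfers-cmons}, which identifies sections of the $\Corr^\fold$-restriction of $\mathscr{D}^\otimes$ with sections of the $\CAlg(\mathscr{D}({-}))$-fibration in a manner compatible with the forgetful functors to underlying objects.
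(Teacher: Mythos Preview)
Your argument is correct, and it shares the essential ingredient with the paper's proof: equivalences in a category of sections of a coCartesian fibration are detected objectwise. However, the paper's proof is even more direct. Rather than passing through the identification of Remark~\ref{remark:fold-transfers-cmons} to view the target as sections of $\CAlg(\mathscr{D}({-}))$ over $\mathscr{B}^\op$ and then invoking conservativity of $\CAlg(\mathscr{D}(S)) \to \mathscr{D}(S)$, the paper simply observes that the forgetful functor is induced by precomposition with $\Phi : \Corr^\fold(\mathscr{B})^\op \to \Corr^\fet(\mathscr{B})^\op$, and that $\Phi$ is essentially surjective (indeed, the identity on objects). Hence a morphism of sections over $\Corr^\fet(\mathscr{B})^\op$ is an equivalence objectwise if and only if its restriction along $\Phi$ is, which is exactly the conservativity statement. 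Your detour through $\CAlg(\mathscr{D}(S))$ is harmless but unnecessary: the fibers of the pulled-back fibration over $\Corr^\fold(\mathscr{B})^\op$ are already $\mathscr{D}(S)$, so no further forgetful functor needs to be invoked.
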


\begin{proof} 
    To check that a natural transformation is an equivalence, it suffices to check objectwise.
    Thus, the claim follows from the fact that $\Phi : \Corr^\fold(\mathscr{B})^\op \to \Corr^\fet(\mathscr{B})^\op$ is essentially surjective.
\end{proof}

\begin{construction}
    Let $B$ be an algebraic space, let $\mathscr{B} \subseteq_\fet \AlgSpc_B$, and let $F : \mathscr{D}_1 \to \mathscr{D}_2$ be a lax norm monoidal functor over $\mathscr{B}$.
    Postcomposition with $F : \mathscr{D}_1^\otimes \to \mathscr{D}_2^\otimes$ induces a functor $F : \NAlg(\mathscr{D}_1|\mathscr{B}) \to \NAlg(\mathscr{D}_2|\mathscr{B})$.
    If $F$ is strong norm monoidal, then it restricts to a functor $F : \NAlg^\mathrm{stdy}(\mathscr{D}_1| \mathscr{B}) \to \NAlg^\mathrm{stdy}(\mathscr{D}_2|\mathscr{B})$.
\end{construction}

\begin{definition} \label{def:norm-monoidal-adjunction}
    Let $B$ be an algebraic space, let $\mathscr{B} \subseteq_\fet \AlgSpc_B$, and let
    \begin{equation*}
        \begin{tikzcd}
            \mathscr{D}_1^\otimes \ar[dr,"p"'] & & \mathscr{D}_2^\otimes \ar[dl] \\
            & \Corr^\fet(\mathscr{B})^\op
        \end{tikzcd}
    \end{equation*}
    be norm monoidal $\infty$-categories.
    A \emph{norm monoidal adjunction} between $\mathscr{D}_1$ and $\mathscr{D}_2$ consists of lax norm monoidal functors
    \begin{equation*}
        F : \mathscr{D}_1 \to \mathscr{D}_2 \qquad\text{and}\qquad  G : \mathscr{D}_2 \to \mathscr{D}_1
    \end{equation*}
    and a natural transformation $\eta : \mathrm{id} \to GF$ such that
    \begin{enumerate}
        \item $\eta$ is the unit of an adjunction $F : \mathscr{D}_1^\otimes \rightleftarrows \mathscr{D}_2^\otimes : G$, and
        \item $p(\eta)$ is the identity transformation.
    \end{enumerate}
\end{definition}

\begin{remark}
    In other words, a norm monoidal adjunction is a relative adjunction consisting of lax norm monoidal functors.
    See \cite[\S 7.3.2]{Lurie_HA} or \cite[Appendix D]{BachmannHoyois_Norms} for more about relative adjunctions.
\end{remark}

\begin{proposition} \label{prop:norm-adjunction-criterion}
    Let $B$ be an algebraic space, let $\mathscr{B} \subseteq_\fet \AlgSpc_B$, let $\mathscr{D}_1$ and $\mathscr{D}_2$ be norm monoidal $\infty$-categories over $\mathscr{B}$, and let $F : \mathscr{D}_1 \to \mathscr{D}_2$ be a lax norm monoidal functor.
    \begin{enumerate}
        \item The norm monoidal functor $F$ is the left adjoint in a norm monoidal adjunction if and only if the following three conditions hold:
            \begin{enumerate}
                \item $F$ is strong norm monoidal,
                \item for each $S \in \mathscr{B}$, the functor $F_S : \mathscr{D}_1(S) \to \mathscr{D}_2(S)$ admits a right adjoint $G_S$, and
                \item for every smooth morphism $p : X \to S$ in $\mathscr{B}$, the exchange transformation
                    \begin{equation*}
                        p^* G_S \to G_X p^* : \mathscr{D}_2(S) \to \mathscr{D}_1(X)
                    \end{equation*}
                    is an equivalence.
            \end{enumerate}
        \item The norm monoidal functor $F$ is the right adjoint in a norm monoidal adjunction if and only if the following two conditions hold:
        \begin{enumerate}
            \item for each $S \in \mathscr{B}$, the functor $F_S : \mathscr{D}_1(S) \to \mathscr{D}_2(S)$ admits a left adjoint $G_S$, and
            \item for every morphism $f : S \to T$ in $\mathscr{B}$, the exchange transformation
                \begin{equation*}
                    f^* G_T \to G_S f^* : \mathscr{D}_2(T) \to \mathscr{D}_1(S)
                \end{equation*}
                is an equivalence.
        \end{enumerate}
    \end{enumerate}
\end{proposition}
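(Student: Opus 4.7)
The plan is to reduce both statements to the standard criterion for relative adjunctions between coCartesian fibrations over a common base; this is the content of \cite[\S 7.3.2]{Lurie_HA} and \cite[Appendix D]{BachmannHoyois_Norms}. Two facts will be essential. First, a morphism of coCartesian fibrations $F$ admits a relative right adjoint $G$ if and only if each fiber $F_b$ admits a right adjoint, in which case $G$ is automatically a functor over the base, and $G$ preserves coCartesian edges over a morphism $\alpha$ of the base precisely when the Beck--Chevalley mate $\alpha^* G \to G \alpha^*$ is an equivalence. Second, dually, every relative left adjoint is automatically a morphism of coCartesian fibrations, and the existence of a relative left adjoint to a functor over the base is controlled by the existence of fiberwise left adjoints together with Beck--Chevalley equivalences for all morphisms in the base.

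For part (1), in the forward direction I would invoke the first fact: a norm monoidal adjunction $F \dashv G$ is in particular a relative adjunction over $\Corr^\fet(\mathscr{B})^\op$, so the presence of a relative right adjoint $G$ compels $F$ to preserve all coCartesian edges (condition (a)) and yields fiberwise right adjoints by restriction (condition (b)); the hypothesis that $G$ is lax norm monoidal translates under the mate correspondence into precisely condition (c). In the backward direction, conditions (a) and (b) allow one to apply the existence criterion to produce a relative right adjoint $G$, and condition (c) upgrades $G$ to a lax norm monoidal functor, giving the desired norm monoidal adjunction.

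Part (2) is dual and uses the second fact. In the forward direction, a norm monoidal adjunction $G \dashv F$ supplies fiberwise left adjoints $G_S$ (condition (a)); since any relative left adjoint automatically preserves coCartesian edges over every morphism in the base, the mate correspondence translates this preservation (restricted to $\mathscr{B}^\op$) into condition (b). In the backward direction, conditions (a) and (b) serve as the input to the dual existence criterion, assembling the $G_S$ into a functor $G$ over $\Corr^\fet(\mathscr{B})^\op$ and gluing the fiberwise units and counits into a relative adjunction $G \dashv F$; by construction $G$ is lax (in fact strong) norm monoidal.

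The main technical obstacle is the careful bookkeeping of the mate correspondence, which swaps coCartesian-edge preservation on one side of the adjunction with Beck--Chevalley equivalences on the other. Of particular delicacy is the asymmetry between parts (1) and (2): the extra hypothesis (a) in part (1) is needed because relative left adjoints in this setting necessarily preserve coCartesian edges, whereas no corresponding extra hypothesis is imposed on $F$ in (2) beyond its being lax norm monoidal.
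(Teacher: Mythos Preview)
Your proposal is correct and follows essentially the same route as the paper: both reduce the statement to the criterion for relative adjunctions recorded in \cite[Appendix D]{BachmannHoyois_Norms} (specifically Lemma D.3) and \cite[\S 7.3.2]{Lurie_HA}, translating between preservation of coCartesian edges and Beck--Chevalley equivalences via the mate correspondence. The paper's proof is terser and additionally invokes the fact (dual of \cite[Tag 01U6]{kerodon}) that in a coCartesian fibration every locally coCartesian edge is already coCartesian; this is what makes Lemma D.3 apply cleanly and is implicitly absorbed into the ``standard facts'' you cite.
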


\begin{proof}
    Recall from (the dual of) \cite[\href{https://kerodon.net/tag/01U6}{Tag 01U6}]{kerodon} that a map in the total space of a coCartesian fibration is locally coCartesian if and only if it is coCartesian.
    Using \cite[Lemma D.3]{BachmannHoyois_Norms}, we find that in either case the functor $F : \mathscr{D}_1^\otimes \to \mathscr{D}_2^\otimes$ admits the approriate adjoint $G : \mathscr{D}_2^\otimes \to \mathscr{D}_1^\otimes$ relative to $\Corr^\fet(\mathscr{B})^\op$ if and only if conditions (a) and (b) hold.
    Finally, the third condition in (1) is equivalent to $G$ being a lax norm monoidal functor.
\end{proof}

\begin{construction} \label{construct:nalg-adj}
    Let $B$ be an algebraic space, let $\mathscr{B} \subseteq_\fet \AlgSpc_B$, let $\mathscr{D}_1, \mathscr{D}_2$ be norm monoidal $\infty$-categories over $\mathscr{B}$, and let $F : \mathscr{D}_1 \rightleftarrows \mathscr{D}_2 : G$ be a norm monoidal adjunction.
    Then we get an induced adjunction
    \begin{equation*}\label{eqn:nalg-adjunction}
        F : \NAlg(\mathscr{D}_1|\mathscr{B}) \rightleftarrows \NAlg(\mathscr{D}_2|\mathscr{B}) : G.
    \end{equation*}
    where the functors, the unit, and the counit are all computed objectwise.
    In particular, adjunction is compatible with forgetful functors.
    For example, the square
    \begin{equation*}
        \begin{tikzcd}
            \NAlg(\mathscr{D}_1|\mathscr{B}) \ar[r,"F"] \ar[d,"U"] & \NAlg(\mathscr{D}_2|\mathscr{B}) \ar[d,"U"] \\
            \mathscr{D}_1(B) \ar[r,"F_B"] & \mathscr{D}_2(B)
        \end{tikzcd}
    \end{equation*}
    commutes, and the exchange transformation $U \circ G_B \to G \circ U$ is an equivalence.
\end{construction}

\begin{definition}
    Let $B$ be an algebraic space, let $\mathscr{B} \subseteq_\fet \AlgSpc_B$, let $\mathscr{D}$ be a norm monoidal $\infty$-category over $\mathscr{B}$, and let $\mathscr{R} \in \NAlg(\mathscr{D}|\mathscr{B})$.
    A \emph{normed $\mathscr{R}$-algebra} is a morphism $\mathscr{R} \to \mathscr{A}$ in $\NAlg(\mathscr{D}|\mathscr{B})$.
    We write $\NAlg_\mathscr{R}(\mathscr{D}|\mathscr{B}) = \NAlg(\mathscr{D}|\mathscr{B})_{\mathscr{R}/}$ for the $\infty$-category of normed $\mathscr{R}$-algebras.
\end{definition}

\begin{proposition}
    Let $B$ be an algebraic space, let $\mathscr{B} \subseteq_\fet \AlgSpc_B$, let $\mathscr{D}$ be a norm monoidal $\infty$-category over $\mathscr{B}$, and let $\mathscr{R} \in \NAlg(\mathscr{D}|\mathscr{B})$. 
    Then the assignment
    \begin{equation*}
        S \mapsto \Mod_{\mathscr{R}(S)}(\mathscr{D}(S))
    \end{equation*}
    can be promoted to a norm monoidal $\infty$-category $\Mod_\mathscr{R}(\mathscr{D})$ over $\mathscr{B}$.
    Moreover, we have an equivalence
    \begin{equation*}
        \NAlg_{\mathscr{R}}(\mathscr{D}|\mathscr{B}) \cong \NAlg(\Mod_{\mathscr{R}}(\mathscr{D})|\mathscr{B}).
    \end{equation*}
\end{proposition}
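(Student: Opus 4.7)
The plan is to construct the norm monoidal $\infty$-category $\Mod_{\mathscr{R}}(\mathscr{D})$ first, and then verify the claimed equivalence of normed algebra $\infty$-categories. The strategy parallels \cite[\S 7]{BachmannHoyois_Norms}, adapted to the slightly generalized notion of normed algebra used here (the restriction only along smooth morphisms should be coCartesian).

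For the construction, I would view $\mathscr{D}$ as a finite-product-preserving functor $\Corr^\fet(\mathscr{B})^\op \to \CAlg(\Cat_\infty)$ and $\mathscr{R}$ as picking out a coherent family of commutative algebras $\mathscr{R}(S) \in \CAlg(\mathscr{D}(S))$, compatible with pullback along smooth morphisms (by definition of a normed algebra) and with the norm functors. Fiberwise, Lurie's theory yields symmetric monoidal $\infty$-categories $\Mod_{\mathscr{R}(S)}(\mathscr{D}(S))$. The key step is to assemble the norm functors: for a finite \'etale $p : \widetilde{S} \to S$ in $\mathscr{B}$, the norm on modules should send an $\mathscr{R}(\widetilde{S})$-module $M$ to $p_\otimes(M)$, regarded as an $\mathscr{R}(S)$-module via extension of scalars along the normed-algebra structure map $p_\otimes \mathscr{R}(\widetilde{S}) \to \mathscr{R}(S)$. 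I would formalize this by pulling back the universal bifibration of pairs $(\text{algebra}, \text{module})$ over $\CAlg(\mathscr{D}^\otimes)$ along the section classifying $\mathscr{R}$; preservation of finite products follows formally from that of $\mathscr{D}$ together with the identification $\Mod_{\mathscr{R}_1 \times \mathscr{R}_2}(\mathscr{D}_1 \times \mathscr{D}_2) \simeq \Mod_{\mathscr{R}_1}(\mathscr{D}_1) \times \Mod_{\mathscr{R}_2}(\mathscr{D}_2)$.

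For the equivalence of normed algebra categories, I would apply the classical equivalence $\CAlg_{\mathscr{R}(S)}(\mathscr{D}(S)) \simeq \CAlg(\Mod_{\mathscr{R}(S)}(\mathscr{D}(S)))$ fiberwise over each $S \in \mathscr{B}$. Both sides of the target equivalence can be described as $\infty$-categories of sections of coCartesian fibrations over $\Corr^\fet(\mathscr{B})^\op$ that send smooth morphisms to coCartesian edges. Since the fiberwise comparison is natural in $S$ and respects the norm monoidal structures on both sides (the module structure on a normed $\mathscr{R}$-algebra $\mathscr{A}$ is tautologically inherited from its algebra structure), it globalizes to the desired equivalence of sections.

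The main obstacle, as usual in normed contexts, is coherence: unlike the restriction functors $f^*$, the norm functors $p_\otimes$ are not symmetric monoidal but only \emph{distributive} over tensor products, so the well-definedness of the $\mathscr{R}(S)$-module structure on $p_\otimes M$ relies on the precise distributivity data packaged into $\mathscr{R}$ being a normed algebra. Turning this informal description into a bona fide coCartesian fibration $\Mod_\mathscr{R}(\mathscr{D})^\otimes \to \Corr^\fet(\mathscr{B})^\op$ is the technical heart of the argument; the appropriate formal scaffolding is provided by \cite[Appendix C]{BachmannHoyois_Norms}, and the verification ultimately reduces to checking finite-product preservation together with the compatibility of $\mathscr{R}$'s normed structure with the module construction.
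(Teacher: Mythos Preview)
Your approach is correct and essentially matches the paper's, which simply says ``the same proof as for \cite[Proposition 7.6(4)]{BachmannHoyois_Norms} goes through here.'' Your proposal is a faithful expansion of that argument.

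One correction, however: you write that ``the norm functors $p_\otimes$ are not symmetric monoidal but only \emph{distributive} over tensor products.'' This is not right. A norm monoidal $\infty$-category is, equivalently, a finite-product-preserving functor $\Corr^\fet(\mathscr{B})^\op \to \CAlg(\Cat_\infty)$, so every $p_\otimes$ \emph{is} a symmetric monoidal functor. Distributivity concerns the interaction of $p_\otimes$ with \emph{coproducts} (or more generally $h_\sharp$), not with tensor products. This actually makes your ``main obstacle'' disappear: since $p_\otimes$ is symmetric monoidal, it automatically sends commutative algebras to commutative algebras and modules to modules, and the norm on $\Mod_{\mathscr{R}}(\mathscr{D})$ is obtained by applying $p_\otimes$ and then base-changing along the structure map $p_\otimes \mathscr{R}(\widetilde{S}) \to \mathscr{R}(S)$ supplied by the normed-algebra datum, exactly as you describe. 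The coherence is handled by the functoriality of Lurie's $\Mod$ construction (\cite[\S 4.8.3]{Lurie_HA}) together with the span-category formalism of \cite[Appendix C]{BachmannHoyois_Norms}, with no special distributivity input needed at this stage.
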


\begin{proof}
    The same proof as for \cite[Proposition 7.6(4)]{BachmannHoyois_Norms} goes through here.
\end{proof}

\begin{proposition} \label{prop:norm-adj-factorization}
    Let $B$ be an algebraic space, let $\mathscr{B} \subseteq_\fet \AlgSpc_B$, let $\mathscr{D}_1, \mathscr{D}_2$ be norm monoidal $\infty$-categories over $\mathscr{B}$, let $F : \mathscr{D}_1 \rightleftarrows \mathscr{D}_2 : G$ be a norm monoidal adjunction, and let $\mathscr{R} = G(\mathbf{1}_{\mathscr{D}_2})$.
    Then we get a pair of norm monoidal adjunctions
    \begin{equation*}
        \mathscr{D}_1 \rightleftarrows \Mod_{\mathscr{R}}(\mathscr{D}_1) \qquad\text{and}\qquad \Mod_{\mathscr{R}}(\mathscr{D}_1) \rightleftarrows \mathscr{D}_2
    \end{equation*}
    that factor the given adjunction $F \dashv G$.
\end{proposition}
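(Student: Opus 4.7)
The proposition is a parametrized upgrade of a familiar fact from symmetric monoidal category theory: any symmetric monoidal adjunction factors canonically through the category of modules over the right adjoint of the unit. My plan is to construct each half of the factorization separately using Proposition~\ref{prop:norm-adjunction-criterion}, and then assemble them.

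The first step is to set the stage. Since $F \dashv G$ is norm monoidal, the right adjoint $G$ is lax norm monoidal; therefore postcomposition with $G$ sends the unit normed algebra $\mathbf{1}_{\mathscr{D}_2}$ to $\mathscr{R} \in \NAlg(\mathscr{D}_1|\mathscr{B})$, and the previous proposition produces the norm monoidal $\infty$-category $\Mod_{\mathscr{R}}(\mathscr{D}_1)$ over $\mathscr{B}$. For the \emph{first} adjunction, the free-forgetful adjunction $\mathscr{D}_1(S) \rightleftarrows \Mod_{\mathscr{R}(S)}(\mathscr{D}_1(S))$ exists objectwise, and the free functor $X \mapsto \mathscr{R}(S) \otimes X$ is symmetric monoidal. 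To conclude norm monoidality, I would verify condition (2) of Proposition~\ref{prop:norm-adjunction-criterion} applied to the forgetful functor $U : \Mod_{\mathscr{R}}(\mathscr{D}_1) \to \mathscr{D}_1$: the required base change along arbitrary morphisms $f : S \to T$ is automatic because the norm monoidal structure on $\Mod_{\mathscr{R}}(\mathscr{D}_1)$ is built so that the forgetful functors form a morphism of norm monoidal $\infty$-categories commuting with all restriction.

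The \emph{second} adjunction is the interesting one. The counit transformation evaluated at $\mathbf{1}_{\mathscr{D}_2}$ gives a map $F(\mathscr{R}) = FG(\mathbf{1}_{\mathscr{D}_2}) \to \mathbf{1}_{\mathscr{D}_2}$, which since $F$ is strong norm monoidal and $G$ is lax norm monoidal, is a morphism in $\NAlg(\mathscr{D}_2|\mathscr{B})$. This endows every $Y \in \mathscr{D}_2(S)$ with a natural $\mathscr{R}(S)$-module structure, yielding a lift $\tilde{G} : \mathscr{D}_2 \to \Mod_{\mathscr{R}}(\mathscr{D}_1)$ of $G$ through $U$. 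Since $U$ is conservative and preserves all restrictions and norms, $\tilde{G}$ inherits the lax norm monoidal structure of $G$. Its objectwise left adjoint is the relative tensor product $M \mapsto F(M) \otimes_{F(\mathscr{R})} \mathbf{1}_{\mathscr{D}_2}$, computed as a standard bar construction. I would verify the conditions in part (1) of Proposition~\ref{prop:norm-adjunction-criterion} for $\tilde{G}$: strong norm monoidality of the left adjoint is classical monoidal category theory applied objectwise, and smooth base change for $\tilde{G}$ reduces via $U$ to smooth base change for $G$, which holds because $G$ is lax norm monoidal and the left adjoint $F$ is strong norm monoidal (so its mate along smooth morphisms is an equivalence).

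Finally, the factorization $F \simeq L_2 \circ L_1$ and $G \simeq R_1 \circ \tilde{G}$ (where $L_1, R_1$ and $L_2, R_2$ denote the first and second adjoint pairs) can be checked objectwise, where it reduces to the classical factorization theorem: the composite left adjoint sends $X$ to $F(\mathscr{R}(S) \otimes X) \otimes_{F(\mathscr{R}(S))} \mathbf{1} \simeq F(X)$. The main obstacle is the careful verification that $\tilde{G}$ really is lax norm monoidal, i.e.\ that the $\mathscr{R}$-module structures assemble compatibly with the norm functors; but this is forced by naturality of the counit $FG \to \mathrm{id}$ with respect to the norm structure, which is part of the data of a norm monoidal adjunction.
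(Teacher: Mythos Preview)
Your approach differs substantially from the paper's. The paper gives a one-line proof: it invokes the functoriality of Lurie's construction $(\mathscr{D},\mathscr{A}) \mapsto \Mod_{\mathscr{A}}(\mathscr{D})$ from \cite[\S 4.8.3]{Lurie_HA}, which packages the entire factorization---both adjunctions and their compatibility---into a single coherent machine applied over $\Corr^\fet(\mathscr{B})^\op$. You instead attempt to build each adjunction by hand and verify the conditions of Proposition~\ref{prop:norm-adjunction-criterion}.

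The difficulty with your route is the construction of $\tilde{G}$. You write that ``since $U$ is conservative and preserves all restrictions and norms, $\tilde{G}$ inherits the lax norm monoidal structure of $G$,'' but this is not a valid argument. Conservativity of $U$ lets you \emph{detect} equivalences; it does not let you \emph{construct} a lift of $G$ through $U$ as a functor $\mathscr{D}_2^\otimes \to \Mod_{\mathscr{R}}(\mathscr{D}_1)^\otimes$ over $\Corr^\fet(\mathscr{B})^\op$. Knowing that each $G_S(Y)$ carries an $\mathscr{R}(S)$-module structure, and that these are natural in some informal sense, is far from producing a coherent functor between total spaces of coCartesian fibrations. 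Proposition~\ref{prop:norm-adjunction-criterion} takes a lax norm monoidal functor as \emph{input} and tells you when it participates in a norm monoidal adjunction; it cannot be used to manufacture $\tilde{G}$ in the first place.

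This is precisely the coherence problem that Lurie's functoriality result solves, and why the paper appeals to it directly. Your objectwise analysis is correct and would serve well as an explanation of what the factorization looks like in each fiber, but to carry out your strategy rigorously you would end up reproving (a parametrized form of) the relevant portion of \cite[\S 4.8.3]{Lurie_HA}.
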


\begin{proof}
    This comes from the functoriality of the construction $(\mathscr{D}, \mathscr{A}) \mapsto \Mod_\mathscr{A}(\mathscr{D})$ from \cite[\S 4.8.3]{Lurie_HA}.
\end{proof}

\subsection{Norms and Presentability}

\begin{notation}
    For this subsection, fix an algebraic space $B$ and a full subcategory $\mathscr{B} \subseteq_{\fet} \AlgSpc_B$.
\end{notation}

\begin{definition}
    A \emph{distribution diagram} in $\mathscr{B}$ is a commutative diagram
    \begin{equation} \label{eqn:distr-diag}
        \begin{tikzcd}
            X \ar[dr,"h"'] & (\mathrm{R}_p X)_S \ar[r,"q"] \ar[d,"g"] \ar[l,"e"'] \arrow[dr,phantom,"\lrcorner", very near start] & \mathrm{R}_p X \ar[d,"f"] \\
            & S \ar[r,"p"] & T
        \end{tikzcd}
    \end{equation}
    in $\mathscr{B}$ where
    \begin{itemize}
        \item $p$ is finite \'etale,
        \item $h$ is smooth,
        \item $f$ is the Weil restriction of $h$ along $p$,
        \item $g$ is the pullback of $f$ along $p$, and
        \item $e$ is the counit map.
    \end{itemize}
    Note that $e$, $f$, and $g$ are smooth, and $q$ is finite \'etale.
\end{definition}

\begin{definition}
    Let $\mathscr{D} : \Corr^{\fet}(\mathscr{B})^\mathrm{op} \to \Cat_\infty$ be a norm monoidal $\infty$-category over $\mathscr{B}$ such that, for every smooth morphism $f : X \to S$ in $\mathscr{B}$, the functor $f^* : \mathscr{D}(S) \to \mathscr{D}(X)$ admits a left adjoint $f_\sharp$.

    Consider a distribution diagram in $\mathscr{B}$ as in (\ref{eqn:distr-diag}).
    The associated \emph{distribution transformation} in $\mathscr{D}$ is a natural transformation
    \begin{equation*}
        \mathrm{Dis} : f_\sharp q_\otimes e^* \to p_\otimes h_\sharp : \mathscr{D}(X) \to \mathscr{D}(T)
    \end{equation*}
    obtained as the composite of the exchange transformation
    \begin{equation*}
        f_\sharp q_\otimes e^* \to p_\otimes g_\sharp e^*
    \end{equation*}
    and the counit transformation
    \begin{equation*}
        p_\otimes g_\sharp e^* \simeq p_\otimes h_\sharp e_\sharp e^* \to p_\otimes h_\sharp.
    \end{equation*}
\end{definition}

\begin{definition}
    Let $\mathscr{D} : \Corr^{\fet}(\mathscr{B})^\mathrm{op} \to \Cat_\infty$ be a norm monoidal $\infty$-category over $\mathscr{B}$ such that, for every smooth morphism $f : X \to S$ in $\mathscr{B}$, the functor $f^* : \mathscr{D}(S) \to \mathscr{D}(X)$ admits a left adjoint $f_\sharp$.
    We say \emph{the distributive law holds} in $\mathscr{D}$ if for every distribution diagram (\ref{eqn:distr-diag}), the associated distribution transformation
    \begin{equation*}
        \mathrm{Dis} : f_\sharp q_\otimes e^* \to p_\otimes h_\sharp : \mathscr{D}(X) \to \mathscr{D}(S)
    \end{equation*}
    is an equivalence.
\end{definition}

\begin{remark}
    Let $\mathscr{D} : \Corr^{\fet}(\mathscr{B})^\mathrm{op} \to \Cat_\infty$ be a norm monoidal $\infty$-category over $\mathscr{B}$ such that, for every smooth morphism $f : X \to S$ in $\mathscr{B}$, the functor $f^* : \mathscr{D}(S) \to \mathscr{D}(X)$ admits a left adjoint $f_\sharp$.
    Let $S \in \mathscr{B}$ and let $\nabla : S^{\sqcup n} \to S$ be the fold map.
    Under the equivalence $\mathscr{D}(S^{\sqcup n}) \to \mathscr{D}(S)^{\times n}$, we can identify $\nabla_\sharp : \mathscr{D}(S^{\sqcup n}) \to \mathscr{D}(S)$ with the coproduct functor.
\end{remark}

\begin{remark} \label{remark:distr-diag-binary-sum}
    Let $p : S \to T$ be a surjective finite \'etale morphism of algebraic spaces and let $h : S \sqcup S \to S$ be the fold map.
    Consider the distribution diagram
    \begin{equation} \label{eqn:distr-diag-binary-sum}
        \begin{tikzcd}
            S \sqcup S \ar[dr,"h"'] & \mathrm{R}_p (S \sqcup S)_S \ar[r,"q"] \ar[d,"g"] \ar[l,"e"'] & \mathrm{R}_p (S \sqcup S) \ar[d,"f"] \\
            & S \ar[r,"p"] & T
        \end{tikzcd}.
    \end{equation}
    The adjoints of the inclusions $p^* T \cong S \to S \sqcup S$ yield maps $T \to \mathrm{R}_p(S \sqcup S)$, which in turn yield a coproduct decomposition $\mathrm{R}_p (S \sqcup S) = T \sqcup C \sqcup T$ for some algebraic space $C$ over $T$.
    The restriction of $e : \mathrm{R}_p(S \sqcup S)_S \to S \sqcup S$ to $C_S \subseteq \mathrm{R}_p(S \sqcup S)_S$ can be decomposed now as
    \begin{equation*}
        e_L \sqcup e_R : L \sqcup R \to S \sqcup S.
    \end{equation*}
    Write $q_l : L \to C$ and $q_r : R \to C$ for the restrictions of $q$ to $L$ and $R$, and write $c : C \to Y$ for the restriction of $f$ to $C$.
    Note that $q_l$ and $q_r$ are again finite \'etale.

    Now let $\mathscr{D} : \Corr^{\fet}(\mathscr{B})^\mathrm{op} \to \Cat_\infty$ be a norm monoidal $\infty$-category over $\mathscr{B}$ such that, for every smooth morphism $f : X \to Y$ in $\mathscr{B}$, the functor $f^* : \mathscr{D}(Y) \to \mathscr{D}(X)$ admits a left adjoint $f_\sharp$.
    Under the equivalence $\mathscr{D}(S \sqcup S) \simeq \mathscr{D}(S) \times \mathscr{D}(S)$, the distribution transformation in $\mathscr{D}$ associated to (\ref{eqn:distr-diag-binary-sum}) can be identified as a transformation
    \begin{equation*}
        p_\otimes(A) \sqcup c_\sharp(q_{l,\otimes} e_l^*(A) \otimes q_{r,\otimes} e_r^*(B)) \sqcup p_\otimes(B) \to p_\otimes(A \sqcup B)
    \end{equation*}
    natural in $(A,B) \in \mathscr{D}(X) \times \mathscr{D}(X)$.
\end{remark}

\begin{definition}
    A \emph{presentably norm monoidal $\infty$-category} over $\mathscr{B}$ is a norm monoidal $\infty$-category $\mathscr{D} : \Corr^{\fet}(\mathscr{B})^\mathrm{op} \to \Cat_\infty$ such that following conditions hold.
    \begin{enumerate}
        \item For every $X \in \mathscr{B}$, the $\infty$-category $\mathscr{D}(X)$ is presentable,
        \item For every morphism $f : S \to T$ in $\mathscr{B}$, the functor $f^* : \mathscr{D}(T) \to \mathscr{D}(S)$ admits a right adjoint $f_*$.
        \item For every smooth morphism $p : X \to S$ in $\mathscr{B}$, the functor $p^* : \mathscr{D}(S) \to \mathscr{D}(X)$ admits a left adjoint $p_\sharp$.
        \item For every Cartesian diagram
        \begin{equation*}
            \begin{tikzcd}
                X \ar[r,"g"] \ar[d,"p"] & Y \ar[d,"q"] \\
                S \ar[r,"f"] & T
            \end{tikzcd}
        \end{equation*}
        in $\mathscr{B}$ with $q$ smooth, the exchange transformation
        \begin{equation*}
            \mathrm{Ex} : p_\sharp g^* \to f^* q_\sharp : \mathscr{D}(Y) \to \mathscr{D}(S)
        \end{equation*}
        is an equivalence.
        \item For every finite \'etale morphism $p : S \to T$ in $\mathscr{B}$, the functor $p_\otimes : \mathscr{D}(S) \to \mathscr{D}(T)$ preserves sifted colimits.
        \item The distributive law holds in $\mathscr{D}$.
    \end{enumerate}
\end{definition}

\begin{remark} \label{remark:deviation-presentably-norm-monoidal}
    The above definition of presentably norm monoidal $\infty$-category is more restrictive than the one in \cite[\S 6]{BachmannHoyois_Norms}.
    There, they only require $p_\sharp$ to exist for $p$ finite \'etale.
    The definition we use here agrees with the one in \cite[\S 3]{Bachmann_MotivicSpectralMackey}.
\end{remark}

\begin{proposition} \label{prop:presentably-norm-adjunction-criterion}
    Let $\mathscr{D}_1$ and $\mathscr{D}_2$ be presentably norm monoidal $\infty$-categories over $\mathscr{B}$, and let $F : \mathscr{D}_1 \to \mathscr{D}_2$ be a lax norm monoidal functor.
    \begin{enumerate}
        \item The norm monoidal functor $F$ is the left adjoint in a norm monoidal adjunction if and only if the following three conditions hold:
        \begin{enumerate}
            \item $F$ is strong norm monoidal
            \item for every $S \in \mathscr{B}$, the functor $F_S : \mathscr{D}_1(S) \to \mathscr{D}_2(S)$ preserves small colimits, and
            \item for every smooth morphism $p : X \to S$ in $\mathscr{B}$, the exchange transformation
                \begin{equation*}
                    p_\sharp F_X \to F_S p_\sharp : \mathscr{D}_1(X) \to \mathscr{D}_2(S)
                \end{equation*}
                is an equivalence.
        \end{enumerate}
        \item The norm monoidal functor $F$ is the right adjoint in a norm monoidal adjunction if and only if the following two conditions hold:
        \begin{enumerate}
            \item for every $S \in \mathscr{B}$, the functor $F_S : \mathscr{D}_1(S) \to \mathscr{D}_2(S)$ is accessible and preserves small limits, and
            \item for every morphism $f : S \to T$ in $\mathscr{B}$, the exchange transformation
                \begin{equation*}
                    f_* F_S \to F_T f_* : \mathscr{D}_1(S) \to \mathscr{D}_2(T)
                \end{equation*}
                is an equivalence.
        \end{enumerate}
    \end{enumerate}
\end{proposition}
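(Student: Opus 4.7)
The plan is to reduce Proposition~\ref{prop:presentably-norm-adjunction-criterion} to Proposition~\ref{prop:norm-adjunction-criterion} by showing that, under the presentability hypotheses, the two sets of conditions are equivalent. The two tools needed are the adjoint functor theorem and the standard calculus of mates.

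For part (1), condition (a) appears in both statements. For condition (b), the fact that $\mathscr{D}_1(S)$ and $\mathscr{D}_2(S)$ are presentable allows me to invoke the adjoint functor theorem: a functor between presentable $\infty$-categories admits a right adjoint if and only if it preserves small colimits. For condition (c), I use the fact that $F$ is strong norm monoidal on smooth morphisms to obtain a natural equivalence $p^* F_S \simeq F_X p^*$ for every smooth $p : X \to S$ in $\mathscr{B}$. From this commutative square, together with the fibrewise adjunctions $F_\star \dashv G_\star$ and the pullback adjunctions $p_\sharp \dashv p^*$, one builds two Beck--Chevalley transformations: $p_\sharp F_X \to F_S p_\sharp$ appearing in Proposition~\ref{prop:presentably-norm-adjunction-criterion}(1)(c), and $p^* G_S \to G_X p^*$ appearing in Proposition~\ref{prop:norm-adjunction-criterion}(1)(c). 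These are mates of each other, and so one is an equivalence if and only if the other is.

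For part (2), the approach is entirely analogous. Condition (a) is equivalent to $F_S$ admitting a left adjoint by the dual form of the adjoint functor theorem, which requires both accessibility and preservation of small limits. For condition (b), the transformations $f_* F_S \to F_T f_*$ and $f^* G_T \to G_S f^*$ (where $G$ denotes the putative left adjoint to $F$) are mates with respect to the adjunctions $f^* \dashv f_*$ and $G_\star \dashv F_\star$. Once more, one is an equivalence if and only if the other is.

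The main obstacle is the careful bookkeeping of the mate correspondences: I must verify that the Beck--Chevalley transformations named in the presentable version do in fact correspond, under mates, to those in Proposition~\ref{prop:norm-adjunction-criterion}, in each of the relevant directions. Once this is sorted out, the argument becomes formal, combining the adjoint functor theorem with the standard observation that the mate of an equivalence is an equivalence.
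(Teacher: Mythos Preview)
Your proposal is correct and follows exactly the approach the paper takes: the paper's proof is the single sentence ``Under the presentability hypotheses, these conditions are equivalent to the conditions in Proposition~\ref{prop:norm-adjunction-criterion},'' and you have spelled out precisely the two ingredients (the adjoint functor theorem and the mate calculus for the Beck--Chevalley transformations) that make this equivalence hold. Your bookkeeping of which exchange transformations are mates of which is accurate, so there is nothing to add.
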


\begin{proof}
    Under the presentability hypotheses, these conditions are equivalent to the conditions in Proposition~\ref{prop:norm-adjunction-criterion}.
\end{proof}

\begin{proposition} \label{prop:norm-is-polynomial}
    Let $\mathscr{D} : \Corr^{\fet}(\mathscr{B})^\mathrm{op} \to \Cat_\infty$ be a presentably norm monoidal $\infty$-category over $\mathscr{B}$.
    For every finite \'etale morphsim $p : S \to T$ in $\mathscr{B}$ of degree $\leq n$, the functor $p_\otimes : \mathscr{D}(S) \to \mathscr{D}(T)$ is polynomial of degree $\leq n$.
\end{proposition}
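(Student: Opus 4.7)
The plan is to proceed by induction on $n$, using the binary distribution formula of Remark~\ref{remark:distr-diag-binary-sum} as the essential input. The base case $n \leq 0$ is trivial: a finite \'etale morphism of degree $\leq 0$ has empty source, so its norm is a constant functor $\ast \to \mathscr{D}(T)$ at a unit object, which is polynomial of degree $\leq 0$.

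For the inductive step, fix $n \geq 1$ and assume the proposition for all finite \'etale morphisms of degree strictly less than $n$. Let $p : S \to T$ be finite \'etale of degree $\leq n$ and fix $B \in \mathscr{D}(S)$; it suffices to show that $\mathscr{D}_B p_\otimes$ is polynomial of degree $\leq n - 1$. Since the distributive law holds in $\mathscr{D}$, the distribution transformation from Remark~\ref{remark:distr-diag-binary-sum} is an equivalence, giving a natural identification
\begin{equation*} p_\otimes(A \vee B) \simeq p_\otimes(A) \vee c_\sharp\bigl(q_{l,\otimes}e_l^*(A) \otimes q_{r,\otimes}e_r^*(B)\bigr) \vee p_\otimes(B). \end{equation*}
By naturality in the $B$-variable, specialized along the morphism $0 \to B$, the coprojection $p_\otimes(A) \to p_\otimes(A \vee B)$ is identified with the inclusion of the first summand in this decomposition. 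Taking cofibers gives
\begin{equation*} \mathscr{D}_B p_\otimes(A) \simeq c_\sharp\bigl(q_{l,\otimes}e_l^*(A) \otimes q_{r,\otimes}e_r^*(B)\bigr) \vee p_\otimes(B). \end{equation*}

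I now verify that each summand is polynomial of degree $\leq n - 1$ as a functor of $A$. The summand $p_\otimes(B)$ is constant in $A$. For the first summand, the crucial observation is that $q_l : L \to C$ has degree $\leq n - 1$: a point of $C$ over $t \in T$ corresponds to a proper nonempty subset $\sigma \subsetneq p^{-1}(t)$, and the fiber of $q_l$ there has cardinality $|\sigma| \leq n - 1$. By the inductive hypothesis, $q_{l,\otimes}$ is polynomial of degree $\leq n - 1$. The remaining functors $e_l^*$, $({-}) \otimes q_{r,\otimes}e_r^*(B)$, and $c_\sharp$ in the composite are colimit-preserving (the first and third as left adjoints, the second by bilinearity of the presentably symmetric monoidal structure on $\mathscr{D}(C)$), hence polynomial of degree $\leq 1$; pre- and post-composition with such linear functors preserves polynomial degree. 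Finally, a coproduct of polynomial functors of degree $\leq n - 1$ is again polynomial of degree $\leq n - 1$, since cofibers commute with finite coproducts in any pointed $\infty$-category with finite colimits.

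The main subtlety is the identification of the natural map $p_\otimes(A) \to p_\otimes(A \vee B)$ with the inclusion of the first summand after applying the distribution formula; this is a clean naturality argument but must be tracked carefully (using that $p_\otimes$, $q_{r,\otimes}$, and $c_\sharp(\,\cdot \otimes \,\cdot)$ all send the zero object to zero when the relevant morphism has positive degree). The remaining bookkeeping — the degree bound on $q_l$, closure of polynomial functors under composition with linear functors, and closure under coproducts — is routine given Proposition~\ref{prop:multivariable-deriv-is-poly} and the structural features of a presentably norm monoidal $\infty$-category.
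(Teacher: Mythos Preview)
Your proposal is correct and is precisely the argument of \cite[Proposition~5.25]{BachmannHoyois_Norms} that the paper invokes: induction on the degree, using the binary distribution formula of Remark~\ref{remark:distr-diag-binary-sum} to identify $\mathscr{D}_B p_\otimes$ with a coproduct of a constant functor and a composite involving $q_{l,\otimes}$, and then observing that $q_l$ has degree $\leq n-1$. The only point worth flagging is that you are (correctly) using the convention $\mathscr{D}_Y F(X) = \operatorname{cofib}(F(X) \to F(X \vee Y))$, which is the one actually employed in the proof of Proposition~\ref{prop:multivariable-deriv-is-poly} and in \cite{BachmannHoyois_Norms}, rather than the formula as literally displayed in the definition here; with the other convention the first summand would be $p_\otimes(A)$ and the induction would fail.
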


\begin{proof}
    Using Remark (\ref{remark:distr-diag-binary-sum}), the argument of \cite[Proposition~5.25]{BachmannHoyois_Norms} goes through unchanged.
\end{proof}

\begin{proposition} \label{prop:norm-t-structure}
    Let $\mathscr{D}$ be a presentably norm monoidal $\infty$-category over $\mathscr{B}$ such that $\mathscr{D}_1(S)$ is stable for every $S \in \mathscr{B}$.
    For each $S \in \mathscr{B}$, let $\mathscr{E}_0(S) \subseteq \mathscr{D}(S)$ be a collection of objects, and let $\mathscr{E}(S) \subseteq \mathscr{D}(S)$ be the full subcategory generated under colimits and extensions by objects of $\mathscr{E}_0(S)$.
    Suppose that the following conditions hold.
    \begin{enumerate}
        \item For every morphism $f : S \to T$ in $\mathscr{B}$, we have $f^*(\mathscr{E}_0(T)) \subseteq \mathscr{E}(S)$.
        \item For every smooth morphism $h : X \to S$ in $\mathscr{B}$, we have $h_\sharp(\mathscr{E}_0(X)) \subseteq \mathscr{E}(S)$.
        \item For every surjective finite \'etale morphism $p : S \to T$ in $\mathscr{B}$, we have $p_\otimes (\mathscr{E}_0(S)) \subseteq \mathscr{E}(T)$.
        \item For every $S \in \mathscr{B}$ and $M, N \in \mathscr{E}_0(S)$, we have $M \otimes N \in \mathscr{E}(S)$.
        \item For every $S \in \mathscr{B}$, we have $\mathbf{1}_{\mathscr{D}(S)} \in \mathscr{E}(S)$.
    \end{enumerate}
    Then the $\infty$-categories $\mathscr{E}(S)$ can be assembled into a presentably norm monoidal $\infty$-category $\mathscr{E}$ over $\mathscr{B}$.
    Moreover, the inclusion functors $\mathscr{E}(S) \to \mathscr{D}(S)$ can be assembled into the left adjoint of a norm monoidal adjunction $\mathscr{E} \rightleftarrows \mathscr{D}$.
\end{proposition}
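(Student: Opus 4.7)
The plan is to construct $\mathscr{E}^\otimes \to \Corr^\fet(\mathscr{B})^\op$ as a sub-coCartesian-fibration of $\mathscr{D}^\otimes \to \Corr^\fet(\mathscr{B})^\op$ whose fiber over $S$ is $\mathscr{E}(S)$. For this it suffices to show that each of the structure functors $f^*$, $h_\sharp$, $p_\otimes$, and $\otimes$ of $\mathscr{D}$ sends objects of $\mathscr{E}$ into $\mathscr{E}$, and that the unit $\mathbf{1}_{\mathscr{D}(S)}$ lies in $\mathscr{E}(S)$. The last point is assumption (5). Each $\mathscr{E}(S)$ is a presentable stable subcategory of $\mathscr{D}(S)$ since it is generated under colimits and extensions by the small collection $\mathscr{E}_0(S)$; in particular the inclusion $\mathscr{E}(S) \hookrightarrow \mathscr{D}(S)$ preserves small colimits and admits a right adjoint.

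The restrictions of $f^*$ and $h_\sharp$ are easy: both are colimit-preserving and exact between stable $\infty$-categories, so the full subcategory of objects sent into $\mathscr{E}$ is closed under colimits and extensions; since it contains $\mathscr{E}_0$ by hypotheses (1) and (2), it contains all of $\mathscr{E}$. For the tensor product, by assumption (4), $M \otimes N \in \mathscr{E}(S)$ whenever $M, N \in \mathscr{E}_0(S)$; fixing one variable at a time and using that $\otimes$ preserves colimits and cofiber sequences in each variable separately, we extend this to all of $\mathscr{E}(S) \times \mathscr{E}(S)$.

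The main obstacle is showing that $p_\otimes$ preserves $\mathscr{E}$, since $p_\otimes$ only preserves \emph{sifted} colimits and is not exact. I would prove this by induction on the degree $n$ of $p$, the cases $n \le 1$ being trivial. For $n \ge 2$, consider the full subcategory $\mathscr{F}(S) \subseteq \mathscr{D}(S)$ of objects $M$ with $p_\otimes(M) \in \mathscr{E}(T)$; by hypothesis (3) it contains $\mathscr{E}_0(S)$, and by presentability axiom (5) it is closed under sifted colimits. To handle finite coproducts, apply the distribution formula of Remark~\ref{remark:distr-diag-binary-sum}: for $A, B \in \mathscr{E}(S)$, $p_\otimes(A \sqcup B)$ sits in a cofiber sequence whose other terms are $p_\otimes(A) \sqcup p_\otimes(B)$ and $c_\sharp(q_{l,\otimes} e_l^*(A) \otimes q_{r,\otimes} e_r^*(B))$; the latter lies in $\mathscr{E}(T)$ by the already-established preservation results for $c_\sharp$, $e_l^*$, $e_r^*$, $\otimes$, together with the inductive hypothesis applied to $q_l, q_r$ (which have degree strictly less than $n$, by analysis of the distribution diagram). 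Using that $p_\otimes$ is polynomial of degree $\leq n$ (Proposition~\ref{prop:norm-is-polynomial}) to control higher cross-effects inductively, together with closure of $\mathscr{E}(T)$ under extensions and the fact that finite colimits in stable $\infty$-categories are built from coproducts and cofibers, we conclude $\mathscr{F}(S) \supseteq \mathscr{E}(S)$.

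Having shown $\mathscr{E}^\otimes$ is stable under all structure maps, the restriction defines a norm monoidal $\infty$-category. Its presentably norm monoidal structure follows: each $\mathscr{E}(S)$ is presentable, the functors $f^*$ and (smooth) $p^*$ admit the right adjoints $\iota_S \circ f_* \circ \iota_S^R$ and left adjoints obtained by restricting $f_\sharp$ (using preservation of colimits and extensions by $\iota$), the smooth base change and distributive law follow from the corresponding statements in $\mathscr{D}$ and full faithfulness of the inclusions, and sifted-colimit preservation of $p_\otimes$ is inherited. Finally, to obtain the norm monoidal adjunction $\mathscr{E} \rightleftarrows \mathscr{D}$, I apply Proposition~\ref{prop:presentably-norm-adjunction-criterion}(1): the inclusion is strong norm monoidal by construction, is colimit-preserving fiberwise, and the smooth base change condition $p_\sharp \iota_X \simeq \iota_S p_\sharp$ is tautological since $p_\sharp$ on $\mathscr{E}$ is defined as the restriction of $p_\sharp$ on $\mathscr{D}$.
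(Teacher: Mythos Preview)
Your argument is correct and is essentially what the paper's proof does: it simply cites \cite[Proposition~6.7]{BachmannHoyois_Norms} (noting that the slightly different definition of ``presentably norm monoidal'' used here does not affect the argument) for the first claim and then invokes Proposition~\ref{prop:presentably-norm-adjunction-criterion} for the second. The one step you leave somewhat vague---that $p_\otimes^{-1}(\mathscr{E}(T))$ is closed not just under colimits but under \emph{extensions}---is precisely where Bachmann--Hoyois use the polynomiality/cross-effect filtration you allude to, so your sketch lines up with the cited argument.
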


\begin{proof}
    The first claim is a very minor modification of the argument for \cite[Proposition 6.7]{BachmannHoyois_Norms}.
    The main difference is that the definition of ``presentably norm monoidal'' used there only requires $h_\sharp$ to exist when $h$ is finite \'etale, but this does not affect the argument in any essential way.
    
    The second claim is then an application of Proposition~\ref{prop:presentably-norm-adjunction-criterion}.
\end{proof}

\begin{proposition} \label{prop:norm-localization}
    Let $\mathscr{D}$ be a presentably norm monoidal $\infty$-category over $\mathscr{B}$.
    For each $S \in \mathscr{B}$, let $w(S)$ be a set of morphisms in $\mathscr{D}(S)$, and let $\mathrm{L}_w \mathscr{D}(S) \subseteq \mathscr{D}(S)$ be the full subcategory of $w(S)$-local objects.
    Let $\mathrm{L}_w : \mathscr{D}(S) \to \mathrm{L}_w \mathscr{D}(S)$ denote the left adjoint of the inclusion, and let $\overline{w}(S)$ denote the class of morphisms whose image under $\mathrm{L}_w$ is invertible.
    Suppose that the following conditions hold.
    \begin{enumerate}
        \item For every morphism $f : S \to T$ in $\mathscr{B}$, we have $f^*(w(T)) \subseteq \overline{w}(S)$.
        \item For every smooth morphism $h : X \to S$ in $\mathscr{B}$, we have $h_\sharp(w(X)) \subseteq \overline{w}(S)$.
        \item For every surjective finite \'etale morphism $p : S \to T$ in $\mathscr{B}$, we have $p_\otimes (w(S)) \subseteq \overline{w}(T)$.
        \item For every $S \in \mathscr{B}$ and $M \in \mathscr{D}(S)$, we have $\mathrm{id}_M \otimes w(S) \subseteq \overline{w}(S)$.
    \end{enumerate}
    Then the $\infty$-categories $\mathrm{L}_w \mathscr{D}(S)$ can be assembled into a presentably norm monoidal $\infty$-category $\mathrm{L}_w \mathscr{D}$ over $\mathscr{B}$.
    Moreover, the localization functors $\mathrm{L}_w : \mathrm{L}_w \mathscr{D}(S) \to \mathscr{D}(S)$ can be assembled into the left adjoint in a norm monoidal adjunction $\mathrm{L}_w : \mathrm{L}_w \mathscr{D} \rightleftarrows \mathscr{D}$.
\end{proposition}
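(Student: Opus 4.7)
The plan is to mimic the proof of Proposition~\ref{prop:norm-t-structure} (itself modeled on \cite[Proposition~6.7]{BachmannHoyois_Norms}), substituting ``reflective localization'' for ``full subcategory generated under colimits and extensions''. The four hypotheses are chosen precisely so that each piece of structure on $\mathscr{D}$ descends coherently to the fiberwise localizations.

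First I would observe that for every $S \in \mathscr{B}$, the subcategory $\mathrm{L}_w\mathscr{D}(S) \subseteq \mathscr{D}(S)$ is an accessible reflective localization of a presentable $\infty$-category, hence itself presentable. By hypothesis (4) and \cite[Proposition~2.2.1.9]{Lurie_HA}, each $\mathrm{L}_w\mathscr{D}(S)$ inherits a unique symmetric monoidal structure for which $\mathrm{L}_w : \mathscr{D}(S) \to \mathrm{L}_w\mathscr{D}(S)$ is symmetric monoidal. I would then descend the structural functors $f^*$, $h_\sharp$, and $p_\otimes$ to the localizations: the key observation is that a functor $F : \mathscr{D}(S) \to \mathscr{D}(T)$ satisfying $F(w(S)) \subseteq \overline{w}(T)$ becomes, after postcomposition with $\mathrm{L}_w$, a functor that inverts $w(S)$ and so factors uniquely through $\mathrm{L}_w\mathscr{D}(S)$. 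Hypotheses (1), (2), (3) supply precisely this condition for $f^*$, $h_\sharp$, and $p_\otimes$ respectively. The corresponding right adjoints on the localized $\infty$-categories are obtained by restricting the right adjoints from $\mathscr{D}$ along the fully faithful inclusion $\mathrm{L}_w\mathscr{D}(-) \hookrightarrow \mathscr{D}(-)$, and the various base change and projection-formula equivalences descend from those in $\mathscr{D}$.

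The main obstacle is assembling these fiberwise constructions into an actual coCartesian fibration $\mathrm{L}_w\mathscr{D}^\otimes \to \Corr^\fet(\mathscr{B})^\op$ equipped with a coCartesian-edge-preserving map $\mathrm{L}_w : \mathscr{D}^\otimes \to \mathrm{L}_w\mathscr{D}^\otimes$ over the base. Following the strategy of \cite[Proposition~6.7]{BachmannHoyois_Norms}, I would work at the level of total spaces: the fiberwise reflections assemble into a relative localization of $\mathscr{D}^\otimes$ over $\Corr^\fet(\mathscr{B})^\op$, and the structural morphisms of $\mathscr{D}^\otimes$ descend by the universal property of this relative localization. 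The distributive law holds in $\mathrm{L}_w\mathscr{D}$ because each distribution transformation there is the image under $\mathrm{L}_w$ of the corresponding equivalence in $\mathscr{D}$, and similarly $p_\otimes$ preserves sifted colimits because it is the composite of the inclusion, the sifted-colimit-preserving $p_\otimes$ of $\mathscr{D}$, and the left adjoint $\mathrm{L}_w$.

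Finally, the norm monoidal adjunction $\mathrm{L}_w : \mathscr{D} \rightleftarrows \mathrm{L}_w\mathscr{D}$ is a direct application of Proposition~\ref{prop:presentably-norm-adjunction-criterion}(1). The localization is strong norm monoidal by construction, it preserves small colimits fiberwise as a left adjoint, and for every smooth morphism $h$ in $\mathscr{B}$ the smooth base change $h_\sharp \mathrm{L}_w \to \mathrm{L}_w h_\sharp$ is an equivalence essentially by the definition of the descended $h_\sharp$ on $\mathrm{L}_w\mathscr{D}$.
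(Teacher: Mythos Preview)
Your approach is essentially the paper's: the paper simply cites \cite[Propositions~6.15 and 6.16]{BachmannHoyois_Norms} (the localization analogues of \cite[Proposition~6.7]{BachmannHoyois_Norms}) with the same caveat as in Proposition~\ref{prop:norm-t-structure}, and then invokes Proposition~\ref{prop:presentably-norm-adjunction-criterion} for the adjunction. Your sketch unpacks exactly this argument.

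One small technical slip: your argument that the localized $p_\otimes$ preserves sifted colimits, phrased as ``composite of the inclusion, $p_\otimes$, and $\mathrm{L}_w$'', would require the inclusion $\iota : \mathrm{L}_w\mathscr{D}(S) \hookrightarrow \mathscr{D}(S)$ to preserve sifted colimits, which is not automatic. The clean argument uses instead the relation $p_\otimes^{\mathrm{loc}} \circ \mathrm{L}_w \simeq \mathrm{L}_w \circ p_\otimes$ together with the fact that $\mathrm{L}_w$ is essentially surjective and preserves sifted colimits; then sifted-colimit preservation for $p_\otimes^{\mathrm{loc}}$ follows from that of $\mathrm{L}_w \circ p_\otimes$. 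This is the form the argument takes in Bachmann--Hoyois.
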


\begin{proof}
    This is a combination of \cite[Proposition 6.15]{BachmannHoyois_Norms} and \cite[Proposition 6.16]{BachmannHoyois_Norms}, with the same caveats as in the proof of Proposition~\ref{prop:norm-t-structure}.
\end{proof}

\section{Motivic Patterns}

In order to streamline proofs and organize the various flavors of transfer for motivic spectra that appear later on in this article, we introduce here the conecpt of a \emph{motivic pattern}.
In essence, a motivic pattern $\mathscr{C}$ is exactly the type of input one needs to construct a nice family of monadic adjunctions
\begin{equation*}
    \SH(S) \rightleftarrows \SH^\mathscr{C}(S)
\end{equation*}
indexed by schemes $S$.
Monadicity tells us that the objects of $\SH^\mathscr{C}(S)$ could be thought of as motivic spectra equipped with extra structure.
The assignment
\begin{equation*}
    S \mapsto \SH^\mathscr{C}(S)
\end{equation*}
is rich with functoriality, yielding a \emph{$(*,\sharp,\otimes)$-formalism} in the sense of \cite{Khan_VoevodskyCriterion}, or a \emph{presentable coefficient system} in the sense of \cite{DrewGallauer_Universal6ff}.

Motivic patterns developed here are also closely related to the \emph{motivic categories of correspondences} of \cite{Bachmann_Cancellation} and the \emph{correspondence categories} of \cite{elmanto2020modules}.

We lay down the foundations of motivic patterns in Subsections~4.1-3.
We discuss in Subsection~4.4 the functoriality of the theory as we vary the motivic pattern.
In Subsections~4.5 and 4.6 we study the compatibility of motivic patterns with norm functors and group completions, respectively.

\subsection{Basic Theory}

\begin{remark}
    Let $\mathscr{C}$ be a small $\infty$-category with finite coproducts.
    Recall from \cite[Lemma 2.1]{BachmannHoyois_Norms} that the Yoneda embedding
    \begin{equation*}
        \mathscr{C}_+ \subseteq \mathscr{C}_\bullet \subseteq \PSh_\Sigma(\mathscr{C})_\bullet
    \end{equation*}
    induces an equivalence $\PSh_\Sigma(\mathscr{C}_+) \cong \PSh_\Sigma(\mathscr{C})_\bullet$.
\end{remark}

\begin{notation}
    Let $S$ be an algebraic space, and let $\theta : \Sm_{S+} \to \mathscr{C}$ be a functor that preserves finite coproducts.
    For $X \in \Sm_S$, we define
    \begin{equation*}
        h_S^\mathscr{C}(X) = \mathrm{Map}_{\mathscr{C}}(\theta({-}), \theta(X_+)) \in \PSh_\Sigma(\Sm_S)_\bullet
    \end{equation*}
    This gives a functor $h_S^\mathscr{C} : \Sm_S \to \PSh_\Sigma(\Sm_S)_\bullet$.
    We'll also write
    \begin{equation*}
        h_S^\mathscr{C} : \PSh_\Sigma(\Sm_S) \to \PSh_\Sigma(\Sm_S)_\bullet
    \end{equation*}
    for the left Kan extension of $h_S^\mathscr{C}$ along the Yoneda embedding $\Sm_S \to \PSh_\Sigma(\Sm_S)$.
\end{notation}

\begin{definition}
    A \emph{motivic pattern} $\mathscr{C} = (\mathscr{C}_\star,\theta)$ consists of
    \begin{itemize}
        \item a functor $\mathscr{C}_\star : \AlgSpc^\op \to \CAlg(\Cat_\infty)$ that preserves finite products, and
        \item a natural transformation
        \begin{equation*}
            \theta_\mathscr{C} : \Sm_{\star+} \to \mathscr{C}_\star
        \end{equation*}
        between functors $\AlgSpc^\op \to \CAlg(\Cat_\infty)$.
    \end{itemize}
    We require that these data satisfy the following axioms.
    \begin{enumerate}
        \item For every algebraic space $S$, the component $\theta: \Sm_{S+} \to \mathscr{C}_S$ is essentially surjective.
        \item For every algebraic space $S$, every algebraic space $X$ smooth over $S$, and every Nisnevich covering sieve $R \hookrightarrow X$ generated by a single map, the induced map $h_{S}^\mathscr{C}(R) \to h_{S}^\mathscr{C}(X)$ is a Nisnevich equivalence.
        \item For every smooth morphism of algebraic spaces $f : S \to T$, the functor $f^* : \mathscr{C}_T \to \mathscr{C}_S$ admits a left adjoint $f_\sharp $, and the exchange transformation
        \begin{equation*}
            f_\sharp \theta \to \theta f_\sharp : \Sm_{S+} \to \mathscr{C}_T
        \end{equation*}
        is an equivalence.
    \end{enumerate}
    We write
    \begin{equation*}
        \mathscr{M}\mathrm{ot}\mathscr{P}\mathrm{att} \subseteq \mathrm{Fun}(\AlgSpc^\mathrm{op} , \CAlg(\Cat_\infty))_{\Sm_{\star+}/}
    \end{equation*}
    for the full subcategory spanned by motivic patterns.
\end{definition}

\begin{example}
    The presheaf $\Sm_{\star+} : \AlgSpc^\op \to \CAlg(\Cat_\infty)$ is the initial motivic pattern.
\end{example}

\begin{example}
    The assignment sending $S \in \AlgSpc$ to $\Sm_{S+} \to \Corr^{\fold}(\Sm_S)$ can be promoted to a motivic pattern.
\end{example}

\begin{example}
    We will encounter a slew of motivic patterns in the following section.
    See Construction~\ref{construct:examples-are-patterns}
\end{example}

\begin{remark}
    Let $(\mathscr{C}_\star, \theta)$ be a motivic pattern, let $S$ be an algebraic space, and let $\nabla : S^{\sqcup n} \to S$ be the fold map.
    Under the equivalence $\mathscr{C}_{S^{\sqcup n}} \cong \mathscr{C}_S^{\times n}$, we can identify $\nabla_\sharp : \mathscr{C}_{S^{\sqcup n}} \to \mathscr{C}_S$ with the $n$-fold coproduct functor.
    In particular, Axiom (3) in the definition of motivic patterns implies that $\mathscr{C}_S$ admits finite coproducts, and $\theta : \Sm_{S+} \to \mathscr{C}_S$ preserves them.
\end{remark}

\begin{proposition}
    Let $(\mathscr{C}_\star, \theta)$ be a motivic pattern and let $S$ be an algebraic space.
    The tensor product in $\mathscr{C}_S$ preserves finite coproducts in each variable.
\end{proposition}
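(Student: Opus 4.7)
The plan is to reduce the claim to the distributivity of the Cartesian product over coproducts in $\Sm_S$, exploiting that every object of $\mathscr{C}_S$ lies in the essential image of $\theta : \Sm_{S+} \to \mathscr{C}_S$. First, I would rephrase what must be shown: for each fixed $X \in \mathscr{C}_S$, the functor $X \otimes (-) : \mathscr{C}_S \to \mathscr{C}_S$ should preserve binary coproducts and the initial object, i.e.\ the canonical comparison maps
\[
(X \otimes Y) \sqcup (X \otimes Z) \to X \otimes (Y \sqcup Z) \qquad\text{and}\qquad \emptyset \to X \otimes \emptyset
\]
should be equivalences for all $Y, Z \in \mathscr{C}_S$.

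The key step is to exploit naturality of these comparison maps in $X, Y, Z$ together with Axiom (1) in the definition of a motivic pattern, which asserts that $\theta$ is essentially surjective. This lets me assume without loss of generality that $X = \theta(X'_+)$, $Y = \theta(Y'_+)$, and $Z = \theta(Z'_+)$ for some $X', Y', Z' \in \Sm_S$. Then I would invoke two further structural facts: (i) $\theta$ is a symmetric monoidal functor, since it is a morphism in $\CAlg(\Cat_\infty)$; and (ii) $\theta$ preserves finite coproducts, by the remark immediately preceding this proposition (which applies Axiom (3) to the fold maps $S^{\sqcup n} \to S$). Combined, these two properties identify the comparison maps above with $\theta$ applied to the corresponding comparison maps in $\Sm_{S+}$.

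What remains is then routine: the category $\Sm_{S+}$ inherits distributivity of its smash monoidal structure over coproducts from the extensivity of $\Sm_S$, since $X'_+ \otimes Y'_+ = (X' \times Y')_+$ and $Y'_+ \sqcup Z'_+ = (Y' \sqcup Z')_+$, so the relevant comparison maps in $\Sm_{S+}$ are already equivalences (and likewise for the empty case, using $X' \times \emptyset = \emptyset$). I do not expect any serious obstacle here; the only mild subtlety is being careful with naturality when applying essential surjectivity and with verifying that the abstractly defined distributivity map in $\mathscr{C}_S$ really is $\theta$ of the analogous distributivity map in $\Sm_{S+}$, both of which follow directly from the symmetric monoidal and coproduct-preserving structure on $\theta$.
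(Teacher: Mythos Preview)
Your proposal is correct and follows essentially the same approach as the paper: use that $\theta$ is essentially surjective, symmetric monoidal, and coproduct-preserving to reduce the distributivity comparison in $\mathscr{C}_S$ to the known distributivity of products over coproducts in $\Sm_S$. The paper's proof is terser but relies on exactly the same three properties of $\theta$ and the same underlying fact about $\Sm_S$.
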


\begin{proof}
    Since $\theta : \Sm_{S+} \to \mathscr{C}_S$ is essentially surjective, symmetric monoidal, and preserves finite coproducts, this follows from the fact that finite products in $\Sm_S$ distribute over finite coproducts.
\end{proof}

\begin{proposition} \label{prop:smooth-bc-projection}
    Let $\mathscr{C} = (\mathscr{C}_\star,\theta)$ be a motivic pattern and let $p : X \to S$ be a smooth morphism of algebraic spaces.
    \begin{enumerate}
        \item For every Cartesian square of algebraic spaces
        \begin{equation*}
            \begin{tikzcd}
                Y \ar[r,"g"] \ar[d,"q"] & X \ar[d,"p"] \\
                T \ar[r,"f"] & S
            \end{tikzcd}
        \end{equation*}
        the exchange transformation
        \begin{equation*}
            \mathrm{Ex} : q_\sharp g^* \to f^* p_\sharp : \mathscr{C}_X \to \mathscr{C}_T
        \end{equation*}
        is an equivalence.

        \item The projection transformation
        \begin{equation*}
            \mathrm{Proj} : p_\sharp ({-} \otimes p^*({-})) \to p_\sharp ({-}) \otimes ({-}) : \mathscr{C}_X \times \mathscr{C}_S \to \mathscr{C}_S
        \end{equation*}
        is an equivalence.
    \end{enumerate}
\end{proposition}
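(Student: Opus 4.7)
My plan is to reduce both claims to the corresponding (essentially trivial) statements in $\Sm_{\star+}$, exploiting the fact that $\theta : \Sm_{X+} \to \mathscr{C}_X$ is essentially surjective (Axiom~(1)) and intertwines with all the pullback and smooth pushforward functors. By essential surjectivity, any object of $\mathscr{C}_X$ is equivalent to $\theta(W_+)$ for some $W \in \Sm_X$, and any object of $\mathscr{C}_S$ is equivalent to $\theta(V_+)$ for some $V \in \Sm_S$. Hence to check that a natural transformation between functors out of $\mathscr{C}_X$ (resp.\ $\mathscr{C}_X \times \mathscr{C}_S$) is an equivalence, it suffices to evaluate at such objects.

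For part~(1), fix $W \in \Sm_X$. Since $\theta$ is a natural transformation of presheaves $\AlgSpc^\op \to \CAlg(\Cat_\infty)$, we have $\theta \circ g^* \simeq g^* \circ \theta$ and $\theta \circ f^* \simeq f^* \circ \theta$. By Axiom~(3), the natural transformations $q_\sharp \theta \to \theta q_\sharp$ and $p_\sharp \theta \to \theta p_\sharp$ are equivalences. Tracing through the construction of the exchange transformation (as the adjoint of $g^*$ applied to the unit $\mathrm{id} \to p^* p_\sharp$), these compatibilities identify the exchange transformation in $\mathscr{C}$ evaluated at $\theta(W_+)$ with $\theta$ applied to the exchange transformation in $\Sm_{\star+}$ evaluated at $W_+$. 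The latter is an equivalence because, in $\Sm_{\star+}$, both $q_\sharp g^*(W_+)$ and $f^* p_\sharp(W_+)$ compute the object $(W \times_S T)_+$ over $T$, using the Cartesian identification $W \times_X Y = W \times_S T$.

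For part~(2), fix $W \in \Sm_X$ and $V \in \Sm_S$ and evaluate at $A = \theta(W_+)$, $B = \theta(V_+)$. Using that $p^*$ is symmetric monoidal and $\theta$ is symmetric monoidal and compatible with pullback, we compute $p^*B \simeq \theta((V \times_S X)_+)$ and $A \otimes p^* B \simeq \theta((W \times_S V)_+)$, where $W \times_S V$ is viewed over $X$. Applying $p_\sharp$ via Axiom~(3) and the previous proposition, this becomes $\theta((W \times_S V)_+)$ in $\mathscr{C}_S$. Similarly, $p_\sharp A \otimes B \simeq \theta((W \times_S V)_+)$ in $\mathscr{C}_S$. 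As in the previous paragraph, the compatibility of $\theta$ with the adjunction unit identifies the projection transformation in $\mathscr{C}$ with $\theta$ applied to its counterpart in $\Sm_{\star+}$, which is an equivalence since finite products in $\Sm_S$ distribute appropriately.

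The main obstacle is bookkeeping: one must verify that the exchange and projection transformations in $\mathscr{C}$, which are defined purely in terms of unit and counit of the $p_\sharp \dashv p^*$ adjunctions together with the symmetric monoidal structure, are genuinely transported from their counterparts in $\Sm_{\star+}$ by $\theta$. This amounts to unwinding the construction of the transformations and repeatedly invoking the naturality of $\theta$ as a morphism in $\mathrm{Fun}(\AlgSpc^\op, \CAlg(\Cat_\infty))$ together with the equivalence $f_\sharp \theta \simeq \theta f_\sharp$ from Axiom~(3). Once this identification is established, the statements in $\Sm_{\star+}$ are immediate from the Cartesian square $W \times_X Y \simeq W \times_S T$ and the description of the smash product as $U_+ \otimes V_+ \simeq (U \times V)_+$.
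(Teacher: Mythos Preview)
Your proposal is correct and follows essentially the same strategy as the paper's proof: use essential surjectivity of $\theta$ to reduce to objects of the form $\theta(W_+)$, then use naturality of $\theta$ together with Axiom~(3) to identify the exchange and projection transformations in $\mathscr{C}_\star$ with $\theta$ applied to their counterparts in $\Sm_{\star+}$, which are equivalences. The paper states this more tersely, while you spell out the bookkeeping and the explicit computations in $\Sm_{\star+}$, but the argument is the same.
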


\begin{proof}
    First we prove (1).
    Since $\theta : \Sm_{X+} \to \mathscr{C}_X$ is essentially surjective, it suffices to show that
    \begin{equation*}
        \mathrm{Ex} \theta : q_\sharp g^* \theta \to f^* p_\sharp \theta : \Sm_{X+} \to \mathscr{C}_T
    \end{equation*}
    is an equivalence.
    However, this is equivalent to $\theta$ applied to the exchange transformation
    \begin{equation*}
        \mathrm{Ex} : q_\sharp g^* \to f^* p_\sharp : \Sm_{X+} \to \mathscr{Sm}_{T+}
    \end{equation*}
    which is an equivalence.

    Now we prove (2).
    Since  $\theta : \Sm_{X+} \to \mathscr{C}_X$ and $\theta : \Sm_{S+} \to \mathscr{C}_S$ are essentially surjective, it suffices to show that
    \begin{equation*}
        \mathrm{Proj}(\theta,\theta) : p_\sharp (\theta({-}) \otimes p^*\theta({-})) \to p_\sharp \theta({-}) \otimes \theta({-}) : \Sm_{X+} \times \Sm_{S+} \to \mathscr{C}_S
    \end{equation*}
    is an equivalence.
    However, this is equivalent to $\theta$ applied to the projection transformation
    \begin{equation*}
        \mathrm{Proj} : p_\sharp ({-} \otimes p^*({-})) \to p_\sharp ({-}) \otimes ({-}) : \Sm_{X+} \times \Sm_{S+} \to \Sm_{S+}
    \end{equation*}
    which is an equivalence.
\end{proof}

\begin{proposition} \label{prop:p-lower-sharp-alpha-lower-shriek}
    Let $\alpha : (\mathscr{C}_\star,\theta) \to (\mathscr{D}_\star,\rho)$ be a morphism of motivic patterns, and let $p : X \to S$ be a smooth morphism of algebraic spaces.
    The exchange transformation
    \begin{equation*}
        \mathrm{Ex} : p_\sharp \alpha \to \alpha p_\sharp : \mathscr{C}_X \to \mathscr{D}_S
    \end{equation*}
    is an equivalence.
\end{proposition}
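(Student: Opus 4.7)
The plan is to follow the same template used in Proposition~\ref{prop:smooth-bc-projection}: exploit essential surjectivity of $\theta$ (axiom~(1) of a motivic pattern) to reduce the claim from $\mathscr{C}_X$ to $\Sm_{X+}$, and then invoke axiom~(3) for both $\mathscr{C}$ and $\mathscr{D}$ to rewrite each side as the same functor.

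More concretely, since $\theta_\mathscr{C} : \Sm_{X+} \to \mathscr{C}_X$ is essentially surjective, it suffices to show that the precomposed transformation
\begin{equation*}
    \mathrm{Ex} \circ \theta_\mathscr{C} : p_\sharp \alpha \theta_\mathscr{C} \to \alpha p_\sharp \theta_\mathscr{C} : \Sm_{X+} \to \mathscr{D}_S
\end{equation*}
is an equivalence. The assumption that $\alpha$ is a morphism of motivic patterns gives a canonical identification $\alpha \theta_\mathscr{C} \simeq \theta_\mathscr{D} = \rho$, so the source of this transformation becomes $p_\sharp \rho$, which by axiom~(3) for $\mathscr{D}$ is identified with $\rho p_\sharp$ (where on the right $p_\sharp : \Sm_{X+} \to \Sm_{S+}$ is just postcomposition with $p$). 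On the target, axiom~(3) for $\mathscr{C}$ gives $p_\sharp \theta_\mathscr{C} \simeq \theta_\mathscr{C} p_\sharp$, so the target becomes $\alpha \theta_\mathscr{C} p_\sharp \simeq \rho p_\sharp$ as well. Thus both sides of $\mathrm{Ex} \circ \theta_\mathscr{C}$ become identified with $\rho p_\sharp : \Sm_{X+} \to \mathscr{D}_S$.

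The main (and only slightly delicate) step is to check that under these identifications the exchange transformation $\mathrm{Ex}\circ\theta_\mathscr{C}$ actually corresponds to the identity on $\rho p_\sharp$. This is a standard compatibility of the mate construction: the exchange transformation $p_\sharp \alpha \to \alpha p_\sharp$ is the mate of the natural equality $\alpha p^* = p^* \alpha$ (obtained from $\alpha$ being a natural transformation $\mathscr{C}_\star \to \mathscr{D}_\star$), while the exchange transformations of axiom~(3) for $\theta_\mathscr{C}$ and $\rho$ are the mates of the analogous equalities $\theta_\mathscr{C} p^* = p^* \theta_\mathscr{C}$ and $\rho p^* = p^* \rho$. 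Pasting the three equalities along the definitional identity $\alpha \theta_\mathscr{C} = \rho$ and taking mates shows that the composite of the three exchange transformations is the identity mate of the tautological equality $\rho p^* = p^* \rho$, which is to say the identity. Since the two exchange transformations from axiom~(3) are equivalences, this forces $\mathrm{Ex}\circ\theta_\mathscr{C}$ to be an equivalence, completing the proof.
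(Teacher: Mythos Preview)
Your proof is correct and follows essentially the same route as the paper's: reduce to $\Sm_{X+}$ via essential surjectivity of $\theta$, then use the pasting law for mates together with axiom~(3) for both $\mathscr{C}$ and $\mathscr{D}$ (the paper compresses this into the single phrase ``this is equivalent to the exchange transformation $p_\sharp\rho \to \rho p_\sharp$''). One small slip in your final paragraph: the mate of the identity $\rho p^* = p^*\rho$ is the exchange transformation $\mathrm{Ex}(\rho)$, not literally the identity; but this does not affect your 2-out-of-3 conclusion, since $\mathrm{Ex}(\rho)$ is an equivalence by axiom~(3).
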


\begin{proof}
    Since $\theta : \Sm_{X+} \to \mathscr{C}_X$ is essentially surjective, it suffices to show that
    \begin{equation*}
        \mathrm{Ex} \theta : p_\sharp \alpha \theta \to \alpha p_\sharp \theta : \Sm_{X+} \to \mathscr{D}_S
    \end{equation*}
    is an equivalence.
    However, this is equivalent to the exchange transformation
    \begin{equation*}
        \mathrm{Ex} : p_\sharp \rho \to \rho p_\sharp : \Sm_{X+} \to \mathscr{D}_S
    \end{equation*}
    which is an equivalence by definition.
\end{proof}

\begin{construction}
    Fix a motivic pattern $\mathscr{C} = (\mathscr{C}_\star, \theta)$.

    Let $S$ be an algebraic space.
    Since the tensor product in $\mathscr{C}_S$ preserves finite coproducts in each variable, we have that $\PSh_\Sigma(\mathscr{C}_S)$ is presentably symmetric monoidal.
    The Yoneda embedding restricts to a fully faithful functor $\mathscr{C}_S \to \PSh_\Sigma(\mathscr{C}_S$$)$ that is symmetric monoidal and preserves finite coproducts.

    The functor $\theta : \Sm_{S+} \to \mathscr{C}_S$ extends uniquely to symmetric monoidal functor $\theta_! : \PSh_\Sigma(\Sm_S)_\bullet \to \PSh_\Sigma(\mathscr{C}_S)$ that preserves sifted colimits.
    Since $\theta_! : \PSh_\Sigma(\Sm_S)_\bullet \to \PSh_\Sigma(\mathscr{C}_S)$ preserves finite coproducts as well, it preserves all small colimits, and thus admits a right adjoint $\theta^*$.
    This right adjoint is given simply by precomposition with $\theta : \Sm_S \to \mathscr{C}_S$.

    For a map $f : S \to T$ of algebraic spaces, the functor $f^* : \mathscr{C}_T \to \mathscr{C}_S$ extends uniquely to a symmetric monoidal functor $f^* : \PSh_\Sigma(\mathscr{C}_T) \to \PSh_\Sigma(\mathscr{C}_S)$ that preserves sifted colimits.
    Since $f^* : \PSh_\Sigma(\mathscr{C}_T) \to \PSh_\Sigma(\mathscr{C}_S)$ preserves finite coproducts as well, it preserves all small colimits, and thus admits a right adjoint $f_*$.
    This right adjoint is given simply by precomposition with $f^* : \mathscr{C}_T \to \mathscr{C}_S$.
    Note that we can form a commutative diagram
    \begin{equation*}
        \begin{tikzcd}
            \PSh_\Sigma(\Sm_T)_\bullet \ar[r,"f^*"] \ar[d,"\theta_!"] & \PSh_\Sigma(\Sm_S)_\bullet \ar[d,"\theta_!"] \\
            \PSh_\Sigma(\mathscr{C}_T) \ar[r,"f^*"] & \PSh_\Sigma(\mathscr{C}_S)
        \end{tikzcd}
    \end{equation*}
    of symmetric monoidal $\infty$-categories and symmetric monoidal functors.

    If $f : S \to T$ is a smooth map of algebraic spaces, the functor $f_\sharp : \mathscr{C}_S \to \mathscr{C}_T$ extends uniquely to a functor $f_\sharp : \PSh_\Sigma(\mathscr{C}_S) \to \PSh_\Sigma(\mathscr{C}_T)$.
    In fact, $f_\sharp : \PSh_\Sigma(\mathscr{C}_S) \to \PSh_\Sigma(\mathscr{C}_T)$ is left adjoint to $f^*:\PSh_\Sigma(\mathscr{C}_T) \to \PSh_\Sigma(\mathscr{C}_S)$.
    Note that we can form a commutative diagram
    \begin{equation*}
        \begin{tikzcd}
            \PSh_\Sigma(\Sm_S)_\bullet \ar[r,"f_\sharp"] \ar[d,"\theta_!"] & \PSh_\Sigma(\Sm_T)_\bullet \ar[d,"\theta_!"] \\
            \PSh_\Sigma(\mathscr{C}_S) \ar[r,"f_\sharp"] & \PSh_\Sigma(\mathscr{C}_T)
        \end{tikzcd}
    \end{equation*}
    of $\infty$-categories.
\end{construction}

\begin{proposition} \label{prop:smooth-bc-projection-p-sigma}
    Fix a motivic pattern $\mathscr{C} = (\mathscr{C}_\star, \theta)$ and smooth morphism $p : X \to S$ of algebraic spaces.
    \begin{enumerate}
        \item For every Cartesian square
        \begin{equation*}
            \begin{tikzcd}
                Y \ar[r,"g"] \ar[d,"q"] & X \ar[d,"p"] \\
                T \ar[r,"f"] & S
            \end{tikzcd}
        \end{equation*}
        the exchange transformation $q_\sharp g^* \to f^* p_\sharp : \PSh_\Sigma(\mathscr{C}_X) \to \PSh_\Sigma(\mathscr{C}_T)$ is an equivalence
        \item The projection transformation
        \begin{equation*}
            p_\sharp (p^*(-) \otimes (-)) \to (-) \otimes p_\sharp (-) : \PSh_\Sigma(\mathscr{C}_S) \times \PSh_\Sigma(\mathscr{C}_X) \to \PSh_\Sigma(\mathscr{C}_S)
        \end{equation*}
        is an equivalence.
    \end{enumerate}
\end{proposition}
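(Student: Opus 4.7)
The plan is to reduce both claims to their analogues in Proposition~\ref{prop:smooth-bc-projection} by exploiting the universal property of $\PSh_\Sigma(\mathscr{C}_X)$ as the free sifted cocompletion of $\mathscr{C}_X$.

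For (1), I would first observe that each of the four functors $g^*$, $q_\sharp$, $p_\sharp$, $f^*$ on the relevant presheaf $\infty$-categories is a left adjoint, as established in the preceding Construction. Hence both composites $q_\sharp g^*$ and $f^* p_\sharp$ from $\PSh_\Sigma(\mathscr{C}_X)$ to $\PSh_\Sigma(\mathscr{C}_T)$ preserve all small colimits, and in particular they preserve sifted colimits and finite coproducts. Since $\PSh_\Sigma(\mathscr{C}_X)$ is generated under sifted colimits by $\mathscr{C}_X$, it will suffice to check that the exchange transformation restricts to an equivalence on $\mathscr{C}_X \subseteq \PSh_\Sigma(\mathscr{C}_X)$. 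By the defining commutative squares in the Construction, the Yoneda embedding $\mathscr{C}_{(-)} \hookrightarrow \PSh_\Sigma(\mathscr{C}_{(-)})$ intertwines the two versions of $f^*$ and $p_\sharp$; this identifies the restricted exchange transformation with the one of Proposition~\ref{prop:smooth-bc-projection}(1), which is an equivalence.

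For (2), I would run the same strategy. The two sides of the projection transformation are functors $\PSh_\Sigma(\mathscr{C}_X) \times \PSh_\Sigma(\mathscr{C}_S) \to \PSh_\Sigma(\mathscr{C}_S)$ which preserve colimits in each variable separately: $p_\sharp$ and $p^*$ are left adjoints, and $\otimes$ on the presentably symmetric monoidal $\infty$-category $\PSh_\Sigma(\mathscr{C}_S)$ preserves colimits in each variable. It will therefore suffice to verify the equivalence on pairs of objects drawn from $\mathscr{C}_X \times \mathscr{C}_S$. Using the compatibility of the Yoneda embeddings with $p^*$, $p_\sharp$, and the symmetric monoidal structures, this reduces directly to Proposition~\ref{prop:smooth-bc-projection}(2).

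The only real subtlety will be the bookkeeping required to identify the restriction of the extended exchange and projection transformations along the Yoneda embeddings with those on $\mathscr{C}$ itself. This is a formal consequence of the universal characterization of the extensions of $f^*$ and $p_\sharp$ laid out in the preceding Construction, but is the one step deserving careful verification; beyond that, everything else is a routine application of "both sides preserve colimits, so reduce to generators."
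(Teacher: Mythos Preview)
Your proposal is correct and follows essentially the same approach as the paper: the paper's proof is the one-line observation that all functors involved commute with small colimits in each variable, so the claim reduces to Proposition~\ref{prop:smooth-bc-projection}. Your version spells out this reduction more carefully (identifying the restriction along Yoneda with the transformations on $\mathscr{C}$ itself), but the underlying strategy is identical.
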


\begin{proof}
    Since all the functors involved commute with small colimits in each variable, the claim reduces to Proposition~\ref{prop:smooth-bc-projection}.
\end{proof}

\begin{construction}
    Let $\alpha : (\mathscr{C}, \theta) \to (\mathscr{D}, \rho)$ be a morphism of motivic patterns, and let $S$ be an algebraic space.
    The functor $\alpha : \mathscr{C}_S \to \mathscr{D}_S$ extends uniquely to a symmetric monoidal functor $\alpha_! : \PSh_\Sigma(\mathscr{C}_S) \to \PSh_\Sigma(\mathscr{D}_S)$ that preserves sifted colimits.
    Since $\alpha_! : \PSh_\Sigma(\mathscr{C}_S) \to \PSh_\Sigma(\mathscr{D}_S)$ preserves finite coproducts as well, it preserves all small colimits, and thus admits a right adjoint $\alpha_*$, which is given by precomposition by $\alpha : \mathscr{C}_S \to \mathscr{D}_S$.
\end{construction}

\begin{proposition} \label{prop:p-lower-sharp-alpha-lower-shriek-p-sigma}
    Let $\alpha : (\mathscr{C}_\star,\theta) \to (\mathscr{D}_\star,\rho)$ be a morphism of motivic patterns, and let $p : X \to S$ be a smooth morphism of algebraic spaces.
    The exchange transformation
    \begin{equation*}
        p_\sharp \alpha_! \to \alpha_! p_\sharp : \PSh_\Sigma(\mathscr{C}_X) \to \PSh_\Sigma(\mathscr{D}_S)
    \end{equation*}
    is an equivalence.
\end{proposition}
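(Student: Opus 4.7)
The plan is to reduce the claim on the large presheaf categories to the corresponding claim at the level of $\mathscr{C}_X$ itself, which is exactly Proposition~\ref{prop:p-lower-sharp-alpha-lower-shriek}. All four functors in the exchange transformation preserve small colimits: $p_\sharp$ and $\alpha_!$ on each presheaf category are left adjoints, by the constructions preceding the statement. Consequently, both composites $p_\sharp \alpha_!$ and $\alpha_! p_\sharp : \PSh_\Sigma(\mathscr{C}_X) \to \PSh_\Sigma(\mathscr{D}_S)$ preserve small colimits.

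Since $\PSh_\Sigma(\mathscr{C}_X)$ is generated under small colimits by the image of the Yoneda embedding $\mathscr{C}_X \hookrightarrow \PSh_\Sigma(\mathscr{C}_X)$, it then suffices to show that the exchange transformation is an equivalence after restriction along this embedding. By construction of $\alpha_!$ and $p_\sharp$ on the presheaf level (as the unique sifted-colimit-preserving extensions of their counterparts on $\mathscr{C}_X$), the restriction of
\begin{equation*}
    \mathrm{Ex} : p_\sharp \alpha_! \to \alpha_! p_\sharp : \PSh_\Sigma(\mathscr{C}_X) \to \PSh_\Sigma(\mathscr{D}_S)
\end{equation*}
along $\mathscr{C}_X \hookrightarrow \PSh_\Sigma(\mathscr{C}_X)$ is canonically identified with the exchange transformation
\begin{equation*}
    \mathrm{Ex} : p_\sharp \alpha \to \alpha p_\sharp : \mathscr{C}_X \to \mathscr{D}_S
\end{equation*}
postcomposed with the Yoneda embedding $\mathscr{D}_S \hookrightarrow \PSh_\Sigma(\mathscr{D}_S)$. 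This last exchange transformation is an equivalence by Proposition~\ref{prop:p-lower-sharp-alpha-lower-shriek}, so we are done.

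The only potentially delicate step is verifying that the restriction of the presheaf-level exchange transformation along the Yoneda embedding agrees with the $\mathscr{C}_X$-level exchange transformation. This is essentially bookkeeping: one checks that the units and counits of the adjunctions $p_\sharp \dashv p^*$ and $\alpha_! \dashv \alpha_*$ on the presheaf categories restrict to the corresponding units and counits on $\mathscr{C}_X$, which follows from the fact that each $p^*$ and $\alpha_*$ is defined by precomposition and therefore commutes with the Yoneda embeddings in the obvious sense.
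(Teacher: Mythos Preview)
Your proof is correct and follows essentially the same approach as the paper: both reduce to Proposition~\ref{prop:p-lower-sharp-alpha-lower-shriek} by observing that all functors involved preserve small colimits and that $\PSh_\Sigma(\mathscr{C}_X)$ is generated under colimits by representables. The paper's proof is the one-line ``since all the functors involved commute with small colimits, the claim reduces to Proposition~\ref{prop:p-lower-sharp-alpha-lower-shriek}''; your version simply spells out the details, including the bookkeeping about why the restricted exchange transformation agrees with the one on $\mathscr{C}_X$.
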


\begin{proof}
    Since all the functors involved commute with small colimits, the claim reduces to Proposition~\ref{prop:p-lower-sharp-alpha-lower-shriek}
\end{proof}

\subsection{Unstable Homotopy Theory for Motivic Patterns}

\begin{definition}
    Let $\mathscr{C}=(\mathscr{C}_\star,\theta)$ be a motivic pattern, and let $S$ be an algebraic space.
    We say $\mathscr{F} \in \PSh_\Sigma(\mathscr{C}_S)$ is \emph{Nisnevich-local} (\emph{$\mathbb{A}^1$-local}, \emph{motivic-local}, respecitvely) if its restriction along $\theta : \Sm_S \to \mathscr{C}_S$ is so.
    We write $\mathscr{H}^\mathscr{C}(S) \subseteq \PSh_\Sigma(\mathscr{C}_S)$ for the full subcategory spanned by motivic-local objects, and refer to those objects as \emph{motivic spaces (over $S$) with $\mathscr{C}$-structure}.
\end{definition}

\begin{proposition} \label{prop:structured-motivic-spaces-accessible}
    Let $\mathscr{C}=(\mathscr{C}_\star,\theta)$ be a motivic pattern, and let $S$ be an algebraic space.
    The full subcategory $\HH^\mathscr{C}(S) \subseteq \PSh_\Sigma(\mathscr{C}_S)$ is closed under small limits and filtered colimits.
    In particular, it is an accessible localization.
\end{proposition}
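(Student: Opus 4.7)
The plan is to reduce the statement to the well-known analogous statement for ordinary motivic spaces, via the restriction functor $\theta^* : \PSh_\Sigma(\mathscr{C}_S) \to \PSh_\Sigma(\Sm_S)_\bullet$. By definition, $\mathscr{F} \in \PSh_\Sigma(\mathscr{C}_S)$ lies in $\HH^\mathscr{C}(S)$ exactly when $\theta^*\mathscr{F}$ is motivic-local in $\PSh_\Sigma(\Sm_S)_\bullet$. So $\HH^\mathscr{C}(S) = (\theta^*)^{-1}(\HH(S)_\bullet)$.

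First, I would observe that $\theta^*$ is given by precomposition with $\theta : \Sm_{S+} \to \mathscr{C}_S$, and hence preserves all small limits and colimits (both are computed objectwise in $\PSh_\Sigma$). Second, I would invoke the classical fact that the inclusion $\HH(S)_\bullet \subseteq \PSh_\Sigma(\Sm_S)_\bullet$ is an accessible localization, and in particular is closed under small limits and filtered colimits (Nisnevich sheaves and $\mathbb{A}^1$-invariant presheaves are each defined by orthogonality to a small set of maps, and such subcategories are closed under limits and filtered colimits). Combining these two facts yields that $\HH^\mathscr{C}(S)$, being the preimage of $\HH(S)_\bullet$ under $\theta^*$, is closed under small limits and filtered colimits inside $\PSh_\Sigma(\mathscr{C}_S)$.

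For the final sentence, I would appeal to the standard criterion (e.g.\ \cite[Proposition~5.5.4.15]{Lurie_HTT}) that a full subcategory of a presentable $\infty$-category which is closed under small limits and $\kappa$-filtered colimits for some regular cardinal $\kappa$ is an accessible localization. The $\infty$-category $\PSh_\Sigma(\mathscr{C}_S)$ is presentable by construction, so this gives the desired conclusion. The only substantive point in the argument is the accessibility, which is immediate since the motivic-local condition is a conjunction of two orthogonality conditions against small sets of maps; no step presents a real obstacle.
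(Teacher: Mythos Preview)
Your proposal is correct and follows essentially the same idea as the paper's (very terse) proof, which simply notes that motivic-locality is expressed by a small set of finite-limit conditions and is therefore preserved under small limits and filtered colimits. One small imprecision: not all colimits in $\PSh_\Sigma$ are computed objectwise---only sifted ones are---but filtered colimits are sifted, so your argument goes through unchanged.
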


\begin{proof}
    This follows from the fact that the property of being motivic-local can be expressed by a small set conditions involving finite limits.
\end{proof}

\begin{definition}
    Let $\mathscr{C}=(\mathscr{C}_\star,\theta)$ be a motivic pattern, and let $S$ be an algebraic space.
    We write $\mathrm{L}_\mathrm{mot} : \PSh_\Sigma(\mathscr{C}_S) \to \HH^\mathscr{C}(S)$ for the left adjoint to the inclusion.
    A morphism in $\PSh_\Sigma(\mathscr{C}_S)$ is called a \emph{motivic equivalence} if its image under $\mathrm{L}_\mathrm{mot}$ is an equivalence.
\end{definition}

\begin{proposition}
    Let $\mathscr{C}=(\mathscr{C}_\star,\theta)$ be a motivic pattern, and let $S$ be an algebraic space.
    \begin{enumerate}
        \item The $\infty$-category $\HH^\mathscr{C}(S)$ is generated under sifted colimits by objects of the form $\mathrm{L}_\mathrm{mot}\theta(X_+)$, where $X \in \Sm_S$ is affine.
        \item For $X \in \Sm_S$, the object $\mathrm{L}_\mathrm{mot}\theta(X_+) \in \HH^\mathscr{C}(S)$ is compact.
        \item The $\infty$-category $\HH^\mathscr{C}(S)$ admits a unique symmetric monoidal structure such that $\mathrm{L}_\mathrm{mot} : \PSh_\Sigma(\mathscr{C}_S) \to \HH^\mathscr{C}(S)$ is symmetric monoidal.
    \end{enumerate}
\end{proposition}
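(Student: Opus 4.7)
The plan is to handle the three assertions in order using the general theory of presentable $\infty$-categories and monoidal localization. For (1), I would begin with the standard fact that $\PSh_\Sigma(\mathscr{C}_S)$ is generated under sifted colimits by the Yoneda image of $\mathscr{C}_S$. Essential surjectivity of $\theta$ (Axiom (1) of a motivic pattern) lets me replace these generators by $\theta(X_+)$ with $X \in \Sm_S$, and applying the colimit-preserving $\mathrm{L}_\mathrm{mot}$ yields generation of $\HH^\mathscr{C}(S)$ under sifted colimits by $\mathrm{L}_\mathrm{mot}\theta(X_+)$ for $X$ smooth. To refine to affine $X$, I would use the Nisnevich-local affineness of algebraic spaces (cited from Knutson in the paper's conventions) together with Axiom (2): for an affine \'etale cover $U \to X$, the \v{C}ech nerve yields a motivic equivalence $|\theta(U^{\times_X \bullet}_+)| \to \theta(X_+)$ in $\PSh_\Sigma(\mathscr{C}_S)$, and applying $\mathrm{L}_\mathrm{mot}$ exhibits $\mathrm{L}_\mathrm{mot}\theta(X_+)$ as a geometric realization of $\mathrm{L}_\mathrm{mot}\theta(V_+)$ with $V$ affine, after a Noetherian-style induction reducing the intermediate \v{C}ech terms to the affine case.

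For (2), the representable $\theta(X_+)$ is compact in $\PSh_\Sigma(\mathscr{C}_S)$; by Proposition~\ref{prop:structured-motivic-spaces-accessible} the inclusion $\HH^\mathscr{C}(S) \hookrightarrow \PSh_\Sigma(\mathscr{C}_S)$ preserves filtered colimits, so its left adjoint $\mathrm{L}_\mathrm{mot}$ preserves compact objects and $\mathrm{L}_\mathrm{mot}\theta(X_+)$ is compact in $\HH^\mathscr{C}(S)$.

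For (3), I would invoke the standard localization theorem for symmetric monoidal presentable $\infty$-categories (\cite[Proposition~4.1.7.4]{Lurie_HA}); uniqueness then follows from the universal property of localization. For existence, I must show that the class $W$ of motivic equivalences is stable under tensoring with an arbitrary object $\mathscr{H}$. Using that $\otimes$ preserves colimits in each variable and $\PSh_\Sigma(\mathscr{C}_S)$ is generated under colimits by $\theta(Y_+)$, I reduce to $\mathscr{H} = \theta(Y_+)$; by strong saturation of $W$ this further reduces to checking on generating motivic equivalences. The $\mathbb{A}^1$-projections $\theta(X_+) \to \theta((X \times \mathbb{A}^1)_+)$ tensor to $\theta((X \times Y)_+) \to \theta((X \times Y \times \mathbb{A}^1)_+)$, visibly an $\mathbb{A}^1$-equivalence. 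The Nisnevich equivalences $\theta_!(R_+) \to \theta(X_+)$ for covering sieves $R \hookrightarrow X$ generated by a single map tensor, via the symmetric monoidality of $\theta_!$, to $\theta_!((R \times Y)_+) \to \theta((X \times Y)_+)$; since the pullback sieve $R \times Y \hookrightarrow X \times Y$ is again generated by a single map, Axiom (2) supplies the needed motivic equivalence. The main obstacle is the affine refinement in (1), which requires combining \v{C}ech-style Nisnevich descent with a Noetherian induction to keep the intermediate \v{C}ech terms affine.
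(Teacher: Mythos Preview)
Your approach is essentially the same as the paper's, which simply defers (1), (2), and (3) to \cite[Proposition~3.16(i)]{Hoyois_6ff}, Proposition~\ref{prop:structured-motivic-spaces-accessible}, and \cite[Proposition~3.2.10(iv)]{EHKSY_MotivicInfiniteLoops} respectively; you are reconstructing the arguments those references contain.

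One caution on the affine refinement in (1): the \v{C}ech-nerve route is awkward because the iterated fiber products $U^{\times_X n}$ need not be affine when $X$ is merely qcqs rather than separated, and there is no evident complexity measure on them for a ``Noetherian-style'' induction to terminate. The cleaner argument in \cite{Hoyois_6ff} uses instead that every qcqs algebraic space is built from affines by finitely many elementary Nisnevich squares, so that $\mathrm{L}_\mathrm{mot}\theta(X_+)$ is a finite iterated pushout (hence a geometric realization) of objects $\mathrm{L}_\mathrm{mot}\theta(V_+)$ with $V$ affine; this terminates without any Noetherian hypothesis and invokes Axiom~(2) exactly in the form stated.
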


\begin{proof}
    Statement (1) follows by the same argument as in \cite[Proposition~3.16(i)]{Hoyois_6ff}.
    Statement (2) follows immediately from Proposition~\ref{prop:structured-motivic-spaces-accessible}. Statement (3) follows by the same argument as in \cite[Proposition~3.2.10(iv)]{EHKSY_MotivicInfiniteLoops}.
\end{proof}

\begin{proposition} \label{prop:gamma-detects-mot-equiv}
    Let $\mathscr{C} = (\mathscr{C}_\star,\theta)$ be a motivic pattern and let $S$ be an algebraic space.
    \begin{enumerate}
        \item The functor $\theta^* : \PSh_\Sigma(\mathscr{C}_S) \to \PSh_\Sigma(\Sm_S)_\point$ preserves and detects motivic equivalences.
        \item The functor $\theta^* : \HH^\mathscr{C}(S) \to \HH(S)_\bullet$ is conservative and preserves sifted colimits.
    \end{enumerate}
\end{proposition}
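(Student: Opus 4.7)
The plan is to prove conservativity of $\theta^*$ directly from essential surjectivity, deduce part (1) by showing that $\theta^*$ commutes with the motivic localization $\mathrm{L}_\mathrm{mot}$, and then use this commutation to obtain sifted colimit preservation in (2).

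For conservativity, I would invoke Axiom (1): since $\theta : \Sm_{S+} \to \mathscr{C}_S$ is essentially surjective, a morphism of product-preserving presheaves on $\mathscr{C}_S^{\mathrm{op}}$ is an equivalence if and only if its evaluation at each $\theta(X_+)$ is, so $\theta^* : \PSh_\Sigma(\mathscr{C}_S) \to \PSh_\Sigma(\Sm_S)_\bullet$ is conservative, whence so is its restriction to $\HH^\mathscr{C}(S) \to \HH(S)_\bullet$. For (1), I would show $\theta^*$ commutes with $\mathrm{L}_\mathrm{mot}$ by decomposing the latter as a transfinite iteration of Nisnevich sheafification $\mathrm{L}_\mathrm{Nis}$ and $\mathbb{A}^1$-localization $\mathrm{L}_{\mathbb{A}^1}$, and verifying commutation at each step. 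For $\mathrm{L}_{\mathbb{A}^1}$, the symmetric monoidality of $\theta$ yields $\theta((Y \times \mathbb{A}^1)_+) \simeq \theta(Y_+) \otimes \theta(\mathbb{A}^1_+)$, and together with $\theta^*$ preserving sifted colimits (computed pointwise in $\PSh_\Sigma$) this gives the commutation. For $\mathrm{L}_\mathrm{Nis}$, Axiom (2) provides exactly what is needed: the image under $\theta^*$ of a generating Nisnevich equivalence $\theta_! R \to \theta(X_+)$ is precisely the map $h_S^\mathscr{C}(R) \to h_S^\mathscr{C}(X)$, which is a Nisnevich equivalence by hypothesis; this extends to all Nisnevich equivalences using closure under sifted colimits and 2-out-of-3.

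Detection in (1) then follows: if $\theta^* f$ is a motivic equivalence, the established commutation gives $\theta^* \mathrm{L}_\mathrm{mot} f \simeq \mathrm{L}_\mathrm{mot} \theta^* f$ an equivalence, so conservativity of $\theta^*$ on motivic-local objects forces $\mathrm{L}_\mathrm{mot} f$ to be an equivalence, i.e., $f$ is a motivic equivalence. The sifted colimit preservation in (2) falls out since $\theta^*$ preserves sifted colimits at the level of $\PSh_\Sigma$ pointwise and commutes with $\mathrm{L}_\mathrm{mot}$ by (1).

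The main obstacle will be verifying the Nisnevich-sheafification step carefully: Axiom (2) only gives commutation on a restricted class of Nisnevich equivalences (those from sieves generated by a single map), so one must argue that the class of morphisms in $\PSh_\Sigma(\mathscr{C}_S)$ whose $\theta^*$-image is Nisnevich equivalent is closed under the \v{C}ech-type sifted colimits and saturation operations that build $\mathrm{L}_\mathrm{Nis}$ from these generators.
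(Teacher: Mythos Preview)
Your proposal is correct and follows essentially the same approach as the paper, which defers to \cite[Propositions~3.2.14 and 3.2.15]{EHKSY_MotivicInfiniteLoops}: those arguments establish that $\theta^*$ commutes with $\mathrm{L}_\mathrm{mot}$ by treating the $\mathbb{A}^1$-singular construction and Nisnevich sheafification separately, exactly as you outline, with Axiom~(2) supplying the Nisnevich step and essential surjectivity giving conservativity. Your identification of the Nisnevich sheafification step as the one requiring care is on target, and the saturation argument you sketch is precisely how it is handled there.
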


\begin{proof}
    The statements follow by the same arguments as in \cite[Proposition~3.2.14]{EHKSY_MotivicInfiniteLoops} and \cite[Proposition~3.2.15]{EHKSY_MotivicInfiniteLoops}.
\end{proof}

\begin{proposition} \label{prop:basic-functoriality-p-sigma}
    Let $\mathscr{C} = (\mathscr{C}_\star,\theta)$ be a motivic pattern, and let $f : S \to T$ be a morphism of algebraic spaces.
    The functor $f_* : \PSh_\Sigma(\mathscr{C}_S) \to \PSh_\Sigma(\mathscr{C}_T)$ restricts to a functor $f_* : \HH^\mathscr{C}(S) \to \HH^\mathscr{C}(T)$.
    If $f : S \to T$ is smooth, then the functor $f^* : \PSh_\Sigma(\mathscr{C}_T) \to \PSh_\Sigma(\mathscr{C}_S)$ restricts to a functor $f^* : \HH^\mathscr{C}(T) \to \HH^\mathscr{C}(S)$
\end{proposition}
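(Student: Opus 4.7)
The plan is to reduce both assertions to the corresponding classical facts in pointed motivic homotopy theory by means of the restriction functor $\theta^* : \PSh_\Sigma(\mathscr{C}_\star) \to \PSh_\Sigma(\Sm_\star)_\bullet$, which by the very definition of $\HH^\mathscr{C}$ detects motivic-locality. The only real content is a formal Beck--Chevalley manipulation.

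First I would extract the needed Beck--Chevalley equivalences by taking right adjoints in the two commutative squares of left adjoints produced in the construction preceding this proposition. The square $f^* \theta_! \simeq \theta_! f^*$, combined with the adjunctions $\theta_! \dashv \theta^*$ and $f^* \dashv f_*$, yields a natural equivalence
\[
    \theta^* f_* \simeq f_* \theta^* : \PSh_\Sigma(\mathscr{C}_S) \to \PSh_\Sigma(\Sm_T)_\bullet
\]
for an arbitrary $f : S \to T$. When $f$ is smooth, the square $f_\sharp \theta_! \simeq \theta_! f_\sharp$ combined with $f_\sharp \dashv f^*$ similarly gives
\[
    \theta^* f^* \simeq f^* \theta^* : \PSh_\Sigma(\mathscr{C}_T) \to \PSh_\Sigma(\Sm_S)_\bullet.
\]

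Given these identifications, the proof becomes immediate. If $\mathscr{F} \in \HH^\mathscr{C}(S)$, then by definition $\theta^* \mathscr{F}$ lies in $\HH(S)_\bullet$; hence $\theta^* f_* \mathscr{F} \simeq f_* \theta^* \mathscr{F}$ lies in $\HH(T)_\bullet$ by the classical fact that pointed pushforward along an arbitrary morphism preserves motivic-locality (a consequence of base change preserving Nisnevich covers and commuting with $\mathbb{A}^1$; see e.g.\ \cite{Hoyois_6ff}). Therefore $f_* \mathscr{F} \in \HH^\mathscr{C}(T)$. The smooth case is verbatim the same, using the second equivalence and the analogous classical statement that smooth pullback preserves motivic-locality.

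I do not anticipate a substantial obstacle: the content is essentially a formal adjoint passage in the two commutative squares from the construction, combined with the classical functoriality of $\HH({-})_\bullet$. The only point that needs any care is ensuring that the right adjoints in the two squares compose correctly to produce the stated Beck--Chevalley equivalences, which is routine.
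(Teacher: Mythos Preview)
Your proposal is correct and follows essentially the same approach as the paper: reduce to the classical case via the equivalences $\theta^* f_* \simeq f_* \theta^*$ and (for smooth $f$) $\theta^* f^* \simeq f^* \theta^*$, then invoke that $f_*$ and smooth $f^*$ preserve motivic-local objects in $\PSh_\Sigma(\Sm_\star)_\bullet$. You even spell out explicitly that these equivalences arise by passing to right adjoints in the commutative squares of left adjoints from the preceding construction, which the paper simply takes as given.
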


\begin{proof}
    Suppose $X \in \PSh_\Sigma(\mathscr{C}_S)$ is a motivic-local object.
    It suffices to show that the object $\theta^* f_* X \in \PSh_\Sigma(\Sm_T)_\bullet$ is motivic-local.
    This follows from the equivalence $\theta^* f_* \simeq f_* \theta^*$ of functors $\PSh_\Sigma(\mathscr{C}_S) \to \PSh_\Sigma(\Sm_T)$ and the fact that $f_* : \PSh_\Sigma(\Sm_S)_\bullet \to \PSh_\Sigma(\Sm_T)_\bullet$ preserves motivic-local objects.

    Suppose now that $f$ is smooth and $Y \in \PSh_\Sigma(\mathscr{C}_T)$ is a motivic-local object.
    It suffices to show that the object $\theta^* f^* Y \in \PSh_\Sigma(\Sm_S)_\bullet$ is motivic-local.
    Since $f$ is smooth, this follows from the equivalence $f^* \theta^* \simeq \theta^* f^*$ of functors $\PSh_\Sigma(\mathscr{C}_T) \to \PSh_\Sigma(\Sm_S)$ and the fact that $f^* : \PSh_\Sigma(\Sm_T)_\bullet \to \PSh_\Sigma(\Sm_S)_\bullet$ preserves motivic-local objects.
\end{proof}

\begin{construction}
    Fix a motivic pattern $\mathscr{C} = (\mathscr{C}_\star,\theta)$ and a morphism $f : S \to T$ of algebraic spaces.

    As a consequence of Proposition~\ref{prop:basic-functoriality-p-sigma}, the functor $f^* : \PSh_\Sigma(\mathscr{C}_T) \to \PSh_\Sigma(\mathscr{C}_S)$ preserves motivic equivalences, and so the composite $\mathrm{L}_\mathrm{mot} f^* : \PSh_\Sigma(\mathscr{C}_T) \to \HH^\mathscr{C}(S)$ extends uniquely to a symmetric monoidal functor $f^* : \HH^\mathscr{C}(T) \to \HH^\mathscr{C}(S)$.
    The functor $f^* : \HH^\mathscr{C}(T) \to \HH^\mathscr{C}(S)$ is left adjoint to $f_* : \HH^\mathscr{C}(S) \to \HH^\mathscr{C}(T)$.

    If $f : S \to T$ is smooth, then again as a consequence of Proposition~\ref{prop:basic-functoriality-p-sigma}, the functor $f_\sharp : \PSh_\Sigma(\mathscr{C}_S) \to \PSh_\Sigma(\mathscr{C}_T)$ preserves motivic equivalences, and so the composite $\mathrm{L}_\mathrm{mot} f_\sharp : \PSh_\Sigma(\mathscr{C}_S) \to \HH^\mathscr{C}(T)$ extends uniquely to a functor $f_\sharp : \HH^\mathscr{C}(S) \to \HH^\mathscr{C}(T)$.
    The functor $f_\sharp : \HH^\mathscr{C}(S) \to \HH^\mathscr{C}(T)$ is left adjoint to $f^* : \HH^\mathscr{C}(T) \to \HH^\mathscr{C}(S)$.
\end{construction}

\begin{proposition} \label{prop:unstable-smooth-bc-projection}
    Fix a motivic pattern $\mathscr{C} = (\mathscr{C}_\star, \theta)$ and smooth morphism $p : X \to S$ of algebraic spaces.
    \begin{enumerate}
        \item For every Cartesian square
        \begin{equation*}
            \begin{tikzcd}
                Y \ar[r,"g"] \ar[d,"q"] & X \ar[d,"p"] \\
                T \ar[r,"f"] & S
            \end{tikzcd}
        \end{equation*}
        the exchange transformation $q_\sharp g^* \to f^* p_\sharp : \HH^\mathscr{C}(X) \to \HH^\mathscr{C}(T)$ is an equivalence
        \item The projection transformation
        \begin{equation*}
            p_\sharp (p^*(-) \otimes (-)) \to (-) \otimes p_\sharp (-) : \HH^\mathscr{C}(S) \times \HH^\mathscr{C}(X) \to \HH^\mathscr{C}(S)
        \end{equation*}
        is an equivalence.
    \end{enumerate}
\end{proposition}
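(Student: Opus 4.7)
The plan is to deduce both claims from their $\PSh_\Sigma$-level analogues, Proposition~\ref{prop:smooth-bc-projection-p-sigma}, by exploiting the compatibility of the motivic localization $\mathrm{L}_\mathrm{mot}: \PSh_\Sigma(\mathscr{C}_\star) \to \HH^\mathscr{C}(\star)$ with all the functors in sight. The first step I would carry out is to record the following compatibility, which is essentially immediate from Proposition~\ref{prop:basic-functoriality-p-sigma} and the construction preceding the proposition: each of the left adjoints $p_\sharp$, $q_\sharp$, $f^*$, $g^*$, $p^*$ satisfies a natural equivalence of the form $\mathrm{L}_\mathrm{mot} \circ F_{\PSh_\Sigma} \simeq F_{\HH^\mathscr{C}} \circ \mathrm{L}_\mathrm{mot}$. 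For the smooth pullbacks this follows because they preserve motivic-local objects, so $F_{\HH^\mathscr{C}}$ is just the restriction of $F_{\PSh_\Sigma}$ to the reflective subcategory. For $p_\sharp$ and $q_\sharp$ it follows because these functors invert motivic equivalences, so their $\HH^\mathscr{C}$-level versions are constructed precisely as $\mathrm{L}_\mathrm{mot} \circ F_{\PSh_\Sigma} \circ i$, where $i$ denotes the inclusion.

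For statement (1), the exchange $\mathrm{Ex}: q_\sharp g^* \to f^* p_\sharp$ on $\HH^\mathscr{C}$ is defined via the mate construction, using the adjunctions $p_\sharp \dashv p^*$ and $q_\sharp \dashv q^*$ at the $\HH^\mathscr{C}$ level. Because these adjunctions descend from the corresponding $\PSh_\Sigma$-level adjunctions in the compatible sense recorded above, naturality of the mate construction identifies the $\HH^\mathscr{C}$-level exchange with $\mathrm{L}_\mathrm{mot}$ applied to its $\PSh_\Sigma$-level counterpart (precomposed with $i$). Proposition~\ref{prop:smooth-bc-projection-p-sigma}(1) then shows the latter is already an equivalence, and applying $\mathrm{L}_\mathrm{mot}$ preserves this.

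For statement (2), I would proceed analogously. The symmetric monoidal structure on $\HH^\mathscr{C}$ was defined so that $\mathrm{L}_\mathrm{mot}$ is symmetric monoidal, so $X \otimes_\HH Y \simeq \mathrm{L}_\mathrm{mot}(iX \otimes_{\PSh_\Sigma} iY)$. Combined with the compatibility of $p_\sharp$ and $p^*$ with $\mathrm{L}_\mathrm{mot}$, the projection transformation on $\HH^\mathscr{C}$ is identified with $\mathrm{L}_\mathrm{mot}$ of the projection transformation on $\PSh_\Sigma$; Proposition~\ref{prop:smooth-bc-projection-p-sigma}(2) then gives the result.

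The main obstacle is more bookkeeping than substance: one must verify carefully that the exchange (respectively projection) transformation defined at the $\HH^\mathscr{C}$ level really coincides with $\mathrm{L}_\mathrm{mot}$ applied to its $\PSh_\Sigma$-counterpart. This is the standard abstract fact that mates and projection transformations are preserved by localizations compatible with all the relevant adjunctions and monoidal structure, and once it is in place, the proposition follows at once from Proposition~\ref{prop:smooth-bc-projection-p-sigma}.
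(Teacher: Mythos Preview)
Your approach is correct and is essentially the same as the paper's: the paper's proof is the one-line observation that all the functors involved preserve motivic equivalences, so the claim reduces to Proposition~\ref{prop:smooth-bc-projection-p-sigma}. Your proposal spells out this reduction in more detail, though note a small imprecision: in part (1) the maps $f$ and $g$ need not be smooth, so $f^*$ and $g^*$ do not generally preserve motivic-local objects; rather, they preserve motivic equivalences because their right adjoints $f_*$, $g_*$ do preserve motivic-local objects (Proposition~\ref{prop:basic-functoriality-p-sigma}), which is all that is needed.
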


\begin{proof}
    Since all the functors involved preserve motivic equivalences, the claim reduces to Proposition~\ref{prop:smooth-bc-projection-p-sigma}.
\end{proof}

\begin{proposition} \label{prop:unstable-nis-sep}
    Fix a motivic pattern $\mathscr{C} = (\mathscr{C}_\star,\theta)$.
    For every Nisnevich cover $\{j_\alpha : U_\alpha \to S \}$, the family $\{j^*_\alpha : \HH^\mathscr{C}(S) \to \HH^\mathscr{C}(U_\alpha) \}$ is conservative.
\end{proposition}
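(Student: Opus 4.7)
The plan is to reduce Nisnevich separation for $\HH^\mathscr{C}$ to the corresponding fact for the classical unstable motivic homotopy category $\HH({-})_\bullet$, using the conservativity of $\theta^*$ established in Proposition~\ref{prop:gamma-detects-mot-equiv}.

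First, I would fix a morphism $f : X \to Y$ in $\HH^\mathscr{C}(S)$ with the property that $j_\alpha^* f$ is an equivalence for every $\alpha$, and aim to show that $f$ itself is an equivalence. By Proposition~\ref{prop:gamma-detects-mot-equiv}(2), the functor $\theta^* : \HH^\mathscr{C}(S) \to \HH(S)_\bullet$ is conservative, so it suffices to show that $\theta^* f$ is an equivalence in $\HH(S)_\bullet$.

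Next, I would verify that the square
\begin{equation*}
    \begin{tikzcd}
        \HH^\mathscr{C}(S) \ar[r,"j_\alpha^*"] \ar[d,"\theta^*"] & \HH^\mathscr{C}(U_\alpha) \ar[d,"\theta^*"] \\
        \HH(S)_\bullet \ar[r,"j_\alpha^*"] & \HH(U_\alpha)_\bullet
    \end{tikzcd}
\end{equation*}
commutes. At the presheaf level, commutativity of $\theta_! f^* \simeq f^* \theta_!$ for smooth $f$ was recorded in the construction preceding Proposition~\ref{prop:smooth-bc-projection-p-sigma}; passing to right adjoints gives $\theta^* j_\alpha^* \simeq j_\alpha^* \theta^*$, and since all of these functors preserve motivic-local objects (Proposition~\ref{prop:basic-functoriality-p-sigma}), the identification descends to $\HH^\mathscr{C}$. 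Consequently, $j_\alpha^* \theta^* f \simeq \theta^* j_\alpha^* f$, which is an equivalence by hypothesis.

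Finally, I invoke Nisnevich separation for the classical unstable motivic homotopy theory: the family $\{ j_\alpha^* : \HH(S) \to \HH(U_\alpha) \}$ is jointly conservative, and hence so is the pointed version $\{ j_\alpha^* : \HH(S)_\bullet \to \HH(U_\alpha)_\bullet \}$. Applying this to $\theta^* f$ yields that $\theta^* f$ is an equivalence in $\HH(S)_\bullet$, and conservativity of $\theta^*$ then delivers the desired conclusion. The only step that requires any real input beyond what is already in the paper is the classical Nisnevich separation statement for $\HH(S)$; this is standard (see, e.g., Morel--Voevodsky or \cite{Hoyois_6ff}), so I do not anticipate a genuine obstacle.
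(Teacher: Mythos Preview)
Your approach is correct and matches the paper's exactly: reduce to the classical case via conservativity of $\theta^*$ and Nisnevich separation for $\HH$ (the paper cites \cite[Proposition~4.5]{Hoyois_6ff}). One small slip in the justification: to obtain $\theta^* j_\alpha^* \simeq j_\alpha^* \theta^*$ by passing to right adjoints you should start from the square $\theta_! j_{\alpha,\sharp} \simeq j_{\alpha,\sharp} \theta_!$ (the second commutative diagram in that construction), since taking right adjoints of $\theta_! f^* \simeq f^* \theta_!$ yields $f_* \theta^* \simeq \theta^* f_*$ rather than the commutativity you need.
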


\begin{proof}
    This follows from conservativity of $\theta^* : \HH^\mathscr{C} \to \HH_\bullet$ and the analogous result for $\HH$ \cite[Proposition~4.5]{Hoyois_6ff}.
\end{proof}

\begin{proposition} \label{prop:alpha-upper-star-preserves-motivic-spaces}
    Let $\alpha : (\mathscr{C}_\star,\theta) \to (\mathscr{D}_\star,\rho)$ be a morphism of motivic pattern, and let $S$ and algebraic space. The functor $\alpha^* : \PSh_\Sigma(\mathscr{D}_S) \to \PSh_\Sigma(\mathscr{C}_S)$ restricts to a functor $\alpha^* : \HH^\mathscr{D}(S) \to \HH^\mathscr{C}(S)$.
\end{proposition}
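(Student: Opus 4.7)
The plan is to unwind the definitions and observe that the restriction of $\alpha^*\mathscr{F}$ along $\theta$ agrees with the restriction of $\mathscr{F}$ along $\rho$, so being motivic-local transfers for free.

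First, I would recall that by the earlier construction, $\alpha_* : \PSh_\Sigma(\mathscr{C}_S) \to \PSh_\Sigma(\mathscr{D}_S)$ is the right adjoint to $\alpha_!$, and it is given simply by precomposition with $\alpha : \mathscr{C}_S \to \mathscr{D}_S$. In the conventions of the paper (following the Notation subsection), this functor is exactly $\alpha^* : \PSh_\Sigma(\mathscr{D}_S) \to \PSh_\Sigma(\mathscr{C}_S)$. Thus for any $\mathscr{F} \in \PSh_\Sigma(\mathscr{D}_S)$ we have the identification $\alpha^*\mathscr{F} = \mathscr{F} \circ \alpha$.

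Next, I would use the assumption that $\alpha$ is a morphism of motivic patterns. By definition, this is a morphism in $\mathrm{Fun}(\AlgSpc^\op,\CAlg(\Cat_\infty))_{\Sm_{\star+}/}$, which means the triangle of symmetric monoidal functors
\begin{equation*}
    \begin{tikzcd}
        & \Sm_{S+} \ar[dl,"\theta"'] \ar[dr,"\rho"] & \\
        \mathscr{C}_S \ar[rr,"\alpha"] & & \mathscr{D}_S
    \end{tikzcd}
\end{equation*}
commutes, i.e., $\alpha \circ \theta \simeq \rho$. Precomposition then yields, for every $\mathscr{F} \in \PSh_\Sigma(\mathscr{D}_S)$, an equivalence $\theta^*(\alpha^*\mathscr{F}) = \mathscr{F} \circ \alpha \circ \theta \simeq \mathscr{F} \circ \rho = \rho^*\mathscr{F}$ in $\PSh_\Sigma(\Sm_S)_\bullet$.

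Finally, I would conclude: if $\mathscr{F} \in \HH^\mathscr{D}(S)$, then by definition $\rho^*\mathscr{F}$ is motivic-local as a presheaf on $\Sm_S$; by the equivalence above, so is $\theta^*(\alpha^*\mathscr{F})$, and hence $\alpha^*\mathscr{F} \in \HH^\mathscr{C}(S)$ by the defining condition of motivic-local objects in $\PSh_\Sigma(\mathscr{C}_S)$. There is no real obstacle here; the only thing to be careful about is confirming that the universal-property extension of $\alpha$ to presheaves indeed acts as honest precomposition at the level of the functor $\alpha^*$, which is already recorded in the construction just after the definition of motivic pattern.
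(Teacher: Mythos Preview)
Your argument is correct and is exactly the paper's approach: the paper's one-line proof records the equivalence $\theta^* \alpha^* \simeq \rho^*$ (written somewhat ambiguously there), and you have simply unpacked it. Your care in identifying $\alpha^*$ with honest precomposition and invoking the triangle $\alpha\circ\theta \simeq \rho$ is precisely what is needed.
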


\begin{proof}
    This follows from the equivalence $\theta^* \alpha^* \simeq \alpha^* \theta^*$.
\end{proof}

\begin{construction}
    Let $\alpha : (\mathscr{C}_\star,\theta) \to (\mathscr{D}_\star,\rho)$ be a morphism of motivic pattern, and let $S$ be an algebraic space.
    By Proposition~\ref{prop:alpha-upper-star-preserves-motivic-spaces}, the functor $\alpha_! : \PSh_\Sigma(\mathscr{C}_S) \to \PSh_\Sigma(\mathscr{D}_S)$ preserves motivic equivalences, and so the composite $\mathrm{L}_\mathrm{mot} \alpha_! : \PSh_\Sigma(\mathscr{C}_S) \to \HH^\mathscr{D}(S)$ extends to a functor $\alpha_! : \HH^\mathscr{C}(S) \to \HH^\mathscr{D}(S)$ left adjoint to $\alpha^*$.
\end{construction}

\begin{proposition} \label{prop:unstable-p-lower-sharp-alpha-lower-shriek}
    Let $\alpha : (\mathscr{C}_\star,\theta) \to (\mathscr{D}_\star,\rho)$ be a morphism of motivic pattern, and let $p : X \to S$ be a smooth morphism of algebraic spaces.
    The exchange transformation $p_\sharp \alpha_! \to \alpha_! p_\sharp : \HH^\mathscr{C}(X) \to \HH^\mathscr{D}(S)$ is an equivalence.
\end{proposition}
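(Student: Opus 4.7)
The plan is to reduce this to the analogous statement at the $\PSh_\Sigma$-level, namely Proposition~\ref{prop:p-lower-sharp-alpha-lower-shriek-p-sigma}, by using the fact that both $p_\sharp$ and $\alpha_!$ on motivic spaces are defined as left Kan extensions of their $\PSh_\Sigma$-level counterparts along the motivic localization $\mathrm{L}_\mathrm{mot}$.

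First I would record the compatibility squares. By construction of $p_\sharp$ and $\alpha_!$ on motivic spaces (preceding the statement), the functor $p_\sharp : \HH^\mathscr{C}(X) \to \HH^\mathscr{C}(S)$ is the unique colimit-preserving extension with $p_\sharp^\HH \circ \mathrm{L}_\mathrm{mot} \simeq \mathrm{L}_\mathrm{mot} \circ p_\sharp^{\PSh_\Sigma}$, and similarly $\alpha_!^\HH \circ \mathrm{L}_\mathrm{mot} \simeq \mathrm{L}_\mathrm{mot} \circ \alpha_!^{\PSh_\Sigma}$. Chaining these identifications produces a canonical equivalence
\begin{equation*}
    p_\sharp^\HH \alpha_!^\HH \mathrm{L}_\mathrm{mot} \simeq \mathrm{L}_\mathrm{mot} p_\sharp^{\PSh_\Sigma} \alpha_!^{\PSh_\Sigma}, \qquad \alpha_!^\HH p_\sharp^\HH \mathrm{L}_\mathrm{mot} \simeq \mathrm{L}_\mathrm{mot} \alpha_!^{\PSh_\Sigma} p_\sharp^{\PSh_\Sigma},
\end{equation*}
and the exchange transformation at the $\HH$-level, when precomposed with $\mathrm{L}_\mathrm{mot}$, is identified via these equivalences with $\mathrm{L}_\mathrm{mot}$ applied to the $\PSh_\Sigma$-level exchange transformation.

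The second step is to invoke Proposition~\ref{prop:p-lower-sharp-alpha-lower-shriek-p-sigma}, which tells us that the $\PSh_\Sigma$-level exchange transformation $p_\sharp \alpha_! \to \alpha_! p_\sharp$ is already an equivalence. Applying the functor $\mathrm{L}_\mathrm{mot}$ preserves this equivalence. Hence the precomposition with $\mathrm{L}_\mathrm{mot}$ of the $\HH$-level exchange transformation is an equivalence. Since $\mathrm{L}_\mathrm{mot} : \PSh_\Sigma(\mathscr{C}_X) \to \HH^\mathscr{C}(X)$ is essentially surjective (as a localization), a natural transformation between functors out of $\HH^\mathscr{C}(X)$ is an equivalence if and only if its precomposition with $\mathrm{L}_\mathrm{mot}$ is one, and we conclude.

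The only real point to verify is that the identifications of the two composites with their $\PSh_\Sigma$-level analogues are genuinely compatible with the structural data defining the exchange transformation (i.e.\ arise from the units and counits of the relevant adjunctions), but this is automatic from the universal property used to extend $p_\sharp$ and $\alpha_!$ across the localization. No further input beyond the already-established functoriality of the construction is required.
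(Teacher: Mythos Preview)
Your proposal is correct and follows essentially the same approach as the paper: both reduce to Proposition~\ref{prop:p-lower-sharp-alpha-lower-shriek-p-sigma} via the fact that the functors $p_\sharp$ and $\alpha_!$ preserve motivic equivalences and hence descend compatibly along $\mathrm{L}_\mathrm{mot}$. The paper's proof is simply the one-line version of what you have spelled out in detail.
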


\begin{proof}
    Since all the functors involved preserve motivic equivalences, the claim reduces to Proposition~\ref{prop:p-lower-sharp-alpha-lower-shriek-p-sigma}.
\end{proof}

\subsection{Stable Homotopy Theory for Motivic Patterns}

\begin{definition}
    Let $\mathscr{C} = (\mathscr{C}_\star,\theta)$ be a motivic pattern, and let $S$ be an algebraic space.
    We write $\mathbf{T}^\mathscr{C}_S = \theta_!(\mathbb{A}^1_S/\mathbb{A}^1_S \smallsetminus 0)$ and $\Sigma_{\mathbf{T},\mathscr{C}} = \mathbf{T}^\mathscr{C}_S \otimes ({-}) : \HH^\mathscr{C}(S) \to \HH^\mathscr{C}(S)$.
\end{definition}

\begin{construction}
    Let $\mathscr{C} = (\mathscr{C}_\star,\theta)$ be a motivic pattern and let $S$ be an algebraic space.
    We write $\mathscr{SH}^\mathscr{C}(S)$ for the presentably symmetric monoidal $\infty$-category obtained from $\HH^\mathscr{C}(S)$ by formally inverting $\mathbf{T}^\mathscr{C}_S$.
    Objects of $\SH^\mathscr{C}(S)$ are called \emph{motivic spectra (over $S$) with $\mathscr{C}$-structure}.
    This comes with a symmetric monoidal functor $\Sigma^\infty_{\mathbf{T},\mathscr{C}} : \mathscr{H}^\mathscr{C}(S) \to \mathscr{SH}^\mathscr{C}(S)$ that admits a right adjoint $\Omega^\infty_{\mathbf{T},\mathscr{C}}$.
    Since the functor $\theta_! : \HH(S)_\bullet \to \HH^\mathscr{C}(S)$ sends $\mathbf{T}_S$ to $\mathbf{T}^\mathscr{C}_S$, it extends uniquely to a symmetric monoidal functor $\theta_! : \SH(S) \to \SH^\mathscr{C}(S)$ that preserves all small colimits.
    We write $\theta^*$ for the right adjoint of $\theta_! : \SH(S) \to \SH^\mathscr{C}(S)$.
\end{construction}

\begin{proposition}
    Let $\mathscr{C} = (\mathscr{C}_\star,\theta)$ be a motivic pattern, and let $S$ be an algebraic space.
    The cyclic permutation on $(\mathbf{T}^\mathscr{C}_S)^{\otimes 3} \in \HH^\mathscr{C}(S)$ is homotopic to the identity.
    In particular, there is an equivalence
    \begin{equation*}
        \SH^\mathscr{C}(S) \cong \operatorname{colim} \Big( \HH^\mathscr{C}(S) \xrightarrow{\Sigma_{\mathbf{T},\mathscr{C}}} \HH^\mathscr{C}(S) \xrightarrow{\Sigma_{\mathbf{T},\mathscr{C}}} \cdots \Big)
    \end{equation*}
    where the colimit is taken in $\CAlg(\mathrm{Pr}^\mathrm{L})$.
\end{proposition}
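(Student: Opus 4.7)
The plan is to reduce both claims to the corresponding statements in classical unstable motivic homotopy theory $\HH(S)_\bullet$, using the symmetric monoidal functor $\theta_! : \HH(S)_\bullet \to \HH^\mathscr{C}(S)$.

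First, I would handle the cyclic permutation claim. Observe that $\mathbf{T}^\mathscr{C}_S = \theta_!(\mathbf{T}_S)$ by definition, and since $\theta_!$ is symmetric monoidal, the cyclic permutation on $(\mathbf{T}^\mathscr{C}_S)^{\otimes 3}$ is the image under $\theta_!$ of the cyclic permutation on $\mathbf{T}_S^{\otimes 3}$ in $\HH(S)_\bullet$. It is a classical fact, due essentially to Voevodsky, that the cyclic permutation on $\mathbf{T}_S^{\otimes 3}$ is homotopic to the identity in $\HH(S)_\bullet$; one can cite, for instance, the proof in Hoyois's \emph{The six operations in equivariant motivic homotopy theory} (Lemma~6.3) or Riou's article on dualizability. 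Applying $\theta_!$ to any such homotopy yields the desired homotopy in $\HH^\mathscr{C}(S)$.

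Having established that $\mathbf{T}^\mathscr{C}_S$ is a symmetric object of the presentably symmetric monoidal $\infty$-category $\HH^\mathscr{C}(S)$, the second claim is then a direct application of Robalo's symmetric-object inversion theorem (see \cite[Proposition~2.9]{Robalo_Thesis} or its presentation in \cite[\S 6.1]{BachmannHoyois_Norms}). This theorem asserts that if $X$ is a symmetric object in a presentably symmetric monoidal $\infty$-category $\mathscr{E}$, then the formal inversion $\mathscr{E}[X^{-1}]$ taken in $\CAlg(\Pr^\mathrm{L})$ is computed by the sequential colimit $\mathrm{colim}(\mathscr{E} \xrightarrow{X \otimes -} \mathscr{E} \xrightarrow{X \otimes -} \cdots)$, taken again in $\CAlg(\Pr^\mathrm{L})$. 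Applied to $\mathscr{E} = \HH^\mathscr{C}(S)$ and $X = \mathbf{T}^\mathscr{C}_S$, this gives precisely the asserted description of $\SH^\mathscr{C}(S)$.

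The main (and only mild) obstacle is the reduction of the cyclic permutation statement, and this is purely a matter of recording that $\theta_!$ is a symmetric monoidal functor sending $\mathbf{T}_S$ to $\mathbf{T}^\mathscr{C}_S$; once that is in hand, both steps are essentially formal consequences of cited results.
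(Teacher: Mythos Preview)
Your proposal is correct and follows essentially the same approach as the paper: the paper also reduces the cyclic permutation statement to the classical case in $\HH(S)_\bullet$ via the symmetric monoidal functor $\theta_!$ (citing \cite[Lemma~6.3]{Hoyois_6ff}), and then invokes Robalo's result (specifically \cite[Corollary~2.22]{Robalo_KTheoryBridge}) for the colimit description.
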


\begin{proof}
    Since $\theta_! : \HH(S)_\bullet \to \HH^\mathscr{C}(S)$ is symmetric monoidal, the first claim follows from the analgous statement for $\mathbf{T}_S \in \HH(S)_\bullet$, which is \cite[Lemma~6.3]{Hoyois_6ff}.
    The second claim is then \cite[Corollary 2.22]{Robalo_KTheoryBridge}
\end{proof}

\begin{proposition} \label{prop:SH-C-is-basechanged-from-SH}
    Let $\mathscr{C} = (\mathscr{C}_\star,\theta)$ be a motivic pattern and let $S$ be an algebraic space.
    The functor
    \begin{equation*}
        \SH(S) \otimes_{\HH(S)_\bullet} \HH^\mathscr{C}(S) \to \SH^\mathscr{C}(S)
    \end{equation*}
    obtained by base change of $\theta_! : \HH(S)_\bullet \to \HH^\mathscr{C}(S)$ along $\Sigma^\infty_\mathbf{T} : \HH(S)_\bullet \to \SH(S)$ is an equivalence.
\end{proposition}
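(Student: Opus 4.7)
The plan is to express both sides as the same sequential colimit in $\CAlg(\mathrm{Pr}^\mathrm{L})$, using the symmetry of $\mathbf{T}^\mathscr{C}_S$ established in the preceding proposition. Applying that proposition also to the initial motivic pattern $\Sm_{\star+}$ gives
\begin{equation*}
    \SH(S) \simeq \operatorname*{colim}\bigl( \HH(S)_\bullet \xrightarrow{\Sigma_{\mathbf{T}}} \HH(S)_\bullet \xrightarrow{\Sigma_{\mathbf{T}}} \cdots \bigr),
\end{equation*}
with the colimit computed in $\CAlg(\mathrm{Pr}^\mathrm{L})$.

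Next, since the relative tensor product $({-}) \otimes_{\HH(S)_\bullet} \HH^\mathscr{C}(S)$ is a pushout in $\CAlg(\mathrm{Pr}^\mathrm{L})$, it preserves colimits. Applying it to the colimit presentation of $\SH(S)$ above and using the unit equivalence $\HH(S)_\bullet \otimes_{\HH(S)_\bullet} \HH^\mathscr{C}(S) \simeq \HH^\mathscr{C}(S)$, we obtain
\begin{equation*}
    \SH(S) \otimes_{\HH(S)_\bullet} \HH^\mathscr{C}(S) \simeq \operatorname*{colim}\bigl( \HH^\mathscr{C}(S) \xrightarrow{\Sigma_{\mathbf{T}} \otimes \mathrm{id}} \HH^\mathscr{C}(S) \to \cdots \bigr).
\end{equation*}
Since $\theta_!$ is symmetric monoidal and by construction sends $\mathbf{T}_S$ to $\mathbf{T}^\mathscr{C}_S$, tensoring with $\mathbf{T}_S$ on the $\HH(S)_\bullet$ factor corresponds under this equivalence to tensoring with $\mathbf{T}^\mathscr{C}_S$ on $\HH^\mathscr{C}(S)$. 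Thus the transition functors are identified with $\Sigma_{\mathbf{T},\mathscr{C}}$, and the resulting colimit is $\SH^\mathscr{C}(S)$ by the preceding proposition.

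The one remaining point is to verify that the equivalence produced by this chain of identifications agrees with the canonical comparison functor named in the statement. This is essentially formal: both functors are determined by the universal property of the pushout, arising from the same compatible pair of symmetric monoidal functors, namely $\Sigma^\infty_{\mathbf{T},\mathscr{C}} \circ \theta_! : \HH(S)_\bullet \to \SH^\mathscr{C}(S)$ (which factors through $\Sigma^\infty_\mathbf{T} : \HH(S)_\bullet \to \SH(S)$) and $\Sigma^\infty_{\mathbf{T},\mathscr{C}} : \HH^\mathscr{C}(S) \to \SH^\mathscr{C}(S)$. I do not expect any serious obstacle; the proof is a direct manipulation of the tensor product and colimit formalism once the symmetry of $\mathbf{T}^\mathscr{C}_S$ is in hand.
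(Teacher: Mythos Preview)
Your argument is workable but takes a longer route than the paper, and one step needs care. The paper's proof is a single line: both sides corepresent the same functor on $\CAlg(\mathrm{Pr}^\mathrm{L})$. A symmetric monoidal left adjoint out of the pushout $\SH(S) \otimes_{\HH(S)_\bullet} \HH^\mathscr{C}(S)$ is, by the universal properties of the pushout and of $\SH(S) = \HH(S)_\bullet[\mathbf{T}_S^{-1}]$, the same datum as a symmetric monoidal left adjoint $F$ out of $\HH^\mathscr{C}(S)$ whose restriction along $\theta_!$ inverts $\mathbf{T}_S$; since $\theta_!(\mathbf{T}_S) = \mathbf{T}^\mathscr{C}_S$, this is precisely a functor $F$ inverting $\mathbf{T}^\mathscr{C}_S$, which is the defining universal property of $\SH^\mathscr{C}(S)$. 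No colimit formula and no symmetry of $\mathbf{T}$ are used.

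In your approach, the point to watch is that $\Sigma_\mathbf{T}$ is \emph{not} a symmetric monoidal functor (it does not preserve the unit), so despite the phrasing of the preceding proposition the tower does not literally live in $\CAlg(\mathrm{Pr}^\mathrm{L})$, and the step ``pushout preserves colimits'' cannot be applied there verbatim. The fix is to work in $\mathrm{Mod}_{\HH(S)_\bullet}(\mathrm{Pr}^\mathrm{L})$: the maps $\Sigma_\mathbf{T}$ are $\HH(S)_\bullet$-linear, base change $({-}) \otimes_{\HH(S)_\bullet} \HH^\mathscr{C}(S)$ on modules is a left adjoint and hence preserves the sequential colimit, and the underlying module of the pushout in $\CAlg$ agrees with the relative tensor product of modules. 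With that adjustment your chain of identifications goes through, but the direct comparison of universal properties is shorter and avoids the extra input.
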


\begin{proof}
    This follows directly from comparing universal properties.
\end{proof}

\begin{proposition}
    Let $\mathscr{C}$ be a motivic pattern, and let $S$ be an algebraic space.
    \begin{enumerate}
        \item The $\infty$-category $\SH^\mathscr{C}(S)$ is stable.
        \item The $\infty$-category $\SH^\mathscr{C}(S)$ is generated under sifted colimits by objects of the form $(\mathbf{T}^\mathscr{C}_S)^{\otimes n} \otimes \Sigma^\infty_{\mathbf{T},\mathscr{C}} \theta(X_+)$, where $X \in \Sm_S$ is affine and $n \leq 0$.
        \item For $X \in \Sm_S$, the object $\Sigma^\infty_{\mathbf{T},\mathscr{C}} \theta(X_+)$ is compact.
    \end{enumerate}
\end{proposition}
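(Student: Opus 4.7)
The plan is to address each of the three parts in turn, relying on the colimit description $\SH^\mathscr{C}(S) \simeq \operatorname{colim}_n(\HH^\mathscr{C}(S), \Sigma_{\mathbf{T},\mathscr{C}})$ in $\CAlg(\mathrm{Pr}^\mathrm{L})$ established in the proposition just above, together with the base-change identification of Proposition~\ref{prop:SH-C-is-basechanged-from-SH}.

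For (1), the shortest route is via Proposition~\ref{prop:SH-C-is-basechanged-from-SH}, which identifies $\SH^\mathscr{C}(S)$ with the $\mathrm{Pr}^\mathrm{L}$-tensor product $\SH(S) \otimes_{\HH(S)_\bullet} \HH^\mathscr{C}(S)$; as a module in $\mathrm{Pr}^\mathrm{L}$ over the stable $\infty$-category $\SH(S)$, it is automatically stable. Alternatively, the Morel--Voevodsky equivalence $\mathbf{T}_S \simeq S^1 \wedge \mathbb{G}_{m,S}$ in $\HH(S)_\bullet$ exhibits $\theta_!(S^1)$ as a tensor factor of the invertible object $\mathbf{T}^\mathscr{C}_S$, so $\theta_!(S^1)$ becomes invertible in $\SH^\mathscr{C}(S)$ and stability follows.

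For (2), I would exploit the fact that the colimit is filtered: every object of $\SH^\mathscr{C}(S)$ arises as a sifted colimit of objects of the form $(\mathbf{T}^\mathscr{C}_S)^{\otimes n} \otimes \Sigma^\infty_{\mathbf{T},\mathscr{C}}(Y)$ with $n \leq 0$ and $Y \in \HH^\mathscr{C}(S)$. Since $\Sigma^\infty_{\mathbf{T},\mathscr{C}}$ and tensoring with $(\mathbf{T}^\mathscr{C}_S)^{\otimes n}$ are left adjoints and therefore preserve sifted colimits, combining this with the sifted generation of $\HH^\mathscr{C}(S)$ by $\mathrm{L}_\mathrm{mot}\theta(X_+)$ for $X \in \Sm_S$ affine (from the preceding unstable proposition) yields the claim.

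For (3), the crux is to verify that $\Sigma_{\mathbf{T},\mathscr{C}} = \mathbf{T}^\mathscr{C}_S \otimes ({-})$ preserves compact objects of $\HH^\mathscr{C}(S)$, which reduces to compactness of $\mathbf{T}^\mathscr{C}_S$ itself. That compactness follows from the preceding unstable proposition, since $\mathbf{T}^\mathscr{C}_S = \theta_!(\mathbf{T}_S)$ is realized as a finite colimit of the compact objects $\mathrm{L}_\mathrm{mot}\theta((\mathbb{A}^1_S)_+)$ and $\mathrm{L}_\mathrm{mot}\theta((\mathbb{A}^1_S \smallsetminus 0)_+)$. A standard fact about filtered $\mathrm{Pr}^\mathrm{L}$-colimits along compact-preserving left adjoints then identifies the compact objects in $\operatorname{colim}_n(\HH^\mathscr{C}(S), \Sigma_{\mathbf{T},\mathscr{C}})$ as those arising from compacts of the stages, whence $\Sigma^\infty_{\mathbf{T},\mathscr{C}}\theta(X_+)$ is compact for any $X \in \Sm_S$. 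The main obstacle I anticipate is precisely this last step: one must carefully invoke the $\infty$-categorical description of compact objects in such colimits (e.g.\ via the commutation of $\mathrm{Ind}$ with filtered colimits of small $\infty$-categories) rather than argue from scratch, so the proof should cite the appropriate reference directly.
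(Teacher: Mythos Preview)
Your proposal is correct and spells out essentially the argument that the paper defers to: the paper's own proof is the single sentence ``This follows from the same argument as in \cite[Proposition~3.3.5]{EHKSY_MotivicInfiniteLoops},'' and your outline reconstructs that argument in the present generality. The only cosmetic difference is that for (1) the cited reference argues directly via the invertibility of $S^1$ (your alternative route), whereas your primary route through Proposition~\ref{prop:SH-C-is-basechanged-from-SH} is a slick shortcut specific to this paper's setup.
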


\begin{proof}
    This follows from the same argument as in \cite[Proposition~3.3.5]{EHKSY_MotivicInfiniteLoops}.
\end{proof}

\begin{proposition}
    Let $\mathscr{C}=(\mathscr{C}_\star,\theta)$ be a motivic pattern, and let $S$ be an algebraic space.
    The functor $\theta^* : \SH^\mathscr{C}(S) \to \SH(S)$ is conservative and preserves all small colimits.
\end{proposition}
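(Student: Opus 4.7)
The plan is to reduce both claims to the unstable analogue Proposition~\ref{prop:gamma-detects-mot-equiv} using the compatibility
\begin{equation*}
    \theta^* \Omega^\infty_{\mathbf{T},\mathscr{C}} \simeq \Omega^\infty_\mathbf{T} \theta^*.
\end{equation*}
This identity is obtained by taking right adjoints of the tautological equivalence $\theta_! \Sigma^\infty_\mathbf{T} \simeq \Sigma^\infty_{\mathbf{T},\mathscr{C}} \theta_!$, which is forced by the universal property of $\mathbf{T}$-inversion used to define $\theta_! : \SH(S) \to \SH^\mathscr{C}(S)$.

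For colimit preservation, I would first observe that $\theta^*$, being a right adjoint between stable $\infty$-categories, is automatically exact and hence preserves finite colimits; it therefore suffices to show that $\theta^*$ preserves filtered colimits, equivalently that $\theta_!$ preserves compact objects. Since $\SH(S)$ is compactly generated by the objects $\mathbf{T}^{\otimes n} \otimes \Sigma^\infty_\mathbf{T}(X_+)$ for $X \in \Sm_S$ affine and $n \in \mathbb{Z}$, and $\theta_!$ takes these to $(\mathbf{T}^\mathscr{C}_S)^{\otimes n} \otimes \Sigma^\infty_{\mathbf{T},\mathscr{C}} \theta(X_+)$, which are compact in $\SH^\mathscr{C}(S)$ by the preceding proposition, this closes the colimit half.

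For conservativity, suppose $E \in \SH^\mathscr{C}(S)$ satisfies $\theta^* E \simeq 0$. The compatibility above gives $\theta^* \Omega^\infty_{\mathbf{T},\mathscr{C}}(\Sigma^n_\mathbf{T} E) \simeq *$ for every $n \in \mathbb{Z}$, so Proposition~\ref{prop:gamma-detects-mot-equiv} forces $\Omega^\infty_{\mathbf{T},\mathscr{C}}(\Sigma^n_\mathbf{T} E) \simeq *$ for each $n$. By adjunction, this is equivalent to the vanishing of $\mathrm{Map}_{\SH^\mathscr{C}(S)}(\Sigma^\infty_{\mathbf{T},\mathscr{C}} \theta(X_+), \Sigma^n_\mathbf{T} E)$ for every affine $X \in \Sm_S$ and every $n$, and the generation statement of the preceding proposition then gives $E \simeq 0$. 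The only real content is the Beck--Chevalley identity; everything else is a direct consequence of compact generation and of the already-established unstable conservativity.
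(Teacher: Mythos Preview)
Your argument is correct and is the standard one (the paper simply cites \cite[Proposition~3.5.2]{EHKSY_MotivicInfiniteLoops}, whose proof proceeds exactly along these lines: compact generation for the colimit claim, and the family $\{\Omega^\infty_{\mathbf{T},\mathscr{C}} \circ \Sigma^n_{\mathbf{T}^\mathscr{C}}\}_{n \in \mathbb{Z}}$ together with unstable conservativity for the conservativity claim).

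One small point to make explicit: in the conservativity step you pass from $\theta^* E \simeq 0$ to $\theta^*(\Sigma^n_{\mathbf{T}} E) \simeq 0$, which uses not only the Beck--Chevalley identity $\theta^* \Omega^\infty_{\mathbf{T},\mathscr{C}} \simeq \Omega^\infty_\mathbf{T} \theta^*$ but also that $\theta^*$ intertwines $\Sigma_{\mathbf{T}^\mathscr{C}}$ with $\Sigma_\mathbf{T}$. This second compatibility follows from the same mechanism: since $\theta_!$ is symmetric monoidal we have $\theta_!(\mathbf{T}_S \otimes -) \simeq \mathbf{T}^\mathscr{C}_S \otimes \theta_!(-)$, and passing to right adjoints (using invertibility of $\mathbf{T}_S$ and $\mathbf{T}^\mathscr{C}_S$) gives $\theta^*(\mathbf{T}^\mathscr{C}_S \otimes -) \simeq \mathbf{T}_S \otimes \theta^*(-)$. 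Equivalently, this is the projection formula for the dualizable object $\mathbf{T}_S$. With that sentence added, the proof is complete.
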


\begin{proof}
    This follows from the same argument as in \cite[Proposition~3.5.2]{EHKSY_MotivicInfiniteLoops}.
\end{proof}

\begin{construction}
    Fix a motivic pattern $\mathscr{C} = (\mathscr{C}_\star,\theta)$ and a morphism $f : S \to T$ of algebraic spaces.

    Since the functor $f^* : \HH^\mathscr{C}(T) \to \HH^\mathscr{C}(S)$ is symmetric monoidal, it is a functor of $\HH(T)_\bullet$-modules.
    Base change along $\Sigma^\infty_{\mathbf{T}} : \HH(T)_\bullet \to \SH(T)$ then yields a symmetric monoidal functor $f^* : \SH^\mathscr{C}(T) \to \SH^\mathscr{C}(S)$ of $\SH(T)$-modules.
    The functor $f^* : \SH^\mathscr{C}(T) \to \SH^\mathscr{C}(S)$ admits a right adjoint $f_*$.

    If $f : S \to T$ is smooth, then Proposition~\ref{prop:unstable-smooth-bc-projection}(2) tells us that $f_\sharp : \HH^\mathscr{C}(S) \to \HH^\mathscr{C}(T)$ is also a functor of $\HH(T)_\bullet$-modules.
    Base change along $\Sigma^\infty_{\mathbf{T}} : \HH(T)_\bullet \to \SH(T)$ then yields a functor $f_\sharp : \SH^\mathscr{C}(T) \to \SH^\mathscr{C}(S)$ of $\SH(T)$-modules that is left adjoint to $f^*$.
\end{construction}

\begin{proposition} \label{prop:stable-smooth-bc-projection}
    Fix a motivic pattern $\mathscr{C} = (\mathscr{C}_\star, \theta)$ and smooth morphism $p : X \to S$ of algebraic spaces.
    \begin{enumerate}
        \item For every Cartesian square
        \begin{equation*}
            \begin{tikzcd}
                Y \ar[r,"g"] \ar[d,"q"] & X \ar[d,"p"] \\
                T \ar[r,"f"] & S
            \end{tikzcd}
        \end{equation*}
        the exchange transformation $q_\sharp g^* \to f^* p_\sharp : \SH^\mathscr{C}(X) \to \SH^\mathscr{C}(T)$ is an equivalence
        \item The projection transformation
        \begin{equation*}
            p_\sharp (p^*(-) \otimes (-)) \to (-) \otimes p_\sharp (-) : \SH^\mathscr{C}(S) \times \SH^\mathscr{C}(X) \to \SH^\mathscr{C}(S)
        \end{equation*}
        is an equivalence.
    \end{enumerate}
\end{proposition}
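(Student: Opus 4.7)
The plan is to bootstrap from Proposition~\ref{prop:unstable-smooth-bc-projection} by checking both transformations on generators of $\SH^\mathscr{C}$. All functors appearing in (1) are colimit-preserving functors $\SH^\mathscr{C}(X) \to \SH^\mathscr{C}(T)$, and all functors appearing in (2) preserve colimits in each variable (they are left adjoints, or sifted-colimit-preserving symmetric monoidal functors). It therefore suffices to verify the comparison maps on generators of $\SH^\mathscr{C}$, which by the generation statement proved earlier take the form $(\mathbf{T}^\mathscr{C})^{\otimes n} \otimes \Sigma^\infty_{\mathbf{T},\mathscr{C}} \theta(Z_+)$ with $Z \in \Sm$ affine and $n \leq 0$.

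For such generators, I would exploit two compatibilities that come directly from the construction of the stable theory. First, by construction the functors $f^*$ and $p_\sharp$ on $\SH^\mathscr{C}$ are obtained by base change of their unstable counterparts along $\Sigma^\infty_\mathbf{T} : \HH(-)_\bullet \to \SH(-)$ in $\CAlg(\mathrm{Pr}^\mathrm{L})$, yielding canonical equivalences $f^* \Sigma^\infty_{\mathbf{T},\mathscr{C}} \simeq \Sigma^\infty_{\mathbf{T},\mathscr{C}} f^*$ and $p_\sharp \Sigma^\infty_{\mathbf{T},\mathscr{C}} \simeq \Sigma^\infty_{\mathbf{T},\mathscr{C}} p_\sharp$ (and likewise for $g^*$ and $q_\sharp$); under these, the stable comparison maps restrict along $\Sigma^\infty_{\mathbf{T},\mathscr{C}}$ to the $\Sigma^\infty_{\mathbf{T},\mathscr{C}}$-image of the unstable comparison maps, which are equivalences by Proposition~\ref{prop:unstable-smooth-bc-projection}. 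Second, the same base change makes $f^*$ and $p_\sharp$ linear over $\SH(T)$ and $\SH(S)$ respectively; since $\mathbf{T}^\mathscr{C} = \theta_!(\mathbf{T})$ lies in the image of the symmetric monoidal functor $\theta_! : \SH(-) \to \SH^\mathscr{C}(-)$, this linearity lets us commute the tensor factor $(\mathbf{T}^\mathscr{C})^{\otimes n}$ past $f^*$ and $p_\sharp$ on the nose. Combining the two compatibilities handles an arbitrary generator.

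The main piece of bookkeeping---and the likely obstacle---is rigorously identifying the stable comparison transformations as base changes of their unstable analogues. For the projection transformation in (2) this is essentially automatic from the functoriality of $({-})\otimes_{\HH(S)_\bullet} \SH(S)$ applied to the unstable projection formula. For the exchange transformation in (1), however, one must track the unit and counit of the $p_\sharp \dashv p^*$ adjunction through the base change construction, so that the stable exchange map genuinely assembles from the stabilization of the unstable one; once this identification is in place, the remainder of the argument is formal.
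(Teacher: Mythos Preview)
Your proposal is correct and follows essentially the same route as the paper: both reduce to the unstable case (Proposition~\ref{prop:unstable-smooth-bc-projection}) via the identification $\SH^\mathscr{C}(S) \cong \SH(S) \otimes_{\HH(S)_\bullet} \HH^\mathscr{C}(S)$ and the functoriality of this base change. The paper's proof is terser, simply citing that functoriality together with the reference to \cite[before Proposition~6.5]{Hoyois_6ff} for the bookkeeping you flag; in particular, once one knows that the stable exchange and projection transformations are themselves obtained by base-changing the unstable ones along $\HH(T)_\bullet \to \SH(T)$, the preliminary reduction to generators in your first paragraph becomes unnecessary.
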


\begin{proof}
    These follow from Proposition~\ref{prop:SH-C-is-basechanged-from-SH}, the functoriality of base change along $\HH_\bullet \to \SH$, and Proposition~\ref{prop:unstable-smooth-bc-projection}.
    See the paragraph before \cite[Proposition~6.5]{Hoyois_6ff}.
\end{proof}

\begin{proposition} \label{prop:stable-nis-sep}
    Fix a motivic pattern $\mathscr{C} = (\mathscr{C}_\star,\theta)$.
    For every Nisnevich cover $\{j_\alpha : U_\alpha \to S \}$, the family $\{j^*_\alpha : \SH^\mathscr{C}(S) \to \SH^\mathscr{C}(U_\alpha) \}$ is conservative.
\end{proposition}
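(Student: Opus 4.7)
The plan is to reduce to the known Nisnevich separation statement for $\SH$ via the conservative functor $\theta^*$, mirroring exactly the strategy used for the unstable analogue in Proposition~\ref{prop:unstable-nis-sep}. Two ingredients will do all the work: first, the conservativity of $\theta^* : \SH^\mathscr{C}(S) \to \SH(S)$ (the stable analogue of Proposition~\ref{prop:gamma-detects-mot-equiv}(2), recorded just before this statement); second, Nisnevich separation for the usual $\SH$, which is \cite[Proposition~6.5]{Hoyois_6ff}.

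Concretely, I would fix $E \in \SH^\mathscr{C}(S)$ with $j_\alpha^* E \simeq 0$ for every $\alpha$, and aim to show $E \simeq 0$. By conservativity of $\theta^*$, it suffices to check that $\theta^* E \simeq 0$ in $\SH(S)$. By Nisnevich separation for $\SH$, this in turn reduces to verifying $j_\alpha^* \theta^* E \simeq 0$ for every $\alpha$. At this point the key input is the base change equivalence $j_\alpha^* \theta^* \simeq \theta^* j_\alpha^*$, which is exactly what the construction of $j_\alpha^*$ at the stable level guarantees: $j_\alpha^* : \SH^\mathscr{C}(S) \to \SH^\mathscr{C}(U_\alpha)$ was obtained by base change of the unstable $j_\alpha^*$ along $\Sigma_\mathbf{T}^\infty$, and compatibility with $\theta^*$ is a formal consequence (essentially, the right adjoint in a commutative square of left adjoints).

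The main obstacle, such as it is, lies in justifying the commutation $j_\alpha^* \theta^* \simeq \theta^* j_\alpha^*$. This is the mate of the evident commutation $\theta_! j_\alpha^* \simeq j_\alpha^* \theta_!$, which at the level of presheaves is automatic because both sides are colimit-preserving symmetric monoidal extensions of compatible functors on generators; passing to $\SH^\mathscr{C}$ via Proposition~\ref{prop:SH-C-is-basechanged-from-SH} preserves this, and the mate is then an equivalence because all four functors involved are left adjoints fitting into the required square. Feeding this in closes the argument: $j_\alpha^* \theta^* E \simeq \theta^* j_\alpha^* E \simeq 0$ for all $\alpha$, so $\theta^* E \simeq 0$, whence $E \simeq 0$.
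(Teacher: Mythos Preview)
Your proposal is correct and follows exactly the paper's approach: reduce to Nisnevich separation for $\SH$ via the conservativity of $\theta^*$, with the paper's proof being the one-line version of what you spelled out. One small remark: the cleanest route to the commutation $j_\alpha^*\theta^* \simeq \theta^* j_\alpha^*$ is to note that both are right adjoint to $\theta_! j_{\alpha\sharp} \simeq j_{\alpha\sharp}\theta_!$ (Proposition~\ref{prop:stable-p-lower-sharp-alpha-lower-shriek} for the initial pattern morphism), rather than taking the mate of the square involving $j_\alpha^*$, whose invertibility is not automatic from that square alone.
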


\begin{proof}
    This follows from the conservativity of $\theta^* : \SH^\mathscr{C} \to \SH$ and the analogous result for $\SH$.
    See the paragraph before \cite[Proposition~6.5]{Hoyois_6ff}.
\end{proof}

\begin{proposition} \label{prop:stable-theta-lower-shriek-i-lower-star}
    Let $\mathscr{C} = (\mathscr{C}_\star,\theta)$ be a motivic pattern, and let $i : Z \to S$ be a closed immersion with a smooth retraction $p : S \to Z$.
    The exchange transformation $\theta_! i_* \to i_* \theta_! : \SH(Z) \to \SH^\mathscr{C}(S)$ is an equivalence.
\end{proposition}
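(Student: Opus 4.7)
The plan is to exploit the smooth retraction $p$ to rewrite $i_*$ purely in terms of smooth functors (namely $p^*$, $j_\sharp$, $j^*$), which are all known to commute with $\theta_!$, thereby reducing to a localization-type identity that can be verified in both $\SH(S)$ and $\SH^\mathscr{C}(S)$.

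First, since $p \circ i = \mathrm{id}_Z$, we have $i^* p^* \simeq \mathrm{id}$ in both $\SH(Z)$ and $\SH^\mathscr{C}(Z)$, and hence $i_* \simeq i_* i^* p^*$ as functors in both settings. Let $j : U \hookrightarrow S$ denote the complementary open immersion to $i$. By the Morel--Voevodsky localization theorem in $\SH$, we have a cofiber sequence $j_\sharp j^* Y \to Y \to i_* i^* Y$ functorially in $Y \in \SH(S)$; applied to $Y = p^* X$, this gives
\begin{equation*}
    i_* X \;\simeq\; i_* i^* p^* X \;\simeq\; \mathrm{cofib}\bigl( j_\sharp j^* p^* X \to p^* X \bigr)
\end{equation*}
in $\SH(S)$, for any $X \in \SH(Z)$.

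Next, apply $\theta_!$ and commute it past the smooth functors. Since $\theta_!$ is a left adjoint, it preserves cofiber sequences; moreover, the equivalences $\theta_! j_\sharp \simeq j_\sharp \theta_!$ (the stable analogue of Proposition~\ref{prop:p-lower-sharp-alpha-lower-shriek-p-sigma}, applied with $\alpha = \theta$ viewed as a morphism from the initial motivic pattern to $\mathscr{C}$) and $\theta_! f^* \simeq f^* \theta_!$ (from the construction of pullback on $\SH^\mathscr{C}$, or directly from the commutative diagram on presheaves) together imply
\begin{equation*}
    \theta_! i_* X \;\simeq\; \mathrm{cofib}\bigl( j_\sharp j^* p^* \theta_! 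X \to p^* \theta_! X \bigr)
\end{equation*}
in $\SH^\mathscr{C}(S)$.

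To finish, one needs the same cofiber-sequence description of $i_* \theta_! X$ in $\SH^\mathscr{C}(S)$, i.e. the localization identity in $\SH^\mathscr{C}$ applied to $p^* \theta_! X \in \SH^\mathscr{C}(S)$. This is the main obstacle: it requires establishing (a version of) the Morel--Voevodsky gluing theorem for $\SH^\mathscr{C}$. The natural route is to verify the three standard ingredients---$i_*$ and $j_\sharp$ fully faithful, $i^* j_\sharp = 0$, and joint conservativity of $(j^*, i^*)$---directly in $\SH^\mathscr{C}$, using the smooth base change and Nisnevich separation already proved (Propositions~\ref{prop:stable-smooth-bc-projection} and~\ref{prop:stable-nis-sep}) together with the base-change description of $\SH^\mathscr{C}(S)$ as $\SH(S) \otimes_{\HH(S)_\bullet} \HH^\mathscr{C}(S)$ (Proposition~\ref{prop:SH-C-is-basechanged-from-SH}), which transfers the analogous facts from $\SH$ via a formal argument about tensor products of presentable $\infty$-categories. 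Once that is in place, equating the two descriptions gives $\theta_! i_* X \simeq i_* \theta_! X$, and naturality of the cofiber sequence identifies this with the exchange transformation, completing the proof.
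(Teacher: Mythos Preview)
Your reduction matches the paper exactly: form the two rows $j_\sharp j^* p^*(-) \to p^*(-) \to i_*(-)$, one in $\SH^\mathscr{C}$ applied to $\theta_! X$ and one obtained by applying $\theta_!$ to the row in $\SH$; the left and middle comparison maps are equivalences because $\theta_!$ commutes with the smooth functors, and the $\SH$-row is a cofiber sequence by localization, so everything hinges on the $\SH^\mathscr{C}$-row being a cofiber sequence as well. You have correctly isolated this as the crux.

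Where the proposal falls short is in resolving this point. Your suggested route---prove the full gluing recollement for $\SH^\mathscr{C}$ by transferring the three standard ingredients through the tensor-product formula $\SH^\mathscr{C}(S) \simeq \SH(S) \otimes_{\HH(S)_\bullet} \HH^\mathscr{C}(S)$---is not workable as stated: full faithfulness of $i_*$ and joint conservativity of $(i^*, j^*)$ involve right adjoints, and these do not transfer formally through Lurie tensor products, which are governed by colimit-preserving functors. Moreover, in the paper's logical order, weak localization for $\SH^\mathscr{C}$ (Proposition~\ref{prop:stable-weak-loc}) is deduced \emph{from} the present statement, so invoking it here would be circular. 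The paper's fix is a one-line observation you are missing: apply the conservative, exact functor $\theta^*$ to the $\SH^\mathscr{C}$-row. Since $\theta^*$ intertwines $p^*$, $j_\sharp$, $j^*$, and $i_*$ with their $\SH$-counterparts, the image is the $\SH$-localization sequence for $p^* \theta^* \theta_! X$, hence a cofiber sequence; conservativity and exactness of $\theta^*$ then pull this conclusion back to $\SH^\mathscr{C}$.
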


\begin{proof}
    We adapt the proof from \cite[Lemma~6.3.13]{CisinskiDeglise_TriangulatedMixedMotives}.

    Let $j : U \to S$ denote the open complement of $i$.
    Fix an object $X \in \SH(Z)$.
    We can form the commutative diagram
    \begin{equation*}
        \begin{tikzcd}
            j_\sharp j^* p^* \theta_! X \ar[r] \ar[d] & p^* \theta_! X \ar[r] \ar[d] & i_* \theta_! X \ar[d] \\
            \theta_! j_\sharp j^* p^* X \ar[r] & \theta_! p^* X \ar[r] & \theta_! i_* X
        \end{tikzcd}
    \end{equation*}
    in $\SH^\mathscr{C}(S)$, where the vertical maps are exchange transformations, the top row is the localization sequence of $\SH^\mathscr{C}$, and the bottom row is $\theta_!$ applied to the localization sequence of $\SH$.
    In particular, the left and middle vertical maps are equivalences, and the bottom row is a cofiber sequence.
    Applying $\theta^*$ to the top row yields the localization sequence
    \begin{equation*}
        j_\sharp j^* p^* \theta^* \theta_! X \to p^* \theta^* \theta_! X \to i_* \theta^* \theta_! X
    \end{equation*}
    for $\SH$ applied to $p^* \theta^* \theta_! X$.
    By exactness and conservativity of $\theta^*$, we deduce that the top row is also a cofiber sequence.
    This yields the desired claim.
\end{proof}

\begin{proposition} \label{prop:stable-weak-loc}
    Fix a motivic pattern $\mathscr{C} = (\mathscr{C}_\star,\theta)$.
    Let $i : Z \to S$ be a closed immersion of algebraic spaces with open complement $j : U \to S$.
    If $i$ admits a smooth retraction $p : S \to Z$, then
    \begin{enumerate}
        \item the functor $i_* : \SH^\mathscr{C}(Z) \to \SH^\mathscr{C}(S)$ is conservative, and
        \item the sequence
        \begin{equation*}
            j_\sharp j^* \to \mathrm{id} \to i_* i^*
        \end{equation*}
        of endofunctors of $\SH^\mathscr{C}(S)$ is a cofiber sequence.
    \end{enumerate}
\end{proposition}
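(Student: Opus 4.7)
The plan is to reduce both assertions to their classical analogues in $\SH(S)$ by exploiting the conservativity and colimit-preservation of $\theta^\ast \colon \SH^{\mathscr C} \to \SH$ together with suitable commutation relations. The essential nonformal input is Proposition~\ref{prop:stable-theta-lower-shriek-i-lower-star}, which supplies $\theta_! i_\ast \simeq i_\ast \theta_!$; this is the geometric ingredient. The remaining compatibilities $\theta_! f^\ast \simeq f^\ast \theta_!$ for any $f$ and $\theta_! j_\sharp \simeq j_\sharp \theta_!$ for smooth $j$, together with the right-adjoint versions $\theta^\ast f_\ast \simeq f_\ast \theta^\ast$, $\theta^\ast j^\ast \simeq j^\ast \theta^\ast$, and $\theta^\ast i_\ast \simeq i_\ast \theta^\ast$, come formally from the construction of the six operations on $\SH^{\mathscr C}$.

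For assertion (1), if $Y \in \SH^{\mathscr C}(Z)$ satisfies $i_\ast Y \simeq 0$, then $i_\ast \theta^\ast Y \simeq \theta^\ast i_\ast Y \simeq 0$ in $\SH(S)$. Since $i_\ast \colon \SH(Z) \to \SH(S)$ is conservative by the classical motivic localization theorem, $\theta^\ast Y \simeq 0$, and conservativity of $\theta^\ast$ then forces $Y \simeq 0$.

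For assertion (2), I would first verify the target sequence on objects of the form $Y = \theta_! X$ with $X \in \SH(S)$. Applying the colimit-preserving functor $\theta_!$ to the classical localization cofiber sequence $j_\sharp j^\ast X \to X \to i_\ast i^\ast X$ in $\SH(S)$ and using the four commutativities produces a cofiber sequence $j_\sharp j^\ast (\theta_! X) \to \theta_! X \to i_\ast i^\ast (\theta_! X)$ in $\SH^{\mathscr C}(S)$. Since the generators $\Sigma^\infty_{\mathbf{T},\mathscr C} \theta(X_+)$ of $\SH^{\mathscr C}(S)$ are themselves $\theta_!$-images, this cofiber sequence propagates to every object by closure under colimits, provided the three endofunctors $j_\sharp j^\ast$, $\mathrm{id}$, $i_\ast i^\ast$ preserve small colimits. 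The first two do so immediately because $j_\sharp, j^\ast$ are left adjoints; for the third it suffices to show $i_\ast$ preserves colimits, which I would verify by applying $\theta^\ast$, using $\theta^\ast i_\ast \simeq i_\ast \theta^\ast$ and the classical colimit-preservation of $i_\ast$ in $\SH(S)$, and concluding via conservativity of $\theta^\ast$. The main obstacle is really just Proposition~\ref{prop:stable-theta-lower-shriek-i-lower-star}; once that nontrivial $\theta$-commutativity for the closed-immersion pushforward is in hand, both assertions reduce to formal manipulation with adjunctions, conservativity, and the classical localization theorem.
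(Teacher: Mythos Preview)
Your proposal is correct and follows essentially the same strategy as the paper: reduce both claims to their analogues in $\SH$ via the conservativity of $\theta^\ast$ and the compatibility $\theta_! i_\ast \simeq i_\ast \theta_!$ from Proposition~\ref{prop:stable-theta-lower-shriek-i-lower-star}, checking the cofiber sequence first on $\theta_!$-images and then propagating. The paper's write-up is terser---it reduces to generators by invoking commutation with sifted colimits and $\mathbf{T}^{\mathscr C}$-tensoring rather than arguing directly that $i_\ast$ preserves all colimits via $\theta^\ast i_\ast \simeq i_\ast \theta^\ast$---but the substance is the same.
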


\begin{proof}
    Statement (1) follows from the conservativity of $\theta^*$.

    We now prove statement (2).
    Since all functors commute with sifted colimits and tensoring with $\mathbf{T}^\mathscr{C}$, it suffices to prove the sequence
    \begin{equation*}
        j_\sharp j^* \theta_! \Sigma^\infty_\mathbf{T} X_+ \to \theta_! \Sigma^\infty_\mathbf{T} X_+ \to i_* i^* \theta_! \Sigma^\infty_\mathbf{T} X_+
    \end{equation*}
    is a cofiber sequence.
    This now follows from Proposition~\ref{prop:stable-theta-lower-shriek-i-lower-star} and the localization theorem for $\SH$.
\end{proof}

\begin{construction}
    Let $\alpha : (\mathscr{C}_\star,\theta) \to (\mathscr{D}_\star,\rho)$ be a morphism of motivic patterns, and let $S$ be an algebraic space.
    Since $\alpha_! : \HH^\mathscr{C}(S) \to \HH^\mathscr{D}(S)$ is symmetric monoidal, it is a functor of $\HH(S)_\bullet$-modules.
    Base change along $\Sigma^\infty_\mathbf{T} : \HH(S)_\bullet \to \SH(S)$ then yields a symmetric monoidal functor $\alpha_! : \SH^\mathscr{C}(S) \to \SH^\mathscr{D}(S)$ of $\SH(S)$-modules.
    The functor $\alpha_! : \SH^\mathscr{C}(S) \to \SH^\mathscr{D}(S)$ admits a right adjoint $\alpha^*$.
\end{construction}

\begin{proposition} \label{prop:stable-p-lower-sharp-alpha-lower-shriek}
    Let $\alpha : (\mathscr{C}_\star,\theta) \to (\mathscr{D},\rho)$ be a morphism of motivic patterns, and let $p : X \to S$ be a smooth morphism of algebraic spaces.
    The exchange transformation
    \begin{equation*}
        p_\sharp \alpha_! \to \alpha_! p_\sharp : \SH^\mathscr{C}(X) \to \SH^\mathscr{D}(S)
    \end{equation*}
    is an equivalence.
\end{proposition}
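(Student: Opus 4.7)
The plan is to mimic the argument for Proposition~\ref{prop:stable-smooth-bc-projection}, reducing the stable statement to the unstable statement via the presentation of $\SH^\mathscr{C}(S)$ as a base change of $\HH^\mathscr{C}(S)$ along $\Sigma^\infty_\mathbf{T} : \HH(S)_\bullet \to \SH(S)$ given in Proposition~\ref{prop:SH-C-is-basechanged-from-SH}.

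First I would observe that both functors involved are compatible with the $\SH(S)$-module structures. The stable functor $\alpha_! : \SH^\mathscr{C}(S) \to \SH^\mathscr{D}(S)$ is defined as the base change of the symmetric monoidal unstable functor $\alpha_! : \HH^\mathscr{C}(S) \to \HH^\mathscr{D}(S)$ along $\Sigma^\infty_\mathbf{T} : \HH(S)_\bullet \to \SH(S)$, and the stable functor $p_\sharp : \SH^\mathscr{C}(X) \to \SH^\mathscr{C}(S)$ arises as the base change of the unstable $p_\sharp : \HH^\mathscr{C}(X) \to \HH^\mathscr{C}(S)$ along $\Sigma^\infty_\mathbf{T} : \HH(S)_\bullet \to \SH(S)$ (using that the latter is an $\HH(S)_\bullet$-linear functor by Proposition~\ref{prop:unstable-smooth-bc-projection}(2)). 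By the functoriality of base change in $\mathrm{CAlg}(\mathrm{Pr}^\mathrm{L})$, the stable exchange transformation is obtained from the unstable exchange transformation by base change.

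Next, since the unstable exchange transformation $p_\sharp \alpha_! \to \alpha_! p_\sharp : \HH^\mathscr{C}(X) \to \HH^\mathscr{D}(S)$ is an equivalence by Proposition~\ref{prop:unstable-p-lower-sharp-alpha-lower-shriek}, the base-changed transformation is automatically an equivalence. Alternatively, if one prefers a direct generator-by-generator check, one can use the fact that $\SH^\mathscr{C}(X)$ is generated under sifted colimits and $(\mathbf{T}^\mathscr{C}_X)^{\otimes n}$-desuspensions by objects of the form $\Sigma^\infty_{\mathbf{T},\mathscr{C}} \theta(Y_+)$ for $Y \in \Sm_X$ affine; since $p_\sharp$, $\alpha_!$, and $\Sigma^\infty_{\mathbf{T},\mathscr{C}}$ all preserve sifted colimits and commute with $\otimes \mathbf{T}$-desuspension (via the symmetry mentioned in the discussion after the definition of $\SH^\mathscr{C}$), it suffices to check the claim on such generators, where it reduces to the unstable statement applied to $\theta(Y_+)$.

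The only real subtlety, and hence the main point to verify carefully, is the identification of the stable exchange transformation with the base change of the unstable one. For this I would invoke the same reasoning used in the paragraph before \cite[Proposition~6.5]{Hoyois_6ff} and cited in the proof of Proposition~\ref{prop:stable-smooth-bc-projection}: the formation of exchange transformations is natural under the relevant base change procedure, precisely because the unstable projection formula (Proposition~\ref{prop:unstable-smooth-bc-projection}(2)) ensures that $p_\sharp$ is $\HH(S)_\bullet$-linear, and $\alpha_!$ is symmetric monoidal and hence $\HH(S)_\bullet$-linear as well.
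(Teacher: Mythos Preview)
Your proposal is correct. The paper's actual proof is the one-line generator reduction you describe as the ``alternative'': since all functors involved commute with sifted colimits, tensoring with $\mathbf{T}$, and $\Sigma^\infty_{\mathbf{T}}$, one reduces directly to Proposition~\ref{prop:unstable-p-lower-sharp-alpha-lower-shriek}. Your primary base-change argument is also fine and is in fact the method the paper uses for the parallel Proposition~\ref{prop:stable-smooth-bc-projection}, so either route works here.
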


\begin{proof}
    Since all functors involved commute with sifted colimits, tensoring with $\mathbf{T}$, and $\Sigma^\infty_\mathbf{T}$, we are reduced to Proposition~\ref{prop:unstable-p-lower-sharp-alpha-lower-shriek}
\end{proof}

\subsection{Functoriality}

\begin{construction} \label{construct:motivic-pattern-to-formalism}
    Recall that the forgetful functor $\CAlg(\Cat^\mathrm{sift}_\infty) \to \CAlg(\Cat_\infty)$ admits a left adjoint, which we denote by $\PSh_\Sigma$.
    We'll also write $\PSh_\Sigma$ for the composite
    \begin{equation*}
        \MotPatt \times \AlgSpc^\op \to \CAlg(\Cat_\infty) \xrightarrow{\PSh_\Sigma} \CAlg(\Cat^\mathrm{sift}_\infty)
    \end{equation*}
    where the first arrow is $(\mathscr{C}, S) \mapsto \mathscr{C}_S$.

    Since the functor $f^* : \PSh_\Sigma(\mathscr{C}_T) \to \PSh_\Sigma(\mathscr{C}_S)$ preserves motivic equivalences for every motivic pattern $\mathscr{C}$ and every morphism $f : S \to T$ of algebraic spaces, and motivic equivalences are stable under tensor product, we can construct a lift
    \begin{equation*}
        \begin{tikzcd}
            & \CAlg(\mathscr{M}\Cat^\mathrm{sift}_\infty) \ar[d] \\
            \MotPatt \times \AlgSpc^\op \ar[ur,dashed] \ar[r,"\PSh_\Sigma"] & \CAlg(\Cat^\mathrm{sift}_\infty)
        \end{tikzcd}
    \end{equation*}
    which sends $(\mathscr{C},S)$ to $(\PSh_\Sigma(\mathscr{C}_S), \text{motivic equivalences})$.
    The essential image of the lift is in the domain of the partial left adjoint of the functor
    \begin{equation*}
        \CAlg(\Cat^\mathrm{sift}_\infty) \to \CAlg(\mathscr{M}\Cat^\mathrm{sift}_\infty), \qquad C \mapsto (C, \text{equivalences}),
    \end{equation*}
    so we obtain a functor
    \begin{align*}
        \HH^{({-})} : \MotPatt \times \AlgSpc^\op &\to \CAlg(\Cat^\mathrm{sift}_\infty) \\
        (\mathscr{C}, S) &\mapsto \HH^\mathscr{C}(S).
    \end{align*}
    together with a natural transformation $\PSh_\Sigma \to \HH^{({-})}$.

    Since the functor $f^* : \HH^\mathscr{C}(T) \to \HH^\mathscr{C}(S)$ sends $\mathbf{T}^\mathscr{C}_T$ to $\mathbf{T}^\mathscr{C}_S$ for every motivic pattern $\mathscr{C}$ and every morphism $f : S \to T$ of algebraic spaces, we can construct a lift
    \begin{equation*}
        \begin{tikzcd}
            & \CAlg(\mathscr{O}\Cat^\mathrm{sift}_\infty) \ar[d] \\
            \MotPatt \times \AlgSpc^\op \ar[ur,dashed] \ar[r,"\HH^{({-})}"] & \CAlg(\Cat^\mathrm{sift}_\infty)
        \end{tikzcd}
    \end{equation*}
    which sends $(\mathscr{C},S)$ to $(\HH^\mathscr{C}(S), \{\mathbf{T}^\mathscr{C}_S\})$.
    The essential image of the lift is in the domain of the partial left adjoint of the functor
    \begin{equation*}
        \CAlg(\Cat^\mathrm{sift}_\infty) \to \CAlg(\mathscr{O}\Cat^\mathrm{sift}_\infty), \qquad C \mapsto (C, \text{invertible objects}),
    \end{equation*}
    so we obtain a functor
    \begin{align*}
        \SH^{({-})} : \MotPatt \times \AlgSpc^\op &\to \CAlg(\Cat^\mathrm{sift}_\infty)\\
        (\mathscr{C}, S) &\mapsto \SH^\mathscr{C}(S).
    \end{align*}
    together with a natural transformation $\HH^{({-})} \to \SH^{({-})}$.
\end{construction}

\begin{proposition} \label{prop:additivity-from-motivic-patterns}
    For every motivic pattern $\mathscr{C}$, each of the functors
    \begin{equation*}
        \PSh_\Sigma(\mathscr{C}_\star), \HH^\mathscr{C}, \SH^\mathscr{C} : \AlgSpc^\mathrm{op} \to \CAlg(\Cat_\infty^\mathrm{sift})
    \end{equation*}
    preserves finite products.
\end{proposition}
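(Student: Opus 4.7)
The plan is to reduce to showing each of the three functors preserves binary products and the empty product, and then to leverage the fact that by definition the association $S \mapsto \mathscr{C}_S$ already preserves finite products in $\CAlg(\Cat_\infty)$. In particular, for $S = S_1 \sqcup S_2$ we have an identification $\mathscr{C}_S \simeq \mathscr{C}_{S_1} \times \mathscr{C}_{S_2}$ of symmetric monoidal $\infty$-categories, induced by the restrictions $j_i^* : \mathscr{C}_S \to \mathscr{C}_{S_i}$, and the empty case $\mathscr{C}_\emptyset \simeq *$ is immediate.

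For $\PSh_\Sigma(\mathscr{C}_\star)$, I would first verify a general lemma: if $C_1, C_2$ are small $\infty$-categories with finite coproducts (hence with initial objects $0_{C_i}$), then the canonical functor $\PSh_\Sigma(C_1 \times C_2) \to \PSh_\Sigma(C_1) \times \PSh_\Sigma(C_2)$ sending $F$ to $(F(-, 0_{C_2}), F(0_{C_1}, -))$ is an equivalence in $\CAlg(\Cat_\infty^\mathrm{sift})$, with inverse $(G_1, G_2) \mapsto ((a,b) \mapsto G_1(a) \times G_2(b))$. The key point is the identity $(a, b) \simeq (a, 0_{C_2}) \sqcup (0_{C_1}, b)$ in $C_1 \times C_2$, which forces any product-preserving presheaf to split as claimed; compatibility with the Day convolution symmetric monoidal structure and with sifted colimits follows by direct inspection. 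Applying this to $C_i = \mathscr{C}_{S_i}$ yields the claim for $\PSh_\Sigma(\mathscr{C}_\star)$.

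For $\HH^\mathscr{C}$, I would use the fact that $\HH^\mathscr{C}(S)$ is the localization of $\PSh_\Sigma(\mathscr{C}_S)$ at motivic equivalences, and that motivic-locality is tested via $\theta^* : \PSh_\Sigma(\mathscr{C}_S) \to \PSh_\Sigma(\Sm_S)_\bullet$. Under the decomposition $\Sm_{S_1 \sqcup S_2} \simeq \Sm_{S_1} \times \Sm_{S_2}$, every $X \in \Sm_{S_1 \sqcup S_2}$ splits as $X_1 \sqcup X_2$ with $X_i \in \Sm_{S_i}$, and every Nisnevich cover, $\mathbb{A}^1$-homotopy, etc.\ decomposes component-wise. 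Hence under the equivalence of the previous paragraph, an object $(F_1, F_2)$ is motivic-local iff each $F_i$ is, so the localization commutes with the product decomposition.

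For $\SH^\mathscr{C}$, the decomposition $\HH^\mathscr{C}(S_1 \sqcup S_2) \simeq \HH^\mathscr{C}(S_1) \times \HH^\mathscr{C}(S_2)$ sends $\mathbf{T}^\mathscr{C}_{S_1 \sqcup S_2}$ to the pair $(\mathbf{T}^\mathscr{C}_{S_1}, \mathbf{T}^\mathscr{C}_{S_2})$, because $\theta_!$ is compatible with base change along the inclusions $j_i$. Since inverting an object in a finite product of presentably symmetric monoidal $\infty$-categories is the same as inverting each component in each factor, we get $\SH^\mathscr{C}(S_1 \sqcup S_2) \simeq \SH^\mathscr{C}(S_1) \times \SH^\mathscr{C}(S_2)$. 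The main obstacle, in my view, is purely bookkeeping: ensuring that all the equivalences exhibited above are natural in $S$ and genuinely live in $\CAlg(\Cat_\infty^\mathrm{sift})$ rather than just in $\Cat_\infty$. This is handled by the fact that all the constructions (Day convolution, localization at a set of morphisms stable under tensor, inverting a distinguished object) are performed functorially in $\CAlg(\Cat_\infty^\mathrm{sift})$ as already set up in Construction~\ref{construct:motivic-pattern-to-formalism}.
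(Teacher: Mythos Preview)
Your argument is correct, but it differs from the paper's route for $\HH^\mathscr{C}$ and $\SH^\mathscr{C}$. For $\PSh_\Sigma(\mathscr{C}_\star)$ the two approaches coincide: the paper simply asserts that $\PSh_\Sigma : \CAlg(\Cat_\infty) \to \CAlg(\Cat_\infty^\mathrm{sift})$ preserves finite products, which is exactly your general lemma. For $\HH^\mathscr{C}$ and $\SH^\mathscr{C}$, however, the paper does not trace through the localization and the formal inversion of $\mathbf{T}^\mathscr{C}$. Instead it invokes two already-established structural results: Nisnevich separation (Propositions~\ref{prop:unstable-nis-sep} and~\ref{prop:stable-nis-sep}, i.e.\ conservativity of $\{j_i^*\}$ for the cover $\{S_i \hookrightarrow S_1 \sqcup S_2\}$) and smooth base change (Propositions~\ref{prop:unstable-smooth-bc-projection}(1) and~\ref{prop:stable-smooth-bc-projection}(1)). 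Together these give that $(j_1^*, j_2^*)$ is conservative and that its left adjoint $(A,B) \mapsto j_{1\sharp}A \sqcup j_{2\sharp}B$ satisfies $j_i^* j_{k\sharp} \simeq 0$ for $i \neq k$ and $j_i^* j_{i\sharp} \simeq \mathrm{id}$, from which the equivalence follows by the usual unit/counit check. Your hands-on argument has the virtue of being self-contained, while the paper's argument is more modular: it shows that finite-product preservation for $\HH^\mathscr{C}$ and $\SH^\mathscr{C}$ is a formal consequence of Nisnevich separation and smooth base change, a pattern that recurs throughout six-functor formalisms and that applies uniformly without revisiting how each $\infty$-category was built.
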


\begin{proof}
    For $\PSh_\Sigma(\mathscr{C}_\star)$, this follows from the fact that
    \begin{equation*}
        \PSh_\Sigma : \CAlg(\Cat_\infty) \to \CAlg(\Cat_\infty^\mathrm{sift})
    \end{equation*}
    preserves finite products.
    For $\HH^\mathscr{C}$ and $\SH^\mathscr{C}$, this is a consequence of Nisnevich separation (Proposition~\ref{prop:unstable-nis-sep}, Proposition~\ref{prop:stable-nis-sep}) and the smooth base change formula (Proposition~\ref{prop:unstable-smooth-bc-projection}(1), Proposition~\ref{prop:stable-smooth-bc-projection}(1)).
\end{proof}

\begin{proposition} \label{prop:all-spheres-inverted}
    Let $\mathscr{C} = (\mathscr{C}_\star,\theta)$ be a normed motivic pattern, and let $S$ be an algebraic space.
    For every vector bundle $E$ over $S$, the object
    \begin{equation*}
        \Sigma^\infty_{\mathbf{T},\mathscr{C}}\theta_! \left(\frac{E}{E\smallsetminus 0}\right) \in \SH^\mathscr{C}(S)
    \end{equation*}
    is invertible.
\end{proposition}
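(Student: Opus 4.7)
The plan is to reduce the claim to the corresponding classical statement in $\SH(S)$ and then to reduce that to a Nisnevich-local computation. Since $\theta_! \colon \HH(S)_\bullet \to \HH^\mathscr{C}(S)$ is symmetric monoidal and sends $\mathbf{T}_S$ to $\mathbf{T}^\mathscr{C}_S$, the universal property of $\SH(S) = \HH(S)_\bullet[\mathbf{T}_S^{-1}]$ produces a symmetric monoidal functor $\theta_! \colon \SH(S) \to \SH^\mathscr{C}(S)$ satisfying $\Sigma^\infty_{\mathbf{T},\mathscr{C}} \theta_! \simeq \theta_! \Sigma^\infty_{\mathbf{T}}$. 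Because symmetric monoidal functors preserve invertible objects, it suffices to show that $\Sigma^\infty_{\mathbf{T}}(E/(E\smallsetminus 0))$ is invertible in $\SH(S)$.

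For this, I would first trivialize $E$ Nisnevich-locally. Since every algebraic space is Nisnevich-locally affine, there is a Nisnevich cover $\{j_\alpha \colon U_\alpha \to S\}$ with each $U_\alpha$ affine, so that each $E|_{U_\alpha}$ is a direct summand of some trivial bundle $\mathcal{O}_{U_\alpha}^{n_\alpha}$, with complement $F_\alpha$. The splitting $E|_{U_\alpha} \oplus F_\alpha \cong \mathcal{O}_{U_\alpha}^{n_\alpha}$ yields, in $\HH(U_\alpha)_\bullet$, an equivalence between the smash product of the Thom spaces of $E|_{U_\alpha}$ and $F_\alpha$ and $\mathbf{T}_{U_\alpha}^{\wedge n_\alpha}$. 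Stabilizing to $\SH(U_\alpha)$ then shows that $j_\alpha^* \Sigma^\infty_{\mathbf{T}}(E/(E\smallsetminus 0))$ is invertible, with explicit inverse $\Sigma^\infty_\mathbf{T}(F_\alpha/(F_\alpha \smallsetminus 0)) \otimes \mathbf{T}_{U_\alpha}^{\otimes -n_\alpha}$.

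To promote invertibility from the cover back to $S$, I would invoke the classical fact that Thom spaces of vector bundles are dualizable in $\SH(S)$, together with the fact that the family $\{j_\alpha^*\}$ is conservative by the $\SH$-analogue of Proposition~\ref{prop:stable-nis-sep} and consists of symmetric monoidal left adjoints, which therefore preserve duals of dualizable objects. Invertibility of the dualizable object $\Sigma^\infty_\mathbf{T}(E/(E\smallsetminus 0))$ is then equivalent to the evaluation map against its dual being an equivalence, a condition which can be verified after pullback to each $U_\alpha$ and which holds there by the preceding paragraph.

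The main obstacle will be justifying this local-to-global step cleanly, since it rests on the nontrivial classical input that Thom spaces are dualizable in $\SH(S)$ (proven via smooth duality or Ayoub-style purity). The remaining ingredients---symmetric monoidality of $\theta_!$, the multiplicativity of Thom spaces under direct sums, local triviality of vector bundles on affine algebraic spaces, and Nisnevich separation---are all routine or already available in the paper.
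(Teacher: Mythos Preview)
Your argument is correct but takes a different route from the paper's. The paper works directly in $\SH^\mathscr{C}(S)$: it invokes a general criterion of Cisinski--D\'eglise \cite[Proposition~2.4.11]{CisinskiDeglise_TriangulatedMixedMotives}, which says that Thom spectra of vector bundles are invertible in any ``stable homotopy functor'' satisfying Nisnevich separation and a weak localization property for closed immersions with smooth retraction. These two properties are exactly Propositions~\ref{prop:stable-nis-sep} and~\ref{prop:stable-weak-loc}, so the proof is a one-line citation. Your approach instead transfers the problem to $\SH(S)$ via the symmetric monoidal functor $\theta_!$, where the result is classical. This is arguably more economical: it bypasses the need for Proposition~\ref{prop:stable-weak-loc} entirely, at the cost of importing the classical invertibility as a black box. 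Note that your steps 2a--2c are really a sketch of how one proves the classical statement (and ultimately rest on the same purity/localization input that underlies the Cisinski--D\'eglise criterion), so you could just as well stop after the first paragraph and cite, e.g., \cite[\S6]{Hoyois_6ff} for invertibility of Thom spectra in $\SH(S)$. Neither approach actually uses the \emph{normed} hypothesis on the pattern; the statement holds for any motivic pattern.
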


\begin{proof}
    By \cite[Proposition~2.4.11]{CisinskiDeglise_TriangulatedMixedMotives}, this follows from Proposition~\ref{prop:stable-nis-sep} and Proposition~\ref{prop:stable-weak-loc}.
\end{proof}

\subsection{Norms for Motivic Patterns}

\begin{remark}
    Recall from Example~\ref{example:sm-s-plus-normed} that the assignment $S \mapsto \Sm_{S+}$ can be promoted to a norm monoidal $\infty$-category $\Sm_{\star +} : \Corr^\fet(\AlgSpc)^\op \to \CAlg(\Cat_\infty)$.
\end{remark}

\begin{definition}
    A \emph{normed motivic pattern} is a norm monoidal $\infty$-category
    \begin{equation*}
        \mathscr{C}_\star : \Corr^{\fet}(\AlgSpc)^\op \to \CAlg(\Cat_\infty)
    \end{equation*}
    together with a norm monoidal functor
    \begin{equation*}
        \theta : \Sm_{\star+} \to \mathscr{C}_\star : \Corr^{\fet}(\AlgSpc)^\op \to \CAlg(\Cat_\infty)
    \end{equation*}
    whose restriction along $\AlgSpc^\op \to \Corr^\fet(\AlgSpc)^\op$ is a motivic pattern.
    We write
    \begin{equation*}
        \mathscr{N}\mathrm{orm}\MotPatt \subseteq \NMon(\Cat_\infty | \AlgSpc)_{\Sm_{\star+}/}
    \end{equation*}
    for the full subcategory spanned by normed motivic patterns.
\end{definition}

\begin{notation}
    Let $\mathscr{C} = (\mathscr{C}_\star,\theta)$ be a normed motivic pattern, and let $p : X \to S$  be a finite \'etale morphism of algebraic spaces.
    We write $p_\otimes : \mathscr{C}_X \to \mathscr{C}_S$ for image of the span
    \begin{equation*}
        \begin{tikzcd}
            & X \ar[dl,"p"'] \ar[dr,"\mathrm{id}"] \\
            S & & X
        \end{tikzcd}
    \end{equation*}
    under $\theta : \Corr^{\fet}(\AlgSpc)^\op \to \CAlg(\Cat_\infty)$.
\end{notation}

\begin{example}
    The norm monoidal $\infty$-category $\Sm_{\star+}$ is the inital normed motivic pattern.
\end{example}

\begin{proposition}
    The distributive law holds in every normed motivic pattern.
\end{proposition}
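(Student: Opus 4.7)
The plan is to reduce the statement to the corresponding fact for the initial normed motivic pattern $\Sm_{\star+}$, using essential surjectivity of $\theta$ together with the compatibility of $\theta$ with all of the functors appearing in the distribution transformation.

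Fix a distribution diagram as in (\ref{eqn:distr-diag}) and let $\mathscr{C} = (\mathscr{C}_\star, \theta)$ be a normed motivic pattern. I would first record the following compatibilities of $\theta : \Sm_{X+} \to \mathscr{C}_X$ (and its analogues over $S$, $T$, $\mathrm{R}_p X$, $(\mathrm{R}_p X)_S$) with the four types of functors appearing in the distribution transformation: (i) $\theta$ commutes with $f^*$ and $e^*$ because $\theta$ is a natural transformation of presheaves $\AlgSpc^{\op} \to \CAlg(\Cat_\infty)$; (ii) $\theta$ commutes with $f_\sharp$ and $h_\sharp$ by Proposition~\ref{prop:p-lower-sharp-alpha-lower-shriek} applied to $\theta$ regarded as a morphism of motivic patterns; and (iii) $\theta$ commutes with $p_\otimes$ and $q_\otimes$ because $\theta$ is norm monoidal.

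Using these identifications, the precomposition of the distribution transformation $\mathrm{Dis}_\mathscr{C} : f_\sharp q_\otimes e^* \to p_\otimes h_\sharp$ with $\theta : \Sm_{X+} \to \mathscr{C}_X$ is canonically equivalent to $\theta$ applied to the analogous distribution transformation
\begin{equation*}
    \mathrm{Dis}_{\Sm_{\star+}} : f_\sharp q_\otimes e^* \to p_\otimes h_\sharp : \Sm_{X+} \to \Sm_{T+}
\end{equation*}
in the initial normed motivic pattern. Since $\theta$ is essentially surjective (axiom (1) of a motivic pattern) and the source and target of $\mathrm{Dis}_\mathscr{C}$ preserve equivalences, to prove that $\mathrm{Dis}_\mathscr{C}$ is an equivalence it suffices to prove that $\mathrm{Dis}_{\Sm_{\star+}}$ is an equivalence.

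This reduces us to a concrete geometric statement: for every $Y \in \Sm_X$, the two smooth pointed $T$-schemes
\begin{equation*}
    \mathrm{R}_q\bigl(Y \times_X (\mathrm{R}_p X)_S\bigr)_+ \qquad\text{and}\qquad \mathrm{R}_p(Y \to S)_+
\end{equation*}
agree naturally in $\Sm_{T+}$. This is the standard distribution formula for Weil restriction and is exactly the content underlying the construction of the norm monoidal structure on $\Sm_{\star+}$ in Example~\ref{example:sm-s-plus-normed}; the identification arises from iterating the adjunction $p^* \dashv \mathrm{R}_p$ and identifying $(\mathrm{R}_pX)_S \to \mathrm{R}_pX$ with the Weil restriction of $X \to S$ along $p$. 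The main technical point is therefore bookkeeping: one must ensure that the canonical comparison between $\mathrm{Dis}_\mathscr{C}\circ\theta$ and $\theta\circ\mathrm{Dis}_{\Sm_{\star+}}$ arising from the three compatibilities above is indeed an equivalence of natural transformations, not merely of their components. I expect this to be the main obstacle, since it requires coherent naturality across the unit, counit, and exchange transformations defining $\mathrm{Dis}$; once that coherence is established, the result follows formally from the geometric distributivity of Weil restriction.
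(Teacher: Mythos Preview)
Your proposal is correct and follows essentially the same approach as the paper: reduce to the initial normed motivic pattern $\Sm_{\star+}$ using that $\theta$ commutes with all of the functors $e^*$, $f_\sharp$, $g_\sharp$, $h_\sharp$, $p_\otimes$, $q_\otimes$ appearing in the distribution transformation, together with essential surjectivity of $\theta$. The paper's proof is a single sentence to this effect and treats the distributivity in $\Sm_{\star+}$ as known (it is the standard adjunction identity for Weil restriction underlying Example~\ref{example:sm-s-plus-normed}), whereas you spell out both the commutation claims and the geometric endpoint; note also that the compatibility of $\theta$ with $f_\sharp$, $h_\sharp$ is more directly axiom~(3) of a motivic pattern than Proposition~\ref{prop:p-lower-sharp-alpha-lower-shriek}, though your citation is not wrong since $\theta$ is the unique map from the initial pattern.
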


\begin{proof}
    Since each functor involved commutes with $\theta$, this follows from the analogous claim for $\Sm_{\star+} : \Corr^{\fet}(\AlgSpc)^\mathrm{op} \to \Cat_\infty$.
\end{proof}

\begin{construction}
    Let $\mathscr{C} = (\mathscr{C}_\star,\theta)$ be a normed motivic pattern, and let $p : X \to S$  be a finite \'etale morphism of algebraic spaces.
    The functor $p_\otimes : \mathscr{C}_X \to \mathscr{C}_T$ extends uniquely to a symmetric monoidal functor $p_\otimes : \PSh_\Sigma(\mathscr{C}_X) \to \PSh_\Sigma(\mathscr{C}_S)$ that preserves sifted colimits.
\end{construction}

\begin{proposition} \label{prop:p-sigma-p-otimes-motivic-equivalences}
    Let $\mathscr{C} = (\mathscr{C}_\star,\theta)$ be a normed motivic pattern, and let $p : X \to S$  be a finite \'etale morphism of algebraic spaces.
    The functor $p_\otimes : \PSh_\Sigma(\mathscr{C}_X) \to \PSh_\Sigma(\mathscr{C}_S)$ preserves motivic equivalences.
\end{proposition}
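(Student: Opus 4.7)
The plan is to bootstrap from the case of the initial motivic pattern $\Sm_{\star+}$, where the analogous statement is due to Bachmann--Hoyois \cite{BachmannHoyois_Norms}, using the norm monoidal transformation $\theta \colon \Sm_{\star+} \to \mathscr{C}_\star$. The starting point is the commutative square
\begin{equation*}
\begin{tikzcd}
\PSh_\Sigma(\Sm_X)_\bullet \ar[r,"p_\otimes"] \ar[d,"\theta_!"] & \PSh_\Sigma(\Sm_S)_\bullet \ar[d,"\theta_!"] \\
\PSh_\Sigma(\mathscr{C}_X) \ar[r,"p_\otimes"] & \PSh_\Sigma(\mathscr{C}_S)
\end{tikzcd}
\end{equation*}
whose commutativity follows because both composites are sifted-colimit-preserving symmetric monoidal functors from $\PSh_\Sigma(\Sm_X)_\bullet$ that agree on $\Sm_{X+}$, by norm monoidality of $\theta$.

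Next, I would observe that $\theta_!$ preserves motivic equivalences: since $G \in \PSh_\Sigma(\mathscr{C}_X)$ is motivic-local iff $\theta^* G$ is so, the adjunction $\theta_! \dashv \theta^*$ forces $\theta_!$ to send motivic equivalences to motivic equivalences. Conversely, the motivic equivalences in $\PSh_\Sigma(\mathscr{C}_X)$ are exactly the strong saturation of $\{\theta_!(\alpha) : \alpha \text{ a motivic equivalence in } \PSh_\Sigma(\Sm_X)_\bullet\}$. Combining with the Bachmann--Hoyois theorem applied to the top $p_\otimes$, the class $W := \{f : p_\otimes(f) \text{ is a motivic equivalence}\}$ contains every $\theta_!(\alpha)$, and therefore a generating set of motivic equivalences in $\PSh_\Sigma(\mathscr{C}_X)$.

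To conclude that $W$ contains \emph{all} motivic equivalences, I would introduce $V := W \cap \{\text{motivic equivalences}\}$ and show that $V$ is strongly saturated; it will then equal the class of motivic equivalences, so in particular $W$ does. Closure of $V$ under 2-out-of-3 and sifted colimits is immediate, since $p_\otimes$ preserves sifted colimits. For closure under finite coproducts, I argue by induction on the degree of $p$. Given $f, g \in V$, the decomposition of Remark~\ref{remark:distr-diag-binary-sum} expresses $p_\otimes(f \sqcup g)$ as a coproduct of three maps: the outer two are $p_\otimes(f)$ and $p_\otimes(g)$, which are motivic equivalences since $f, g \in W$; the middle one is $c_\sharp\bigl(q_{l,\otimes} e_l^*(f) \otimes q_{r,\otimes} e_r^*(g)\bigr)$, where $q_l, q_r$ are finite \'etale of strictly smaller degree than $p$. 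Smooth pullback and pushforward preserve motivic equivalences (Proposition~\ref{prop:basic-functoriality-p-sigma} and the construction following it); by the inductive hypothesis, so do $q_{l,\otimes}$ and $q_{r,\otimes}$; and tensor products preserve motivic equivalences by monoidality of the localization. Hence the middle term is a motivic equivalence, and so is the whole coproduct.

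The main obstacle is this closure argument, where one must avoid a circularity between the outer terms (which need $f, g \in W$) and the middle term (which needs $f, g$ to themselves be motivic equivalences and uses smaller-degree norms). Working with $V = W \cap \{\text{motivic equivalences}\}$ rather than $W$ alone handles this cleanly, since at every step of the transfinite saturation process we retain both hypotheses on our morphisms simultaneously.
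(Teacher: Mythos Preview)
Your approach is correct but takes a genuinely different route from the paper's.

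The paper's proof is a three-line reduction via monadicity. Since $\theta : \Sm_{X+} \to \mathscr{C}_X$ is essentially surjective, the right adjoint $\theta^* : \PSh_\Sigma(\mathscr{C}_X) \to \PSh_\Sigma(\Sm_X)_\bullet$ is conservative and preserves sifted colimits, so the adjunction $\theta_! \dashv \theta^*$ is monadic. The bar resolution therefore expresses \emph{any} motivic equivalence $f$ in $\PSh_\Sigma(\mathscr{C}_X)$ as a geometric realization of maps of the form $\theta_!\theta^*\cdots\theta_!\theta^* f$; since both $\theta_!$ and $\theta^*$ preserve motivic equivalences, these are all of the form $\theta_!(g)$ with $g$ a motivic equivalence in $\PSh_\Sigma(\Sm_X)_\bullet$. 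Now $p_\otimes$ preserves sifted colimits and commutes with $\theta_!$, so one is immediately reduced to Bachmann--Hoyois. No induction on degree, no distribution formula, no saturation bookkeeping.

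Your argument instead re-runs in $\PSh_\Sigma(\mathscr{C}_\star)$ essentially the same induction-on-degree strategy that Bachmann--Hoyois use to establish the base case $\Sm_{\star+}$, rather than reducing to it. This works, and it is the style of argument the paper itself uses later for the $\langle\mathrm{mot},\mathrm{gp}\rangle$-equivalences in $\HH^\mathscr{C}$ (where monadicity is not directly available). One small gap to patch: you verify that $V$ is closed under 2-out-of-3, sifted colimits, and binary coproducts, but strong saturation also requires closure under cobase change. This does follow from what you have---writing a pushout as the bar construction $B \sqcup_A C \simeq |B \sqcup A^{\sqcup\bullet} \sqcup C|$, the cobase change of $f$ is a geometric realization of maps $f \sqcup \mathrm{id}_D$, and identities lie in $V$---but it should be said. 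Alternatively, cite \cite[Lemma~2.10]{BachmannHoyois_Norms} directly, which packages exactly this reduction and would let you check only $p_\otimes(s \sqcup \mathrm{id}_C)$ for $s$ a generating equivalence and $C$ representable.
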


\begin{proof}
    Fix a motivic equivalence $f$ in $\PSh_\Sigma(\mathscr{C}_X)$.
    By monadicity of the adjunction
    \begin{equation*}
        \theta_! : \PSh_\Sigma(\Sm_X) \rightleftarrows \PSh_\Sigma(\mathscr{C}_X) : \theta^*,
    \end{equation*}
    the morphism $f$ can be written as a simplicial colimit of morphisms of the form $\theta_! \theta^* \cdots \theta_! \theta^* f$.
    In particular, since $\theta_!$ and $\theta^*$ both preserve motivic equivalences, we can write $f$ as a simplicial colimit of morphisms of the form $\theta_! g$ with $g$ a motivic equivalence in $\PSh_\Sigma(\Sm_X)$.
    Since $p_\otimes : \PSh_\Sigma(\mathscr{C}_X) \to \PSh_\Sigma(\mathscr{C}_S)$ preserves sifted colimits and commutes with $\theta_!$, we are reduced to the analogous claim for $\PSh_\Sigma(\Sm_\star)$, which is \cite[Theorem~3.3(4)]{BachmannHoyois_Norms}.
\end{proof}

\begin{construction}
    Let $\mathscr{C} = (\mathscr{C}_\star,\theta)$ be a normed motivic pattern, and let $p : X \to S$  be a finite \'etale morphism of algebraic spaces.
    By Proposition~\ref{prop:p-sigma-p-otimes-motivic-equivalences}, the functor $\mathrm{L}_\mathrm{mot} p_\otimes : \PSh_\Sigma(\mathscr{C}_X) \to \HH^\mathscr{C}(S)$ extends uniquely to a symmetric monoidal functor $p_\otimes : \HH^\mathscr{C}(X) \to \HH^\mathscr{C}(S)$.
    The functor $p_\otimes : \HH^\mathscr{C}(X) \to \HH^\mathscr{C}(S)$ preserves sifted colimits.
\end{construction}

\begin{proposition} \label{prop:compute-p-otimes-sphere}
    Let $\mathscr{C} = (\mathscr{C}_\star,\theta)$ be a normed motivic pattern, let $p : X \to S$  be a finite \'etale morphism of algebraic spaces, and let $\mathscr{E}$ be a finite flat $\mathcal{O}_X$-module.
    \begin{enumerate}
        \item We have identifications
        \begin{equation*}
            p_\otimes \theta(\mathbb{V}(\mathscr{E})_+) = \theta(\mathbb{V}(p_* \mathscr{E})_+) \qquad\text{and}\qquad p_\otimes \theta ((\mathbb{V}(\mathscr{E}) \smallsetminus 0)_+) = \theta((\mathbb{V}(p_* \mathscr{E})\smallsetminus 0)_+)
        \end{equation*}
        in $\HH^\mathscr{C}(S)$.
        \item The canonical map
        \begin{equation*}
            p_\otimes \theta_! \left(\frac{\mathbb{V}(\mathscr{E})}{\mathbb{V}(\mathscr{E}) \smallsetminus 0} \right) \to \theta_! \left(\frac{\mathbb{V}(p_*\mathscr{E})}{\mathbb{V}(p_*\mathscr{E}) \smallsetminus 0} \right)
        \end{equation*}
        in $\HH^\mathscr{C}(S)$ is an equivalence.
    \end{enumerate}
\end{proposition}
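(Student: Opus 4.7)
The plan is to reduce both parts to their analogues in $\Sm_{\star+}$ (the initial normed motivic pattern) via the norm monoidal structure of $\theta$. Because $\theta : \Sm_{\star+} \to \mathscr{C}_\star$ is a morphism of normed motivic patterns, we get a natural equivalence $p_\otimes \circ \theta \simeq \theta \circ p_\otimes$ of functors $\Sm_{X+} \to \mathscr{C}_S$; passing to presheaves and using Proposition~\ref{prop:p-sigma-p-otimes-motivic-equivalences} extends this to a natural equivalence $p_\otimes \circ \theta_! \simeq \theta_! \circ p_\otimes$ of functors $\HH(X)_\bullet \to \HH^\mathscr{C}(S)$, and this is the workhorse for both parts.

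For part (1), this naturality reduces both identifications to the corresponding computations of Weil restriction (the norm on $\Sm_{\star+}$, by Example~\ref{example:sm-s-plus-normed}) applied to $\mathbb{V}(\mathscr{E})$ and $\mathbb{V}(\mathscr{E}) \smallsetminus 0$. The identification $\mathrm{R}_p \mathbb{V}(\mathscr{E}) \simeq \mathbb{V}(p_*\mathscr{E})$ is the standard description of the Weil restriction of a total space as the total space of the pushforward sheaf, and the corresponding statement after removing the zero section follows from the compatibility of the zero section with Weil restriction.

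For part (2), I would construct the canonical map by applying $p_\otimes$ to the defining cofiber sequence
\begin{equation*}
    \theta_!\bigl((\mathbb{V}(\mathscr{E}) \smallsetminus 0)_+\bigr) \to \theta_!(\mathbb{V}(\mathscr{E})_+) \to \theta_!\bigl(\mathbb{V}(\mathscr{E})/(\mathbb{V}(\mathscr{E}) \smallsetminus 0)\bigr)
\end{equation*}
in $\HH^\mathscr{C}(X)$ and invoking part~(1) on the outer terms. Using the intertwining $\theta_! \circ p_\otimes \simeq p_\otimes \circ \theta_!$, it suffices to prove the analogous map
\begin{equation*}
    p_\otimes \bigl(\mathbb{V}(\mathscr{E})/(\mathbb{V}(\mathscr{E}) \smallsetminus 0)\bigr) \to \mathbb{V}(p_*\mathscr{E})/(\mathbb{V}(p_*\mathscr{E}) \smallsetminus 0)
\end{equation*}
is an equivalence in $\HH(S)_\bullet$, i.e., that the norm of a motivic Thom space is the Thom space of the pushforward.

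The hard part is this last identification, since $p_\otimes$ does not preserve cofibers and so the equivalence cannot be extracted termwise from the cofiber sequence. I would resolve this using the distributive law (Remark~\ref{remark:distr-diag-binary-sum}): \'etale-locally on $S$ the map $p$ trivialises to a fold $S^{\sqcup n} \to S$, along which $p_\otimes$ becomes an $n$-fold smash product, and the Thom space identity reduces to the classical multiplicativity $\operatorname{Th}(E_1) \wedge \cdots \wedge \operatorname{Th}(E_n) \simeq \operatorname{Th}(E_1 \oplus \cdots \oplus E_n)$. Nisnevich separation (Proposition~\ref{prop:unstable-nis-sep}) then globalises the equivalence to arbitrary finite \'etale $p$.
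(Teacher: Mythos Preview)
Your reduction is exactly the paper's: use that $\theta$ is a morphism of normed motivic patterns to get $p_\otimes \theta_! \simeq \theta_! p_\otimes$, and thereby reduce both parts to the corresponding statements for $\HH(-)_\bullet$. The paper then simply cites \cite[Proposition~3.13]{BachmannHoyois_Norms} for those statements and stops; it does not attempt to reprove them.

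Where your proposal diverges is in the final paragraph, where you sketch an argument for the $\HH(S)_\bullet$ Thom-space identity. That sketch has a gap: a finite \'etale map trivialises after an \emph{\'etale} cover of $S$, not in general after a Nisnevich cover, so Proposition~\ref{prop:unstable-nis-sep} (Nisnevich separation) does not let you globalise an \'etale-local check. Motivic spaces are only Nisnevich sheaves, and there is no \'etale-separation statement available here. The distributive law you invoke is the right ingredient, but Bachmann--Hoyois use it differently: rather than trivialising $p$, they analyse $p_\otimes$ applied to the cofiber directly and control the cross terms. If you want a self-contained argument you should follow their computation; otherwise, as the paper does, just cite \cite[Proposition~3.13]{BachmannHoyois_Norms}.
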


\begin{proof}
    Since $p_\otimes$ commutes with $\theta_!$, the statement reduces to the analogous one for $\HH^\mathscr{C}(S)_\bullet$ \cite[Proposition~3.13]{BachmannHoyois_Norms}.
\end{proof}

\begin{construction}
    Let $\mathscr{C} = (\mathscr{C}_\star,\theta)$ be a normed motivic pattern, and let $p : X \to S$  be a finite \'etale morphism of algebraic spaces.
    By Proposition~\ref{prop:compute-p-otimes-sphere} and Proposition~\ref{prop:all-spheres-inverted}, the composite
    \begin{equation*}
        \begin{tikzcd}
            \HH^\mathscr{C}(X) \ar[r,"p_\otimes"] & \HH^\mathscr{C}(S) \ar[r,"\Sigma^{\infty}_{\mathbf{T},\mathscr{C}}"] & \SH^\mathscr{C}(S)
        \end{tikzcd}
    \end{equation*}
    sends $\mathbf{T}^\mathscr{C}_X$ to an invertible object.
    We can thus apply \cite[Proposition~4.1]{BachmannHoyois_Norms} to extend $\Sigma^\infty_{\mathbf{T},\mathscr{C}} p_\otimes : \HH^\mathscr{C}(X) \to \SH^\mathscr{C}(S)$ uniquely to a symmetric monoidal functor $p_\otimes : \SH^\mathscr{C}(X) \to \SH^\mathscr{C}(S)$ that preserves sifted colimits.
\end{construction}

\begin{construction} \label{construct:normed-patt-to-normed-cat}
    Using the same strategy as in Construction~\ref{construct:motivic-pattern-to-formalism}
    we can construct functors
    \begin{equation*}
        \PSh_\Sigma, \HH^{({-})}, \SH^{({-})} : \mathscr{N}\mathrm{orm}\MotPatt \to \NMon(\Cat_\infty^\mathrm{sift} | \AlgSpc)
    \end{equation*}
    and natural transformations
    \begin{equation*}
        \PSh_\Sigma \to \HH^{({-})} \to \SH^{({-})}
    \end{equation*}
    of functors $\mathscr{N}\mathrm{orm}\MotPatt \to \NMon (\Cat_\infty^\mathrm{sift} | \AlgSpc)$.
\end{construction}

\begin{proposition} \label{prop:psh-h-sh-presentably-normed}
    Let $\mathscr{C}$ be a normed motivic pattern.
    Each of the norm monoidal $\infty$-categories $\PSh_\Sigma(\mathscr{C}_\star)$, $\HH^\mathscr{C}$, and $\SH^\mathscr{C}$ over $\AlgSpc$ is presentably norm monoidal.
    Each of the norm monoidal functors
    \begin{equation*}
        \PSh_\Sigma(\mathscr{C}_\star) \to \HH^\mathscr{C} \to \SH^\mathscr{C}
    \end{equation*}
    is the left adjoint in a norm monoidal adjunction.
\end{proposition}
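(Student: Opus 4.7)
The plan is to verify each of the six axioms of a presentably norm monoidal $\infty$-category for $\PSh_\Sigma(\mathscr{C}_\star)$, $\HH^\mathscr{C}$, and $\SH^\mathscr{C}$, and then to invoke Proposition~\ref{prop:presentably-norm-adjunction-criterion}(1) to upgrade the natural chain of norm monoidal functors into norm monoidal adjunctions. Many of the required ingredients are already recorded in Section~4: presentability is clear in each case (standard for free cocompletion under sifted colimits, its accessible localization $\HH^\mathscr{C}$ by Proposition~\ref{prop:structured-motivic-spaces-accessible}, and its subsequent stabilization $\SH^\mathscr{C}$); the adjoints $f_*$ and $p_\sharp$ (for smooth $p$) are constructed throughout Section~4; the smooth base change formulae are Propositions~\ref{prop:smooth-bc-projection-p-sigma}, \ref{prop:unstable-smooth-bc-projection}(1), and \ref{prop:stable-smooth-bc-projection}(1); and sifted-colimit-preservation of $p_\otimes$ for finite \'etale $p$ is built into the successive constructions of Section~4.5.

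The heart of the proof, and the main obstacle, will be verifying the distributive law. At the level of $\mathscr{C}_\star$ itself this is already established, by reduction along $\theta$ to the distributive law for $\Sm_{\star+}$. To propagate it to $\PSh_\Sigma(\mathscr{C}_\star)$, I would observe that in any distribution diagram each of $f_\sharp$, $q_\otimes$, $e^*$, $p_\otimes$, and $h_\sharp$ is a sifted-colimit-preserving extension of its counterpart on $\mathscr{C}_\star$, and each commutes with $\theta_!$ in the appropriate sense. Hence the distribution transformation on $\PSh_\Sigma(\mathscr{C}_X)$ is the unique sifted-colimit-preserving extension of the distribution transformation on $\mathscr{C}_X$, and the latter is an equivalence. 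The passage to $\HH^\mathscr{C}$ then follows because $\mathrm{L}_\mathrm{mot}$ is symmetric monoidal and colimit-preserving, and because each functor appearing in a distribution transformation preserves motivic equivalences---crucially, Proposition~\ref{prop:p-sigma-p-otimes-motivic-equivalences} supplies this for $p_\otimes$. Finally, stabilization to $\SH^\mathscr{C}$ is handled via the base change description of Proposition~\ref{prop:SH-C-is-basechanged-from-SH} together with the $\mathbf{T}$-invertibility granted by Proposition~\ref{prop:all-spheres-inverted}.

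For the second claim, I would apply Proposition~\ref{prop:presentably-norm-adjunction-criterion}(1) to each of the functors $\PSh_\Sigma(\mathscr{C}_\star) \to \HH^\mathscr{C}$ and $\HH^\mathscr{C} \to \SH^\mathscr{C}$. Strong norm monoidality is part of the output of Construction~\ref{construct:normed-patt-to-normed-cat}; colimit-preservation on each fiber holds because the fibers are given by a symmetric monoidal localization and a symmetric monoidal stabilization, both colimit-preserving left adjoints; and the smooth $p_\sharp$-compatibility reduces to the statement that, for smooth $p$, both $\mathrm{L}_\mathrm{mot}$ and $\Sigma^\infty_{\mathbf{T},\mathscr{C}}$ commute with $p_\sharp$. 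This last point is immediate from the fact that $p_\sharp$ preserves motivic equivalences (Proposition~\ref{prop:basic-functoriality-p-sigma}) and carries $\mathbf{T}$-suspensions to $\mathbf{T}$-suspensions.
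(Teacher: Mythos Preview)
Your proposal is correct and follows essentially the same strategy as the paper: reduce the distributive law for $\PSh_\Sigma(\mathscr{C}_\star)$ and $\HH^\mathscr{C}$ to that of $\mathscr{C}_\star$ via sifted-colimit and motivic-equivalence preservation, then pass to $\SH^\mathscr{C}$ by $\mathbf{T}$-compatibility, and finish with Proposition~\ref{prop:presentably-norm-adjunction-criterion}. The paper is slightly terser (it phrases the stabilization step as ``commute with tensoring with $\mathbf{T}^\mathscr{C}$'' rather than invoking Proposition~\ref{prop:SH-C-is-basechanged-from-SH}) and explicitly cites additivity (Proposition~\ref{prop:additivity-from-motivic-patterns}) alongside smooth base change, but the substance is the same.
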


\begin{proof}
    Since all functors involved in the distribution transformation for $\PSh_\Sigma(\mathscr{C}_\star)$ preserve sifted colimits and motivic equivalences, the distributive laws for $\PSh_\Sigma(\mathscr{C}_\star)$ and $\HH^\mathscr{C}$ reduce to the distributive law for $\mathscr{C}_\star$.
    Since all functors involved in the distribution transformation for $\HH^\mathscr{C}$ commute with tensoring with $\mathbf{T}^\mathscr{C}$, the distributive law for $\SH^\mathscr{C}$ reduces to that of $\HH^\mathscr{C}$.

    The first claim now follows from additivity (Proposition~\ref{prop:additivity-from-motivic-patterns}), and smooth base change (Propositions~\ref{prop:smooth-bc-projection-p-sigma}, \ref{prop:unstable-smooth-bc-projection}, and \ref{prop:stable-smooth-bc-projection}).
    The second claim is an application of Proposition~\ref{prop:presentably-norm-adjunction-criterion}.
\end{proof}

\subsection{Motivic Patterns and Group Completion}

\begin{definition}
    A \emph{semiadditive motivic pattern} is a motivic pattern $(\mathscr{C}_\star,\theta)$ such that $\mathscr{C}_S$ is semiadditive for every algebraic space $S$.
    We write
    \begin{equation*}
        \MotPatt^\mathrm{semiadd} \subseteq \MotPatt
    \end{equation*}
    for the full subcategory spanned by semiadditive motivic patterns.
\end{definition}

\begin{definition}
    Let $\mathscr{C}$ be a semiadditive motivic pattern, and let $S$ be an algebraic space.
    A morphism in $\PSh_\Sigma(\mathscr{C}_S)$ is called a \emph{$\langle \mathrm{mot},\mathrm{gp} \rangle$-equivalence} if is sent to an equivalence by the localization functor $\PSh_\Sigma(\mathscr{C}_S) \to \HH^\mathscr{C}(S)^\mathrm{gp}$.
\end{definition}

\begin{proposition} \label{prop:f-upper-star-preserves-gp-equivalences}
    Let $\mathscr{C}$ be a semiadditive motivic pattern and let $f : S \to T$ be a morphism of algebraic spaces.
    The functors $f^* : \PSh_\Sigma(\mathscr{C}_T) \to \PSh_\Sigma(\mathscr{C}_S)$ and $f^* : \HH^\mathscr{C}(T) \to \HH^\mathscr{C}(S)$ both preserve $({-})^\mathrm{gp}$-equivalences.
    If $f$ is smooth, then the functors $f_\sharp : \PSh_\Sigma(\mathscr{C}_S) \to \PSh_\Sigma(\mathscr{C}_T)$ and $f_\sharp : \HH^\mathscr{C}(S) \to \HH^\mathscr{C}(T)$ also preserve $({-})^\mathrm{gp}$-equivalences.
\end{proposition}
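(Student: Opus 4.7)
My plan is to reduce each case to the following abstract principle: if $F : \mathscr{A} \rightleftarrows \mathscr{B} : G$ is an adjunction between semiadditive presentable $\infty$-categories and $G$ sends group-complete objects to group-complete objects, then $F$ preserves $(-)^\mathrm{gp}$-equivalences. This holds because a morphism $\phi$ in $\mathscr{A}$ is a $(-)^\mathrm{gp}$-equivalence precisely when $\mathrm{Map}_\mathscr{A}(\phi, Z)$ is an equivalence for every $Z \in \mathscr{A}^\mathrm{gp}$, and by adjunction $\mathrm{Map}_\mathscr{B}(F\phi, W) \simeq \mathrm{Map}_\mathscr{A}(\phi, GW)$ for every $W \in \mathscr{B}^\mathrm{gp}$, which lies in $\mathscr{A}^\mathrm{gp}$ by hypothesis.

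The first step would be to verify that $\PSh_\Sigma(\mathscr{C}_S)$ and $\HH^\mathscr{C}(S)$ are both semiadditive presentable. The former is immediate since $\mathscr{C}_S$ is semiadditive by hypothesis, and $\PSh_\Sigma$ applied to a small semiadditive $\infty$-category is semiadditive. For $\HH^\mathscr{C}(S)$, one shows that the localization $\mathrm{L}_\mathrm{mot}$ preserves finite products; via the conservative, sifted-colimit-preserving functor $\theta^*$ of Proposition~\ref{prop:gamma-detects-mot-equiv}, this reduces to the classical statement that motivic localization preserves finite products in $\HH(S)_\bullet$.

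The second step would be to verify the hypothesis on right adjoints in each of the four cases. Between semiadditive categories, finite products coincide with finite coproducts and both agree with the biproduct; consequently any functor preserving finite products automatically preserves the biproduct structure together with the zero morphism, and hence preserves the shear map of Definition~\ref{def:grp-complete} — so it sends group-complete objects to group-complete objects. All four functors in the statement are left adjoints ($f^* \dashv f_*$ in both settings, and $f_\sharp \dashv f^*$ when $f$ is smooth), and in each case the relevant right adjoint preserves finite products: $f_*$ because it is a right adjoint, and $f^*$ because it is itself either a right adjoint to $f_\sharp$ (when $f$ is smooth) or a colimit-preserving left adjoint to $f_*$, in either case preserving finite biproducts. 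Applying the abstract principle then yields the four claims.

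The step requiring the most care is verifying semiadditivity of $\HH^\mathscr{C}(S)$; the rest is formal manipulation of adjoints, so I do not expect any serious obstruction.
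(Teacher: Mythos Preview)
Your proposal is correct and follows essentially the same route as the paper, which dispatches the statement in one line by observing that the functors in question are left adjoints between semiadditive presentable $\infty$-categories; your abstract principle is exactly the content behind that sentence. One minor simplification: to see that $\HH^\mathscr{C}(S)$ is semiadditive you need not invoke $\theta^*$ or the classical statement for $\HH(S)_\bullet$ --- since $\HH^\mathscr{C}(S) \subseteq \PSh_\Sigma(\mathscr{C}_S)$ is a reflective subcategory closed under small limits (Proposition~\ref{prop:structured-motivic-spaces-accessible}), finite products of motivic-local objects are computed in $\PSh_\Sigma(\mathscr{C}_S)$, where they coincide with finite coproducts, and hence remain motivic-local; this immediately gives semiadditivity.
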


\begin{proof}
    This is immediate since the functors in question are left adjoint between semiadditive presentable $\infty$-categories.
\end{proof}

\begin{construction}
    By Proposition~\ref{prop:f-upper-star-preserves-gp-equivalences}, we can use the same strategy as in Construction~\ref{construct:motivic-pattern-to-formalism} to construct a functors
    \begin{equation*}
        \PSh_\Sigma^\mathrm{gp}, \HH^{({-}),\mathrm{gp}} : \MotPatt^\mathrm{semiadd} \times \AlgSpc^\op \to \CAlg(\Cat_\infty^\mathrm{sift})
    \end{equation*}
    and a commutative diagram
    \begin{equation*}
        \begin{tikzcd}
            \PSh_\Sigma \ar[r] \ar[d] & \HH^{({-})} \ar[d] \ar[r] & \SH^{({-})}\\
            \PSh^\mathrm{gp}_\Sigma \ar[r] & \HH^{({-}),\mathrm{gp}} \ar[ur]
        \end{tikzcd}
    \end{equation*}
    of functors $\MotPatt^\mathrm{semiadd} \times \AlgSpc^\op \to \CAlg(\Cat_\infty^\mathrm{sift}) $
\end{construction}

\begin{definition}
    A \emph{normed semiadditive motivic pattern} is a normed motivic pattern $(\mathscr{C}_\star,\theta)$ such that $\mathscr{C}_S$ is semiadditive for every algebraic space $S$.
    We write
    \begin{equation*}
        \mathscr{N}\mathrm{orm}\MotPatt^\mathrm{semiadd} \subseteq \mathscr{N}\mathrm{orm}\MotPatt
    \end{equation*}
    for the full subcategory spanned by normed semiadditive motivic patterns.
\end{definition}

\begin{proposition} \label{prop:p-lower-otimes-preserves-gp-equivalence}
    Let $\mathscr{C}$ be a normed semiadditve motivic pattern.
    For every finite \'etale morphism $p : X \to S$, the functor $p_\otimes : \PSh_\Sigma(C_X) \to \PSh_\Sigma(C_S)$ preserves $({-})^\mathrm{gp}$-equivalences.
\end{proposition}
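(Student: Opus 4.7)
The plan is to recognize $p_\otimes : \PSh_\Sigma(\mathscr{C}_X) \to \PSh_\Sigma(\mathscr{C}_S)$ as a polynomial functor between semiadditive projectively generated $\infty$-categories that preserves geometric realizations, and then invoke Proposition~\ref{prop:polynomial-preserves-gp-equiv} to conclude.

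First I would check the categorical hypotheses on source and target. Since $\mathscr{C}$ is a semiadditive motivic pattern, both $\mathscr{C}_X$ and $\mathscr{C}_S$ are small semiadditive $\infty$-categories, so $\PSh_\Sigma(\mathscr{C}_X)$ and $\PSh_\Sigma(\mathscr{C}_S)$ are semiadditive projectively generated in the sense of Remark~\ref{remark:add-proj-gen-is-prestable}.

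Next I would verify that $p_\otimes$ is polynomial. By Proposition~\ref{prop:psh-h-sh-presentably-normed}, the norm monoidal $\infty$-category $\PSh_\Sigma(\mathscr{C}_\star)$ is presentably norm monoidal over $\AlgSpc$. Since $p : X \to S$ is finite \'etale between quasi-compact algebraic spaces, its degree is bounded by some $n$, and hence Proposition~\ref{prop:norm-is-polynomial} applies to give that $p_\otimes : \PSh_\Sigma(\mathscr{C}_X) \to \PSh_\Sigma(\mathscr{C}_S)$ is polynomial of degree $\leq n$. Preservation of geometric realizations is automatic: by construction, $p_\otimes$ on $\PSh_\Sigma(\mathscr{C}_\star)$ was defined as the unique symmetric monoidal extension of the norm functor on $\mathscr{C}_\star$ that preserves sifted colimits, and geometric realizations are sifted colimits.

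With all three hypotheses in hand, Proposition~\ref{prop:polynomial-preserves-gp-equiv} immediately yields the claim. There is no real obstacle here; the work is entirely one of assembly from previously established results, the nontrivial inputs being the polynomiality of norms (Proposition~\ref{prop:norm-is-polynomial}) and Bachmann's lemma on polynomial functors and group completion (Proposition~\ref{prop:polynomial-preserves-gp-equiv}).
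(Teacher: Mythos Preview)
Your proposal is correct and follows exactly the approach the paper takes: the paper's proof simply cites Propositions~\ref{prop:psh-h-sh-presentably-normed}, \ref{prop:norm-is-polynomial}, and \ref{prop:polynomial-preserves-gp-equiv}, and you have unpacked precisely how these combine, including the auxiliary observations (semiadditive projective generation of $\PSh_\Sigma(\mathscr{C}_X)$, boundedness of the degree of $p$, and preservation of sifted colimits) needed to make the citations fit together.
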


\begin{proof}
    This is an immediate consequence of Propositions~\ref{prop:psh-h-sh-presentably-normed}, \ref{prop:norm-is-polynomial}, and \ref{prop:polynomial-preserves-gp-equiv}.
\end{proof}

\begin{proposition}
    Let $\mathscr{C}$ be a normed semiadditve motivic pattern.
    For every finite \'etale morphism $p : X \to S$, the functor $p_\otimes : \HH^\mathscr{C}(X) \to \HH^\mathscr{C}(S)$ preserves $({-})^\mathrm{gp}$-equivalences.
\end{proposition}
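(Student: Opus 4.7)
The plan is to transport the question to $\PSh_\Sigma(\mathscr{C}_X)$, where Propositions~\ref{prop:p-sigma-p-otimes-motivic-equivalences} and~\ref{prop:p-lower-otimes-preserves-gp-equivalence} already tell us that $p_\otimes$ preserves both motivic equivalences and $({-})^\mathrm{gp}$-equivalences. The two facts about $p_\otimes$ on $\PSh_\Sigma$ are then combined via two applications of the two-out-of-three property for $({-})^\mathrm{gp}$-equivalences, using the defining commutative square relating $p_\otimes$ on $\PSh_\Sigma$ and on $\HH^\mathscr{C}$.

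Given a $({-})^\mathrm{gp}$-equivalence $f : A \to B$ in $\HH^\mathscr{C}(X)$, view it in $\PSh_\Sigma(\mathscr{C}_X)$ and form the commutative square
\begin{equation*}
\begin{tikzcd}
A \ar[r,"f"] \ar[d,"\eta_A"'] & B \ar[d,"\eta_B"] \\
A^\mathrm{gp} \ar[r,"f^\mathrm{gp}"] & B^\mathrm{gp}
\end{tikzcd}
\end{equation*}
where $\eta_A, \eta_B$ are the units of group completion in $\PSh_\Sigma(\mathscr{C}_X)$ and are therefore $({-})^\mathrm{gp}$-equivalences. The key reduction is to show that $f^\mathrm{gp}$ is a motivic equivalence. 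To see this, apply $\mathrm{L}_\mathrm{mot}$: since $\mathrm{L}_\mathrm{mot}$ is a left adjoint between semiadditive presentable $\infty$-categories, the same argument as in Proposition~\ref{prop:f-upper-star-preserves-gp-equivalences} shows it preserves $({-})^\mathrm{gp}$-equivalences, and because it preserves finite products (which coincide with finite coproducts in the semiadditive setting) it also preserves the property of being group-complete. Using $\mathrm{L}_\mathrm{mot} A = A$ and $\mathrm{L}_\mathrm{mot} B = B$, the resulting square in $\HH^\mathscr{C}(X)$ has top map $f$ and vertical maps $\mathrm{L}_\mathrm{mot}\eta_A, \mathrm{L}_\mathrm{mot}\eta_B$ that are all $({-})^\mathrm{gp}$-equivalences. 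By two-out-of-three, $\mathrm{L}_\mathrm{mot} f^\mathrm{gp}$ is a $({-})^\mathrm{gp}$-equivalence between group-complete objects in $\HH^\mathscr{C}(X)$, and hence an actual equivalence. Therefore $f^\mathrm{gp}$ is a motivic equivalence.

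Now apply $p_\otimes$ to the original $\PSh_\Sigma$-square and then $\mathrm{L}_\mathrm{mot}$. By Proposition~\ref{prop:p-lower-otimes-preserves-gp-equivalence} the vertical maps become $({-})^\mathrm{gp}$-equivalences in $\PSh_\Sigma(\mathscr{C}_S)$, and then remain so in $\HH^\mathscr{C}(S)$ after $\mathrm{L}_\mathrm{mot}$. By Proposition~\ref{prop:p-sigma-p-otimes-motivic-equivalences}, $p_\otimes f^\mathrm{gp}$ is a motivic equivalence in $\PSh_\Sigma(\mathscr{C}_S)$, so the bottom map becomes an equivalence in $\HH^\mathscr{C}(S)$. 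The top map is $p_\otimes f$ by the commutative square defining $p_\otimes : \HH^\mathscr{C}(X) \to \HH^\mathscr{C}(S)$. A final two-out-of-three then yields that $p_\otimes f$ is a $({-})^\mathrm{gp}$-equivalence in $\HH^\mathscr{C}(S)$.

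The only subtlety requiring care is the interaction between motivic localization and group completion, namely that $\mathrm{L}_\mathrm{mot}$ both carries group-complete objects to group-complete objects and preserves $({-})^\mathrm{gp}$-equivalences. Once that is noted, the desired statement follows mechanically from the two previously established preservation results for $p_\otimes$ on $\PSh_\Sigma$.
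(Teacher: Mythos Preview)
Your argument is correct and takes a genuinely different route from the paper's proof.

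The paper argues by showing the stronger statement that $p_\otimes : \PSh_\Sigma(\mathscr{C}_X) \to \PSh_\Sigma(\mathscr{C}_S)$ preserves $\langle \mathrm{mot},\mathrm{gp}\rangle$-equivalences. To do this it invokes \cite[Lemma~2.10]{BachmannHoyois_Norms} to reduce to generators of the form $f \oplus \mathrm{id}_C$ with $f$ either a motivic or a $({-})^\mathrm{gp}$-equivalence and $C \in \mathscr{C}_X$, and then uses the distribution formula of Remark~\ref{remark:distr-diag-binary-sum} to decompose $p_\otimes(f \oplus \mathrm{id}_C)$ into terms to which Propositions~\ref{prop:p-sigma-p-otimes-motivic-equivalences} and~\ref{prop:p-lower-otimes-preserves-gp-equivalence} apply. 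Your approach instead works directly with a given $({-})^\mathrm{gp}$-equivalence $f$ in $\HH^\mathscr{C}(X)$: you form the group-completion square in $\PSh_\Sigma(\mathscr{C}_X)$, observe via semiadditivity that $\mathrm{L}_\mathrm{mot}$ preserves both group-complete objects and $({-})^\mathrm{gp}$-equivalences, deduce that the bottom edge $f^\mathrm{gp}$ is a motivic equivalence, and then push the square through $p_\otimes$ and $\mathrm{L}_\mathrm{mot}$ using only two-out-of-three. Your argument is more elementary in that it avoids both the external lemma and the distribution formula; the paper's argument, on the other hand, yields the slightly stronger intermediate statement about $\langle\mathrm{mot},\mathrm{gp}\rangle$-equivalences at the $\PSh_\Sigma$ level.
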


\begin{proof}
    It suffices to show that the functor $p_\otimes : \PSh_\Sigma(\mathscr{C}_X) \to \PSh_\Sigma(\mathscr{C}_S)$ preserves $\langle \mathrm{mot},\mathrm{gp} \rangle$-equivalences.
    By \cite[Lemma~2.10]{BachmannHoyois_Norms}, it suffices to show $p_\otimes$ sends $f \oplus \mathrm{id}_C$ to a $\langle \mathrm{mot},\mathrm{gp} \rangle$-equivalence, where the map $f : A \to B$ in $\PSh_\Sigma(\mathscr{C}_X)$ is a motivic equivalence or a $({-})^\mathrm{gp}$-equivalence, and the object $C$ is in $\mathscr{C}_X$.
    By Remark~\ref{remark:distr-diag-binary-sum}, this follows now from Proposition~\ref{prop:p-sigma-p-otimes-motivic-equivalences} and Proposition~\ref{prop:p-lower-otimes-preserves-gp-equivalence}.
\end{proof}

\begin{construction}
    Using the same strategy as in Construction~\ref{construct:motivic-pattern-to-formalism}
    we can construct functors
    \begin{equation*}
        \PSh_\Sigma^\mathrm{gp}, \HH^{({-}),\mathrm{gp}} : \mathscr{N}\mathrm{orm}\MotPatt^\mathrm{semiadd} \to \NMon(\Cat_\infty^\mathrm{sift}|\AlgSpc)
    \end{equation*}
    and a commutative diagram
    \begin{equation*}
        \begin{tikzcd}
            \PSh_\Sigma \ar[r] \ar[d] & \HH^{({-})} \ar[d] \ar[r] & \SH^{({-})}\\
            \PSh^\mathrm{gp}_\Sigma \ar[r] & \HH^{({-}),\mathrm{gp}} \ar[ur]
        \end{tikzcd}
    \end{equation*}
    of functors $\mathscr{N}\mathrm{orm}\MotPatt^\mathrm{semiadd} \to \NMon(\Cat_\infty^\mathrm{sift} | \AlgSpc)$.
\end{construction}

\begin{proposition}
    Let $\mathscr{C}$ be a normed semiadditive motivic pattern.
    The normed $\infty$-categories $\PSh_\Sigma(\mathscr{C}_\star)^\mathrm{gp}$ and $\HH^{\mathscr{C},\mathrm{gp}}$ over $\AlgSpc$ are both presentably normed.
    Each of the norm monoidal functors
    \begin{equation*}
        \begin{tikzcd}
            \PSh_\Sigma(\mathscr{C}_\star) \ar[r] \ar[d] & \HH^{\mathscr{C}} \ar[d] \ar[r] & \SH^{\mathscr{C}} \\
            \PSh_\Sigma(\mathscr{C}_\star)^\mathrm{gp} \ar[r] & \HH^{\mathscr{C},\mathrm{gp}} \ar[ur]
        \end{tikzcd}
    \end{equation*}
    is the left adjoint in a norm monoidal adjunction.
\end{proposition}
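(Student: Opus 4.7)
The plan is to deduce the presentably norm monoidal structure on $\PSh_\Sigma(\mathscr{C}_\star)^\mathrm{gp}$ and $\HH^{\mathscr{C},\mathrm{gp}}$ by applying the norm monoidal localization result, Proposition~\ref{prop:norm-localization}, to the norm monoidal $\infty$-categories $\PSh_\Sigma(\mathscr{C}_\star)$ and $\HH^\mathscr{C}$, with $w(S)$ taken to be the class of $({-})^\mathrm{gp}$-equivalences in $\PSh_\Sigma(\mathscr{C}_S)$ and in $\HH^\mathscr{C}(S)$, respectively. The class of local objects is then exactly the class of group-complete objects, so the localization delivers $\PSh_\Sigma(\mathscr{C}_\star)^\mathrm{gp}$ and $\HH^{\mathscr{C},\mathrm{gp}}$.

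First I would check the four hypotheses of Proposition~\ref{prop:norm-localization}. Conditions~(1) and (2), concerning $f^*$ and $h_\sharp$, are exactly Proposition~\ref{prop:f-upper-star-preserves-gp-equivalences}. Condition~(3), concerning $p_\otimes$ for a surjective finite \'etale $p$, is the proposition immediately preceding this one (together with its unstable counterpart obtained just before it from Proposition~\ref{prop:p-lower-otimes-preserves-gp-equivalence}). The main obstacle is condition~(4): one must show that $M \otimes ({-})$ sends $({-})^\mathrm{gp}$-equivalences to $({-})^\mathrm{gp}$-equivalences for every object $M$. I would prove this via Proposition~\ref{prop:gp-equiv-sigma-equiv}: it suffices to show that $M \otimes ({-})$ preserves $\Sigma$-equivalences, which follows from the canonical equivalence $\Sigma(M \otimes X) \simeq M \otimes \Sigma X$ coming from the fact that $M \otimes ({-})$ is a left adjoint (tensor product distributes over colimits in each variable) and preserves the zero object.

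With the four conditions verified, Proposition~\ref{prop:norm-localization} produces presentably norm monoidal structures on $\PSh_\Sigma(\mathscr{C}_\star)^\mathrm{gp}$ and $\HH^{\mathscr{C},\mathrm{gp}}$, together with norm monoidal left adjoints $\PSh_\Sigma(\mathscr{C}_\star) \to \PSh_\Sigma(\mathscr{C}_\star)^\mathrm{gp}$ and $\HH^\mathscr{C} \to \HH^{\mathscr{C},\mathrm{gp}}$. The norm monoidal functor $\PSh_\Sigma(\mathscr{C}_\star)^\mathrm{gp} \to \HH^{\mathscr{C},\mathrm{gp}}$ appears by the universal property, since group-completion and motivic localization commute. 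Finally, to produce $\HH^{\mathscr{C},\mathrm{gp}} \to \SH^\mathscr{C}$, observe that $\SH^\mathscr{C}$ is pointwise stable, hence all of its objects are group-complete; the composite $\HH^\mathscr{C} \to \SH^\mathscr{C}$ then inverts $({-})^\mathrm{gp}$-equivalences at each $S$ and, using the universal property of norm monoidal localization, factors uniquely through a norm monoidal left adjoint $\HH^{\mathscr{C},\mathrm{gp}} \to \SH^\mathscr{C}$. That each of these is the left adjoint in a norm monoidal adjunction follows from Proposition~\ref{prop:presentably-norm-adjunction-criterion} once one checks colimit-preservation (clear for left adjoints between presentable categories) and compatibility with $p_\sharp$ for smooth $p$, which follows since $p_\sharp$ on the localized side is induced from $p_\sharp$ on the unlocalized side.
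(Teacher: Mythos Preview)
Your approach via Proposition~\ref{prop:norm-localization} is different from the paper's: the paper simply says ``the same argument as Proposition~\ref{prop:psh-h-sh-presentably-normed}'', i.e.\ it verifies the presentably norm monoidal axioms directly, reducing the distributive law and smooth base change for $\PSh_\Sigma(\mathscr{C}_\star)^\gp$ and $\HH^{\mathscr{C},\gp}$ to those already established, using that all the relevant functors preserve $({-})^\gp$-equivalences. Your route through the general localization machinery is a reasonable alternative and yields the norm monoidal adjunctions for free, but it requires you to check hypothesis~(4) of Proposition~\ref{prop:norm-localization}, which the paper's direct verification sidesteps.

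There is a gap in your verification of~(4) for $\HH^\mathscr{C}$. You invoke Proposition~\ref{prop:gp-equiv-sigma-equiv} to pass from ``$M\otimes({-})$ preserves $\Sigma$-equivalences'' to ``$M\otimes({-})$ preserves $({-})^\gp$-equivalences'', but the converse direction of that proposition needs $\Sigma$ to be conservative on $\HH^\mathscr{C}(S)^\gp$. Remark~\ref{remark:add-proj-gen-is-prestable} supplies this for $\PSh_\Sigma(\mathscr{C}_S)$ because it is semiadditive projectively generated, but $\HH^\mathscr{C}(S)$ is a non-exact localization thereof and is not shown to be projectively generated; in a general additive presentable category $\Sigma$ need not be conservative. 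The fix is easy: argue instead that the internal hom $\underline{\Hom}(M,{-})$ preserves finite products and hence preserves the shear-map condition defining group-completeness, so its left adjoint $M\otimes({-})$ preserves $({-})^\gp$-equivalences. Alternatively, obtain $\HH^{\mathscr{C},\gp}$ by applying Proposition~\ref{prop:norm-localization} to $\PSh_\Sigma(\mathscr{C}_\star)$ with $w(S)$ a generating set for the $\langle\mathrm{mot},\mathrm{gp}\rangle$-equivalences; then your $\Sigma$-argument for~(4) goes through since you are working in the projectively generated category. (As a minor technicality, Proposition~\ref{prop:norm-localization} asks for $w(S)$ to be a set rather than the full class of $({-})^\gp$-equivalences; choose a generating set, which exists since group completion is an accessible localization.)
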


\begin{proof}
    This is proved using the same argument as Proposition~\ref{prop:psh-h-sh-presentably-normed}.
\end{proof}

\section{Examples of Normed Motivic Patterns}

The goal of this section is to prove the following

\begin{theorem} \label{theorem:THE-DIAGRAM}
    The assignment sending an algebraic space $S$ to the diagram
    \begin{equation} \label{eqn:THE-DIAGRAM}
        \begin{tikzcd}
            \Sm_{S+} \ar[d] \\
            \Corr^{\fet}(\Sm_S) \ar[d] \\
            \Corr^{\fr}(\Sm_S) \ar[r] & \Corr^{\orfsyn}(\Sm_S) \ar[r] \ar[d] & \Corr^{\orfgor}(\Sm_S) \ar[d] \\
            & \Corr^{\fsyn}(\Sm_S) \ar[r] & \Corr^{\fflat}(\Sm_S).
        \end{tikzcd}
    \end{equation}
    can be promoted to a diagram of normed motivic patterns.
\end{theorem}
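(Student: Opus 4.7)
The strategy is to construct each node of (\ref{eqn:THE-DIAGRAM}) as a normed motivic pattern and then assemble the arrows between them. For each flavor $\tau \in \{\fet, \fr, \orfsyn, \orfgor, \fsyn, \fflat\}$, I would promote $S \mapsto \Corr^\tau(\Sm_S)$ to a functor
\[
\Corr^\tau(\Sm_\star) : \Corr^\fet(\AlgSpc)^\op \to \CAlg(\Cat_\infty)
\]
via Barwick's unfurling construction, in direct analogy with the example treating $S \mapsto \Sm_S$ recalled in Section~3. The essential geometric input for unfurling is that for every finite \'etale morphism $p : \widetilde{S} \to S$ of algebraic spaces, the pullback $p^* : \Corr^\tau(\Sm_S) \to \Corr^\tau(\Sm_{\widetilde{S}})$ admits a right adjoint given by Weil restriction on the middle objects of spans. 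This reduces to two claims: that Weil restriction preserves smoothness (already handled), and that Weil restriction along a finite \'etale cover preserves the $\tau$-class of the backward leg. For $\fet$ and $\fr$ these facts are in Bachmann-Hoyois and EHKSY; for $\fsyn$, $\fflat$, $\orfsyn$, $\orfgor$ the relevant geometric properties are \'etale-local and so descend through Weil restriction by standard arguments.

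Next, each arrow in (\ref{eqn:THE-DIAGRAM}) is induced either by forgetting an orientation datum or by enlarging the admissible class of backward morphisms, and in each case the induced functor on spans is natural in $S$ and commutes strictly with Weil restriction. Functoriality of unfurling then upgrades these to morphisms in $\NMon(\Cat_\infty|\AlgSpc)$, assembling the entire diagram into one of norm monoidal $\infty$-categories over $\AlgSpc$.

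It then remains to verify the motivic pattern axioms for each node. Essential surjectivity of $\theta : \Sm_{S+} \to \Corr^\tau(\Sm_S)$ is tautological since $\theta$ is the identity on objects. The Nisnevich sheaf axiom on representables reduces, via Proposition~\ref{prop:sheaf-criteria-flagged-cats} applied to the flagged presentation of each $\Corr^\tau(\Sm_S)$, to descent for the presheaf of $\tau$-correspondences out of a fixed smooth source; this is exactly the content of the descent theorems of EHKSY (for $\fr$) and HJNTY / HJNY (for the syntomic, flat, and Gorenstein variants, oriented or not). Finally, the compatibility $f_\sharp \theta \simeq \theta f_\sharp$ for smooth $f$ is automatic from the construction: in the unfurling, smooth pushforward is realized by postcomposing the forward leg of a correspondence with $f$, and this operation is by definition compatible with the inclusion of smooth morphisms into each $\tau$-class.

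The main obstacle will be in the framed and oriented cases $\fr, \orfsyn, \orfgor$, where the additional trivialization data (of the cotangent complex for $\fr$, of the determinant line for $\orfsyn$ and $\orfgor$) must be transported compatibly through Weil restriction so as to yield coherent norm functors. This is precisely where the labeling-functor technology of EHKSY, suitably refined to handle normed structure as foreshadowed in the introduction, is indispensable: rather than producing $\Corr^\tau(\Sm_\star)$ directly, one exhibits it as arising from a normed presheaf of labeling data over $\Corr^\fet(\AlgSpc)^\op$, and then transports the norm monoidal structure through this presentation. The bulk of the work in Section~5 should therefore consist of setting up the correct normed labeling presheaves for each flavor and verifying their compatibilities with the arrows in (\ref{eqn:THE-DIAGRAM}).
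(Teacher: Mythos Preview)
Your first paragraph contains a genuine gap. You assert that for finite \'etale $p : \widetilde{S} \to S$, the pullback $p^* : \Corr^\tau(\Sm_S) \to \Corr^\tau(\Sm_{\widetilde{S}})$ admits a right adjoint given by Weil restriction, and that Barwick's unfurling then yields the norm structure. This is false. Weil restriction does define a functor $p_\otimes$ on these correspondence categories, but $p_\otimes$ is \emph{not} right adjoint to $p^*$: the correspondence categories are semiadditive, so the right adjoint to $p^*$ is the additive transfer $p_* \simeq p_\sharp$, whereas $p_\otimes$ is a polynomial, non-additive functor. Already for the fold map $\nabla : S \sqcup S \to S$, the right adjoint to the diagonal is the direct sum, while the norm is the tensor product. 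Unfurling along adjoints would therefore produce a Mackey-type structure with additive transfers, not the norm monoidal structure required here.

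The paper's route is different. For the flavors $\tau \in \{\fet, \orfsyn, \orfgor, \fsyn, \fflat\}$ and for $\Sm_{\star+}$, it shows that the presheaf $S \mapsto \Corr^\tau(\Sm_S)$ of symmetric monoidal \emph{flagged} $\infty$-categories is an \'etale sheaf (Proposition~\ref{prop:descent-for-everyone-except-fr}, via Proposition~\ref{prop:sheaf-criteria-flagged-cats}); by \cite[Proposition~C.13]{BachmannHoyois_Norms} such a sheaf extends \emph{uniquely} along $\AlgSpc^\op \to \Corr^\fet(\AlgSpc)^\op$, producing the norm structure for free with no adjunction in sight. This descent argument does not apply to $\fr$, since the framing datum lives in algebraic $\mathrm{K}$-theory. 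For $\fr$ and for the arrows $\fet \to \fr \to \orfsyn$ linking it to the descent-built part, the paper uses normed labeling functors (Constructions~\ref{construct:label-fr}--\ref{construct:fet-fr-orfsyn}); the two pieces are glued via the uniqueness clause of the descent extension. So your instinct that labeling functors are indispensable is right, but the dichotomy is descent versus labeling, not unfurling versus labeling, and only $\fr$ falls on the labeling side.

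A smaller point: you invoke Proposition~\ref{prop:sheaf-criteria-flagged-cats} to verify the Nisnevich axiom of a motivic pattern, but that proposition is about descent for presheaves of flagged $\infty$-categories in the variable $S$, and is used in the paper to construct the norms, not to check Axiom~(2). The Nisnevich condition on $h_S^\mathscr{C}$ is handled separately and directly in Construction~\ref{construct:examples-are-patterns}.
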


\begin{theorem} \label{theorem:THE-CONSEQUENCES}
    Let $\mathscr{F} \in \{ \fflat, \fsyn, \orfgor, \orfsyn, \fr, \fet\}$.
    The assignment
    \begin{equation*}
        S \mapsto \HH^\mathscr{F}(S)
    \end{equation*}
    can be promoted to a norm monoidal $\infty$ categoy.
    The functors $\HH(S)_\bullet \to \HH^\mathscr{F}(S)$ can be assembled a norm monoidal functor.
\end{theorem}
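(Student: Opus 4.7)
The plan is to deduce Theorem~\ref{theorem:THE-CONSEQUENCES} as a formal consequence of Theorem~\ref{theorem:THE-DIAGRAM} together with the functorial machinery established in Section~4. Concretely, Theorem~\ref{theorem:THE-DIAGRAM} supplies, for each $\mathscr{F} \in \{ \fflat, \fsyn, \orfgor, \orfsyn, \fr, \fet \}$, a normed motivic pattern $\Corr^{\mathscr{F}}(\Sm_\star)$ together with a morphism $\Sm_{\star+} \to \Corr^{\mathscr{F}}(\Sm_\star)$ in $\mathscr{N}\mathrm{orm}\MotPatt$. This is exactly the type of input needed by the construction of Section~4.5.

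The first step is to apply Construction~\ref{construct:normed-patt-to-normed-cat}, which produces a functor
\begin{equation*}
    \HH^{({-})} : \mathscr{N}\mathrm{orm}\MotPatt \to \NMon(\Cat_\infty^\mathrm{sift} | \AlgSpc).
\end{equation*}
Evaluating this functor at the normed motivic pattern corresponding to $\mathscr{F}$ yields a norm monoidal $\infty$-category over $\AlgSpc$ whose value at $S$ is $\HH^{\mathscr{F}}(S)$, proving the first assertion. The second assertion follows by evaluating $\HH^{({-})}$ on the morphism of normed motivic patterns $\Sm_{\star+} \to \Corr^{\mathscr{F}}(\Sm_\star)$: this produces a norm monoidal natural transformation whose component at $S$ is exactly the functor $\HH(S)_\bullet \to \HH^{\mathscr{F}}(S)$, after identifying $\HH^{\Sm_{\star+}}(S)$ with $\HH(S)_\bullet$ (this identification is immediate from the construction of $\HH^{\mathscr{C}}$, since for $\mathscr{C} = \Sm_{\star+}$ the localization of $\PSh_\Sigma(\Sm_{S+})$ at motivic equivalences is $\HH(S)_\bullet$ by definition).

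The only real content here is thus Theorem~\ref{theorem:THE-DIAGRAM}, which is the technical heart of Section~5 and which I am permitted to assume. There is essentially no additional obstacle: the compatibility of $\HH^{({-})}$ with the norm monoidal structure, including the distributive law and the smooth base-change formulas needed to promote the $\mathrm{L}_\mathrm{mot}$-localized norm functors to norm monoidal structure on $\HH^{\mathscr{F}}$, was already verified in Proposition~\ref{prop:psh-h-sh-presentably-normed} (and in the constructions preceding it) for an arbitrary normed motivic pattern. Therefore the proof consists of a single sentence invoking Theorem~\ref{theorem:THE-DIAGRAM} and Construction~\ref{construct:normed-patt-to-normed-cat}, and the main conceptual work has been relegated to the preceding sections: building up the theory of (normed) motivic patterns in Section~4 so that all the data we need is packaged into functoriality, and then exhibiting each $\Corr^{\mathscr{F}}(\Sm_\star)$ as a normed motivic pattern in Section~5.
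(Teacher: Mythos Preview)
Your proposal is correct and matches the paper's own proof essentially verbatim: the paper's argument is the single sentence ``This comes from applying Construction~\ref{construct:normed-patt-to-normed-cat} to (\ref{eqn:THE-DIAGRAM}),'' which is exactly what you unpack. Your additional remark identifying $\HH^{\Sm_{\star+}}(S)$ with $\HH(S)_\bullet$ is the one point the paper leaves implicit, and you have handled it correctly.
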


\begin{proof}
    This comes from applying Construction~\ref{construct:normed-patt-to-normed-cat} to (\ref{eqn:THE-DIAGRAM}).
\end{proof}

The proof of Theorem~\ref{theorem:THE-DIAGRAM} will take the entire section.
Let us explain the notation a bit more carefully.

\begin{notation}
    Let $S$ be an algebraic space and let
    \begin{equation*}
        \mathscr{F} \in \{ \fflat, \fsyn, \orfgor, \orfsyn, \fr, \fet\}.
    \end{equation*}
    For $X, Y \in \AlgSpc_S$, we let $\Corr^\mathscr{F}_S(X, Y)$ denote the space of spans
    \begin{equation*}
        \begin{tikzcd}
            & \widetilde{X} \ar[dl,"p"'] \ar[dr] \\
            X & & Y
        \end{tikzcd}
    \end{equation*}
    in $\AlgSpc_S$ where the backwards morphism $p$ is finite flat, possibly with some extra properties or structures as follows.
    \begin{enumerate}
        \item For $\mathscr{F} = \fflat$, we require nothing more.
        \item For $\mathscr{F} = \fsyn$, we require that $p$ is syntomic.
        \item For $\mathscr{F} = \orfgor$, we require that $p$ is equipped with an isomorphism $\mathcal{O}_{\widetilde{X}} \cong \omega_p$.
        \item For $\mathscr{F} = \orfsyn$, we require that $p$ is syntomic and equipped with an isomorphism $\mathcal{O}_{\widetilde{X}} \cong \omega_p$.
        \item For $\mathscr{F} = \fr$, we require that $p$ is syntomic and equipped with an equivalence $\mathscr{L}_p \cong 0$ in the $\mathrm{K}$-theory space $\mathrm{K}(\widetilde{X})$.
        \item For $\mathscr{F} = \fet$, we require that $p$ is finite \'etale.
    \end{enumerate}
    We write $\mathscr{L}_p$ and $\omega_p$ for the cotangent complex and the dualizing sheaf of $p$, respectively.
    Note that, for $p$ finite syntomic, we have $\omega_p \cong \mathrm{det}(\mathscr{L}_p)$ by \cite[Lemma 7.1]{HJNY_Hermitian}.
\end{notation}

\begin{notation}
    Let $S$ be an algebraic space.
    Each of the $\infty$-categories in (\ref{eqn:THE-DIAGRAM}) can be described as follows.
    \begin{itemize}
        \item Objects are the same as those of $\Sm_S$.
        \item For $X, Y \in \Sm_S$, the space of morphisms in $\Corr^\mathscr{F}(\Sm_S)$ from $X$ to $Y$ is $\Corr^\mathscr{F}_S(X, Y)$
    \end{itemize}
    Each of $\Corr^\mathscr{F}(\Sm_S)$ is semiadditive and comes with a symmetric monoidal structure where the tensor product is the product of algebraic spaces over $S$.
    Each of the functors in (\ref{eqn:THE-DIAGRAM}) is symmetric monoidal and preserves finite coproducts.
\end{notation}

\begin{remark}
    For more details on these $\infty$-categories, we refer the reader to
    \begin{itemize}
        \item \cite[\S 3.2]{EHKSY_MotivicInfiniteLoops} and \cite{Hoyois_2021} for $\mathscr{F} = \fr$,
        \item \cite[\S 4]{EHKSY_ModulesOverMGL} for $\mathscr{F} = \fsyn$ and $\orfsyn$,
        \item \cite[\S 5]{HJNTY_HilbertSchemes} for $\mathscr{F} = \fflat$, and
        \item \cite[\S 5]{HJNY_Hermitian} for $\mathscr{F} = \orfgor$.
    \end{itemize} 
\end{remark}

\begin{construction} \label{construct:examples-are-patterns}
    The assignment sending $S$ to the diagram (\ref{eqn:THE-DIAGRAM}) can be promoted to a diagram of motivic patterns.

    Indeed, the functoriality of the diagram in $S \in \AlgSpc^\op$ is straightforward, except for potentially $S \mapsto \Corr^\fr(\Sm_S)$.
    To construct this object and the arrows connected to it, one can use the technology of labeled triples as developed in \cite[\S 4]{EHKSY_MotivicInfiniteLoops}.
    We will perform a slightly more elaborate version of this construction in the following subsections.

    It remains to verify the motivic pattern axioms, but this is again straightforward.
    Axiom (1) holds by construction.
    For Axiom (2), one uses the same argument from \cite[Lemma 3.1]{Bachmann_Cancellation} or \cite[Proposition 2.3.7]{EHKSY_MotivicInfiniteLoops}.
    To verify Axiom (3), one makes use of the following observation:
    for $\mathscr{F} \in \{ \fflat, \fsyn, \orfgor, \orfsyn, \fr, \fet\}$ and
    \begin{equation*}
        U \to X \xrightarrow{p} S
    \end{equation*}
    smooth morphisms of algebraic spaces, the functor
    \begin{equation*}
        \Corr^\mathscr{F}_X(U, p^*({-})) : \Corr^\mathscr{F}(\Sm_S) \to \Spc
    \end{equation*}
    is corepresented by $p_\sharp U$.
\end{construction}

\begin{notation}
    For $\mathscr{F} \in \{ \fflat, \fsyn, \orfgor, \orfsyn, \fr, \fet\}$, we use $\mathscr{F}$ to denote the motivic pattern $\Sm_{\star+} \to \Corr^\mathscr{F}(\Sm_\star)$.
    We write $\PSh^\mathscr{F}_\Sigma(\Sm_\star) = \PSh_\Sigma(\Corr^\mathscr{F}(\Sm_\star))$.
\end{notation}

\subsection{Norm Monoidal \texorpdfstring{$\infty$}{oo}-Categories via Descent}

\begin{notation}
    Let $B$ be an algebraic space, let $\mathscr{B} \subseteq_\fet \AlgSpc_B$, and let $\mathscr{D}$ be an $\infty$-category with finite products.
    We write
    $$\NMon_\fet(\mathscr{D}|\mathscr{B}) \subseteq \Fun(\Corr^\fet(\mathscr{B})^\op, \mathscr{D})$$
    for the full subcategory spanned by presheaves whose restriction to $\mathscr{B}$ is a sheaf for the finite \'etale topology.
\end{notation}

\begin{proposition}
    Let $B$ be an algebraic space, let $\mathscr{B} \subseteq_\fet \AlgSpc_B$, and let $\mathscr{D}$ be an $\infty$-category with finite products.
    There is an equivalence
    \begin{equation*}
        \NMon_\fet(\mathscr{D}|\mathscr{B}) \cong \Sh_{\fet}(\mathscr{B}; \CMon(\mathscr{D}))
    \end{equation*}
\end{proposition}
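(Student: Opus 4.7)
The plan is to construct mutually inverse equivalences explicitly, using as input the Appendix~C equivalence from \cite{BachmannHoyois_Norms}, which identifies finite-product-preserving functors $\Corr^\fold(\mathscr{B})^\op \to \mathscr{D}$ with finite-product-preserving functors $\mathscr{B}^\op \to \CMon(\mathscr{D})$. The additional content encoded by $\Corr^\fet(\mathscr{B})$ beyond $\Corr^\fold(\mathscr{B})$, namely the norms along arbitrary finite \'etale morphisms, should then be recovered from the fet-descent hypothesis via a Barwick-style unfurling argument \cite[\S 11]{Barwick_SpectralMackey}.

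In the forward direction, I will start with $F \in \NMon_\fet(\mathscr{D}|\mathscr{B})$ and restrict along $\Corr^\fold(\mathscr{B}) \hookrightarrow \Corr^\fet(\mathscr{B})$. The resulting functor on $\Corr^\fold(\mathscr{B})^\op$ preserves finite products: because $\mathscr{B}$ is extensive and $F|_{\mathscr{B}^\op}$ is a fet-sheaf, $F$ sends finite coproducts in $\mathscr{B}$ to finite products in $\mathscr{D}$, and this propagates to the full span category by decomposing spans whose target is a coproduct. Applying Appendix~C of \cite{BachmannHoyois_Norms} then yields a functor $\mathscr{B}^\op \to \CMon(\mathscr{D})$ whose underlying $\mathscr{D}$-valued presheaf on $\mathscr{B}^\op$ equals $F|_{\mathscr{B}^\op}$, and hence remains a fet-sheaf.

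For the inverse direction, I will begin with $G \in \Sh_\fet(\mathscr{B}; \CMon(\mathscr{D}))$, apply Appendix~C to obtain $\tilde{G}: \Corr^\fold(\mathscr{B})^\op \to \mathscr{D}$, and extend to $\Corr^\fet(\mathscr{B})^\op$ by unfurling. For a finite \'etale $p: \widetilde{X} \to X$, the map $p$ is itself a fet cover of $X$ on which the pullback $\widetilde{X} \times_X \widetilde{X} \to \widetilde{X}$ splits off its diagonal section, and after iterated pullbacks (or passage to a Galois closure) the pullback of $p$ becomes a fold. The norm $p_\otimes$ is then defined locally via the fold-transfers of $\tilde{G}$, and fet-descent for $G$ glues these local definitions into a single morphism $G(\widetilde{X}) \to G(X)$; a parallel descent argument handles the composition of general fet spans.

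The hard part will be verifying that this unfurling genuinely assembles into a functor on $\Corr^\fet(\mathscr{B})^\op$: the norms must be independent of the chosen trivialising cover, vertical composition of consecutive fet morphisms must be respected, and the base-change composition in the span calculus must be respected. These are the cocycle conditions implicit in the fet-sheaf axiom, and they are precisely what Barwick's unfurling theorem packages; I would invoke (or adapt) \cite[\S 11]{Barwick_SpectralMackey} rather than carrying out the coherences by hand. Once the inverse is constructed, verifying that the two constructions are mutually inverse is formal, since both manifestly restrict to the identity on $\mathscr{B}^\op$-data and on fold transfers, and any $F \in \NMon_\fet(\mathscr{D}|\mathscr{B})$ is uniquely determined by these plus fet-descent.
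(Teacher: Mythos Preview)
The paper's proof is a single citation to \cite[Proposition~C.13]{BachmannHoyois_Norms}, which already states exactly this: for a class of maps $m$ (such as finite \'etale) in an extensive category $\mathscr{B}$ in which every $m$-map is $m$-locally a fold, restriction along $\Corr^\fold(\mathscr{B}) \hookrightarrow \Corr^m(\mathscr{B})$ identifies functors $\Corr^m(\mathscr{B})^\op \to \mathscr{D}$ whose restriction to $\mathscr{B}^\op$ is an $m$-sheaf with $m$-sheaves $\mathscr{B}^\op \to \CMon(\mathscr{D})$.

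Your proposal is essentially a sketch of the proof of that proposition rather than a citation of it. The forward direction and the informal descent picture for the inverse are correct in outline, and this is indeed how Bachmann--Hoyois argue: a finite \'etale map becomes a fold after a finite \'etale cover, so fet-local functors on $\Corr^\fold$ and on $\Corr^\fet$ agree. However, your appeal to Barwick's unfurling \cite[\S 11]{Barwick_SpectralMackey} is the wrong tool. Unfurling manufactures span-functoriality out of \emph{adjoints} satisfying a Beck--Chevalley condition; here the fet transfers on a $\CMon(\mathscr{D})$-valued sheaf are not adjoint to anything in $\mathscr{D}$ and must be constructed purely by descent from the fold transfers. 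The coherences you identify as ``the hard part'' are therefore not packaged by unfurling; they are packaged by the descent argument in \cite[Proposition~C.13]{BachmannHoyois_Norms} itself. You should either cite C.13 directly, as the paper does, or reproduce its descent argument without the unfurling detour.
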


\begin{proof}
    This is a direct consequence of \cite[Proposition C.13]{BachmannHoyois_Norms}.
\end{proof}

\begin{construction}
    Let $S$ be an algebraic space.
    We can form a diagram
    \begin{equation} \label{eqn:THE-DIAGRAM-except-fr-pointwise}
        \begin{tikzcd}
            \Sm_{S+} \ar[d] \\
            \Corr^{\fet}(\Sm_S) \ar[dr] \\
            & \Corr^{\orfsyn}(\Sm_S) \ar[r] \ar[d] & \Corr^{\orfgor}(\Sm_S) \ar[d] \\
            & \Corr^{\fsyn}(\Sm_S) \ar[r] & \Corr^{\fflat}(\Sm_S).
        \end{tikzcd}
    \end{equation}
    of symmetric monoidal $\infty$-categories.
    We can promote this to a diagram of symmetric monoidal flagged $\infty$-categories by equipping each $\infty$-category with the evident essentially surjective functor from $(\Sm_{S+})^\simeq$.

    The assignment sending $S$ to (\ref{eqn:THE-DIAGRAM-except-fr-pointwise}) can now be promoted to a diagram
    \begin{equation} \label{eqn:THE-DIAGRAM-except-fr}
        \begin{tikzcd}
            \Sm_{\star+} \ar[d] \\
            \Corr^{\fet}(\Sm_\star) \ar[dr] \\
            & \Corr^{\orfsyn}(\Sm_\star) \ar[r] \ar[d] & \Corr^{\orfgor}(\Sm_\star) \ar[d] \\
            & \Corr^{\fsyn}(\Sm_\star) \ar[r] & \Corr^{\fflat}(\Sm_\star).
        \end{tikzcd}
    \end{equation}
    of presheaves $\AlgSpc^\op \to \CAlg(\fCat)$ that preserves finite products.
\end{construction}

\begin{proposition} \label{prop:descent-for-everyone-except-fr}
    Each of the presheaves of symmetric monoidal flagged $\infty$-categories appearing in (\ref{eqn:THE-DIAGRAM-except-fr}) is a sheaf for the \'etale topology.
    In particular, the diagram can be promoted to one of normed monoids of flagged $\infty$-categories over $\AlgSpc$ in a unique way.
\end{proposition}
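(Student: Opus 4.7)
The plan is to apply Proposition~\ref{prop:sheaf-criteria-flagged-cats}, which reduces \'etale descent for a presheaf of flagged $\infty$-categories to two tasks: showing that the underlying presheaf of objects is an \'etale sheaf, and showing that the presheaves of mapping spaces are \'etale-local. The first task is trivial, since every entry of (\ref{eqn:THE-DIAGRAM-except-fr}) has as its $\mathscr{F}_0$-level the presheaf $S \mapsto (\Sm_{S+})^\simeq$, and smooth algebraic spaces satisfy fpqc descent. So the real work is, for each decoration $\mathscr{F} \in \{\fet, \fsyn, \orfsyn, \orfgor, \fflat\}$ and each pair $X, Y \in \Sm_S$, to show that the presheaf
\begin{equation*}
    (f : T \to S) \mapsto \Corr^\mathscr{F}_T(f^*X, f^*Y)
\end{equation*}
on $\Sm_S$ is an \'etale (in fact fpqc) sheaf.

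For this I would argue as follows. A point of $\Corr^\mathscr{F}_T(f^*X, f^*Y)$ is a triple $(\widetilde{X}, p, g)$ where $\widetilde{X} \to f^*X$ is finite flat (with the appropriate extra conditions) and $g : \widetilde{X} \to f^*Y$ is a morphism over $T$. Finite flat quasi-coherent $\mathcal{O}_{f^*X}$-algebras satisfy fpqc descent; the properties of being finite \'etale, finite syntomic, or finite flat are fpqc-local on the base; the dualizing sheaf $\omega_p$ is compatible with flat base change for finite flat $p$, so the datum of an orientation $\mathcal{O}_{\widetilde{X}} \cong \omega_p$ (or equivalently $\cong \det \mathscr{L}_p$ in the syntomic case) is an fpqc-local datum by descent for isomorphisms of line bundles; and morphisms of algebraic spaces satisfy fpqc descent. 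Assembling these, each mapping space presheaf is an fpqc sheaf, hence an \'etale sheaf.

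For the ``in particular'' clause, I would combine the descent result with the equivalence of the preceding proposition applied to $\mathscr{D} = \fCat$:
\begin{equation*}
    \NMon_\fet(\fCat|\AlgSpc) \cong \Sh_\fet(\AlgSpc; \CMon(\fCat)) = \Sh_\fet(\AlgSpc; \CAlg(\fCat)),
\end{equation*}
so every sheaf $\AlgSpc^\op \to \CAlg(\fCat)$ uniquely extends to a finite-product-preserving functor $\Corr^\fet(\AlgSpc)^\op \to \CAlg(\fCat)$, i.e., a norm monoid in flagged $\infty$-categories. Functoriality of this extension in $\mathscr{D}$ lifts the entire diagram (\ref{eqn:THE-DIAGRAM-except-fr}) coherently.

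The main obstacle is the descent argument for the oriented variants $\orfsyn$ and $\orfgor$, since one must verify that the orientation isomorphism $\mathcal{O}_{\widetilde{X}} \cong \omega_p$ glues across an \'etale cover. This ultimately reduces to compatibility of the dualizing sheaf with flat base change along finite flat maps together with fpqc descent for isomorphisms of invertible sheaves, both of which are classical; the other decorations present no novel difficulty beyond citing standard fpqc descent results.
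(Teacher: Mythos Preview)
Your proposal is correct and follows essentially the same approach as the paper: apply Proposition~\ref{prop:sheaf-criteria-flagged-cats}, note that $(\Sm_{\star+})^\simeq$ is an \'etale sheaf, and then verify that the mapping-space presheaves are \'etale-local. The only cosmetic difference is that the paper packages the second step via the identification
\[
    \Corr^\mathscr{F}_T(p^*X, p^* Y) \;\cong\; \Corr^\mathscr{F}_S(T \times_S X, Y),
\]
which reduces the $\tau$-locality check on $\AlgSpc_{/S}$ to the single assertion that $\Corr^\mathscr{F}_S({-}, Y)$ is an \'etale sheaf on $\AlgSpc_S$; you instead unwind the content of that sheaf condition directly (descent for finite flat algebras, fpqc-locality of the relevant properties, base-change compatibility of $\omega_p$, descent for morphisms). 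These are the same argument, and your version has the advantage of making explicit why the oriented cases go through. One small slip: the mapping-space presheaf in Definition~\ref{def:tau-local-mapping-spaces} lives on $\AlgSpc_{/S}$, not on $\Sm_S$, but your descent argument applies equally well there.
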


\begin{proof}
    Let $\mathscr{F} \in \{\fflat, \fsyn,\orfgor, \orfsyn, \fet\}$.
    We use Proposition~\ref{prop:sheaf-criteria-flagged-cats} to show that $((\Sm_{\star+})^\simeq \to \Corr^\mathscr{F}(\Sm_\star))$ is a sheaf of flagged $\infty$-categories for the \'etale topology.
    Note $(\Sm_{\star+})^\simeq$ is an \'etale sheaf since smooth algebraic spaces satisfy \'etale descent.

    Let $S \in \AlgSpc$ and let $X, Y \in \Sm_S$.
    We need to show that the presheaf
    \begin{equation*}
        \AlgSpc_S^\op \to \Spc, \quad (p : T \to S) \mapsto \Corr^\mathscr{F}_T(p^*X, p^* Y)
    \end{equation*}
    satisfies \'etale descent.
    Note that $\Corr^\mathscr{F}_T(p^*X, p^* Y) \cong \Corr^\mathscr{F}_S(T \times_S X, Y)$.
    Thus, the claim follows from the fact that $\Corr^\mathscr{F}_S({-}, Y)$ is an \'etale sheaf on $\AlgSpc_S$.
\end{proof}

\subsection{Recollections on Labeled Correspondences}

\begin{definition}
    A \emph{triple} is a triple $(C,L,R)$ where $C$ is an $\infty$-category, $L$ and $R$ are classes of morphisms in $C$, and the following conditions hold.
    \begin{enumerate}
        \item $L$ and $R$ are closed under composition,
        \item for every diagram
            \begin{equation*}
                \begin{tikzcd}
                    & \bullet \ar[d,"p"] \\
                    \bullet \ar[r,"f"] & \bullet
                \end{tikzcd}
            \end{equation*}
            if $f$ is in $R$ and $p$ is in $L$, then a pullback exists, and
        \item for every Cartesian diagram
            \begin{equation*}
                \begin{tikzcd}
                    \bullet \ar[r,"g"] \ar[d] \arrow[dr,phantom,"\lrcorner", very near start] & \bullet \ar[d] \\
                    \bullet \ar[r,"f"] & \bullet
                \end{tikzcd}
            \end{equation*}
            if $f$ is in $R$ (in $L$), then $g$ is in $R$ (in $L$, respectively).
    \end{enumerate}
    If $R = \all$, we simply write $(C, L) = (C, L, \all)$.

    A commutative square
    \begin{equation*}
        \begin{tikzcd}
            \bullet \ar[r,"g"] \ar[d,"q"] & \bullet \ar[d,"p"] \\
            \bullet \ar[r,"f"] & \bullet
        \end{tikzcd}
    \end{equation*}
    in $C$ is called \emph{ambigressive} if it is Cartesian, $f$ is in $R$, and $p$ is in $L$.

    A \emph{morphism} of triples between $(C,L,R)$ and $(C',L',R')$ is a functor $f : C \to C'$ that sends $L$ to $L'$, sends $R$ to $R'$, and sends ambigressive squares to ambigressive squares.
    We write $\mathrm{Trip}$ for the $\infty$-category of triples.
\end{definition}

\begin{example}
    All of the examples we will consider will have $C$ some $\infty$-category of algebraic spaces closed under fiber products, and $R$ the class of all morphisms.
    We then only need $L$ to be closed under composition and stable by base change, which holds for most classes of morphisms considered in algebraic geometry.
    Of particular importance for us is the example $(\AlgSpc_S,\fsyn,\mathrm{all})$, where $\fsyn$ denotes the class of finite syntomic maps.
\end{example}

\begin{example}
    Let $\Sigma_n$ denote the twisted arrow category of $\Delta^n \in \mathbf{\Delta}$.
    Explicitly, the objects of $\Sigma_n$ are inequalities $i \leq j$ where $i, j \in \Delta^n$, and there is a unique morphism $(i \leq j) \to (i' \leq j')$ if and only if $i \leq i'$ and $j \geq j'$.
    Let
    \begin{enumerate}
        \item $\Sigma^L_n$ denote the class of morphisms $(i \leq j) \to (i' \leq j')$ where $i = i$, and
        \item $\Sigma^R_n$ denote the class of morphisms $(i \leq j) \to (i' \leq j')$ where $j = j'$.
    \end{enumerate}
    Then $(\Sigma_n, \Sigma^L_n, \Sigma^R_n)$ is a triple.

    For example, we can view $(\Sigma_3, \Sigma^L_3, \Sigma^R_3)$ as a diagram
    \begin{equation*}
        \begin{tikzcd}[column sep=tiny]
            & & & (0 \leq 3) \ar[dl,"L"'] \ar[dr,"R"] \\
            & & (0 \leq 2) \ar[dl,"L"'] \ar[dr,"R"] & & (1 \leq 3) \ar[dl,"L"'] \ar[dr,"R"] \\
            & (0 \leq 1) \ar[dl,"L"'] \ar[dr,"R"] & & (1 \leq 2) \ar[dl,"L"'] \ar[dr,"R"] & & (2 \leq 3) \ar[dl,"L"'] \ar[dr,"R"] \\
            (0 \leq 0) & & (1 \leq 1) & & (2 \leq 2) & & (3 \leq 3)
        \end{tikzcd}
    \end{equation*}
    where the morphisms labeled with $L$ and $R$ are in $\Sigma^L_3$ and $\Sigma^R_3$, respectively.

    By functoriality of the twisted arrow category, the assignment $\Delta^n \mapsto (\Sigma_n, \Sigma_n^L, \Sigma_n^R)$ can be promoted to functor $\mathbf{\Delta} \to \Trip$.
\end{example}

\begin{definition}
    Let $(C,L,R)$ be a triple.
    For $n \geq 0$, we'll write $\mathbf{Corr}_n(C,L,R)$ for the space
    \begin{equation*}
        \mathrm{Map}_{\Trip}((\Sigma_n,\Sigma_n^L,\Sigma_n^R),(C,L,R))
    \end{equation*}
    of morphisms of triples.

    The assignment $[n] \mapsto \mathbf{Corr}_n(C,L,R)$ defines a complete Segal space.
    See \cite[Proposition 5.6]{Barwick_SpectralMackey}.
    Furthermore, the assignment $(C,L,R) \mapsto \mathbf{Corr}_\bullet(C,L,R)$ defines a functor $\mathbf{Corr}_\bullet : \mathrm{Trip} \to \CompleteSegalSpaces$.
\end{definition}

\begin{example}
    Let $(C,L,R)$ be a triple.
    The objects of the space $\mathbf{Corr}_3(C,L,R)$ are commutative diagrams in $C$ of the form
    \begin{equation*}
        \begin{tikzcd}[column sep=tiny]
            & & & X_{0,3} \ar[dl,"L"'] \ar[dr,"R"] \arrow[dd,phantom,"\lrcorner" rotate=-45, very near start] \\
            & & X_{0,2} \ar[dl,"L"'] \ar[dr,"R"] \arrow[dd,phantom,"\lrcorner" rotate=-45, very near start] & & X_{1,3} \ar[dl,"L"'] \ar[dr,"R"] \arrow[dd,phantom,"\lrcorner" rotate=-45, very near start] \\
            & X_{0,1} \ar[dl,"L"'] \ar[dr,"R"] & & X_{1,2} \ar[dl,"L"'] \ar[dr,"R"] & & X_{2,3} \ar[dl,"L"'] \ar[dr,"R"] \\
            X_{0,0} & & X_{1,1} & & X_{2,2} & & X_{3,3}
        \end{tikzcd}.
    \end{equation*}
    Morphisms of $\mathbf{Corr}_3(C,L,R)$ are equivalences of such diagrams.
\end{example}

\begin{definition}
    Let $(C,L,R)$ be a triple.
    For every $n \geq 0$, we'll write $\Phi_n(C,L,R)$ for the subcategory of $\mathrm{Fun}(\Delta^n, C)$ whose objects are functors $\sigma : \Delta^n \to C$ that send every edge in $\Delta^n$ to $L$, and whose morphisms are Cartesian transformations with components in $R$.

    The assignement $[n] \mapsto \Phi_n(C,L,R)$ defines a functor $\Phi_\bullet(C,L,R) : \mathbf{\Delta}^\op \to \Cat_\infty$.
    Furthermore, the assignment $(C,L,R) \mapsto \Phi_\bullet(C,L,R)$ defines a functor $\Phi_\bullet : \Trip \to \mathrm{Fun}(\mathbf{\Delta}^\op, \Cat_\infty)$
\end{definition}

\begin{example}
    Let $(C,L,R)$ be a triple.
    An object $\sigma$ in $\Phi_3(C,L,R)$ are diagrams
    \begin{equation*}
        \begin{tikzcd}
            X_0 & X_1 \ar[l,"L"'] & X_2 \ar[l,"L"'] & X_3 \ar[l,"L"']
        \end{tikzcd}
    \end{equation*}
    in $C$ where each morphism is in $L$.
    A morphism $\sigma \to \tau$ in $\Phi_3(C,L,R)$ is a diagram
    \begin{equation*}
        \begin{tikzcd}
            \sigma \ar[d] & X_0 \ar[d,"R"] & X_1 \ar[l,"L"'] \ar[d,"R"] \arrow[dl,phantom,"\llcorner", very near start] & X_2 \ar[l,"L"'] \ar[d,"R"] \arrow[dl,phantom,"\llcorner", very near start] & X_3 \ar[l,"L"'] \ar[d,"R"] \arrow[dl,phantom,"\llcorner", very near start] \\
            \tau & Y_0 & Y_1 \ar[l,"L"'] & Y_2 \ar[l,"L"'] & Y_3 \ar[l,"L"']
        \end{tikzcd}
    \end{equation*}
    where the horizontal morphisms are in $L$, the vertical morphisms are in $R$, and each square is Cartesian.
\end{example}

\begin{definition}
    Let $(C,L,R)$ be a triple.
    A \emph{labeling functor} on $(C,L,R)$ is a functor
    \begin{equation*}
        F : \int_{\bullet \in \mathbf{\Delta}^\op} \Phi_\bullet(C,L,R)^\op \to \Spc
    \end{equation*}
    such that, for every $[n] \in \mathbf{\Delta}^\op$ and every $\sigma \in \Phi_n(C,L,R)$, the canonical map
    \begin{equation*}
        F(\sigma) \to F(\sigma_{0,1}) \times_{F(\sigma_1)} \cdots \times_{F(\sigma_{n-1})} F(\sigma_{n-1,n})
    \end{equation*}
    is an equivalence.
    Here, for $\phi : [k] \to [n]$ and $\sigma \in \Phi_n(C,L,R)$, we are writing $\phi^*\sigma = \sigma_{\phi(0), \ldots, \phi(k)}$.

    We'll write $\Lab(C,L,R) \subseteq \mathrm{Fun}(\int_{\bullet \in \mathbf{\Delta}^\op} \Phi_\bullet(C,L,R)^\op, \Spc)$ for the full subcategory spanned by labeling functors.
    The assignment $(C,L,R) \mapsto \Lab(C,L,R)$ defines a functor $\Trip^\op \to \Cat_\infty$, and we'll write $\Lab\Trip \to \Trip$ for the corresponding Cartesian fibration.
\end{definition}

\begin{remark}
    We are using the definition of labeling functor from \cite[Appendix~B]{EHKSY_ModulesOverMGL}, which is strictly more general than the notion of labeling functor defined in \cite[Section~4]{EHKSY_MotivicInfiniteLoops}.
\end{remark}

\begin{example}
    Let $(C,L,R)$ be a triple, and let
    \begin{equation*}
        F : \int_{\bullet \in \mathbf{\Delta}^\op} \Phi_\bullet(C,L,R)^\op \to \Spc
    \end{equation*}
    be a labeling functor on $(C,L,R)$.
    Consider an object $\sigma \in \Phi_2(C,L,R)$ given by
    \begin{equation*}
        \begin{tikzcd}
            X_0 & X_1 \ar[l,"f_{0,1}"swap] & X_2 \ar[l,"f_{1,2}"swap]
        \end{tikzcd}
    \end{equation*}
    Then we have an equivalence
    \begin{equation*}
        F(\Delta^2,\sigma) \simeq F(\Delta^1, X_0 \leftarrow X_1) \times_{F(\Delta^0,X_1)} F(\Delta^1, X_1 \leftarrow X_2)
    \end{equation*}
    Part of the functoriality of $F$ yields a morphism
    \begin{equation*}
        F(\Delta^1, X_0 \leftarrow X_1) \times_{F(\Delta^0,X_1)} F(\Delta^1, X_1 \leftarrow X_2) \to F(\Delta^1, X_1 \leftarrow X_2).
    \end{equation*}
    Heuristically, the rest of the functoriality of $F$ asserts that this operation is suitably associative.
\end{example}

\begin{definition}
    Let $(C,L,R)$ be a triple, and let $F \in \Lab(C,L,R)$ be a labeling functor.
    For $n \geq 0$, we'll write $\mathbf{Corr}^F_n(C,L,R) \to \mathbf{Corr}_n(C,L,R)$ for the coCartesian fibration in spaces classified by the composite
    \begin{equation*}
        \begin{tikzcd}
            \mathbf{Corr}_n(C,L,R) \ar[d,equal] \\
            \mathrm{Map}_{\Trip}((\Sigma_n,\Sigma_n^L,\Sigma_n^R),(C,L,R)) \ar[d] \\
            \mathrm{Fun}_{/\mathbf{\Delta}^\op}(\int_\bullet \Phi_\bullet(\Sigma_n, \Sigma_n^L, \Sigma_n^R)^\op, \int_\bullet \Phi_\bullet(C,L,R)^\op) \ar[d,"F"] \\
            \mathrm{Fun}(\int_\bullet \Phi_\bullet(\Sigma_n, \Sigma_n^L, \Sigma_n^R)^\op, \Spc) \ar[r,"\lim"] & \Spc
        \end{tikzcd}
    \end{equation*}

    The assignment $[n] \mapsto \mathbf{Corr}^F_n(C,L,R)$ defines a Segal space $\mathbf{Corr}^F_\bullet(C,L,R) : \mathbf{\Delta}^\op \to \Spc$. See \cite[Theorem~4.1.23]{EHKSY_MotivicInfiniteLoops}.
    Furthermore, the assignment $(C,L,R;F) \mapsto \mathbf{Corr}^F_\bullet(C,L,R)$ can be promoted to a functor $\mathbf{Corr}^{({-})}_\bullet : \Lab\Trip \to \SegalSpaces$ that preserves finite products.
\end{definition}

\begin{example}
    Let $(C,L,R) \in \Trip$ be a triple, and let $F \in \Lab(C,L,R)$ be a labeling functor.
    An object of $\mathbf{Corr}^F_2(C,L,R)$ consists of an object
    \begin{equation*}
        \begin{tikzcd}[column sep=tiny]
            & & X_{0,2} \ar[dl,"L"'] \ar[dr,"R"] \arrow[dd,phantom,"\lrcorner" rotate=-45, very near start] \\
            & X_{0,1} \ar[dl,"L"'] \ar[dr,"R"] & & X_{1,2} \ar[dl,"L"'] \ar[dr,"R"] \\
            X_{0,0} & & X_{1,1} & & X_{2,2}
        \end{tikzcd}
    \end{equation*}
    of $\mathbf{Corr}_2(C,L,R)$ together with
    \begin{enumerate}
        \item a label
        \begin{align*}
            \lambda_0 \in &F(\Delta^2, X_{1,1} \leftarrow X_{1,2} \leftarrow X_{1,3}) \\
            &\simeq F(\Delta^0,X_{0,0} \leftarrow X_{0,1}) \times_{F(\Delta^1,X_{0,1})} F(\Delta^1,X_{0,1} \leftarrow X_{0,2}),
        \end{align*}
        \item a label $\lambda_1 \in F(\Delta^1, X_{1,1} \leftarrow X_{1,2})$, and
        \item a label $\lambda_2 \in F(\Delta^0, X_{2,2})$
    \end{enumerate}
    all of which are suitably compatible.
\end{example}

\subsection{Norm Monoidal \texorpdfstring{$\infty$}{oo}-Categories via Labeling Functors}

\begin{definition}
    Let $B$ be an algebraic space, and let $\mathscr{B} \subseteq_\fet \AlgSpc_B$.
    A \emph{normed triple} over $\mathscr{B}$ is a functor $\Corr^{\fet}(\mathscr{B})^\op \to \Trip$ that preserves finite products.
\end{definition}

\begin{definition}
    Let $B$ be an algebraic space, let $\mathscr{B} \subseteq_\fet \AlgSpc_B$, and and let $(C_\star, L_\star,R_\star) : \Corr^{\fet}(\mathscr{B})^\op \to \Trip$ be a normed triple over $\mathscr{B}$.
    A \emph{normed labeling functor} for $(C_\star, L_\star,R_\star)$ is a lift
    \begin{equation*}
        \begin{tikzcd}
            & \Lab\Trip \ar[d] \\
            \Corr^{\fet}(\mathscr{B})^\op \ar[ur,dashed,"{(C_\star, L_\star,R_\star;F_\star)}"] \ar[r,"{(C_\star, L_\star,R_\star)}"'] & \Trip
        \end{tikzcd}
    \end{equation*}
    that preserves finite products.
\end{definition}

\begin{lemma} \label{lemma:lift-to-labtrip}
    Let $F : A \to \Trip$.
    There is an equivalence between
    \begin{enumerate}
        \item the $\infty$-category of lifts of $F$ along $\Lab\Trip \to \Trip$, and
        \item the full subcategory of $\mathrm{Fun}(\int_{(a,\bullet) \in A \times \mathbf{\Delta}^\op} \Phi_\bullet(F(a))^\op, \Spc)$ spanned by functors
        \begin{equation*}
            \Lambda : \int_{(a,\bullet) \in A \times \mathbf{\Delta}^\op} \Phi_\bullet(F(a))^\op \to \Spc
        \end{equation*}
        such that, for every $a_0 \in A$, the restriction of $\Lambda$ to
        \begin{equation*}
            \int_{\bullet \in \mathbf{\Delta}^\op} \Phi_\bullet(F(a_0))^\op \subseteq \int_{(a,\bullet) \in A \times \mathbf{\Delta}^\op} \Phi_\bullet(F(a))^\op
        \end{equation*}
        is a labeling functor on $F(a_0)$.
    \end{enumerate}
\end{lemma}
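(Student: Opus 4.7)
The strategy is to unwind the Cartesian fibration $\Lab\Trip \to \Trip$ via straightening/unstraightening. Write $X : \Trip \to \Cat_\infty$ for the functor $(C,L,R) \mapsto \int_{\bullet \in \mathbf{\Delta}^{\op}} \Phi_\bullet(C,L,R)^{\op}$. Consider the auxiliary Cartesian fibration $\widetilde{\Lab}\Trip \to \Trip$ classifying the functor $\Trip^{\op} \to \Cat_\infty$ sending $(C,L,R)$ to $\Fun(X(C,L,R), \Spc)$; by the definition of labeling functor, the inclusion $\Lab\Trip \subseteq \widetilde{\Lab}\Trip$ is a fiberwise full subcategory.

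The first step is to identify lifts of $F : A \to \Trip$ along $\widetilde{\Lab}\Trip \to \Trip$ with objects of $\Fun(\int_A (X \circ F), \Spc)$, where $\int_A (X \circ F) \to A$ is the coCartesian unstraightening of $X \circ F$. This is an instance of a general principle: for any functor $G : B \to \Cat_\infty$, the Cartesian fibration classifying $b \mapsto \Fun(G(b), \Spc)$ has the universal property that lifts of an $F : A \to B$ correspond to functors $\int_A(G \circ F) \to \Spc$. Unpacking either side, one specifies for each $a \in A$ a functor $G(F(a)) \to \Spc$, and for each morphism $a \to a'$ a compatibility between these functors after precomposition with the transition $G(F(a)) \to G(F(a'))$, all suitably coherently.

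By the Fubini property of the Grothendieck construction, $\int_A (X \circ F)$ is equivalent to $\int_{(a, [n]) \in A \times \mathbf{\Delta}^{\op}} \Phi_n(F(a))^{\op}$, matching the $\infty$-category appearing in the statement. The final step is to verify compatibility of the fiberwise fullness: a lift of $F$ to $\widetilde{\Lab}\Trip$ factors through $\Lab\Trip$ precisely when, for every $a_0 \in A$, the induced functor $X(F(a_0)) \to \Spc$ is a labeling functor on $F(a_0)$. Under the correspondence above, this is exactly the condition that the restriction of $\Lambda$ to $\int_{\bullet \in \mathbf{\Delta}^{\op}} \Phi_\bullet(F(a_0))^{\op}$ is a labeling functor.

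The main obstacle is making the universal property of $\widetilde{\Lab}\Trip$ in the first step precise at the $\infty$-categorical level. At the level of objects this is essentially a restatement of the definition of $\Lab$, but establishing naturality in $A$ together with the correct higher coherences requires some care with (un)straightening; the cleanest route is to invoke the standard dictionary between coCartesian fibrations and $\Cat_\infty$-valued functors twice, once for $F : A \to \Trip$ and once for the inner Grothendieck construction defining $X$.
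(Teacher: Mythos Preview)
The paper does not give its own proof of this lemma; it simply cites \cite[Lemma~4.3.4]{EHKSY_MotivicInfiniteLoops}. Your proposal is a correct unwinding of that result: the key input is the standard identification between sections of the Cartesian fibration classified by $a \mapsto \Fun(X(F(a)),\Spc)$ and functors out of the total space $\int_A (X\circ F)$, together with the Fubini-type equivalence $\int_A(X\circ F)\simeq \int_{(a,\bullet)\in A\times\mathbf{\Delta}^\op}\Phi_\bullet(F(a))^\op$. The fiberwise fullness of $\Lab\Trip\subseteq\widetilde{\Lab}\Trip$ then translates directly into the restriction condition in (2), exactly as you say. This is essentially the argument one finds in the cited reference, so your approach matches.
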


\begin{proof}
    This is \cite[Lemma~4.3.4]{EHKSY_MotivicInfiniteLoops}
\end{proof}

\begin{construction}
    Let $B$ be an algebraic space, let $\mathscr{B} \subseteq_{\fet} \AlgSpc_B$, and let $(C_\star, L_\star,R_\star;F_\star)$ be a normed triple over $\mathscr{B}$ with a normed labeling functor.
    We'll write $\Corr^F(C_\star,L_\star,R_\star) : \Corr^{\fet}(\mathscr{B})^\op \to \Cat_\infty$
    for the composite
    \begin{equation*}
        \begin{tikzcd}
            \Corr^{\fet}(\mathscr{B})^\op \ar[r] & \Lab\Trip \ar[r] & \SegalSpaces \ar[r] & \Cat_\infty
        \end{tikzcd}
    \end{equation*}
    where the final functor takes a Segal space to its underlying $\infty$-category.
\end{construction}

\begin{lemma} \label{lemma:nomred-labeling-to-normed-cat}
    Let $B$ be an algebraic space, let $\mathscr{B} \subseteq_{\fet} \AlgSpc_B$, and let
    \begin{equation*}
        (C_\star, L_\star,R_\star;F_\star)
    \end{equation*}
    be a normed triple over over $\mathscr{B}$ with a normed labeling functor.
    The functor
    \begin{equation*}
        \Corr^F(C_\star,L_\star,R_\star) : \Corr^{\fet}(\mathscr{B})^\op \to \Cat_\infty
    \end{equation*}
    is a norm monoidal $\infty$-category over $\mathscr{B}$.
\end{lemma}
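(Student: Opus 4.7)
The plan is to verify that the composite functor
\[
\Corr^F(C_\star,L_\star,R_\star) : \Corr^\fet(\mathscr{B})^\op \to \Cat_\infty
\]
preserves finite products; by straightening/unstraightening, this is precisely the condition needed to exhibit it as a norm monoidal $\infty$-category over $\mathscr{B}$.

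The approach is to factor the functor in question as
\[
\Corr^\fet(\mathscr{B})^\op \xrightarrow{(C_\star,L_\star,R_\star;F_\star)} \Lab\Trip \xrightarrow{\mathbf{Corr}^{(-)}_\bullet} \SegalSpaces \to \Cat_\infty,
\]
and to verify that each of the three factors preserves finite products. The first factor preserves finite products by the very definition of a normed labeling functor (it was required to be a product-preserving lift of $(C_\star,L_\star,R_\star)$). The second factor preserves finite products because this was recorded in the excerpt immediately after the construction of $\mathbf{Corr}^{(-)}_\bullet : \Lab\Trip \to \SegalSpaces$; concretely, on objects it sends $(C,L,R;F)$ to the Segal space $[n] \mapsto \mathbf{Corr}^F_n(C,L,R)$, and this is a limit of a product-compatible pullback diagram of spaces of morphisms of triples together with labels, so products commute through.

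For the third factor—passage to the underlying $\infty$-category of a Segal space—I would invoke Proposition~\ref{prop:flagged-segal}, which identifies $\SegalSpaces$ with $\fCat$ in a way that translates this passage into the functor $\mathrm{ev}_1 : \fCat \to \Cat_\infty$. Since $\fCat$ is closed under finite products inside $\mathrm{Fun}(\Delta^1,\Cat_\infty)$ (as recorded early in the Preliminaries) and $\mathrm{ev}_1 : \mathrm{Fun}(\Delta^1,\Cat_\infty) \to \Cat_\infty$ preserves all limits, this factor preserves finite products as well. Composing the three gives the desired conclusion.

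I do not expect any serious obstacle here: the real content has already been packaged into the functoriality of $\mathbf{Corr}^{(-)}_\bullet$ and into the definition of a normed labeling functor. The only mild subtlety is the identification of the underlying $\infty$-category of a Segal space with the $\mathrm{ev}_1$-component of the associated flagged $\infty$-category, which is immediate from Proposition~\ref{prop:flagged-segal}.
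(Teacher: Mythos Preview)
Your argument is correct and is essentially the same as what the paper intends: the paper simply defers to \cite[Proposition~4.3.8]{EHKSY_MotivicInfiniteLoops}, whose proof is precisely the factorization you describe (normed labeling functor, then $\mathbf{Corr}^{(-)}_\bullet$, then passage to underlying $\infty$-categories, each step preserving finite products). Your identification of the last step with $\mathrm{ev}_1$ via Proposition~\ref{prop:flagged-segal} is a clean way to handle the only point that might otherwise require comment.
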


\begin{proof}
    The proof is the same as that of \cite[Proposition~4.3.8]{EHKSY_MotivicInfiniteLoops}.
\end{proof}

\subsection{The Normed Labeling Functor for Framed Correspondences}

\begin{proposition}
    The assignment $S \mapsto (\AlgSpc_S, \fsyn)$ can be promoted to a normed triple over $\AlgSpc$.
\end{proposition}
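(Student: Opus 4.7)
The plan is to promote the norm monoidal $\infty$-category $\AlgSpc_\star : \Corr^\fet(\AlgSpc)^\op \to \Cat_\infty$ (constructed in an earlier example via Barwick's unfurling of Weil restriction) along the forgetful functor $\Trip \to \Cat_\infty$, by equipping each $\AlgSpc_S$ with the triple structure $L = \fsyn$, $R = \all$. The natural strategy is to apply the unfurling construction to the functor $(\AlgSpc_\star, \fsyn, \all) : \AlgSpc^\op \to \Trip$ instead of just the underlying $\AlgSpc_\star : \AlgSpc^\op \to \Cat_\infty$. The assembly is then formal, and what must be checked is that the additional triple structure is compatible with every piece of data that enters the unfurling.

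First, for each $S \in \AlgSpc$, the triple $(\AlgSpc_S, \fsyn, \all)$ is well-defined: the class $\fsyn$ is closed under composition and stable under arbitrary base change, and $\AlgSpc_S$ admits all pullbacks. Second, for any $f : S \to T$, the restriction $f^* : \AlgSpc_T \to \AlgSpc_S$ is a morphism of triples, since $\fsyn$ is preserved by base change and pullback squares pull back to pullback squares. Third and most substantively, for any finite \'etale $p : \widetilde S \to S$, the Weil restriction $p_\otimes : \AlgSpc_{\widetilde S} \to \AlgSpc_S$ must be a morphism of triples. Preservation of ambigressive squares reduces to preservation of pullbacks, which is automatic since $p_\otimes$ is right adjoint to $p^*$. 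Preservation of $\fsyn$ is the main technical input: working \'etale-locally on $S$, one may assume $p$ splits as $\bigsqcup_{i=1}^n S \to S$, so that any $X \in \AlgSpc_{\widetilde S}$ decomposes as $\bigsqcup_{i=1}^n X_i$ with each $X_i \to S$ finite syntomic, and $p_\otimes X \simeq X_1 \times_S \cdots \times_S X_n$. Since $\fsyn$ is closed under composition and base change, this iterated fiber product is finite syntomic over $S$; as finite syntomicity is \'etale-local on the target, the general case follows.

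Finally, preservation of finite products follows from the analogous property of $\AlgSpc_\star$, together with the evident decomposition of $\fsyn$ along $\AlgSpc_{S_1 \sqcup S_2} \simeq \AlgSpc_{S_1} \times \AlgSpc_{S_2}$. The main obstacle is the \'etale-local verification that Weil restriction sends $\fsyn$ to $\fsyn$; this is the only nontrivial geometric input and is analogous to the argument used in \cite{BachmannHoyois_Norms} to show that Weil restriction of smooth morphisms is smooth.
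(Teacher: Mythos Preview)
Your proposal is correct and follows essentially the same approach as the paper: lift the already-constructed norm monoidal $\infty$-category $\AlgSpc_\star : \Corr^\fet(\AlgSpc)^\op \to \Cat_\infty$ along the forgetful functor $\Trip \to \Cat_\infty$ by verifying that all the structure maps (pullbacks and Weil restrictions) are morphisms of triples for the marking $L = \fsyn$. The paper's proof is a single sentence asserting that Weil restriction along a finite \'etale map preserves finite syntomic morphisms; you have spelled out this fact via the standard \'etale-local splitting argument, and additionally made explicit the other routine checks (stability of $\fsyn$ under base change, preservation of pullbacks by the right adjoint $p_\otimes$, and additivity) that the paper leaves implicit.
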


\begin{proof}
    Since Weil restriction along a finite \'etale map preserves finite syntomic morphisms, we can lift the functor $\AlgSpc_\star : \Corr^\fet(\AlgSpc)^\op \to \Cat_\infty$ along $\Trip \to \Cat_\infty$.
\end{proof}

\begin{notation}
    For $n \geq 0$, we'll write $\Ar^n = \mathrm{Fun}(\Delta^1, \Delta^n)$.
    We'll identify objects of $\Ar^n$ with inequalities $(i \leq j)$ where $i, j \in \Delta^n$.
    We'll write $\iota_n : \Delta^n \to \Ar^n$ for the functor $i \mapsto (0 \leq i)$.
\end{notation}

\begin{definition}
    Let $p : \mathscr{E} \to \mathscr{B}$ be a coCartesian fibration.
    A morphism $f$ in $\mathscr{E}$ is \emph{vertical} if $p(f)$ is an equivalence.
\end{definition}

\begin{construction} \label{def:gap}
    Let $p : \mathscr{E} \to \mathscr{B}$ be a coCartesian fibration such that
    \begin{enumerate}
        \item for each $s \in \mathscr{B}$, the fiber $\mathscr{E}_s$ admits finite colimits, and
        \item for each morphism $f : s \to t$ in $\mathscr{B}$, the functor $f_* : \mathscr{E}_s \to \mathscr{E}_t$ preserves finite colimits.
    \end{enumerate}
    For $n \geq 0$, we'll write $\mathrm{Gap}_\mathscr{B}(n,\mathscr{E})$ for the full subcategory of $\mathrm{Fun}(\Ar^n, \mathscr{E})$ spanned by functors $\sigma : \Ar^n \to \mathscr{E}$ such that
    \begin{enumerate}
        \item for $0 \leq i \leq j \leq k \leq n$, the morphism $\sigma(i \leq k) \to \sigma(j \leq k)$ is vertical,
        \item for $0 \leq i \leq n$, the object $\sigma(i\leq i)$ is $p$-relative initial, and
        \item for $0 \leq i \leq j \leq k \leq n$, the diagram
        \begin{equation*}
            \begin{tikzcd}
                \sigma(i \leq j) \ar[r] \ar[d] & \sigma(i \leq k) \ar[d] \\
                \sigma(j \leq j) \ar[r] & \sigma(j \leq k)
            \end{tikzcd}
        \end{equation*}
        is a $p$-relative colimit.
    \end{enumerate}
\end{construction}

\begin{example} \label{example:gap-qcoh}
    Let $p : \mathbf{QCoh} \to \AlgSpc^\op$ be the coCartesian fibration classified by the functor $\AlgSpc^\op \to \Cat_\infty^\mathrm{st}$ sending an algebraic space $S$ to the derived $\infty$-category $\mathbf{QCoh}(S)$ of quasi-coherent $\mathcal{O}_S$-modules.
    We can view an object of $\mathrm{Gap}_{\AlgSpc^\op}(\Delta^3, \mathbf{QCoh})$ as a diagram
    \begin{equation*}
        \begin{tikzcd}
            0 \ar[r,dashed] & \mathscr{F}_{0,1} \ar[r,dashed] \ar[d] \arrow[dr,phantom,"\ulcorner", very near end] & \mathscr{F}_{0,2} \ar[r,dashed] \ar[d] \arrow[dr,phantom,"\ulcorner", very near end] & \mathscr{F}_{0,3} \ar[d] \\
            & 0 \ar[r,dashed] & \mathscr{F}_{1,2} \ar[r,dashed] \ar[d] \arrow[dr,phantom,"\ulcorner", very near end] & \mathscr{F}_{1,3} \ar[d] \\
            & & 0 \ar[r,dashed] & \mathscr{F}_{2,3} \ar[d] \\
            & & & 0 \\
            X_0 & X_1 \ar[l,"f_{0,1}"'] & X_2 \ar[l,"f_{1,2}"'] & X_3 \ar[l,"f_{2,3}"']
        \end{tikzcd}
    \end{equation*}
    where the vertical morphisms over an algebraic space $X_i$ indicate morphism in $\mathbf{QCoh}(X_i)$, and a dashed horizontal arrows $\mathscr{F}_{i,j-1} \dashrightarrow \mathscr{F}_{i,j}$ indicate morphisms $(f_{j-1,j})^* \mathscr{F}_{i,j-1} \to \mathscr{F}_{i,j}$ in $\mathbf{QCoh}(X_j)$.
    The pushout marks in each square indicates that the induced diagram
    \begin{equation*}
        \begin{tikzcd}
            (f_{j-1,j})^* \mathscr{F}_{i-1,j-1} \ar[r] \ar[d] & \mathscr{F}_{i-1,j} \ar[d] \\
            (f_{j-1,j})^* \mathscr{F}_{i,j-1} \ar[r] & \mathscr{F}_{i,j}
        \end{tikzcd}
    \end{equation*}
    is coCartesian.
\end{example}

\begin{construction}
    Let $p : \mathscr{E} \to \mathscr{B}$ be as in Construction~\ref{def:gap}.
    For $n \geq 0$, we'll write $\mathrm{Filt}_\mathscr{B}(n, \mathscr{E}) = \mathrm{Fun}(\Delta^n, \mathscr{E})$.
    Note that restriction along $\iota_n : \Delta^n \to \Ar^n$ yields a fully faithful functor $\iota_n^*: \mathrm{Gap}_\mathscr{B}(n, \mathscr{E}) \to \mathrm{Filt}_\mathscr{B}(n, \mathscr{E})$.
    The essential image consists of those functors $\sigma : \Delta^n \to \mathscr{E}$ such that $\sigma(0)$ is $p$-relatively initial.
    Moreover, $\iota_n^*: \mathrm{Gap}_\mathscr{B}(n, X) \to \mathrm{Filt}_\mathscr{B}(n, \mathscr{E})$ admits a left adjoint.
    See \cite[after Remark~B.0.4]{EHKSY_ModulesOverMGL}.

    The assignment $[n] \mapsto (\mathrm{Filt}_\mathscr{B}(n, \mathscr{E}) \to \mathrm{Gap}_\mathscr{B}(n, \mathscr{E}))$ can be promoted to a natural transformation among functors $\mathbf{\Delta}^\op \to \Cat_\infty$.
\end{construction}

\begin{example} \label{example:gap-qcoh-cont}
    We use notation as in Example~\ref{example:gap-qcoh}.
    An object of
    \begin{equation*}
        \mathrm{Filt}_{\AlgSpc^\op}(\Delta^3,\mathbf{QCoh})
    \end{equation*}
    can be viewed as a diagram
    \begin{equation*}
        \begin{tikzcd}
            \mathscr{F}_0 \ar[r,dashed] & \mathscr{F}_1 \ar[r,dashed] & \mathscr{F}_2 \ar[r,dashed] & \mathscr{F}_3 \\
            X_0 & X_1 \ar[l,"f_{0,1}"'] & X_2 \ar[l,"f_{1,2}"'] & X_3. \ar[l,"f_{2,3}"']
        \end{tikzcd}
    \end{equation*}
    The left adjoint $\mathrm{Filt}_{\AlgSpc^\op}(\Delta^3,\mathbf{QCoh}) \to \mathrm{Gap}_{\AlgSpc^\op}(\Delta^3,\mathbf{QCoh})$
    sends this to
    \begin{equation*}
        \begin{tikzcd}
            0 \ar[r,dashed] & \mathscr{F}_1 / (f_{0,1})^*\mathscr{F}_0 \ar[r,dashed] \ar[d] \arrow[dr,phantom,"\ulcorner", very near end] & \mathscr{F}_2 / (f_{0,2})^*\mathscr{F}_0 \ar[r,dashed] \ar[d] \arrow[dr,phantom,"\ulcorner", very near end] & \mathscr{F}_3 / (f_{0,3})^*\mathscr{F}_0 \ar[d] \\
            & 0 \ar[r,dashed] & \mathscr{F}_2 / (f_{1,2})^*\mathscr{F}_1 \ar[r,dashed] \ar[d] \arrow[dr,phantom,"\ulcorner", very near end] & \mathscr{F}_3 / (f_{1,3})^*\mathscr{F}_1 \ar[d] \\
            & & 0 \ar[r,dashed] & \mathscr{F}_3 / (f_{2,3})^*\mathscr{F}_2 \ar[d] \\
            & & & 0 \\
            X_0 & X_1 \ar[l,"f_{0,1}"'] & X_2 \ar[l,"f_{1,2}"'] & X_3 \ar[l,"f_{2,3}"']
        \end{tikzcd}
    \end{equation*}
\end{example}

\begin{definition}
    We write $\mathrm{K} : \Cat_\infty^\mathrm{st} \to \Spc$ for the algebraic $\mathrm{K}$-theory functor as defined in \cite{BGT_UniversalKTheory}.
    This comes with a canonical natural transformation $\eta : ({-})^\simeq \to \mathrm{K}$.
\end{definition}

\begin{definition}
    Let $p : \mathscr{E} \to \mathscr{B}$ be a coCartesian fibration, which is classified by a functor $\chi : \mathscr{B} \to \Cat_\infty$
    We'll write $p^\mathrm{vop} : \mathscr{E}^\mathrm{vop} \to \mathscr{B}$ for the coCartesian fibration classified by $\chi({-})^\op : \mathscr{B} \to \Cat_\infty$.
    Similarly, we'll write $\mathrm{Tw}_\mathscr{B}(\mathscr{E})^\mathrm{vop} \to \mathscr{B}$ for the coCartesian fibration classified by $\mathrm{Tw}(\chi({-}))^\op : \mathscr{B} \to \Cat_\infty$.
    The natural transformation $\mathrm{Tw}(\chi({-}))^\op \to \chi({-})^\op \times \chi({-})$ then classifies a functor $\mathrm{Tw}_\mathscr{B}(\mathscr{E})^\mathrm{vop} \to \mathscr{E}^\mathrm{vop} \times_\mathscr{B} \mathscr{E}$ which is in fact a coCartesian fibration in spaces.
    The functor $\mathrm{Map}^\mathrm{fiber} : \mathscr{E}^\mathrm{vop} \times_\mathscr{B} \mathscr{E} \to \Spc$ which classifies this last functor is called the \emph{fiberwise mapping space} of $p : \mathscr{E} \to \mathscr{B}$.
\end{definition}

\begin{example}
    Let $p : \mathscr{E} \to \mathscr{B}$ be a coCartesian fibration, and let $x : \mathscr{B} \to \mathscr{E}^\mathrm{vop}$ be a section of $p^\mathrm{vop}$.
    Then we can form the functor $\mathrm{Map}^\mathrm{fiber}(x,({-})) : \mathscr{E} \to \Spc$ which sends an object $y \in \mathscr{E}$ living over $s = p(y)$ to the space $\mathrm{Map}_{\mathscr{E}_s}(x(s),y)$.
\end{example}

\begin{construction} \label{construct:label-fr}
    We construct now a normed labeling functor that refines \cite[Proposition 4.3.13]{EHKSY_MotivicInfiniteLoops}.

    For an algebraic space $S$, we have a diagram
    \begin{equation*}
        \begin{tikzcd}
            & \mathbf{QCoh}_S \ar[d,"p"] \\
            \mathrm{Fun}^\mathrm{flat}(\Delta^1, \AlgSpc_S)^\op \ar[r,"s"]  \ar[ur,"\mathscr{L}"] & \AlgSpc_S^\op
        \end{tikzcd}
    \end{equation*}
    of symmetric monoidal $\infty$-categories and symmetric monoidal functors.
    Here, $\mathrm{Fun}^\mathrm{flat}(\Delta^1, \AlgSpc_S)^\op$ is the full subcategory of $\mathrm{Fun}(\Delta^1, \AlgSpc_S)^\op$ spanned by flat morphisms, and $p: \mathbf{QCoh}_S \to \AlgSpc^\op_S$ is the coCartesian fibration classified by the functor $\AlgSpc^\op_S \to \Cat_\infty$ sending an algebraic space $X$ to the derived $\infty$-category $\mathbf{QCoh}(X)$ of quasi-coherent sheaves on $X$.
    The functor $\mathscr{L}$ sends a flat morphism $f : X \to Y$ of algebraic spaces over $S$ to (the object of $\mathbf{QCoh}_S$ corresponding to) the cotangent complex $\mathscr{L}_f \in \mathbf{QCoh}(X)$.

    The assignment
    \begin{equation*}
        S \mapsto \left(
        \begin{tikzcd}
            & \mathbf{QCoh}_S \ar[d,"p"] \\
            \mathrm{Fun}^\mathrm{flat}(\Delta^1, \AlgSpc_S)^\op \ar[r,"s"]  \ar[ur,"\mathscr{L}"] & \AlgSpc_S^\op
        \end{tikzcd}
        \right)
    \end{equation*}
    can be promoted to a diagram
    \begin{equation*}
        \begin{tikzcd}
            & \mathbf{QCoh}_\star \ar[d,"p"] \\
            \mathrm{Fun}^\mathrm{flat}(\Delta^1, \AlgSpc_\star)^\op \ar[r,"s"]  \ar[ur,"\mathscr{L}"] & \AlgSpc_\star^\op
        \end{tikzcd}
    \end{equation*}
    of finite-product-preserving functors $\AlgSpc^\op \to \CAlg(\Cat_\infty)$.
    Each object in this diagram satisfies $\fet$-descent, so we can apply \cite[Lemma~C.13]{BachmannHoyois_Norms} to extend uniquely to a diagram of functors $\Corr^\fet(\AlgSpc)^\op \to \CAlg(\Cat_\infty)$

    Observe now that we have a restriction
    \begin{equation*}
        \begin{tikzcd}[column sep=huge]
            \Phi_{\bullet^\op}(\AlgSpc_\star,\mathrm{fsyn})^\op \ar[d,symbol=\subseteq] \ar[r,dashed] & \mathrm{Fun}(\Delta^\bullet, \mathrm{Fun}^\mathrm{flat}(\Delta^1, \AlgSpc_\star)^\op) \ar[d,symbol=\subseteq] \\
            \mathrm{Fun}(\Delta^n, \AlgSpc_\star^\op) \ar[r,"\mathrm{Fun}(\Delta^1{,}({-}))"]& \mathrm{Fun}(\Ar^n, \mathrm{Fun}(\Delta^1,\AlgSpc_\star)^\op)
        \end{tikzcd}
    \end{equation*}
    where we have made use of the canonical equivalence $(\Delta^1)^\op \simeq \Delta^1$ in the bottom right corner.
    We can thus form the commutative diagram
    \begin{equation*}
        \begin{tikzcd}
            \Phi_{\bullet^\op}(\AlgSpc_\star,\mathrm{fsyn})^\op \ar[r,"\mathscr{L}"] \ar[d] & \mathrm{Fun}(\Delta^\bullet, \mathbf{QCoh}_\star) \ar[d,"p"] \\
            \mathrm{Fun}(\Ar^\bullet, \mathrm{Fun}^\mathrm{flat}(\Delta^1, \AlgSpc_\star)^\op) \ar[r,"s"] \ar[ur,"\mathscr{L}"] & \mathrm{Fun}(\Ar^\bullet,\AlgSpc_\star^\op) \ar[d,"\iota_n^*"] \\
            & \mathrm{Fun}(\Delta^\bullet,\AlgSpc_\star^\op)
        \end{tikzcd}
    \end{equation*}
    In fact, the arrow along the top factors through the subcategory
    \begin{equation*}
        \mathrm{Gap}_{\AlgSpc_\star^\op}(\bullet,\mathbf{Perf}_\star) \subseteq \mathrm{Fun}(\Delta^\bullet, \mathbf{QCoh}_\star).
    \end{equation*}
    Altogether, we obtain a diagram
    \begin{equation*}
        \begin{tikzcd}
            \Phi_{\bullet^\op}(\AlgSpc_\star,\mathrm{fsyn})^\op \ar[dr]  \ar[r,"\mathscr{L}"] & \mathrm{Gap}_{\AlgSpc_\star^\op}(\bullet,\mathbf{Perf}_\star) \ar[d] \\
            & \mathrm{Fun}(\Delta^\bullet,\AlgSpc_\star^\op)
        \end{tikzcd}
    \end{equation*}
    of functors $\Corr^{\fet}(\AlgSpc)^\op \to \mathrm{Fun}(\mathbf{\Delta}^\op, \Cat_\infty)$.

    Passing to coCartesian fibrations over $\Corr^{\fet}(\AlgSpc)^\op \times \mathbf{\Delta}^\op$, we obtain a functor
    \begin{equation*}
        \int_{\star,\bullet} \mathrm{Gap}_{\AlgSpc_\star^\op}(\bullet,\mathbf{Perf}_\star) \to \int_{\star,\bullet} \mathrm{Fun}(\Delta^\bullet,\AlgSpc_\star^\op)
    \end{equation*}
    where the ``integral'' is taken over $(\star, \bullet) \in \Corr^{\fet}(\AlgSpc)^\op \times \mathbf{\Delta}^\op$.
    This functor is itself a coCartesian fibration classifying a functor
    \begin{equation*}
        \int_{\star,\bullet} \mathrm{Fun}(\Delta^\bullet,\AlgSpc_\star^\op) \to \Cat_\infty^\mathrm{st}
    \end{equation*}
    which we whisker with the natural transformation $\eta : ({-})^\simeq \to \mathrm{K} : \Cat_\infty^\mathrm{st} \to \Spc$ and unstraighten to get
    \begin{equation*}
        \begin{tikzcd}[column sep={3cm,between origins}]
            \int_{\star,\bullet} \mathrm{Gap}_{\AlgSpc_\star^\op}(\bullet,\mathbf{Perf}_\star)^\mathrm{cocart} \ar[rr,"\eta"] \ar[dr] & & \int_{\star,\bullet} \mathrm{K}\mathrm{Gap}_{\AlgSpc_\star^\op}(\bullet,\mathbf{Perf}_\star) \ar[dl] \\
            & \int_{\star,\bullet} \mathrm{Fun}(\Delta^\bullet,\AlgSpc_\star^\op).
        \end{tikzcd}
    \end{equation*}
    Finally, we write $\mathrm{fr}$ for the composite
    \begin{equation*}
        \begin{tikzcd}
            \int_{\star,\bullet} \Phi_{\bullet}(\AlgSpc_\star,\mathrm{fsyn})^\op \ar[d] & \int_{\star,\bullet} \mathrm{Gap}_{\AlgSpc_\star^\op}(\bullet,\mathbf{Perf}_\star)^\mathrm{cocart} \ar[d,"\eta"] \\
            \int_{\star,\bullet} \Phi_{\bullet^\op}(\AlgSpc_\star,\mathrm{fsyn})^\op \ar[ur,"\mathscr{L}"] & \int_{\star,\bullet} \mathrm{K}\mathrm{Gap}_{\AlgSpc_\star^\op}(\bullet,\mathbf{Perf}_\star) \ar[r] & \Spc
        \end{tikzcd}
    \end{equation*}
    where the the first functor is induced by the natural equivalence $\mathrm{id} \to ({-})^\op : \mathbf{\Delta} \to \mathbf{\Delta}$, and the last functor is the fiberwise mapping space construction $\mathrm{Map}^\mathrm{fiber}(0,({-}))$ for the coCartesian fibration
    \begin{equation*}
        \int_{\star,\bullet} \mathrm{K}\mathrm{Gap}_{\AlgSpc_\star^\op}(\bullet,\mathbf{Perf}_\star) \to \int_{\star,\bullet} \mathrm{Fun}(\Delta^\bullet,\AlgSpc_\star^\op).
    \end{equation*}
\end{construction}

\begin{example}
    Fix an algebraic space $S$ and let $\sigma \in \Phi_2(\AlgSpc_S,\fsyn)$ be the sequence
    \begin{equation*}
        \begin{tikzcd}
            X_0 & X_1 \ar[l,"f_{0,1}"'] & X_2 \ar[l,"f_{1,2}"']
        \end{tikzcd}
    \end{equation*}
    of finite syntomic maps in $\AlgSpc_S$.
    Note that we can identify the fiber of the coCartesian fibration
    \begin{equation*}
        \int_{\star,\bullet} \mathrm{K}\mathrm{Gap}_{\AlgSpc_\star^\op}(\bullet,\mathbf{Perf}_\star) \to \int_{\star,\bullet} \mathrm{Fun}(\Delta^\bullet,\AlgSpc_\star^\op)
    \end{equation*}
    over $(S,[2],\sigma)$ as
    \begin{equation*}
        \mathrm{K}(X_1) \times \mathrm{K}(X_2).
    \end{equation*}
    Under this equivalence, we have
    \begin{equation*}
        \fr(S,[2],\sigma) = \mathrm{Map}_{\mathrm{K}(X_1)}(0, \mathscr{L}_{f_{0,1}}) \times \mathrm{Map}_{\mathrm{K}(X_2)}(0, \mathscr{L}_{f_{1,2}}).
    \end{equation*}
    See the proof of \cite[Proposition~4.2.31]{EHKSY_MotivicInfiniteLoops}.
\end{example}

\begin{variant} \label{construct:label-fet-to-fr}
    Here we promote \cite[4.3.15]{EHKSY_MotivicInfiniteLoops} to a normed labeling functor.

    Consider the composite $\mathrm{fr}'$
    \begin{equation*}
        \begin{tikzcd}
            \int_{\star,\bullet} \Phi_{\bullet}(\AlgSpc_\star,\mathrm{fsyn})^\op \ar[d]\\
            \int_{\star,\bullet} \Phi_{\bullet^\op}(\AlgSpc_\star,\mathrm{fsyn})^\op \ar[r,"\mathscr{L}"] & \int_{\star,\bullet} \mathrm{Gap}_{\AlgSpc_\star^\op}(\bullet,\mathbf{Perf}_\star)^\mathrm{cocart} \ar[r] & \Spc
        \end{tikzcd}
    \end{equation*}
    where the the first functor is induced by the natural equivalence $\mathrm{id} \to ({-})^\op : \mathbf{\Delta} \to \mathbf{\Delta}$, and the last functor is the fiberwise mapping space construction $\mathrm{Map}^\mathrm{fiber}(0,({-}))$.
    The natural transformation $\eta : ({-})^\simeq \to \mathrm{K} : \Cat^\mathrm{st}_\infty \to \Spc$ induces a natural transformation $\mathrm{fr}' \to \mathrm{fr}$.
\end{variant}

\begin{remark}
    The following variant will not play a role in the present article.
\end{remark}

\begin{variant} \label{construct:label-fr-to-K}
    This is a norm monoidal refinement of \cite[Appendix~B]{EHKSY_ModulesOverMGL}.
    Note that the constructions in Construction~\ref{construct:label-fr} work equally well if one replaces
    \begin{itemize}
        \item the $1$-category $\AlgSpc$ of algebraic spaces with the $\infty$-category $\mathrm{d}\AlgSpc$ of \emph{derived} algebraic spaces, and
        \item the class $\mathrm{fsyn}$ of finite syntomic maps with the class $\mathrm{perf}$ of maps with perfect cotangent complex.
    \end{itemize}
    We work here in that generality for this variant.

    Consider the pullback
    \begin{equation*}
        \begin{tikzcd}
            \mathrm{P}_{\star,\bullet} \ar[r] \ar[d] & \mathrm{K}\mathrm{Filt}_{\mathrm{d}\AlgSpc^\op_\star} (\bullet, \mathbf{Perf}_\star) \ar[d] \\
            \Phi_{\bullet^\op}(\mathrm{d}\AlgSpc_\star, \mathrm{perf})^\op \ar[r,"\mathscr{L}"] & \mathrm{K}\mathrm{Gap}_{\mathrm{d}\AlgSpc^\op_\star} (\bullet, \mathbf{Perf}_\star)
        \end{tikzcd}
    \end{equation*}
    of functors $\Corr^{\fr}(\mathrm{d}\AlgSpc)^\op \times \mathbf{\Delta}^\op \to \Cat_\infty$.
    Unstraightening the left vertical map yields a functor
    \begin{equation} \label{eqn:label-fr-to-K}
        \int_{\star,\bullet} \mathrm{P}_{\star,\bullet} \to \int_{\star,\bullet} \Phi_{\bullet^\op}(\mathrm{d}\AlgSpc_\star, \mathrm{perf})^\op
    \end{equation}
    which is a coCartesian fibration in spaces.
    We'll write
    \begin{equation*}
        \mathscr{K} : \int_{\star,\bullet} \Phi_{\bullet}(\mathrm{d}\AlgSpc_\star, \mathrm{perf})^\op \to \Spc
    \end{equation*}
    for the composite
    \begin{equation*}
        \int_{\star,\bullet} \Phi_{\bullet}(\mathrm{d}\AlgSpc_\star, \mathrm{perf})^\op \to \int_{\star,\bullet} \Phi_{\bullet^\op}(\mathrm{d}\AlgSpc_\star, \mathrm{perf})^\op \to \Spc
    \end{equation*}
    where the first functor is induced by the canonical equivalence $\mathrm{id} \to ({-})^\op : \mathbf{\Delta} \to \mathbf{\Delta}$, and the second is the straightening of (\ref{eqn:label-fr-to-K}).
\end{variant}

\begin{construction} \label{construct:label-fr-to-orfsyn}
    Let $n \geq 0$, $S \in \AlgSpc$, and $\sigma \in \Phi_n(\AlgSpc_S,\mathrm{fsyn})$.
    We can view $\sigma$ as a sequence of maps
    \begin{equation*}
        \begin{tikzcd}
            X_0 & X_1 \ar[l,"f_{0,1}"'] & \cdots \ar[l,"f_{1,2}"'] & X_n \ar[l,"f_{n-1,n}"'].
        \end{tikzcd}
    \end{equation*}
    We can associate to such a triplet $([n],S,\sigma)$ the set
    \begin{equation*}
        \orfsyn(S,[n],\sigma) = \mathrm{Isom}(\mathcal{O}_{X_1}, \omega_{f_{0,1}}) \times \cdots \times \mathrm{Isom}(\mathcal{O}_{X_n}. \omega_{f_{n-1,n}})
    \end{equation*}
    where $\mathrm{Isom}(\mathscr{F},\mathscr{G})$ is the set of isomorphisms between two quasi-coherent sheaves.

    Using the functoriality of dualizing sheaves, the association
    \begin{equation*}
        (S,[n],\sigma) \mapsto \orfsyn(S,[n],\sigma)
    \end{equation*}
    can in fact be promoted to a functor
    \begin{equation*}
        \orfsyn : \int_{\star,\bullet} \Phi_\bullet(\AlgSpc_\star,\mathrm{fsyn})^\op \to \mathscr{S}\mathrm{et}.
    \end{equation*}
    It comes with a natural transformation $\fr \to \orfsyn$ induced by determinant functors.
\end{construction}

\begin{example}
    Fix an algebraic space $S$ and let $\sigma \in \Phi_2(\AlgSpc_S,\fsyn)$ be the sequence
    \begin{equation*}
        \begin{tikzcd}
            X_0 & X_1 \ar[l,"f_{0,1}"'] & X_2 \ar[l,"f_{1,2}"']
        \end{tikzcd}
    \end{equation*}
    of finite syntomic maps in $\AlgSpc_S$.

    Recall that we have a coCartesian fibration
    \begin{equation*}
        \int_{\star,\bullet} \Phi_\bullet(\AlgSpc_\star,\mathrm{fsyn})^\op \to \Corr^{\fet}(\AlgSpc)^\op \times \mathbf{\Delta}^\op
    \end{equation*}
    Let
    \begin{equation*}
        \psi : (S,[2],\sigma) \to (S,[1],\tau)
    \end{equation*}
    denote a coCartesian lift of the map
    \begin{equation*}
        (\mathrm{id},\delta_1) : (S,[1]) \to (S,[2])
    \end{equation*}
    in $\Corr^{\fet}(\AlgSpc) \times \mathbf{\Delta}$.
    The map
    \begin{equation*}
        \orfsyn(\psi) : \orfsyn(S,[2],\sigma) \to \orfsyn(S,[1],\tau)
    \end{equation*}
    is then the map
    \begin{equation*}
        \mathrm{Isom}(\mathcal{O}_{X_1},\omega_{f_{0,1}}) \times \mathrm{Isom}(\mathcal{O}_{X_2},\omega_{f_{1,2}}) \to \mathrm{Isom}(\mathcal{O}_{X_2},\omega_{f_{0,1}f_{1,2}})
    \end{equation*}
    that sends
    \begin{equation*}
        (\alpha : \mathcal{O}_{X_1} \to \omega_{f_{0,1}}, \beta : \mathcal{O}_{X_2} \to \omega_{f_{1,2}})
    \end{equation*}
    to
    \begin{equation*}
        ((f_{1,2})^* \omega_{f_{0,1}} \otimes \beta) \circ (f_{1,2})^*\alpha : \mathcal{O}_{X_2} \to \omega_{f_{0,2}}.
    \end{equation*}
    Note that we are making use of the canonical isomorphism $\omega_{f_{0,2}} \simeq (f_{1,2})^* \omega_{f_{0,1}} \otimes \omega_{f_{1,2}}$.

    The natural map $\fr(S,[2],\sigma) \to \orfsyn(S,[2],\sigma)$ is the determinant map
    \begin{equation*}
        \mathrm{Map}_{\mathrm{K}(X_1)}(0, \mathscr{L}_{f_{0,1}}) \times \mathrm{Map}_{\mathrm{K}(X_2)}(0, \mathscr{L}_{f_{1,2}}) \to \mathrm{Isom}(\mathcal{O}_{X_1},\omega_{f_{0,1}}) \times \mathrm{Isom}(\mathcal{O}_{X_2},\omega_{f_{1,2}}).
    \end{equation*}
\end{example}

\begin{lemma} \label{lemma:example-normed-labeling-functors}
    Each of the functors
    \begin{enumerate}
        \item $\fr$ (Construction~\ref{construct:label-fr}),
        \item $\fr'$ (Construction~\ref{construct:label-fet-to-fr}),
        \item $\orfsyn$ (Construction~\ref{construct:label-fr-to-orfsyn})
    \end{enumerate}
    satisfies the condition in Lemma~\ref{lemma:lift-to-labtrip}(2).
    Each of the corresponding functors
    \begin{equation*}
        \Corr^{\fet}(\AlgSpc)^\op \to \Lab\Trip
    \end{equation*}
    is a normed triple with labeling functor.
\end{lemma}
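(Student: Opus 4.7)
The plan is to apply Lemma~\ref{lemma:lift-to-labtrip} directly: for each of $\fr$, $\fr'$, $\orfsyn$, the constructions above already provide a functor
\begin{equation*}
    \int_{(S,\bullet)} \Phi_\bullet(\AlgSpc_S,\fsyn)^\op \to \Spc,
\end{equation*}
so what remains is to check that, for every fixed algebraic space $S$, the restriction to $\int_\bullet \Phi_\bullet(\AlgSpc_S,\fsyn)^\op$ satisfies the Segal-type condition defining a labeling functor on $(\AlgSpc_S,\fsyn,\all)$. Since this condition is pointwise in $S$, our task reduces to checking the classical (unnormed) statement for each of the three functors, after which the desired lift to $\Corr^\fet(\AlgSpc)^\op \to \Lab\Trip$ is produced automatically by Lemma~\ref{lemma:lift-to-labtrip}.

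For $\fr$, I would argue as follows. Fix $S$ and $\sigma \in \Phi_n(\AlgSpc_S,\fsyn)$ given by $X_0 \xleftarrow{f_{0,1}} X_1 \leftarrow \cdots \leftarrow X_n$. Unwinding the construction, $\fr(S,[n],\sigma)$ is the fiber of the map
\begin{equation*}
    \mathrm{K}\mathrm{Gap}_{\AlgSpc_S^\op}([n],\mathbf{Perf}_S)(\sigma) \to \prod_{i=0}^n \mathrm{K}(X_i)
\end{equation*}
over the point determined by $(0,\mathscr{L}_{f_{0,1}},\ldots,\mathscr{L}_{f_{n-1,n}})$ in the fiber over $\sigma$. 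Using the gap construction and the fact that algebraic K-theory is additive on cofiber sequences, $\mathrm{K}\mathrm{Gap}_{\AlgSpc_S^\op}([n],\mathbf{Perf}_S)(\sigma)$ decomposes as an iterated fiber product along the $\mathrm{K}(X_i)$'s of the spine pieces; this is precisely the Segal-style compatibility needed for the definition of labeling functor, and it is exactly \cite[Proposition~4.3.13]{EHKSY_MotivicInfiniteLoops}. For $\fr'$ the argument is identical after replacing $\mathrm{K}$ by $({-})^\simeq$, which is also additive on cofiber sequences in $\mathbf{Perf}$; this is \cite[4.3.15]{EHKSY_MotivicInfiniteLoops}.

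For $\orfsyn$, the Segal-type condition unwinds to the requirement that the canonical map
\begin{equation*}
    \mathrm{Isom}(\mathcal{O}_{X_n},\omega_{f_{0,n}}) \to \prod_{i=1}^n \mathrm{Isom}(\mathcal{O}_{X_i},\omega_{f_{i-1,i}})
\end{equation*}
is induced from the isomorphism $\omega_{gf} \cong f^*\omega_g \otimes \omega_f$ for composable finite syntomic maps. This is multiplicativity of the dualizing sheaf along compositions; the Segal condition is a rephrasing of the fact that a trivialization of $\omega_{f_{0,n}}$ breaks canonically into a compatible sequence of trivializations of the $\omega_{f_{i-1,i}}$'s via the tensor-product decomposition. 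This is essentially the content of \cite[\S 5]{HJNY_Hermitian}, and I would just verify it directly using the cocycle properties of $\omega$ under composition.

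The main obstacle, to the extent there is one, is purely bookkeeping: one must check that the functoriality in the norm direction (coming from Weil restriction along finite étale maps) interacts compatibly with the cotangent-complex, K-theoretic, and dualizing-sheaf constructions used to define $\fr$, $\fr'$, and $\orfsyn$. This is already built into Constructions~\ref{construct:label-fr}, \ref{construct:label-fet-to-fr}, and \ref{construct:label-fr-to-orfsyn} via the fact that each ingredient (the cotangent complex $\mathscr{L}$, algebraic K-theory, and the dualizing sheaf $\omega$) is compatible with base change along finite étale maps, so no additional verification beyond the pointwise Segal condition is required. Together with Lemma~\ref{lemma:lift-to-labtrip}, this yields the claimed lifts to $\Corr^\fet(\AlgSpc)^\op \to \Lab\Trip$.
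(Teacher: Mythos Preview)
Your overall strategy matches the paper's: the paper's proof simply says the lemma follows from the same arguments as \cite[Proposition~4.2.31]{EHKSY_MotivicInfiniteLoops} and \cite[Proposition~4.3.13]{EHKSY_MotivicInfiniteLoops}, and you correctly observe that the condition in Lemma~\ref{lemma:lift-to-labtrip}(2) is pointwise in $S$, so everything reduces to the unnormed statements already in the literature. For $\fr$ and $\fr'$ your sketch and citations are essentially right.

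However, your treatment of $\orfsyn$ contains a genuine error. You claim the Segal condition unwinds to an equivalence
\[
    \mathrm{Isom}(\mathcal{O}_{X_n},\omega_{f_{0,n}}) \;\longrightarrow\; \prod_{i=1}^n \mathrm{Isom}(\mathcal{O}_{X_i},\omega_{f_{i-1,i}})
\]
and that ``a trivialization of $\omega_{f_{0,n}}$ breaks canonically into a compatible sequence of trivializations of the $\omega_{f_{i-1,i}}$.'' That statement is false: an isomorphism $\mathcal{O}\cong L_1\otimes\cdots\otimes L_n$ does not determine isomorphisms $\mathcal{O}\cong L_i$ for each factor, so no such map exists, let alone an equivalence. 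The actual Segal condition is much easier. By Construction~\ref{construct:label-fr-to-orfsyn}, $\orfsyn(S,[n],\sigma)$ is \emph{defined} as the product $\prod_{i=1}^n \mathrm{Isom}(\mathcal{O}_{X_i},\omega_{f_{i-1,i}})$, and $\orfsyn(S,[0],X_i)=*$, so the comparison map
\[
    \orfsyn(S,[n],\sigma)\;\longrightarrow\; \orfsyn(S,[1],\sigma_{0,1})\times_{*}\cdots\times_{*}\orfsyn(S,[1],\sigma_{n-1,n})
\]
is the identity. The multiplicativity isomorphism $\omega_{gf}\cong f^*\omega_g\otimes\omega_f$ is what makes $\orfsyn$ a \emph{functor} on $\int_{\star,\bullet}\Phi_\bullet(\AlgSpc_\star,\fsyn)^\op$ (it supplies the simplicial functoriality, as in the example following Construction~\ref{construct:label-fr-to-orfsyn}), not what verifies the Segal condition. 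Once you fix this, your argument goes through and agrees with the paper's.
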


\begin{proof}
    This follows from the same proofs as in \cite[Proposition~4.2.31]{EHKSY_MotivicInfiniteLoops} and \cite[Proposition~4.3.13]{EHKSY_MotivicInfiniteLoops}.
\end{proof}

\begin{construction} \label{construct:fet-fr-orfsyn}
    From Constructions~\ref{construct:label-fr}, \ref{construct:label-fet-to-fr}, and \ref{construct:label-fr-to-orfsyn} we have a diagram
    \begin{equation*}
        \begin{tikzcd}
            \fr' \ar[d] \ar[dr] \\
            \fr \ar[r] & \orfsyn
        \end{tikzcd}
    \end{equation*}
    of functors $\int_{\star,\bullet} \Phi_\bullet(\AlgSpc_\star,\mathrm{fsyn})^\op \to \Spc$.
    Using Lemma~\ref{lemma:example-normed-labeling-functors}, we can apply Lemma~\ref{lemma:lift-to-labtrip} to get a commutative diagram
    \begin{equation*}
        \begin{tikzcd}
            \fet \ar[d] \ar[dr] \\
            \fr \ar[r] & \orfsyn
        \end{tikzcd}
    \end{equation*}
    of normed triples with labeling functors $\Corr^\fet(\AlgSpc)^\op \to \Lab\Trip$.
    Applying Lemma~\ref{lemma:nomred-labeling-to-normed-cat} yields a commutative diagram
    \begin{equation} \label{eqn:THE-DIAGRAM-corner-piece}
        \begin{tikzcd}
            \Corr^{\fet}(\AlgSpc_\star) \ar[d] \ar[dr] \\
            \Corr^{\fr}(\AlgSpc_\star) \ar[r] & \Corr^{\orfsyn}(\AlgSpc_\star)
        \end{tikzcd}
    \end{equation}
    of norm monoidal $\infty$-categories over $\AlgSpc$ and norm monoidal functors.
\end{construction}

\begin{proof}[Proof of Theorem~\ref{theorem:THE-DIAGRAM}]
    Since smooth algebraic spaces are preserved under base change and Weil restriction along finite \'etale maps, we get from (\ref{eqn:THE-DIAGRAM-corner-piece}) a diagram
    \begin{equation} \label{eqn:THE-DIAGRAM-corner-piece-smoothly}
        \begin{tikzcd}
            \Corr^{\fet}(\Sm_\star) \ar[d] \ar[dr] \\
            \Corr^{\fr}(\Sm_\star) \ar[r] & \Corr^{\orfsyn}(\Sm_\star)
        \end{tikzcd}
    \end{equation}
    of norm monoidal $\infty$-categories over $\AlgSpc$.
    We conclude by noting that the uniqueness statement in Proposition~(\ref{prop:descent-for-everyone-except-fr}) implies that the diagonal arrow in (\ref{eqn:THE-DIAGRAM-corner-piece-smoothly}) must agree with the diagonal arrow in (\ref{eqn:THE-DIAGRAM-except-fr}).
\end{proof}

\section{Motivic Homotopy Theory with Norms and Transfers}

\subsection{Multiplicative Theory of Motivic Infinite Loop Spaces}

\begin{notation}
    We write $\gamma : \Sm_{\star+} \to \Corr^\fr(\Sm_\star)$ for the normed motivic pattern constructed in Theorem~\ref{theorem:THE-DIAGRAM}.
\end{notation}

\begin{remark}
    Recall from \cite[Theorem 18]{Hoyois_2021} that the functor
    \begin{equation*}
        \gamma_! : \SH(S) \to \SH^\fr(S)
    \end{equation*}
    is an equivalence for every algebraic space $S$.
    Beware that the functor is called $\gamma^*$ there.
\end{remark}

\begin{theorem} \label{theorem:multiplicative-reconstruction}
    The equivalences
    \begin{equation*}
        \gamma_! : \SH(S) \to \SH^\fr(S), \qquad S \in \AlgSpc
    \end{equation*}
    can be assembled into an equivalence of norm monoidal $\infty$-categories over $\AlgSpc$.
\end{theorem}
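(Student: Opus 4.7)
The plan is straightforward given the infrastructure already built. By Construction~\ref{construct:normed-patt-to-normed-cat} applied to the morphism of normed motivic patterns $\gamma : \Sm_{\star+} \to \Corr^\fr(\Sm_\star)$ provided by Theorem~\ref{theorem:THE-DIAGRAM}, we obtain a norm monoidal functor
\begin{equation*}
    \gamma_! : \SH \to \SH^\fr
\end{equation*}
between norm monoidal $\infty$-categories over $\AlgSpc$. The goal is to promote this to an equivalence in $\NMon(\Cat_\infty|\AlgSpc)$.

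The key observation is that, by definition, $\NMon(\Cat_\infty|\AlgSpc)$ embeds as a full subcategory of the functor $\infty$-category $\Fun(\Corr^\fet(\AlgSpc)^\op, \Cat_\infty)$. Since equivalences in a functor $\infty$-category are detected objectwise, it suffices to verify that for every algebraic space $S$, the induced functor $\gamma_! : \SH(S) \to \SH^\fr(S)$ is an equivalence of $\infty$-categories. This pointwise statement is exactly Hoyois's reconstruction theorem \cite[Theorem~18]{Hoyois_2021}, recalled in the remark immediately preceding the statement of Theorem~\ref{theorem:multiplicative-reconstruction}.

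The main obstacle, therefore, is not in the present proof but in the work that has already been done: promoting $\gamma$ to a morphism of \emph{normed} motivic patterns (Theorem~\ref{theorem:THE-DIAGRAM}, carried out in Section~5 via the labeled correspondences machinery) and verifying that the output of Construction~\ref{construct:normed-patt-to-normed-cat} is a functor of norm monoidal $\infty$-categories in a genuinely compatible way. Once the norm monoidal refinement of $\gamma_!$ is in hand, the multiplicative reconstruction theorem follows by reducing fiberwise to Hoyois's theorem.
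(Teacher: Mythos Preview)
Your proposal is correct and takes essentially the same approach as the paper. The paper's proof is the one-liner ``This is a direct consequence of Construction~\ref{construct:fet-fr-orfsyn},'' which is a terse pointer to exactly the same infrastructure you invoke: the normed motivic pattern $\gamma$ from Theorem~\ref{theorem:THE-DIAGRAM} yields via Construction~\ref{construct:normed-patt-to-normed-cat} a norm monoidal functor $\gamma_!:\SH\to\SH^\fr$, and since $\NMon(\Cat_\infty|\AlgSpc)$ is a full subcategory of a functor category, the objectwise equivalence from \cite[Theorem~18]{Hoyois_2021} upgrades to an equivalence of norm monoidal $\infty$-categories.
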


\begin{proof}
    This is a direct consequence of Construction~\ref{construct:fet-fr-orfsyn}.
\end{proof}

\begin{construction}
    Recall that, for $S \in \AlgSpc$, the $\infty$-category of \emph{very effective motivic spectra} over $S$ is the full subcategory $\SH^\veff(S) \subseteq \SH(S)$ generated under colimits and extensions by objects of the form $\Sigma_\mathbf{T}^n \Sigma^\infty_+ X$, with $X \in \Sm_S$ and $n \geq 0$.
    Using Proposition~\ref{prop:norm-t-structure}, we obtain a presentably norm monoidal $\infty$-category $\SH^{\veff}$ over $\AlgSpc$, together with a norm monoidal adjunction $\SH^\veff \rightleftarrows \SH$.
\end{construction}

\begin{proposition} \label{prop:fr-gp-veff-comparison}
    The diagram
    \begin{equation*}
        \begin{tikzcd}
            \HH^{\fr,\gp} \ar[d,"\Sigma^\infty_{\mathbf{T},\fr}"'] \ar[r,dashed] & \SH^\veff \ar[d,"\Sigma^\infty_{\mathbf{T}}"] \\
            \SH^\fr \ar[r,"\gamma_*"] & \SH
        \end{tikzcd}
    \end{equation*}
    of norm monoidal $\infty$-categories over $\AlgSpc$ can be completed to a commutative diagram.
    Moreover, each of the arrows is the left adjoint of a norm monoidal adjunction.
\end{proposition}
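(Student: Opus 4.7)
The plan is to construct the dashed arrow as the essentially unique norm monoidal factorization of $\gamma_* \circ \Sigma^\infty_{\mathbf{T},\fr} \colon \HH^{\fr,\gp} \to \SH$ through the fully faithful norm monoidal inclusion $\SH^\veff \to \SH$ of Proposition~\ref{prop:norm-t-structure}.

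First I verify the factorization pointwise: for every algebraic space $S$, the composite $\gamma_* \Sigma^\infty_{\mathbf{T},\fr} \colon \HH^{\fr,\gp}(S) \to \SH(S)$ lands in $\SH^\veff(S)$. All functors involved preserve small colimits and $\SH^\veff(S)$ is closed under small colimits, so it suffices to check this on the generating family $(\mathrm{L}_\mathrm{mot} \gamma(X_+))^\gp$ indexed by $X \in \Sm_S$. Two facts drive the computation: (i)~stability of $\SH^\fr(S)$ forces $\Omega^\infty_{\mathbf{T},\fr}$ to land in group-complete objects, whence $\Sigma^\infty_{\mathbf{T},\fr}$ inverts the group-completion map $\mathrm{L}_\mathrm{mot} \gamma(X_+) \to (\mathrm{L}_\mathrm{mot} \gamma(X_+))^\gp$; (ii)~the canonical natural equivalence $\Sigma^\infty_{\mathbf{T},\fr} \circ \gamma_! \simeq \gamma_! \circ \Sigma^\infty_\mathbf{T}$ holds by construction. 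Combining these with Theorem~\ref{theorem:multiplicative-reconstruction}, which provides $\gamma_! \simeq \gamma_*^{-1}$, yields
\begin{equation*}
    \gamma_* \Sigma^\infty_{\mathbf{T},\fr}(\mathrm{L}_\mathrm{mot} \gamma(X_+))^\gp \simeq \gamma_* \gamma_! \Sigma^\infty_\mathbf{T}(X_+) \simeq \Sigma^\infty_\mathbf{T}(X_+),
\end{equation*}
which is visibly very effective.

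Next, I promote this pointwise factorization to a factorization in $\NMon(\Cat_\infty | \AlgSpc)$ using that $\SH^\veff \to \SH$ is fully faithful as a morphism of norm monoidal $\infty$-categories over $\AlgSpc$: any norm monoidal functor into $\SH$ whose pointwise image lies in $\SH^\veff$ lifts uniquely to a norm monoidal functor into $\SH^\veff$. This produces the dashed arrow together with the commutativity of the square. The right vertical and bottom horizontal arrows are already left adjoints in norm monoidal adjunctions, by Proposition~\ref{prop:norm-t-structure} and Theorem~\ref{theorem:multiplicative-reconstruction}, respectively. The left vertical arrow is the restriction of the norm monoidal adjunction $\Sigma^\infty_{\mathbf{T},\fr} \dashv \Omega^\infty_{\mathbf{T},\fr}$ to the group-complete subcategory (legitimated by~(i) above), which remains a norm monoidal adjunction by Proposition~\ref{prop:presentably-norm-adjunction-criterion}, its exchange conditions being inherited from the unrestricted case. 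Finally, the dashed arrow is a norm monoidal left adjoint because it factors the composite left adjoint $\gamma_* \Sigma^\infty_{\mathbf{T},\fr}$ through the fully faithful left adjoint $\SH^\veff \to \SH$; its right adjoint is obtained by postcomposing the right adjoint of the composite with the inclusion $\SH^\veff \to \SH$.

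The hard part will be the upgrade from pointwise to norm monoidal in the third paragraph: establishing that $\SH^\veff \to \SH$ is fully faithful at the level of norm monoidal $\infty$-categories over $\AlgSpc$ and not merely fiberwise. This reduces to checking that the norm functors $p_\otimes$ and the $\sharp$-pushforwards both preserve very effective spectra, which is precisely the content of Proposition~\ref{prop:norm-t-structure} applied to the generating family $\{ \Sigma^n_\mathbf{T} \Sigma^\infty_+ X : X \in \Sm_S,\ n \geq 0 \}$.
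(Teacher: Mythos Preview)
Your proof is correct and follows essentially the same approach as the paper: verify the factorization objectwise on generators via the same chain of equivalences, then invoke Proposition~\ref{prop:presentably-norm-adjunction-criterion} for the adjunction claims. You elaborate more than the paper does on why pointwise factorization suffices (the paper simply asserts ``it suffices to give a factorization objectwise'' and dispatches the second claim in one line), but the substance is identical; note only the minor slip that the right adjoint of the dashed arrow is obtained by \emph{pre}composing, not postcomposing, with the inclusion $\SH^\veff \to \SH$.
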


\begin{proof}
    For the first claim, it suffices to give a factorization objectwise.
    That is, we need to show that
    \begin{equation}
        \gamma_* \Sigma^\infty_{\mathbf{T},\fr} : \HH^\fr(S)^\gp \to \SH(S)
    \end{equation}
    factors through $\SH^\veff(S)$ for every algebraic space $S$.
    Since this functor preserves small colimits and $\HH^\fr(S)^\gp$ is generated under sifted colimits by objects of the form $h^\fr_S(X)^\gp = \gamma_!(X_+)^\gp$ for $X \in \Sm_S$, it suffices to show that $\gamma_* \Sigma^\infty_{\mathbf{T},\fr} \gamma_! (X_+)^\gp$ is very effective, but this is clear since
    \begin{align*}
        \gamma_* \Sigma^\infty_{\mathbf{T},\fr} \gamma_! (X_+)^\gp &\cong \gamma_* \Sigma^\infty_{\mathbf{T},\fr} \gamma_! (X_+) \\
        &\cong \gamma_* \gamma_! \Sigma^\infty_{\mathbf{T}}(X_+) \\
        &\cong \Sigma^\infty_{\mathbf{T}}(X_+).
    \end{align*}
    The second claim follows now from Proposition~\ref{prop:presentably-norm-adjunction-criterion}.
\end{proof}

\begin{theorem} \label{theorem:multiplicative-recognition}
    Let $k$ be a perfect field.
    There is an equivalence
    \begin{equation*}
        \NAlg (\SH^\veff|k) \cong \NAlg (\HH^{\fr,\gp}|k)
    \end{equation*}
    of $\infty$-categories of normed algebras over $k$
\end{theorem}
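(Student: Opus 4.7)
The plan is to apply Construction~\ref{construct:nalg-adj} to the norm monoidal adjunction $F : \HH^{\fr,\gp} \rightleftarrows \SH^\veff : G$ supplied by Proposition~\ref{prop:fr-gp-veff-comparison}, restricted from $\AlgSpc$ down to $\Sm_k$. This produces an induced adjunction
\begin{equation*}
    F : \NAlg(\HH^{\fr,\gp}|k) \rightleftarrows \NAlg(\SH^\veff|k) : G,
\end{equation*}
whose unit and counit are computed objectwise and which commutes with the forgetful functors to commutative algebras. Showing that this induced adjunction is an equivalence is what remains.

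Next, I would invoke Proposition~\ref{prop:nalg-forget-conservative}: the forgetful functor $\NAlg(\mathscr{D}|\Sm_k) \to \CAlg(\mathscr{D}|\Sm_k)$ is conservative, and by Remark~\ref{remark:parametrized-calg} the target may be identified with $\CAlg(\mathscr{D}(k))$, since $\Sm_k \subseteq \Sm_k$. Combining these with the compatibility of the $\NAlg$-adjunction with the forgetful functors supplied by Construction~\ref{construct:nalg-adj}, it suffices to prove that the induced adjunction
\begin{equation*}
    F_k : \CAlg(\HH^\fr(k)^\gp) \rightleftarrows \CAlg(\SH^\veff(k)) : G_k
\end{equation*}
is an equivalence.

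Finally, this last claim follows from the Motivic Recognition Principle of \cite{EHKSY_MotivicInfiniteLoops}: over a perfect field $k$, the symmetric monoidal adjunction $F_k : \HH^\fr(k)^\gp \rightleftarrows \SH^\veff(k) : G_k$ is itself an equivalence, and any symmetric monoidal equivalence induces an equivalence on commutative algebras. The main obstacle in this theorem is not in the proof sketched above, but rather in the preceding work: upgrading the classical adjunction $\Sigma^\infty_{\mathbf{T},\fr} \dashv \Omega^\infty_{\mathbf{T},\fr}$ to a \emph{norm monoidal} adjunction (Proposition~\ref{prop:fr-gp-veff-comparison}), so that Construction~\ref{construct:nalg-adj} may be applied in the first place. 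Once that refinement is in hand, the passage to normed algebras is essentially formal.
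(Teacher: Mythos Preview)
Your proposal is correct and follows essentially the same approach as the paper: both use Construction~\ref{construct:nalg-adj} applied to the norm monoidal adjunction from Proposition~\ref{prop:fr-gp-veff-comparison}, then reduce to the underlying equivalence from \cite{EHKSY_MotivicInfiniteLoops} via conservativity of the forgetful functor (Proposition~\ref{prop:nalg-forget-conservative} plus Remark~\ref{remark:parametrized-calg}). The only cosmetic difference is that the paper's commuting square has the forgetful functors landing directly in $\HH^\fr(k)^\gp$ and $\SH^\veff(k)$ rather than in $\CAlg$, but the logic is identical.
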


\begin{proof}
    Using the functor constructed in Proposition~\ref{prop:fr-gp-veff-comparison}, we can form a commutative diagram
    \begin{equation*}
        \begin{tikzcd}
            \NAlg(\HH^{\fr,\gp}|k) \ar[d] \ar[r] & \NAlg(\SH^\veff | k) \ar[d] \\
            \HH^\fr(k)^\gp \ar[r] & \SH^\veff(k)
        \end{tikzcd}
    \end{equation*}
    where the horizontal arrows are left adjoints, and the vertical arrows are the forgetful functors.
    The bottom horizontal arrow is an equivalence by \cite[Theorem 3.5.14]{EHKSY_MotivicInfiniteLoops}.
    Since the forgetful functors are conservative (combine Proposition~\ref{prop:nalg-forget-conservative} and Remark~\ref{remark:parametrized-calg}) and compatible with the adjunctions, the top horizontal arrow is also an equivalence.
\end{proof}

\begin{construction}
    Let $S$ be an algebraic space.
    We write
    \begin{equation*}
        \Sh^{\fr}_{\fet}(\Sm_S) \subseteq \PSh_\Sigma(\Corr^{\fr}(\Sm_S))
    \end{equation*}
    for the full subcategory spanned by $\Sigma$-presheaves $\mathscr{F} : \Corr^{\fr}(\Sm_S)^\op \to \Spc$ whose restriction to $\Sm_S$ is a sheaf for the finite \'etale topology.
    This is an accessible localization with left adjoint $\mathrm{L}_\fet : \PSh_\Sigma(\Corr^{\fr}(\Sm_S)) \to \Sh^{\fr}_{\fet}(\Sm_S)$.

    Since $\mathrm{L}_\fet$-equivalences are stable under base change and norm functors, the functors
    \begin{equation*}
        \mathrm{L}_\fet : \PSh_\Sigma(\Corr^{\fr}(\Sm_S)) \to \Sh^{\fr}_{\fet}(\Sm_S), \qquad S \in \AlgSpc
    \end{equation*}
    can be assembled into a norm monoidal functor
    \begin{equation*}
        \mathrm{L}_\fet : \PSh_\Sigma(\Corr^{\fr}(\Sm_\star)) \to \Sh^{\fr}_{\fet}(\Sm_\star)
    \end{equation*}
    between norm monoidal $\infty$-categories over $\AlgSpc$.
    Using Proposition~\ref{prop:presentably-norm-adjunction-criterion}, we find that $\mathrm{L}_\fet$ is a left adjoint in a norm monoidal adjunction.
    In particular, we can form the following commutative diagram
    \begin{equation}
        \begin{tikzcd} \label{eqn:nalg-by-etale-descent}
            \NAlg(\Sh_\fet^\fr(\Sm_\star)|\mathscr{B}) \ar[r] \ar[d] & \NAlg(\PSh^{\fr}_\Sigma (\Sm_\star)|\mathscr{B}) \ar[d] \\
            \CAlg(\Sh_\fet^\fr(\Sm_\star)|\mathscr{B}) \ar[r] & \CAlg(\PSh^{\fr}_\Sigma(\Sm_\star)|\mathscr{B})
        \end{tikzcd}
    \end{equation}
    for every algebraic space $S$ and every $\mathscr{B} \subseteq_\fet \AlgSpc_S$.
\end{construction}

\begin{proposition} \label{prop:nalg-by-descent}
    Let $S$ be an algebraic space.
    If $\mathscr{B} \subseteq_\fet \Sm_S$, then the left vertical arrow in (\ref{eqn:nalg-by-etale-descent}) induces an equivalence
    \begin{equation*}
        \NAlg(\Sh_\fet^\fr(\Sm_\star)|\mathscr{B}) \cong \CAlg(\Sh_\fet^\fr(\Sm_S)).
    \end{equation*}
\end{proposition}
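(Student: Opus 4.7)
The strategy is to exploit the finite \'etale descent built into $\Sh^{\fr}_{\fet}(\Sm_\star)$ to reduce normed algebra structures to plain commutative algebra data. First, I would reduce to showing that the forgetful functor
\begin{equation*}
    U : \NAlg(\Sh_\fet^\fr(\Sm_\star)|\mathscr{B}) \to \CAlg(\Sh_\fet^\fr(\Sm_\star)|\mathscr{B})
\end{equation*}
is an equivalence, since the identification $\CAlg(\Sh_\fet^\fr(\Sm_\star)|\mathscr{B}) \cong \CAlg(\Sh_\fet^\fr(\Sm_S))$ is immediate from Remark~\ref{remark:parametrized-calg}, using the hypothesis $\mathscr{B} \subseteq \Sm_S$. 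Proposition~\ref{prop:nalg-forget-conservative} already gives conservativity of $U$, so the substantive task is the construction of an inverse.

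I would construct the inverse by descent. Given a commutative algebra $A$ and a finite \'etale map $p : T' \to T$ in $\mathscr{B}$, one needs to produce the norm-multiplication map $p_\otimes p^* A \to A$ extending the multiplication data. The key reduction is that every finite \'etale $p$ of constant degree $n$ becomes a fold map $\nabla : (T')^{\sqcup n} \to T'$ after pullback along $p$ itself (the diagonal splits $p^* p$ off a factor of $T'$, and one iterates). For a fold map, $\nabla_\otimes$ is, under the equivalence $\mathscr{D}(T^{\sqcup n}) \cong \mathscr{D}(T)^{\times n}$, the $n$-fold tensor product, and the corresponding norm-multiplication is simply the $n$-fold multiplication in the $\mathbb{E}_\infty$-algebra $A$. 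Since $A$ takes values in finite \'etale sheaves, these locally-defined data glue uniquely to define $p_\otimes p^* A \to A$ over $T$, and the (non-constant degree) case reduces to this by decomposing $T'$ into clopen pieces of constant degree.

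The main obstacle is checking that all the higher coherences in this descent construction assemble into a genuine normed algebra, rather than merely a pointwise collection of maps. The cleanest route is to package things abstractly. The norm monoidal $\infty$-category $\Sh^{\fr}_{\fet}(\Sm_\star)|\mathscr{B}$ is, by construction (via the localization $\mathrm{L}_\fet$ and the descent inherited from $\Sh^\fr_\fet(\Sm_\star)$ itself), a finite \'etale sheaf of presentably symmetric monoidal $\infty$-categories on $\mathscr{B}$. One should then prove the general fact that for any norm monoidal $\infty$-category $\mathscr{D}$ over $\mathscr{B}$ whose associated coCartesian fibration $\mathscr{D}^\otimes \to \Corr^\fet(\mathscr{B})^\op$ is a finite \'etale sheaf in the relevant sense, restriction along the inclusion $\Corr^\fold(\mathscr{B})^\op \hookrightarrow \Corr^\fet(\mathscr{B})^\op$ induces an equivalence between the $\infty$-category of normed algebras (sections sending smooth morphisms to coCartesian edges) and the $\infty$-category of commutative algebras. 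The essential input is that every finite \'etale correspondence in $\mathscr{B}$ is refined in the \'etale topology by a fold correspondence, so the extra coCartesian data over $\Corr^\fet$ beyond $\Corr^\fold$ is forced by descent.
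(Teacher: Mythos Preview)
Your proposal is correct and follows essentially the same route as the paper. Both arguments hinge on the fact that $\Sh^{\fr}_{\fet}(\Sm_\star)$ satisfies finite \'etale descent, so that normed algebra structures are forced by their underlying $\mathbb{E}_\infty$-structures; both then use Remark~\ref{remark:parametrized-calg} to collapse $\CAlg(\Sh^{\fr}_\fet(\Sm_\star)|\mathscr{B})$ to $\CAlg(\Sh^{\fr}_\fet(\Sm_S))$.

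The only difference is one of packaging. The ``general fact'' you articulate in your final paragraph---that for a norm monoidal $\infty$-category satisfying finite \'etale descent, restriction along $\Corr^\fold(\mathscr{B})^\op \hookrightarrow \Corr^\fet(\mathscr{B})^\op$ induces an equivalence on sections with the appropriate coCartesian condition---is precisely \cite[Corollary~C.16]{BachmannHoyois_Norms}, and the paper simply cites it. Your middle paragraph (splitting a finite \'etale map into a fold after pullback along itself, then gluing) is a correct sketch of the mechanism behind that corollary, so you have essentially reproduced its proof rather than invoked it. The paper's version is shorter; yours is more self-contained.
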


\begin{proof}
    Unwinding definitions, we find that the horizontal arrows in (\ref{eqn:nalg-by-etale-descent}) are the inclusions of full subcategories of sections that satisfy finite \'etale descent in the sense of \cite[\S C.2]{BachmannHoyois_Norms}.
    We conclude using \cite[Corollary C.16]{BachmannHoyois_Norms} and Remark~\ref{remark:parametrized-calg}.
\end{proof}

\subsection{Normed Orientations}

\begin{construction} \label{construct:examples-of-nalgs}
    Let $S$ be an algebraic space.
    From Theorem~\ref{theorem:THE-DIAGRAM}, we can construct a commutative diagram
    \begin{equation*}
        \begin{tikzcd}
            \PSh_\Sigma^\fr(\Sm_\star) \ar[r] & \PSh_\Sigma^\orfsyn(\Sm_\star) \ar[r] \ar[d] & \PSh_\Sigma^\fsyn(\Sm_\star) \ar[d] \\
            & \PSh_\Sigma^\orfgor(\Sm_\star) \ar[r] & \PSh_\Sigma^\fflat(\Sm_\star)
        \end{tikzcd}
    \end{equation*}
    of norm monoidal $\infty$-categories over $\AlgSpc_S$.
    All of the arrows are left adjoints in norm monoidal adjunctions.
    Applying the various right adjoints to the various norm monoidal units at each vertex of the square, we obtain a commutative diagram
    \begin{equation} \label{eqn:examples-of-nalgs}
        \begin{tikzcd}
            \FSyn^{\oriented}_S \ar[r] \ar[d] & \FSyn_S \ar[d] \\
            \FGor^{\oriented}_S \ar[r] & \FFlat_S
        \end{tikzcd}
    \end{equation}
    in $\NAlg(\PSh_\Sigma^\fr(\Sm_\star)|\AlgSpc_S)$.
\end{construction}

\begin{proposition} \label{prop:examples-of-steady-nalgs}
    Let $S$ be an algebraic space.
    Each of
    \begin{equation*}
        \mathrm{L}_\mot \FFlat_S^\gp, \quad \mathrm{L}_\mot \FSyn_S^\gp, \quad \mathrm{L}_\mot \FGor^{\oriented,\gp}_S, \quad\text{and}\quad \mathrm{L}_\mot \FSyn^{\oriented,\gp}_S
    \end{equation*}
    is a steady normed algebra in $\HH^\fr$ over $\AlgSpc_S$.
\end{proposition}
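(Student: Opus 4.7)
The plan is to use Proposition~\ref{prop:nalg-forget-conservative} to reduce steadiness to a pointwise base change statement, which I then verify by identifying each of the listed objects with a motivic Hilbert-type moduli space whose formation commutes with arbitrary base change.

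First, by Proposition~\ref{prop:nalg-forget-conservative} the forgetful functor $\NAlg(\HH^\fr|\AlgSpc_S) \to \CAlg(\HH^\fr|\AlgSpc_S)$ is conservative, so it suffices to show that for each algebra $A$ in the list, and each morphism $f\colon T \to T'$ in $\AlgSpc_S$, the canonical structure map $f^*(A_{T'}) \to A_T$ in $\HH^\fr(T)^\gp$ is an equivalence. For smooth $f$ this holds automatically because each $A$ is already a normed algebra; the content is at non-smooth $f$.

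Next, I would unwind Construction~\ref{construct:examples-of-nalgs} to obtain an explicit presheaf description. Since the right adjoint $\alpha^\fflat_*$ of the norm monoidal adjunction $\alpha^\fflat_! \dashv \alpha^\fflat_*$ between $\PSh_\Sigma^\fr(\Sm_T)$ and $\PSh_\Sigma^\fflat(\Sm_T)$ is given by precomposition with the identity-on-objects functor $\alpha^\fflat\colon \Corr^\fr(\Sm_T) \to \Corr^\fflat(\Sm_T)$, the presheaf $\FFlat_{S,T}$ sends $X \in \Sm_T$ to $\Corr^\fflat_T(X,T)$, i.e.\ to the space of finite flat $X$-schemes. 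This quantity depends only on $X$ as an algebraic space, not on the base $T$, so $\FFlat_{S,T}$ is the restriction to $\Sm_T$ of a single absolute moduli presheaf $\mathrm{Hilb}^\fflat\colon \AlgSpc^\op \to \Spc$. Analogous descriptions using $\mathrm{Hilb}^\fsyn$, $\mathrm{Hilb}^{\orfgor}$, and $\mathrm{Hilb}^{\orfsyn}$ apply in the remaining three cases.

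The heart of the argument is to show that, after applying $\mathrm{L}_\mot$ and $({-})^\gp$, the canonical comparison becomes an equivalence. For this I would invoke the identifications of these motivic spaces with Hilbert schemes of finite flat (resp.\ syntomic, oriented Gorenstein, oriented syntomic) schemes as developed in \cite{HJNTY_HilbertSchemes}, \cite{EHKSY_ModulesOverMGL}, and \cite{HJNY_Hermitian}. These Hilbert functors are Nisnevich-locally representable by smooth ind-algebraic-spaces defined absolutely, whose formation manifestly commutes with base change. The steadiness then reduces to the observation that for a smooth ind-scheme $H$ defined absolutely, the motivic space $\mathrm{L}_\mot h^\fr_T(H \times T)^\gp \in \HH^\fr(T)^\gp$ is compatible with pullback along any $f\colon T \to T'$, since $h^\fr_?$ is a sheaf in both variables and $\mathbb{A}^1$-invariance of $\mathrm{L}_\mot$ lets one absorb the fibers of $f$.

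The main obstacle will be the third step: producing the identifications with the representing Hilbert ind-schemes at the level of framed-transferred presheaves (not merely as objects of $\HH^\fr$), and verifying compatibility with the normed algebra structure. The identifications in the cited references are typically formulated over a perfect base field, and extending them over an arbitrary $T \in \AlgSpc_S$ requires both the Nisnevich-local nature of the representing objects and a careful comparison respecting norm functors and framed transfers.
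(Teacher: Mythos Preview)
Your overall strategy---reduce steadiness to a base-change statement for $\mathrm{L}_\mot$ and then use that the moduli in question are represented by smooth objects---is the same as the paper's. But several steps in your execution are off.

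First, the invocation of Proposition~\ref{prop:nalg-forget-conservative} is a red herring. Steadiness is by definition the assertion that $f^*(A_{T'}) \to A_T$ is an equivalence for every $f$, so there is nothing to reduce; you can simply check this directly.

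Second, and more seriously, your justification in step~3 does not work as written. The identification of $\FFlat_T$, $\FGor^{\oriented}_T$, etc.\ with smooth Hilbert-type objects takes place at the level of underlying motivic spaces, not at the level of framed presheaves: these objects are \emph{not} of the form $h^\fr_T(H)$ for a smooth ind-scheme $H$. The correct move is to use conservativity of $\gamma^* \colon \HH^\fr \to \HH_\bullet$ (Proposition~\ref{prop:gamma-detects-mot-equiv}(2)) to reduce to checking the comparison in $\HH(T)_\bullet$, where the smooth representability is available. This dissolves the ``main obstacle'' you flag about needing framed-level identifications compatible with norms: you only need to verify a property of an already-constructed normed algebra, and that property can be tested after forgetting transfers. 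Also, the clause ``$\mathbb{A}^1$-invariance of $\mathrm{L}_\mot$ lets one absorb the fibers of $f$'' is not the mechanism here; the point is rather that for a presheaf on $\AlgSpc$ arising from a smooth algebraic stack with quasi-affine diagonal, the motivic localization of its restriction to $\Sm_T$ is stable under base change in $T$.

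The paper packages exactly this last point as a black box: it cites \cite[Lemmas~A.0.4 and~A.0.5]{EHKSY_ModulesOverMGL}, whose input is precisely that the relevant moduli stack is smooth with quasi-affine diagonal. For $\FSyn$ and $\FSyn^{\oriented}$ this is the stack of (oriented) finite syntomic schemes; for $\FFlat$ and $\FGor^{\oriented}$ one first uses \cite[Theorem~3.1]{HJNTY_HilbertSchemes} and \cite[Theorem~3.1]{HJNY_Hermitian} to replace them by the stack of vector bundles (with nondegenerate symmetric form), which is again smooth with quasi-affine diagonal. No perfect-field hypothesis is needed, and no framed-level comparison with Hilbert schemes is required.
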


\begin{proof}
    Each of these is a consequence of \cite[Lemma A.0.4]{EHKSY_ModulesOverMGL} and \cite[Lemma A.0.5]{EHKSY_ModulesOverMGL}.
    For $\mathrm{L}_\mot \FSyn_S^\gp$ and $\mathrm{L}_\mot \FSyn^{\oriented,\gp}_S$, we use the fact that the algebraic stack of (oriented) finite syntomic algebraic spaces is smooth with quasi-affine diagonal.
    For $\mathrm{L}_\mot \FFlat_S^\gp$ and $\mathrm{L}_\mot \FGor^{\oriented,\gp}_S$, we use \cite[Theorem 3.1]{HJNTY_HilbertSchemes} and \cite[Theorem 3.1]{HJNY_Hermitian}, together with the fact that the algebraic stack of vector bundles (equipped with a non-degenerate symmetric bilinear form) is smooth with quaso-affine diagonal.
\end{proof}

\begin{notation}
    Let $S$ be an algebraic space.
    We say $(*)_S$ holds if $\mathrm{f}_1(\mathrm{H}\mathbb{Z}_S) = 0$ and $\tau_{\geq 2 }\mathrm{HW}_S = 0$.
    Here, we write
    \begin{itemize}
        \item $\mathrm{H}\mathbb{Z}_S$ for Spitzweck's motivic cohomology spectrum,
        \item $\mathrm{H}\mathrm{W}$ for the periodic Witt cohomology spectrum,
        \item $\mathrm{f}_n$ for the $n$-effective cover functor for the slice filtration, and
        \item $\tau_{\geq n}$ for the $n$-connective cover functor for the homotopy $t$-structure.
    \end{itemize}
    See \cite{Bachmann_veffCovers} for further details.
\end{notation}

\begin{remark} \label{remark:assumptions-are-true}
    The condition $(*)_S$ is known to hold when $S$ is a scheme essentially smooth over a Dedekind scheme.
    See \cite[Remark 1.2]{Bachmann_veffCovers}.
    In fact, it will be shown in \cite{BEM_cdh} that $(*)_S$ holds for all schemes $S$.
    From there, the extension to all algebraic spaces is straightforward.
\end{remark}

\begin{theorem} \label{theorem:normed-orientations}
    Let $S$ be an algebraic space with $2 \in \mathcal{O}_S^\times$, and let $\mathscr{B} \subseteq_\fet \AlgSpc_S$.
    Suppose $(*)_X$ holds for all $X \in \mathscr{B}$.
    The commutative diagram
    \begin{equation*}
        \begin{tikzcd}
            \mathrm{MSL}_S \ar[r] \ar[d] & \mathrm{MGL}_S \ar[d] \\
            \mathrm{ko}_S \ar[r] & \mathrm{kgl}_S
        \end{tikzcd}
    \end{equation*}
    can be promoted to one in $\NAlg^\mathrm{stdy}(\SH|\mathscr{B})$.
\end{theorem}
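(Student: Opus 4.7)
The plan is to build the desired lift by starting with a commutative square of steady normed algebras in $\HH^{\fr,\gp}$ supplied by the moduli-theoretic constructions of the previous subsection, then transporting it along a chain of strong norm monoidal left adjoints into $\NAlg^\mathrm{stdy}(\SH|\mathscr{B})$, and invoking the existing $\mathbb{P}^1$-delooping identifications at the end.

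First, Construction~\ref{construct:examples-of-nalgs} produces the square (\ref{eqn:examples-of-nalgs}) in $\NAlg(\PSh_\Sigma^\fr(\Sm_\star)|\AlgSpc_S)$, and Proposition~\ref{prop:examples-of-steady-nalgs} asserts that applying $\mathrm{L}_\mot({-})^\gp$ to each vertex yields a steady normed algebra in $\HH^{\fr,\gp}$ over $\AlgSpc_S$. Restricting along $\mathscr{B} \hookrightarrow \AlgSpc_S$, we obtain a commutative square
\begin{equation*}
    \begin{tikzcd}
        \mathrm{L}_\mot \FSyn^{\oriented,\gp}_S \ar[r] \ar[d] & \mathrm{L}_\mot \FSyn_S^\gp \ar[d] \\
        \mathrm{L}_\mot \FGor^{\oriented,\gp}_S \ar[r] & \mathrm{L}_\mot \FFlat_S^\gp
    \end{tikzcd}
\end{equation*}
in $\NAlg^\mathrm{stdy}(\HH^{\fr,\gp}|\mathscr{B})$.

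Next, I would apply the composite
\begin{equation*}
    \HH^{\fr,\gp} \xrightarrow{\;\Sigma^\infty_{\mathbf{T},\fr}\;} \SH^\fr \xleftarrow[\;\gamma_!\;]{\sim} \SH,
\end{equation*}
where the first arrow is a left adjoint in a norm monoidal adjunction by Proposition~\ref{prop:fr-gp-veff-comparison}, and the second is a norm monoidal equivalence by Theorem~\ref{theorem:multiplicative-reconstruction}. Since both arrows are strong norm monoidal, Construction~\ref{construct:nalg-adj} guarantees that composing with them carries steady normed algebras to steady normed algebras. This yields the desired square of steady normed algebras in $\SH$ over $\mathscr{B}$.

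Finally, I would identify the four resulting spectra with $\mathrm{MSL}_S$, $\mathrm{MGL}_S$, $\mathrm{ko}_S$, and $\mathrm{kgl}_S$, and check that the morphisms match those in the statement. For the syntomic (respectively oriented syntomic) moduli, this identification with $\mathrm{MGL}_S$ (respectively $\mathrm{MSL}_S$) is the main result of \cite{EHKSY_ModulesOverMGL}; for $\FFlat$ the identification with $\mathrm{kgl}_S$ is the main result of \cite{HJNTY_HilbertSchemes}; and for oriented finite Gorenstein schemes the identification with $\mathrm{ko}_S$ is \cite{HJNY_Hermitian}. The hypotheses $2 \in \mathcal{O}_S^\times$ and $(*)_X$ for each $X \in \mathscr{B}$ are imposed precisely so that these four identification theorems apply: $2$ being invertible is needed for the symmetric bilinear form setup underlying $\mathrm{ko}$, while $(*)_X$ provides the slice-theoretic input needed to invoke the recognition principle fibrewise. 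The main obstacle in this approach is not the norm monoidal formalism, which at this point is essentially automatic, but rather the compatibility of these four cited identifications with the morphisms between moduli stacks; this reduces to unwinding the functorial constructions in \cite{EHKSY_ModulesOverMGL}, \cite{HJNTY_HilbertSchemes}, and \cite{HJNY_Hermitian}, and does not require any new input.
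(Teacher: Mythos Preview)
Your proposal is correct and follows essentially the same route as the paper's proof. The one refinement the paper makes explicit is that the hypothesis $(*)_X$ enters not via any ``recognition principle fibrewise'' but through \cite[Theorem~1.1]{Bachmann_veffCovers}, which is invoked alongside the moduli-theoretic results of \cite{EHKSY_ModulesOverMGL}, \cite{HJNTY_HilbertSchemes}, and \cite{HJNY_Hermitian} to complete the identifications of the bottom row with $\mathrm{ko}$ and $\mathrm{kgl}$.
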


\begin{proof}
    From Construction~\ref{construct:examples-of-nalgs} and Proposition~\ref{prop:examples-of-steady-nalgs}, we can construct a diagram 
    \begin{equation*}
        \begin{tikzcd}
            \Sigma^\infty_{\mathbf{T},\fr} \mathrm{L}_\mot \FSyn^{\oriented, \gp}_S \ar[r] \ar[d] & \Sigma^\infty_{\mathbf{T},\fr} \mathrm{L}_\mot \FSyn^\gp_S \ar[d] \\
            \Sigma^\infty_{\mathbf{T},\fr} \mathrm{L}_\mot \FGor^{\oriented, \gp}_S \ar[r] & \Sigma^\infty_{\mathbf{T},\fr} \mathrm{L}_\mot \FFlat^\gp_S
        \end{tikzcd}
    \end{equation*}
    of steady normed algebras in $\SH^\fr$ over $\mathscr{B}$.
    We now use \cite[Theorem 3.4.1]{EHKSY_ModulesOverMGL}, \cite[Theorem 3.4.3]{EHKSY_ModulesOverMGL}, \cite[Theorem 3.1]{HJNTY_HilbertSchemes} and \cite[Theorem 3.1]{HJNY_Hermitian}, and \cite[Theorem 1.1]{Bachmann_veffCovers} to identify the objects.
\end{proof}

\begin{remark}
    For $S$ and algebraic space, let $Q_S$ denote the commutative diagram (\ref{eqn:examples-of-nalgs}).
    The formation of $Q_S$ is stable by pullbacks in the following sense.
    Given any algebraic space $S$, we have that $Q_S$ is the restriction of $Q_{\Spec \mathbb{Z}}$ along $\Corr^\fet(\AlgSpc_S)^\op \to \Corr^\fet(\AlgSpc)^\op$.

    In particular, once we know that $(*)_S$ holds for every algebraic space $S$, then we'll know that the commutative diagram in Theorem~\ref{theorem:normed-orientations} is stable by pullbacks in the same sense.
\end{remark}

\begin{remark}
    If one is only interested in Theoem~\ref{theorem:normed-orientations} when $\mathscr{B} \subseteq_\fet \Sm_S$, then one can replace Construction~\ref{construct:examples-of-nalgs} with the following.
    One can directly construct (\ref{eqn:examples-of-nalgs}) as a diagraim in $\CAlg(\PSh^\fr_\Sigma(\Sm_S))$.
    Since each object is a sheaf for the finite \'etale topology, we can use Proposition~\ref{prop:nalg-by-descent} to promote the diagram to one of normed algebras over $\Sm_S$.
    
    The uniqueness statement in Proposition~\ref{prop:nalg-by-descent} implies that this produces the same result as taking Construction~\ref{construct:examples-of-nalgs} and restricting along $\Corr^\fet(\Sm_S)^\op \to \Corr^\fet(\AlgSpc)^\op$.
\end{remark}

\subsection{Normed Algebras over Cobordism and \texorpdfstring{$\mathrm{K}$}{K}-Theory Spectra}

\begin{theorem}
    There are equivalences
    \begin{equation*}
        \Mod_{\mathrm{MGL}}(\SH) \cong \SH^\fsyn \quad\text{and}\quad \Mod_{\mathrm{MSL}}(\SH) \cong \SH^\orfsyn
    \end{equation*}
    of norm monoidal $\infty$-categories over $\AlgSpc$.
\end{theorem}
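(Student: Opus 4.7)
The strategy is to deduce the claimed equivalences from the pointwise identifications of Elmanto-Hoyois-Khan-Sosnilo-Yakerson by packaging them through Proposition~\ref{prop:norm-adj-factorization}. By Theorem~\ref{theorem:multiplicative-reconstruction}, the equivalence $\SH \simeq \SH^\fr$ reduces the task to producing norm monoidal equivalences $\Mod_{\mathrm{MGL}}(\SH^\fr) \simeq \SH^\fsyn$ and $\Mod_{\mathrm{MSL}}(\SH^\fr) \simeq \SH^\orfsyn$ over $\AlgSpc$.

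First I would unpack the morphisms $\alpha : \fr \to \fsyn$ and $\beta : \fr \to \orfsyn$ of normed motivic patterns from Theorem~\ref{theorem:THE-DIAGRAM}: by Construction~\ref{construct:normed-patt-to-normed-cat} they induce norm monoidal functors $\alpha_! : \SH^\fr \to \SH^\fsyn$ and $\beta_! : \SH^\fr \to \SH^\orfsyn$, and the smooth base-change identity of Proposition~\ref{prop:stable-p-lower-sharp-alpha-lower-shriek} combined with Proposition~\ref{prop:presentably-norm-adjunction-criterion} upgrades each of these to a norm monoidal adjunction. The right adjoints applied to the units yield normed algebras $\alpha^*(\mathbf{1}_{\SH^\fsyn})$ and $\beta^*(\mathbf{1}_{\SH^\orfsyn})$ in $\SH^\fr$; I would identify these with $\mathrm{MGL}$ and $\mathrm{MSL}$, respectively, via \cite[Theorem 3.4.1]{EHKSY_ModulesOverMGL} and \cite[Theorem 3.4.3]{EHKSY_ModulesOverMGL}. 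Applying Proposition~\ref{prop:norm-adj-factorization} then produces factorizations
\begin{equation*}
    \SH^\fr \rightleftarrows \Mod_{\mathrm{MGL}}(\SH^\fr) \rightleftarrows \SH^\fsyn \qquad \text{and} \qquad \SH^\fr \rightleftarrows \Mod_{\mathrm{MSL}}(\SH^\fr) \rightleftarrows \SH^\orfsyn
\end{equation*}
as norm monoidal adjunctions over $\AlgSpc$.

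To conclude I would show that the right-hand adjunction in each factorization is an equivalence of norm monoidal $\infty$-categories. Equivalences between coCartesian fibrations over a fixed base are detected on fibers, so this reduces to checking that for each algebraic space $S$ the underlying symmetric monoidal functors $\Mod_{\mathrm{MGL}_S}(\SH(S)) \to \SH^\fsyn(S)$ and $\Mod_{\mathrm{MSL}_S}(\SH(S)) \to \SH^\orfsyn(S)$ are equivalences, which is the content of \cite[Theorem 3.4.1]{EHKSY_ModulesOverMGL} and \cite[Theorem 3.4.3]{EHKSY_ModulesOverMGL}. The main technical point will be verifying that the commutative algebra structures on $\mathrm{MGL}$ and $\mathrm{MSL}$ produced here as $\alpha^*(\mathbf{1})$ and $\beta^*(\mathbf{1})$ match those used in the cited pointwise theorems, so that the factorization delivered by Proposition~\ref{prop:norm-adj-factorization} really is the free/forgetful adjunction on modules appearing there; once this compatibility is granted, every remaining step is formal.
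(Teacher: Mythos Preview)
Your approach is the paper's: use Theorem~\ref{theorem:THE-DIAGRAM} to get the norm monoidal adjunctions $\SH^\fr \rightleftarrows \SH^\fsyn$ and $\SH^\fr \rightleftarrows \SH^\orfsyn$, apply Proposition~\ref{prop:norm-adj-factorization}, and then check the second adjunction in each factorization is a fiberwise equivalence. The one correction is that the fiberwise equivalences $\Mod_{\mathrm{MGL}_S}(\SH(S)) \simeq \SH^\fsyn(S)$ and $\Mod_{\mathrm{MSL}_S}(\SH(S)) \simeq \SH^\orfsyn(S)$ are not themselves the content of \cite[Theorems 3.4.1 and 3.4.3]{EHKSY_ModulesOverMGL}---those results only identify $\alpha^*(\mathbf{1})$ and $\beta^*(\mathbf{1})$ with $\mathrm{MGL}$ and $\mathrm{MSL}$; the paper additionally invokes \cite[Lemma 4.1.3]{EHKSY_ModulesOverMGL} (a monadicity argument) to pass from that identification to the module-category equivalence, so your ``main technical point'' is really this step rather than the matching of $\CAlg$-structures.
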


\begin{proof}
    Combining \cite[Theorem 3.4.1]{EHKSY_ModulesOverMGL} and \cite[Theorem 3.4.3]{EHKSY_ModulesOverMGL} with \cite[Lemma 4.1.3]{EHKSY_ModulesOverMGL}, this is immediate from Proposition~\ref{prop:norm-adj-factorization} and Theorem~\ref{theorem:THE-DIAGRAM}.
\end{proof}

\begin{theorem} \label{theorem:normed-algebras-over-cobordism}
    Let $k$ be a perfect field.
    There are equivalences
    \begin{gather*}
        \NAlg_{\mathrm{MGL}}(\SH^\veff|k) \cong \NAlg(\HH^{\fsyn,\gp}|k) \qquad\text{and} \\ \NAlg_{\mathrm{MSL}}(\SH^\veff|k) \cong \NAlg(\HH^{\orfsyn,\gp}|k)
    \end{gather*}
    of $\infty$-categories of normed algebras.
\end{theorem}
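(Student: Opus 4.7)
The plan is to mirror the strategy used for Theorem~\ref{theorem:multiplicative-recognition}, inserting an additional step that factors the relevant adjunction through a module category. I focus on the $\mathrm{MGL}$-case; the $\mathrm{MSL}$-case runs identically with $\mathrm{MSL}$ and $\orfsyn$ in place of $\mathrm{MGL}$ and $\fsyn$. Since $k$ is a perfect field, Theorem~\ref{theorem:normed-orientations} promotes $\mathrm{MGL}_k$ to a normed algebra in $\SH^\veff$ over $k$, so the equivalence $\NAlg_{\mathscr{R}}(\mathscr{D}|\mathscr{B}) \cong \NAlg(\Mod_{\mathscr{R}}(\mathscr{D})|\mathscr{B})$ (the proposition immediately preceding Proposition~\ref{prop:norm-adj-factorization}) gives
\begin{equation*}
    \NAlg_\mathrm{MGL}(\SH^\veff|k) \cong \NAlg(\Mod_\mathrm{MGL}(\SH^\veff)|k).
\end{equation*}
The task becomes identifying $\Mod_\mathrm{MGL}(\SH^\veff)$ with a presentably norm monoidal $\infty$-category admitting a known unstable model. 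Let $\SH^{\fsyn,\veff} \subseteq \SH^\fsyn$ be the full subcategory generated under colimits and extensions by the objects $\Sigma_{\mathbf{T},\fsyn}^n \Sigma^\infty_{\mathbf{T},\fsyn} \theta(X_+)$ with $X \in \Sm_\star$ and $n \geq 0$. Proposition~\ref{prop:norm-t-structure} makes this a presentably norm monoidal subcategory of $\SH^\fsyn$ over $\AlgSpc$ fitting into a norm monoidal adjunction $\SH^{\fsyn,\veff} \rightleftarrows \SH^\fsyn$. Combining the norm monoidal equivalence $\Mod_\mathrm{MGL}(\SH) \simeq \SH^\fsyn$ from the preceding theorem with the fact that $\mathrm{MGL}$ is itself very effective, I expect this restricts to a norm monoidal equivalence $\Mod_\mathrm{MGL}(\SH^\veff) \simeq \SH^{\fsyn,\veff}$.

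Next, mirroring Proposition~\ref{prop:fr-gp-veff-comparison} I would produce a commutative diagram of norm monoidal $\infty$-categories over $k$
\begin{equation*}
    \begin{tikzcd}
        \HH^{\fsyn,\gp} \ar[d] \ar[r,dashed] & \SH^{\fsyn,\veff} \ar[d] \\
        \SH^\fsyn \ar[r,equal] & \SH^\fsyn
    \end{tikzcd}
\end{equation*}
in which each arrow is the left adjoint of a norm monoidal adjunction. The dashed lift is obtained by checking on generators $\mathrm{L}_\mathrm{mot}\theta(X_+)^\gp$ that $\Sigma^\infty_{\mathbf{T},\fsyn}$ lands in $\SH^{\fsyn,\veff}$, and then invoking Proposition~\ref{prop:presentably-norm-adjunction-criterion}. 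The concluding move then repeats the argument for Theorem~\ref{theorem:multiplicative-recognition}: form the square
\begin{equation*}
    \begin{tikzcd}
        \NAlg(\HH^{\fsyn,\gp}|k) \ar[d] \ar[r] & \NAlg(\SH^{\fsyn,\veff}|k) \ar[d] \\
        \HH^\fsyn(k)^\gp \ar[r] & \SH^{\fsyn,\veff}(k)
    \end{tikzcd}
\end{equation*}
whose vertical forgetful functors are conservative (Proposition~\ref{prop:nalg-forget-conservative} together with Remark~\ref{remark:parametrized-calg}) and compatible with the horizontal left adjoints (Construction~\ref{construct:nalg-adj}), and deduce the top equivalence from the bottom one. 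That bottom equivalence is the unstable recognition theorem for $\mathrm{MGL}$-modules, \cite[Theorem~3.4.1]{EHKSY_ModulesOverMGL}; in the $\mathrm{MSL}$-case one substitutes \cite[Theorem~3.4.3]{EHKSY_ModulesOverMGL}.

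The main obstacle is the norm monoidal identification $\Mod_\mathrm{MGL}(\SH^\veff) \simeq \SH^{\fsyn,\veff}$ in the first step, as a \emph{norm monoidal} rather than merely symmetric monoidal equivalence. The underlying symmetric monoidal statement follows quickly from Proposition~\ref{prop:norm-adj-factorization} applied to the norm monoidal adjunction $\SH^\veff \rightleftarrows \SH^\fsyn$ together with the preceding theorem. Upgrading to a norm monoidal equivalence requires checking that the norm functors $p_\otimes$ on $\SH^\fsyn$ preserve $\SH^{\fsyn,\veff}$ for every finite \'etale $p$, which amounts to verifying hypothesis~(3) of Proposition~\ref{prop:norm-t-structure} on the chosen generators; this should follow by computing $p_\otimes \Sigma^\infty_{\mathbf{T},\fsyn}\theta(X_+)$ via Proposition~\ref{prop:compute-p-otimes-sphere} and the stability of smoothness under Weil restriction.
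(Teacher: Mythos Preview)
Your approach is correct but takes a genuinely different route from the paper. You pass through the stable category $\SH^{\fsyn,\veff}$, whereas the paper stays entirely on the unstable framed side. Concretely, the paper applies Proposition~\ref{prop:norm-adj-factorization} to the norm monoidal adjunction $\HH^{\fr,\gp} \rightleftarrows \HH^{\fsyn,\gp}$ (which comes for free from the normed motivic pattern machinery), obtaining $\Mod_{\mathscr{A}}(\HH^{\fr,\gp}) \rightleftarrows \HH^{\fsyn,\gp}$ with $\mathscr{A}$ the image of the unit; then Theorem~\ref{theorem:multiplicative-recognition} together with \cite[Theorem~3.4.1]{EHKSY_ModulesOverMGL} identifies $\NAlg(\Mod_{\mathscr{A}}(\HH^{\fr,\gp})|k)$ with $\NAlg_{\mathrm{MGL}}(\SH^\veff|k)$, and \cite[Theorem~4.1.4]{EHKSY_ModulesOverMGL} supplies the bottom equivalence $\Mod_{\mathrm{MGL}}(\SH^\veff(k)) \simeq \HH^{\fsyn}(k)^\gp$. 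This avoids defining $\SH^{\fsyn,\veff}$ and checking that norms preserve it, and it also sidesteps needing a normed structure on $\mathrm{MGL}$ as an \emph{input}: in the paper's argument that structure emerges as a byproduct. Your invocation of Theorem~\ref{theorem:normed-orientations} for this is problematic anyway, since that theorem carries the hypothesis $2 \in \mathcal{O}_S^\times$, which fails in characteristic~$2$. Your route is nonetheless valid and makes the stable picture explicit; just note that your bottom equivalence $\HH^{\fsyn}(k)^\gp \simeq \SH^{\fsyn,\veff}(k)$ should be attributed to \cite[Theorem~4.1.4]{EHKSY_ModulesOverMGL} (plus the stable module identification), not to \cite[Theorem~3.4.1]{EHKSY_ModulesOverMGL}.
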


\begin{proof}
    This is similar to the proof of Theorem~\ref{theorem:multiplicative-recognition}.

    Using Proposition~\ref{prop:norm-adj-factorization}, we get a norm monoidal adjunction
    \begin{equation*}
        \Mod_\mathscr{A}(\HH^{\fr,\gp}) \rightleftarrows \HH^{\fsyn,\gp},
    \end{equation*}
    where we write $\mathscr{A}$ for the image of the norm monoidal unit under the right adjoint.
    Using Construction~\ref{construct:nalg-adj}, Theorem~\ref{theorem:multiplicative-recognition}, and \cite[Theorem 3.4.1]{EHKSY_ModulesOverMGL}, we get a commutative diagram
    \begin{equation*}
        \begin{tikzcd}
            \NAlg_{\mathrm{MGL}}(\SH^\veff|k) \ar[r] \ar[d] & \NAlg(\HH^{\fsyn,\gp}| k) \ar[d] \\
            \Mod_\mathrm{MGL}(\SH^\veff(k)) \ar[r] & \HH^\fsyn(k)^\gp
        \end{tikzcd}
    \end{equation*}
    where the horizontal arrows are left adjoints, and the vertical arrows are the forgetful functors.
    Since the bottom arrow is an equivalence by \cite[Theorem 4.1.4]{EHKSY_ModulesOverMGL}, we find that the top horizontal arrow is the first desired equivalence.

    Replacing $\fsyn$ with $\orfsyn$, \cite[Theorem 3.4.1]{EHKSY_ModulesOverMGL} with \cite[Theorem 3.4.3]{EHKSY_ModulesOverMGL}, and \cite[Theorem 4.1.4]{EHKSY_ModulesOverMGL} with \cite[Theorem 4.2.2]{EHKSY_ModulesOverMGL} gives the second desired equivalence.
\end{proof}

\begin{notation}
    Let $\mathscr{D}$ be an additive $\infty$-category.
    For a positive integer $n$, we write $\mathscr{D}[1/n] \subseteq \mathscr{D}$ for the full subcategory spanned by objects $X \in \mathscr{D}$ such that $X \xrightarrow{n} X$ is an equivalence.
\end{notation}

\begin{notation}
    Let $S$ be an algebraic space, let $\mathscr{B} \subseteq_\fet \AlgSpc_S$ with $\mathscr{B} \subseteq \Sm_S$, and let $\mathscr{D}$ be a norm monoidal $\infty$-category over $\mathscr{B}$ such that $\mathscr{D}(X)$ is additive for every $X \in \mathscr{B}$.
    For a positive integer $n$, we write
    \begin{equation*}
        \NAlg(\mathscr{D}|S)[1/n] \subseteq \NAlg(\mathscr{D}|S)
    \end{equation*}
    for the full subcategory spanned by $\mathscr{A} \in \NAlg(\mathscr{D}|S)$ such that $\mathscr{A}_S$ is in $\mathscr{D}(S)[1/n]$.
\end{notation}

\begin{theorem} \label{theorem:normed-algebras-over-k-theory}
    Let $k$ be a field of exponential characteristic $e$.
    There is an equivalence
    \begin{equation*}
        \NAlg_{\mathrm{kgl}}(\SH|k)[1/e] \cong \NAlg(\SH^\fflat|k)[1/e].
    \end{equation*}
    If $e \neq 2$, then we also have an equivalence
    \begin{equation*}
        \NAlg_{\mathrm{ko}}(\SH|k)[1/e] \cong \NAlg(\SH^\orfgor|k)[1/e].
    \end{equation*}
\end{theorem}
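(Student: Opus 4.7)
The plan is to mirror the proof of Theorem~\ref{theorem:normed-algebras-over-cobordism}, but working at the stable level and systematically passing to the $[1/e]$-subcategories. First I would invoke Theorem~\ref{theorem:multiplicative-reconstruction} to identify $\SH$ with $\SH^\fr$ as norm monoidal $\infty$-categories over $\AlgSpc$. Then Theorem~\ref{theorem:THE-DIAGRAM} together with Construction~\ref{construct:normed-patt-to-normed-cat} provides norm monoidal adjunctions $\SH^\fr \rightleftarrows \SH^\fflat$ and $\SH^\fr \rightleftarrows \SH^\orfgor$ over $\AlgSpc$, coming from the morphisms $\fr \to \fflat$ and $\fr \to \orfgor$ of normed motivic patterns.

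Next I would apply Proposition~\ref{prop:norm-adj-factorization} to factor each of these adjunctions as
\begin{equation*}
    \SH^\fr \rightleftarrows \Mod_\mathscr{A}(\SH^\fr) \rightleftarrows \SH^\fflat
\end{equation*}
(and analogously with $\orfgor$ in place of $\fflat$), where $\mathscr{A}$ is the image of the norm monoidal unit under the right adjoint. A crucial geometric step is to identify the underlying object of $\mathscr{A}$ with $\mathrm{kgl}$ after inverting $e$, and the $\orfgor$-analog with $\mathrm{ko}$ after inverting $e$, using the recognition theorems of \cite[Theorem~3.1]{HJNTY_HilbertSchemes} and \cite[Theorem~3.1]{HJNY_Hermitian}. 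The restriction $e \neq 2$ in the $\mathrm{ko}$ case reflects the standard hypothesis $2 \in \mathcal{O}_S^\times$ required for the Hermitian $\mathrm{K}$-theory comparison.

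Using Construction~\ref{construct:nalg-adj} and restricting to the $[1/e]$-subcategories (the underlying functors are additive, hence preserve $[1/e]$-objects), I would then assemble the commutative square
\begin{equation*}
    \begin{tikzcd}
        \NAlg_{\mathrm{kgl}}(\SH|k)[1/e] \ar[r] \ar[d] & \NAlg(\SH^\fflat|k)[1/e] \ar[d] \\
        \Mod_{\mathrm{kgl}}(\SH(k))[1/e] \ar[r] & \SH^\fflat(k)[1/e]
    \end{tikzcd}
\end{equation*}
together with its $\mathrm{ko}/\orfgor$ analog. The horizontal arrows are the induced left adjoints and the vertical arrows are the conservative forgetful functors (Proposition~\ref{prop:nalg-forget-conservative}, Remark~\ref{remark:parametrized-calg}). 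Since the forgetful functors are compatible with the adjunctions, once the bottom horizontal arrow is known to be an equivalence, the top horizontal arrow is forced to be an equivalence as well.

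The main obstacle will be the fiberwise equivalence along the bottom, namely $\Mod_{\mathrm{kgl}}(\SH(k))[1/e] \simeq \SH^\fflat(k)[1/e]$ and its $\mathrm{ko}/\orfgor$ variant. This is the stable $\mathrm{K}$-theoretic analog of \cite[Theorem~4.1.4]{EHKSY_ModulesOverMGL}, and should follow by combining the motivic recognition results of \cite{HJNTY_HilbertSchemes,HJNY_Hermitian} with standard monadic descent for the module category; the essential role of the $[1/e]$-localization is that the space-level identifications of $\mathrm{kgl}$ and $\mathrm{ko}$ with the framed motivic spaces of $\FFlat^\gp$ and $\FGor^{\oriented,\gp}$ are only known after inverting $e$.
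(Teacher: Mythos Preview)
Your proposal is correct and follows essentially the same route as the paper: factor the norm monoidal adjunction $\SH^\fr \rightleftarrows \SH^\fflat$ (resp.\ $\SH^\orfgor$) through modules over the pushforward of the unit, identify that unit with $\mathrm{kgl}$ (resp.\ $\mathrm{ko}$) after inverting $e$, and then deduce the equivalence on normed algebras from the fiberwise equivalence via the conservative forgetful functors. The paper's proof is terse---it just says the argument of Theorem~\ref{theorem:normed-algebras-over-cobordism} goes through unchanged---and for the two inputs you left somewhat open it cites \cite[Theorem~5.4]{HJNTY_HilbertSchemes} and \cite[Theorem~7.12]{HJNY_Hermitian} for the identification of the unit pushforward, and \cite[Corollary~4.3]{Bachmann_Cancellation} and \cite[Theorem~9.2]{HJNY_Hermitian} for the bottom equivalence $\Mod_{\mathrm{kgl}}(\SH(k))[1/e] \simeq \SH^\fflat(k)[1/e]$ (resp.\ its $\mathrm{ko}/\orfgor$ analog), rather than deriving these from monadic descent.
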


\begin{proof}
    The proof of Theorem~\ref{theorem:normed-algebras-over-cobordism} goes through essentially unchanged.
    One just needs to replace \cite[Theorem 3.4.1]{EHKSY_ModulesOverMGL} with either \cite[Theorem 5.4]{HJNTY_HilbertSchemes} or \cite[Theorem 7.12]{HJNY_Hermitian}, and \cite[Theorem 4.1.4]{EHKSY_ModulesOverMGL} with either \cite[Corollary 4.3]{Bachmann_Cancellation} or \cite[Theorem 9.2]{HJNY_Hermitian},
\end{proof}

\bibliographystyle{amsalpha}
\bibliography{biblio}

\end{document}